\newtheorem{theorem}{Theorem}[section]
\newtheorem*{theorem*}{Theorem}
\newtheorem{lemma}{Lemma}[section]
\newtheorem{corollary}[theorem]{Corollary}
\newtheorem{proposition}{Proposition}[section]
\newtheorem{remark}[theorem]{Remark}
\def\heat{\lf(\frac{\p}{\p t}-\Delta\ri)}
\def \b {\beta}
\def\Ric{\text{Ric}}
\def\lf{\left}
\def\ri{\right}
\def\a{\alpha}
\def\g{\gamma}
\def\p{\partial}
\def\delbar{{\bar{\delta}}}
\def\dbar{{\bar{\partial}}}
\def\C{\Bbb C}
\def\R{\Bbb R}
\def\dbar{\bar\partial}
\def\ba{{\bar{\alpha}}}
\def\bb{{\bar{\beta}}}
\def\abb{\alpha{\bar{\beta}}}
\def\gbd{\gamma{\bar{\delta}}}
\def\HQ{\widetilde{\mathcal{Q}}}
\def\HJ{\widetilde{\mathcal{J}}}
\def \D {\Delta}
\def\id{\operatorname{id}}
\def\Ric{\operatorname{Ric}}
\def\Rm{\operatorname{Rm}}
\def\I{\operatorname{I}}
\def\tr{\operatorname{tr}}
\def\ov{\overline}
\newcommand{\SO}{{\mathsf{SO}}}
\def\I{\operatorname{I}}
\numberwithin{equation}{section}
\begin{document}
\title[positive $(p,p)$-forms]{\bf Sharp differential estimates of Li-Yau-Hamilton type
 for positive $(p,p)$-forms on K\"ahler manifolds}

\author{ Lei Ni and Yanyan Niu}



\date{}

\maketitle

\begin{abstract} In this paper we study the heat equation (of Hodge-Laplacian) deformation of $(p, p)$-forms on a K\"ahler manifold. After identifying the condition and establishing that the positivity of a $(p, p)$-form solution is preserved under such an invariant condition we prove the sharp differential Harnack (in the sense of Li-Yau-Hamilton) estimates for the positive solutions of the Hodge-Laplacian heat equation. We also prove a nonlinear version coupled with the K\"ahler-Ricci flow and some interpolating matrix differential Harnack type estimates for both the K\"ahler-Ricci flow and the Ricci flow.
\end{abstract}

\section{Introduction}

In this paper, we study the deformation of positive $(p,p)$-forms on a K\"ahler manifold via the $\bar{\partial}$-Laplacian heat equation. One of our main goals of this paper is to prove differential Harnack estimates for positive solutions. The Harnack estimate for positive solutions of linear parabolic PDEs of divergence form goes back to the fundamental work of Moser \cite{Moser}. In another fundamental work \cite{LY}, Li and Yau, on Riemannian manifolds with nonnegative Ricci curvature,  proved a sharp differential estimate which implies a sharp form of Harnack inequality for the positive solutions. Later, Hamilton \cite{richard-harnack} proved the miraculous matrix differential estimates for the curvature operator of solutions to Ricci flow assuming that the curvature operator is nonnegative. Since the curvature operator of a Ricci flow solution satisfies a nonlinear diffusion-reaction equation, this result of Hamilton  is as surprising as it is important. Due to this development people  also call this type of sharp estimates Li-Yau-Hamilton (abbreviated as LYH) type estimates. There are many further works \cite{Andrews, brendle, Cao, CN, chow-Gauss, chow-yamabe, Hlinear, Hmcf, N-jams, Ni-JDG07, NT-ajm} in this direction since the foundational estimate of Li and Yau for linear heat equation and Hamilton's one for the Ricci flow, which cover various different geometric evolution equations, including the mean curvature flow, the Gauss curvature flow, the K\"ahler-Ricci flow, the Hermitian-Einstein flow, the Yamabe flow etc.

Since the Harnack estimate for the linear equation implies the regularity of the weak solution, it has been an interesting question that if  the celebrated De Giorgi-Nash-Moser theory for the linear equation  has its analogue for linear systems. This unfortunately  has been known to be  false in the most general setting. As a geometric interesting system, the Hodge-Laplacian operator on forms has been extensively  studied since the original works of Hodge and Kodaira (see for example Morrey's classics \cite{Morrey} and references therein). It is a natural candidate on which one would like to investigate whether or not the differential Harnack estimates of LYH type still hold. One of the main results of this paper is to prove such  LYH type estimates for this system. The positivity  (really  meaning non-negativity) of the $(p, p)$-form is in the sense of Lelong \cite{L}. In fact, in \cite{N-jams} the first author proved a LYH type estimate for  positive semi-definite Hermitian symmetric tensors satisfying the so-called Lichnerowicz-Laplacian heat equation. This in particular applies to solutions of $(1,1)$-forms to the Hodge-Laplacian heat equation. The first main result of this paper is to generalize this result for $(1, 1)$-forms to solutions of $(p, p)$-forms to the Hodge-Laplacian heat equation.  The result is proved under a new curvature condition $\mathcal{C}_p$. We say that the curvature operator $\Rm$ of a K\"ahler manifold $(M, g)$ satisfies $\mathcal{C}_p$ (or lies inside the cone $\mathcal{C}_p$) if
$
\langle \Rm(\alpha), \bar{\alpha}\rangle \ge 0
$
for any  $\alpha\in \wedge ^{1, 1}(\C^m)$ such that  $\alpha =\sum_{k=1}^p X_k \wedge  \ov{Y}_k$ with $X_k, Y_k \in T'M$. Here $TM\otimes \C =T'M\oplus T''M$, $\langle \cdot, \cdot \rangle$ is the bilinear extension of the Riemannian product,  and we identify $T'M$ with $\C^m$. Under the condition $\mathcal{C}_p$ we prove the following result.

\begin{theorem} \label{thm11} For $\phi(x, t)$, a positive $(p, p)$-form satisfying $\left(\frac{\partial}{\partial t} +\Delta_{\dbar}\right)\phi(x, t) =0$, then
$$\frac{1}{\sqrt{-1}}\bar{\partial}^*\partial^* \phi+\frac{1}{\sqrt{-1}}\iota_{V}\cdot  \bar{\partial}^* \phi-\frac{1}{\sqrt{-1}}\iota_{\ov{V}}\cdot \partial^* \phi  + \sqrt{-1} \iota_V\cdot \iota_{\ov{V}}\cdot \phi+\frac{\Lambda \phi}{t}\ge 0
$$
as a $(p-1, p-1)$-form, for any $(1, 0)$ type vector field $V$. Here $\Lambda$ is the adjoint of the operator $L\doteqdot \omega \wedge (\cdot)$ with $\omega$ being the K\"ahler form.
\end{theorem}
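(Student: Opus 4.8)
\emph{Overall scheme and reductions.} The plan is to run the standard Bochner-plus-maximum-principle argument behind all Li--Yau--Hamilton estimates: to show that the left-hand side $Q$ of the asserted inequality evolves, under $\left(\frac{\partial}{\partial t}+\Delta_{\dbar}\right)$, by a reaction term that is inward-pointing on the cone of positive $(p-1,p-1)$-forms as soon as $\mathcal{C}_p$ holds, and to use the $\frac{\Lambda\phi}{t}$ summand to control the behaviour as $t\to 0^+$. Before that I would make two reductions. First, since $\omega^p$ is parallel, hence $\Delta_{\dbar}$-harmonic, $\phi_\e:=\phi+\frac{\e}{p!}\,\omega^p$ is again a positive solution, and now $\Lambda\phi_\e$ is a strictly positive $(p-1,p-1)$-form; proving the estimate for $\phi_\e$ and letting $\e\to 0$ reduces us to the strictly positive case. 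Second, for fixed $(x,t)$ the quantity $Q=Q(V)$ is a Hermitian-quadratic expression in $V$ whose quadratic part $\sqrt{-1}\,\iota_V\iota_{\ov{V}}\phi$ is a positive form, so ``$Q(V)\ge 0$ for every $(1,0)$ field $V$'' is equivalent to ``$Q(V)\ge 0$ for the fibrewise minimizing $V$''; in the maximum-principle step I am therefore free to take $V$ to be that minimizer and to extend it conveniently (and, if the computation demands it, to let $V$ evolve in time by an auxiliary heat-type equation, as in Hamilton's treatment of the Ricci-flow Harnack).

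\emph{The evolution equation.} On a K\"ahler manifold $\Delta_{\dbar}$ commutes with each of $\partial,\dbar,\partial^*,\dbar^*,L,\Lambda$, so $\dbar^*\partial^*\phi$, $\partial^*\phi$, $\dbar^*\phi$ and $\Lambda\phi$ all again solve $\left(\frac{\partial}{\partial t}+\Delta_{\dbar}\right)(\cdot)=0$. Applying $\frac{\partial}{\partial t}+\Delta_{\dbar}$ to $Q$, the term $\frac{1}{\sqrt{-1}}\dbar^*\partial^*\phi$ therefore drops out, the term $\frac{\Lambda\phi}{t}$ produces exactly $-\frac{\Lambda\phi}{t^2}$, and each $\iota_V$ term produces a commutator $[\Delta_{\dbar},\iota_V]$ (respectively $[\Delta_{\dbar},\iota_V\iota_{\ov{V}}]$) applied to a heat solution, together with $\partial_t V$-terms. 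Writing $\Delta_{\dbar}=\ov{\nabla}^*\ov{\nabla}+(\text{curvature endomorphism})$ through Bochner--Kodaira--Nakano, each commutator decomposes into (a) terms linear in $\nabla V$ or $\nabla\ov{V}$ paired with a single derivative of $\phi$, (b) a term in $\Delta V$, and (c) a genuinely zeroth-order curvature term coming from commuting $\iota_V$ past the curvature endomorphism. I would extend the minimizing $V$ so that $\nabla V$ and $\Delta V$ vanish at the point under consideration (for instance, constant in normal coordinates); since replacing $V$ by such a competitor field only increases the tested quantity, this is harmless for the maximum principle, and it annihilates (a) and (b) there.

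\emph{Curvature term and the $1/t$ term.} What then survives is, schematically and after pairing with the relevant positive $(m-p+1,m-p+1)$-element,
\[
\left(\frac{\partial}{\partial t}+\Delta_{\dbar}\right)Q \;\ge\; -\frac{\Lambda\phi}{t^{2}}\;+\;\frac{1}{t}\bigl(\text{first-order pieces of }Q\bigr)\;+\;\langle\Rm(\alpha),\ov{\alpha}\rangle ,
\]
where $\alpha$ is built from the test element and the contractions defining $Q$ and has the shape $\alpha=\sum_{k=1}^{p}X_k\wedge\ov{Y}_k$; it is precisely because $\phi$ is a $(p,p)$-form that no more than $p$ wedge factors appear, so $\langle\Rm(\alpha),\ov{\alpha}\rangle\ge 0$ by the hypothesis $\mathcal{C}_p$. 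At a would-be first zero of $Q$ one uses the relation $\frac{\Lambda\phi}{t}=-(\text{the remaining terms of }Q)$ to rewrite $-\frac{\Lambda\phi}{t^{2}}$ as $+\frac1t(\cdots)$, and a Cauchy--Schwarz / arithmetic--geometric-mean step carried out fibrewise against the positive form $\iota_V\iota_{\ov{V}}\phi$ --- the algebraic core of the Li--Yau inequality, now performed inside the cone of positive $(p-1,p-1)$-forms --- shows that the whole right-hand side is nonnegative there.

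\emph{Conclusion and the main obstacle.} It then follows from the maximum principle for positive $(p-1,p-1)$-forms established earlier in the paper --- applied to $Q_\e$, whose reaction term is inward-pointing and whose data satisfy $tQ_\e\to\Lambda\phi_\e>0$ as $t\to0^+$ --- that $Q_\e\ge 0$ for all $t>0$; letting $\e\to0$, and then letting $V$ range over all $(1,0)$ fields via the quadratic-form reduction of the first step, yields the theorem. (On a complete noncompact $M$ one inserts the usual spatial cut-off.) The only genuinely delicate part is the computation of the two previous paragraphs: commuting $\Delta_{\dbar}$ past $\iota_V$, $\iota_{\ov{V}}$ and $\Lambda$, keeping track of every curvature contribution, and recognizing that these reassemble precisely into $\langle\Rm(\alpha),\ov{\alpha}\rangle$ with $\alpha$ decomposable as a sum of at most $p$ terms $X_k\wedge\ov{Y}_k$ --- which is exactly what forces the hypothesis to be $\mathcal{C}_p$ rather than full nonnegativity of the curvature operator.
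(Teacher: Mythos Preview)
Your overall architecture (perturb to strictly positive, maximum principle on the cone of positive $(p-1,p-1)$-forms, compute $(\partial_t-\Delta)Q$ at a first null point) matches the paper, but two of your implementation choices would not close the argument.

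\textbf{The extension of $V$.} You propose to extend $V$ so that $\nabla V$ and $\Delta V$ vanish at the point. The paper instead takes the Hamilton-type extension
\[
\nabla_i V=\tfrac{1}{t}\,\tfrac{\partial}{\partial z^i},\qquad \nabla_{\bar{i}} V=0,\qquad \heat V=-\tfrac{1}{t}V,
\]
and this is not cosmetic: these particular first derivatives of $V$ manufacture, via the $\phi_{\nabla_i V,\nabla_{\bar{j}}\bar V}$, $(\nabla_i\operatorname{div}''(\phi))_{\nabla_{\bar j}V}$, etc.\ terms in (\ref{eq:lem412})--(\ref{eq:414}), exactly the pieces $-\tfrac{1}{t}\operatorname{div}'(\operatorname{div}''(\phi))$, $-\tfrac{1}{t}\operatorname{div}'_{\bar V}(\phi)$, $-\tfrac{1}{t}\operatorname{div}''_{V}(\phi)$, $-\tfrac{\Lambda\phi}{t^2}$ that, together with $\partial_t(\Lambda\phi/t)$, reassemble into $-\tfrac{2Q}{t}$ at the null point. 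With $\nabla V=0$ those cross terms are absent and the $-\Lambda\phi/t^2$ summand does not cancel; the ``use $Q=0$ to rewrite $-\Lambda\phi/t^2$'' step you describe does not by itself produce the right structure, because the remaining terms of $Q$ involve $\partial^*,\bar\partial^*$ of $\phi$, not just $\phi$, and there is no Cauchy--Schwarz that absorbs them without the $1/t$ gradient terms generated by $\nabla_i V=\tfrac{1}{t}\partial_i$.

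\textbf{The curvature term is a trace, not a single evaluation.} You summarize the surviving zeroth-order term as $\langle \Rm(\alpha),\bar\alpha\rangle$ for one $\alpha=\sum_{k=1}^p X_k\wedge\bar Y_k$. In the paper the residual term $J$ is instead $\operatorname{trace}(\mathcal{M}_1\cdot\mathcal{M}_2)$, a full contraction of two Hermitian forms on $\bigoplus_{1}^{p}T'M$. The factor $\mathcal{M}_2$ is the block matrix $(R_{X_\mu\bar X_\nu(\cdot)\overline{(\cdot)}})_{\mu,\nu}$ (with $X_p=V$), and its nonnegativity is \emph{equivalent} to $\mathcal{C}_p$; this is how $\mathcal{C}_p$ actually enters. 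The factor $\mathcal{M}_1$ is obtained from the second-variation inequality at the null point, but varying \emph{both} $V$ and the test vectors $v_1,\dots,v_{p-1}$ simultaneously: its upper-left $(p-1)\times(p-1)$ block is built from $Q^{\mu\bar\nu}$, its lower-right block from $\phi$, with off-diagonal blocks $S$ mixing $\operatorname{div}''(\phi)$ and $\phi_{\,\cdot,\bar V}$. Your proposal only varies $V$, which yields the first-variation identities (\ref{eq:51}) but not the matrix $\mathcal{M}_1\ge 0$; without that, there is no reason the mixed $R^{\,k\bar l}_{V\,\bar j_\nu}(\cdots)$ and $R^{\,k\bar l}_{i_\mu\,\bar V}(\cdots)$ pieces of $J$ have a sign. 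The ``algebraic core'' here is therefore not a scalar Cauchy--Schwarz against $\iota_V\iota_{\bar V}\phi$, but the linear-algebra fact that the trace of a product of two nonnegative Hermitian matrices is nonnegative --- and producing those two matrices requires the specific $V$-extension and the joint second variation that your outline omits.
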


The above estimate is compatible with the  Hodge $*$ operator (see Section 4 for the detailed discussions). Also note the easy fact that $*$-operator maps a positive $(p, p)$-form to a positive $(m-p, m-p)$-form. It then implies that if $\psi=*\phi$,
$$
\sqrt{-1}\partial \bar{\partial} \psi +\sqrt{-1}V^*\wedge \bar{\partial}\psi-\sqrt{-1}\ov{V}^* \wedge \partial \psi +\sqrt{-1}V^*\wedge \ov{V}^* \wedge \psi+\frac{L (\psi)}{t}\ge 0
$$
as a $(m-p+1, m-p+1)$-form, with $V^*$ being a $(1,0)$-type $1$-form. This  generalizes the matrix estimate for positive solutions to the heat equation proved in \cite{CN}, which asserts $\sqrt{-1}\partial \bar{\partial}\log \psi+\frac{\omega}{t}\ge 0$. (Note that this is the matrix version of Li-Yau's estimate: $\Delta \log \psi+\frac{m}{t}\ge 0$. See also \cite{yau-harmonic} for the earlier work for  harmonic functions.)
  For the proof, it is a combination of techniques of \cite{Cao}, \cite{N-jams}, \cite{NT-jdg} and Hamilton's argument in  \cite{richard-harnack}. Applying to the static solution the above result asserts  a differential estimate:
\begin{equation}\label{eq:har1}
\frac{1}{\sqrt{-1}}\bar{\partial}^*\partial^* \phi+\frac{1}{\sqrt{-1}}\iota_{V}\cdot  \bar{\partial}^* \phi-\frac{1}{\sqrt{-1}}\iota_{\ov{V}}\cdot \partial^* \phi  +\sqrt{-1} \iota_V\cdot \iota_{\ov{V}}\cdot \phi\ge 0
\end{equation}
for  any  $\bar{\partial}$-harmonic positive $(p, p)$-form $\phi$ and any vector field $V$ of $(1, 0)$-type. Note here on a noncompact manifold, being harmonic does not imply that $\partial^*\phi =0$ (or $\bar{\partial}^* \phi=0$).

As a result of independent interest we also observe that $\mathcal{C}_p$ is an invariant condition under the K\"ahler-Ricci flow, thanks to a general invariant cone result of Wilking, whose proof we include here in the Appendix (see also a recent preprint \cite{CT}). Note that this result of Wilking includes almost all  the known invariant cones such as the nonnegativity of bisectional curvature, the nonnegativity of isotropic curvature, etc.

After establishing  the invariance of $\mathcal{C}_p$, it is natural to study the heat equation for the Hodge-Laplacian coupled with the K\"ahler-Ricci flow. For this we proved the following nonlinear version of the above estimate.

\begin{theorem} \label{thm12}Assume that $\phi(x, t)\ge 0$ is a solution to heat equation of the Hodge-Laplacian coupled with the K\"ahler-Ricci flow: $\frac{\partial}{\partial t}g_{i\bar{j}}=-R_{i\bar{j}}$. Then
$$
\frac{1}{\sqrt{-1}}\bar{\partial}^*\partial^* \phi+\frac{1}{\sqrt{-1}}\iota_{V}\cdot  \bar{\partial}^* \phi-\frac{1}{\sqrt{-1}}\iota_{\ov{V}}\cdot \partial^* \phi  +\sqrt{-1}\iota_V\cdot \iota_{\ov{V}}\cdot \phi+\Lambda_{\Ric}(\phi)+\frac{\Lambda \phi}{t}\ge 0
$$as a $(p-1, p-1)$-form for any $(1, 0)$ type vector field $V$. Here $\Lambda_{\Ric}\phi $ is the adjoint of $\Ric\wedge (\cdot)$ with $\Ric=\sqrt{-1}R_{i\bar{j}}dz^i\wedge dz^{\bar{j}}$ being the Ricci form.
\end{theorem}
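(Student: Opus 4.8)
The plan is to prove the (equivalent) pointwise inequality by Hamilton's tensor maximum principle for the evolving metrics $g(t)$, applied to the $(p-1,p-1)$-form
$$
\mathcal{Z}_V\doteqdot \frac{1}{\i}\dbar^*\p^*\phi+\frac{1}{\i}\iota_V\cdot\dbar^*\phi
-\frac{1}{\i}\iota_{\ov V}\cdot\p^*\phi+\i\,\iota_V\cdot\iota_{\ov V}\cdot\phi+\Lambda_{\Ric}(\phi)+\frac{\Lambda\phi}{t},
$$
paralleling the proof of Theorem \ref{thm11} but carrying along the extra contributions produced by the metric evolution $\p_t g_{i\bar j}=-R_{i\bar j}$. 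First I would reduce matters, via the Hodge-$*$ compatibility of Section 4 and a routine mollification on compact exhaustions, to establishing $\langle\mathcal{Z}_V,\bar\xi\rangle\ge 0$ for every decomposable nonnegative test $(p-1,p-1)$-form $\xi$ and every $(1,0)$-type vector field $V$. For fixed $\xi$ the pairing $\langle\mathcal{Z}_V,\bar\xi\rangle$ is a real quadratic function of $(V,\ov V)$ whose leading coefficient $\i\langle\iota_V\iota_{\ov V}\phi,\bar\xi\rangle$ is $\ge 0$ by the positivity of $\phi$, so (completing the square in $V$, approximating $\phi$ by $\phi+\e\,\omega^p$ if needed to rule out the degenerate case) the infimum over $V$ is attained; at an interior space-time point where the quantity first vanishes, the optimal $V=V_\xi$ annihilates the first-order ($\dbar^*\phi$, $\p^*\phi$) terms, and $V_\xi$ can be extended to a neighborhood with vanishing covariant derivative at that point in Hamilton's fashion. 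That $\phi(x,t)\ge 0$ persists and that $\Rm$ stays in $\mathcal{C}_p$ along the coupled flow --- both needed below --- are furnished by the invariance statements of the paper (Wilking's cone theorem of the Appendix for $\mathcal{C}_p$).

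The core of the argument is the evolution computation: apply $\bigl(\tfrac{\p}{\p t}+\Delta_{\dbar}\bigr)$, with the now time-dependent Hodge Laplacian, to each of the six building blocks of $\mathcal{Z}_V$. This uses the Bochner--Kodaira--Nakano commutation identities relating $\dbar^*\p^*$, $\iota_V\dbar^*$, $\iota_{\ov V}\p^*$ and $\iota_V\iota_{\ov V}$ to the full Laplacian on $\phi$, the commutators $[\Delta_{\dbar},\dbar^*]$, $[\Delta_{\dbar},\p^*]$ and $[\Delta_{\dbar},\iota_V]$ (the last producing first covariant derivatives of $V$ and a curvature term), together with --- the feature absent from Theorem \ref{thm11} --- the $t$-derivatives of $\dbar^*$, $\p^*$, $\Lambda$ and of the Hermitian inner product, each contributing a factor of $R_{i\bar j}$. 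The $\Lambda\phi/t$ term reproduces the usual $-\Lambda\phi/t^2$; the new Ricci-weighted terms should be arranged so that $\bigl(\tfrac{\p}{\p t}+\Delta_{\dbar}\bigr)\Lambda_{\Ric}(\phi)$, computed with the second Bianchi identity and the Kähler--Ricci flow evolution of $R_{i\bar j}$, absorbs the leftover curvature-times-first-derivative terms up to one genuine reaction term. After this bookkeeping the evolution should take the schematic form
$$
\Bigl(\tfrac{\p}{\p t}+\Delta_{\dbar}\Bigr)\mathcal{Z}_V=\bigl(\text{terms in }\mathcal{Z}_V,\ \nabla\mathcal{Z}_V\text{, harmless at a first-null point}\bigr)+\mathcal{R},
$$
with $\mathcal{R}$ of zeroth order.

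The final and most delicate step is to recognize $\mathcal{R}$, paired against a null direction with the first-order terms killed by the optimal $V=V_\xi$, as manifestly nonnegative: the surviving curvature terms should assemble into $\langle\Rm(\a),\bar\a\rangle$ with $\a=\sum_{k=1}^p X_k\wedge\ov{Y}_k$, where $X_k,Y_k\in T'M$ are read off from $\phi$ and $\xi$ at that point, so that $\mathcal{R}\ge 0$ is precisely the hypothesis $\Rm\in\mathcal{C}_p$ there. The main obstacle I anticipate is exactly this matching --- carrying out the commutator and metric-variation computations carefully enough to see all the cancellations (in particular the cross terms between the $\Lambda_{\Ric}(\phi)$ term and the $\i\,\iota_V\iota_{\ov V}\phi$ term) and to identify the residue as a genuine $\mathcal{C}_p$-pairing rather than a quantity of indefinite sign. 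Once that is in hand, Hamilton's tensor maximum principle --- with the usual $\e$-perturbation of $\mathcal{Z}_V$, which is strictly positive for small $t>0$ because of the $\Lambda\phi/t$ term, and, in the noncompact case, a cutoff/barrier localization together with the growth hypotheses carried over as in \cite{Cao, N-jams} --- yields $\mathcal{Z}_V\ge 0$ for all $t>0$, and undoing the approximation gives the theorem.
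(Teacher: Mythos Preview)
Your overall scaffolding is right --- a tensor maximum-principle argument applied to $\widetilde Q=\mathcal{Z}_V$, first- and second-variation information at a first null point, and a specific space-time extension of $V$ --- and this is indeed how the paper proceeds. But there is a genuine gap in your identification of the reaction term.

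After the evolution computation (Sections~6 and~8), the residual zeroth-order quantity at the null point is not a pure curvature pairing $\langle\Rm(\alpha),\bar\alpha\rangle$. Under the K\"ahler--Ricci flow the time-derivatives of $\p^*$, $\dbar^*$, $\Lambda$ and the connection contribute terms in $\nabla R_{i\bar j}$ and $\Delta R_{i\bar j}$ that do \emph{not} cancel against the $\Lambda_{\Ric}(\phi)$ term. What remains (the paper's $\tilde J$) has the block form $\operatorname{trace}(\mathcal{M}_1\cdot\mathcal{M}_3)$, where $\mathcal{M}_1\ge 0$ comes from your second-variation inequality, but the matrix $\mathcal{M}_3$ has entries not only $R_{\mu\bar\nu(\cdot)\bar{(\cdot)}}$ but also $R_{\mu\bar V(\cdot)\bar{(\cdot)}}+P_{(\cdot)\bar{(\cdot)}\mu}$ in the off-diagonal blocks and the full Cao--Hamilton tensor
\[
Z_{\gamma\bar\delta}=\Delta R_{\gamma\bar\delta}+R_{\gamma\bar\delta\xi\bar\eta}R_{\eta\bar\xi}+\nabla_V R_{\gamma\bar\delta}+\nabla_{\bar V}R_{\gamma\bar\delta}+R_{\gamma\bar\delta V\bar V}+\tfrac{R_{\gamma\bar\delta}}{t}
\]
in the bottom corner. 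The condition $\Rm\in\mathcal{C}_p$ alone does not make $\mathcal{M}_3\ge 0$; this is precisely the content of the interpolating matrix Harnack for the K\"ahler--Ricci flow (Theorem~\ref{thm13} / Theorem~\ref{LYH}), which must be proved first and independently. The introduction says as much: Theorem~\ref{thm13} is \emph{necessary} for Theorem~\ref{thm12}. So your plan is missing an entire intermediate theorem.

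Two smaller points: (i) the required extension of $V$ at the null point is not ``vanishing covariant derivative''; one needs $\nabla_i V^j=R^j_i+\tfrac{1}{t}\delta^j_i$, $\nabla_{\bar i}V=0$, $(\tfrac{\p}{\p t}-\Delta)V=-\tfrac{1}{t}V$, the Ricci term being exactly what later produces the $Z$- and $P$-pieces above; and (ii) the operator in the maximum-principle computation is $(\tfrac{\p}{\p t}-\Delta)$ acting on the components (equivalently $\tfrac{\p}{\p t}+\Delta_{\dbar}$ plus the Bochner terms), so keep track of the sign convention when you redo the calculation.
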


To prove the above result it is necessary to prove the following family of matrix differential estimates which interpolate between Hamilton's matrix estimate and Cao's estimate for the K\"ahler-Ricci flow.

\begin{theorem} \label{thm13} Let $(M, g(t))$ be a complete solution to the K\"ahler-Ricci flow satisfying the condition $\mathcal{C}_p$ on $M \times [0, T]$. When $M$ is noncompact we assume that the curvature of $(M, g(t))$ is bounded on $M \times [0, T]$. Then
 for any $\wedge^{1, 1}$-vector $U$ which can be written as
$U=\sum_{i=1}^{p-1} X_i\wedge \bar{Y}_i+W\wedge \bar{V}$, for $(1, 0)$-type vectors $X_i, Y_i, W, V$, the Hermitian bilinear form
$$
\mathcal{Q}(U\oplus W)\doteqdot
\mathcal{M}_{\abb}W^{\a}W^{\bb}+P_{\abb\g}\bar{U}^{\bb\g}W^{\a}
+P_{\abb\bar{\g}}U^{\a\bar{\g}}W^{\bb}
+R_{\abb\gbd}U^{\a\bb}\bar{U}^{\delbar\g}
$$
satisfies that $\mathcal{Q}\ge 0$ for any $t>0$. Moreover, if the equality ever occurs for some $t>0$, the universal cover of $(M, g(t))$ must be a gradient expanding K\"ahler-Ricci soliton.
\end{theorem}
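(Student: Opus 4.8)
\emph{Sketch of the argument.} The plan is to regard $\mathcal{Q}$ as a Hermitian quadratic form on the vector bundle over $M\times(0,T]$ with fibre $\wedge^{1,1}T'M\oplus T'M$, restricted to the closed subset on which the $\wedge^{1,1}$-component $U$ has the prescribed shape $U=\sum_{i=1}^{p-1}X_i\wedge\bar Y_i+W\wedge\bar V$ (so $U$ has ``rank'' at most $p$, with one slot tied to $W$), and to prove $\mathcal{Q}\ge0$ by Hamilton's tensor maximum principle. Its coefficients are the curvature quantities $P_{\abb\g}=\nabla_\g R_{\abb}$, $\mathcal{M}_{\abb}=\D R_{\abb}+R_{\abb\gbd}R^{\delbar\g}+\frac1t R_{\abb}$ and the curvature operator $R_{\abb\gbd}$; when $U=W\wedge\bar V$ the form $\mathcal{Q}$ is exactly Cao's matrix Harnack quantity for the K\"ahler-Ricci flow \cite{Cao}, and letting $U$ range over all admissible rank-$\le p$ $(1,1)$-vectors is what interpolates to Hamilton's matrix estimate \cite{richard-harnack}. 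Under the flow these three blocks satisfy a coupled evolution system (established by Cao \cite{Cao}, following Hamilton \cite{richard-harnack}), and the argument amounts to feeding that system into $\mathcal{Q}$.

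The core step is the computation of $\heat\mathcal{Q}(U\oplus W)$ at an interior point $(x_0,t_0)$ at which $\mathcal{Q}\ge0$ on the whole fibre and $(U_0,W_0)$ is a fibrewise null vector, after extending $(U_0,W_0)$ to a local section whose first covariant derivatives and $\p_t$-derivative vanish at $(x_0,t_0)$ (Hamilton's device). Substituting the evolution equations of $\mathcal{M}_{\abb}$, $P_{\abb\g}$ and $R_{\abb\gbd}$ and collecting terms, I expect $\heat\mathcal{Q}(U_0\oplus W_0)$ to be bounded below by $\langle\Rm(U_0'),\overline{U_0'}\rangle$ for a $(1,1)$-vector $U_0'$ of rank again at most $p$ — hence $\ge0$ by $\mathcal{C}_p$ — plus terms that are non-negative whenever $\mathcal{Q}\ge0$ on the fibre and vanish at a null vector, the leftover first-order terms being absorbed by the $P$-cross terms after the Bianchi identity, in parallel with the proofs of Cao's and Hamilton's matrix Harnack estimates. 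Producing the term $\langle\Rm(U_0'),\overline{U_0'}\rangle$ with $U_0'$ of rank at most $p$ is where I expect the main obstacle: it is essentially the algebra underlying Wilking's criterion for the $\Rm$-invariance of the cone $\mathcal{C}_p$ (Appendix), but one must now verify that the coupling with $\mathcal{M}$ and $P$ does not raise the rank bound and that every $1/t$-term enters with the correct sign.

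With the differential inequality in hand I would run the tensor maximum principle forward from $t=0$. Because the curvature is uniformly bounded on $M\times[0,T]$, the $\frac1t R_{\abb}$ part of $\mathcal{M}_{\abb}W^\a W^{\bb}$ — together with a small $\eps/t$ perturbation to neutralize any kernel of $\Ric$ — dominates for $t$ small, while for $W=0$ one has $\mathcal{Q}=\langle\Rm(U),\overline U\rangle\ge0$ by $\mathcal{C}_p$; hence a suitable perturbation $\mathcal{Q}_\eps$ of $\mathcal{Q}$ is positive near $t=0$, and running the maximum principle and then letting $\eps\to0$ gives $\mathcal{Q}\ge0$ on $(0,T]$ when $M$ is compact. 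For $M$ noncompact, the bounded-curvature hypothesis lets me substitute the localized maximum principle: Shi's derivative estimates bound every curvature quantity entering the reaction, and a spatial cutoff in the spirit of the noncompact arguments of \cite{Cao} and \cite{NT-jdg} controls the boundary error term. This step carries most of the technical bookkeeping but introduces no new idea.

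Finally, for the rigidity assertion, suppose $\mathcal{Q}(U_0\oplus W_0)=0$ at an interior $(x_0,t_0)$ with $(U_0,W_0)\ne0$. Hamilton's strong maximum principle propagates the null vector to a smooth section over $M\times(0,t_0]$ that is parallel in space, invariant under $\p_t$, and annihilated by the reaction term; forcing both the non-negative remainder and $\langle\Rm(U_0'),\overline{U_0'}\rangle$ to vanish yields pointwise identities relating $\Rm$, $P$ and $\mathcal{M}$ along the null directions, and the equality case of $\mathcal{C}_p$ pins down the degeneracy of $\Rm$. Translating these identities — following the rigidity analysis for the estimates of Cao and Hamilton, adapted to $(p,p)$-forms — produces, on the universal cover, a potential function $f$ with
\begin{equation*}
R_{\abb}+\nabla_\a\nabla_{\bb}f+\frac{1}{2t}\,g_{\abb}=0 ,
\end{equation*}
that is, a gradient expanding K\"ahler-Ricci soliton, as claimed.
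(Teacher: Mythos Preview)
Your framework is right — evolution equations for $\mathcal{M}$, $P$, $R$, tensor maximum principle, perturbation for small $t$, noncompact localization — but two concrete pieces of the argument are wrong or missing, and without them the proof does not close.

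First, the extension of $(U,W)$ at the null point is not ``first covariant derivatives and $\partial_t$-derivative vanish.'' Hamilton's device here (and in \cite{richard-harnack}, \cite{Cao}) is precisely the opposite: one chooses \emph{specific nonzero} extensions so that the gradient cross-terms produced by $\Delta$ cancel against the first-order pieces of the evolution equations. In the paper the choice is
\[
\nabla_s U^{\gamma\bar\delta}=R^{\gamma}_{s}W^{\bar\delta}+\tfrac{1}{t}g^{\gamma}_{s}W^{\bar\delta},\quad \nabla_{\bar s}U^{\gamma\bar\delta}=0,\quad \nabla W=0,\quad \heat W^{\alpha}=\tfrac{1}{2}R^{\alpha}_{\beta}W^{\beta}+\tfrac{1}{t}W^{\alpha},
\]
and an analogous formula for $\heat U$. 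With the ``all derivatives vanish'' extension you propose, the computation of $\heat\mathcal{Q}$ leaves uncontrolled first-order terms that cannot be absorbed.

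Second, and more seriously, the reaction term is not of the form $\langle\Rm(U_0'),\overline{U_0'}\rangle$ for a single rank-$\le p$ vector $U_0'$, and the algebra is not Wilking's. After the correct extension one finds
\[
\heat\mathcal{Q}\ =\ \operatorname{trace}(A_1\cdot A_2)\;+\;\bigl|P_{X_p\bar\beta\alpha}+\textstyle\sum_\mu R_{X_\mu\bar Y_\mu\alpha\bar\beta}\bigr|^2\;-\;\operatorname{trace}(A_3\cdot\bar A_3),
\]
where $A_1$ is the $p\times p$ block Hermitian matrix $(R_{X_\mu\bar X_\nu(\cdot)\bar{(\cdot)}})$ (nonnegative by $\mathcal{C}_p$), $A_2$ is a block matrix built from $R_{Y_\mu\bar Y_\nu(\cdot)\bar{(\cdot)}}$, $P$, and $Z_{(\cdot)\bar{(\cdot)}}$ (nonnegative because $\mathcal{Q}\ge0$ on the fibre), and $A_3$ mixes the two. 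The trace of a product of two nonnegative Hermitian matrices is nonnegative, but the negative $\operatorname{trace}(A_3\cdot\bar A_3)$ term has no sign. The missing ingredient is the \emph{second variation} of $\mathcal{Q}$ at the null vector: perturbing $X_\mu\mapsto X_\mu+sW_\mu$, $Y_\mu\mapsto Y_\mu+sV_\mu$, $W\mapsto W+sW_p$ and using $\partial_s^2\mathcal{Q}\ge0$ at $s=0$ yields a quadratic inequality in $(W_\mu,V_\mu)$ whose block structure is exactly $\bar{\mathcal X}^{tr}A_2\mathcal X+\bar{\mathcal Y}^{tr}A_1\mathcal Y+2\operatorname{Re}(\cdots)\ge0$. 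One then invokes Mok's linear-algebra lemma (if a Hermitian form $A_{i\bar j}\mathcal X^i\bar{\mathcal X}^j+2\operatorname{Re}(B_{ij}\mathcal X^i\mathcal Y^j+D_{i\bar j}\mathcal X^i\bar{\mathcal Y}^j)+C_{i\bar j}\mathcal Y^i\bar{\mathcal Y}^j\ge0$, then $\operatorname{trace}(AC)\ge\max(|B|^2,|D|^2)$) to conclude $\operatorname{trace}(A_1\cdot A_2)\ge\operatorname{trace}(A_3\cdot\bar A_3)$. Wilking's argument concerns the ODE $\dot R=R^2+R^\#$ and the adjoint action of $\mathsf{SO}(n,\mathbb C)$; it plays no role in this step. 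Your sketch has no mechanism to kill the $-\operatorname{trace}(A_3\cdot\bar A_3)$ term, so as written the inequality $\heat\mathcal{Q}\ge0$ is unproved.
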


Recall that the tensors $\mathcal{M}$ and $P$ are defined as $\mathcal{M}_{\abb}=\D R_{\abb}+R_{\abb\gbd}R_{\delta\bar{\g}}+\frac{R_{\abb}}{t}, \quad
P_{\abb\g}=\nabla_{\g}R_{\abb}, \quad P_{\abb\bar{\g}}=\nabla_{\bar{\g}}R_{\abb}.$
There exists a similar condition $\widetilde{\mathcal{C}}_p$ for Riemannian manifold which can be formulated similarly. Precisely we call the curvature operator satisfies  $\widetilde{\mathcal{C}}_p$ if
$
\langle \Rm (v), \bar{v} \rangle >0
$
for any nonzero $v\in \Lambda^2(\C^n)$ which can be written as $v=\sum_{i=1}^k{Z_i\wedge W_i}$
for some complex vectors $Z_i$ and $W_i \in TM \otimes \C$. For K\"ahler manifolds it can be shown that  $\widetilde{\mathcal{C}}_p=\mathcal{C}_{2p}$ and $\mathcal{C}_{2}$ amounts to the nonnegativity of the complex sectional curvature, a notion goes back at least to the work of Sampson \cite{Sa} on harmonic maps. This leads us to discover another family of matrix differential estimates for the Ricci flow which interpolate the result of Hamilton and a recent result of Brendle.

\begin{theorem} \label{thm14} Assume that $(M, g(t))$ on $M \times [0, T]$ satisfies $\widetilde{\mathcal{C}}_p$. When $M$ is noncompact we also assume that the curvature of $(M, g(t))$ is uniformly bounded on $M \times [0, T]$. Then for any $t>0$, the quadratic form
$$
\widetilde{\mathcal{Q}}(W\oplus U)\doteqdot \langle \mathcal{M}(W), W\rangle +2\langle P(W), U\rangle +\langle \Rm(U), U\rangle
$$
satisfies that $\HQ \ge 0$ for any $(x, t)\in M\times [0,  T]$, $W\in T_x M \otimes \C$ and $U\in \wedge^2(T_xM\otimes \C) $ such that $U=\sum_{\mu=1}^p W_\mu \wedge Z_\mu$ with  $W_p=W$. Furthermore, the equality holds for some $t>0$ implies that the universal cover of $(M, g(t))$ is a gradient expanding Ricci soliton.
\end{theorem}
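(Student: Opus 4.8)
The plan is to prove $\HQ\geq 0$ by a tensor maximum principle of Li--Yau--Hamilton type, running parallel to the K\"ahler case (Theorem~\ref{thm13}) and to Hamilton's matrix estimate \cite{richard-harnack}, Cao's estimate \cite{Cao} and the argument of \cite{N-jams}. First I would dispose of the noncompact case by the standard localization: using the uniform curvature bound on $M\times[0,T]$, replace $\HQ$ by $\HQ+\eps\,\Theta$ for a positive barrier $\Theta$ that blows up as $t\to 0^{+}$ and as $x\to\infty$ and whose heat operator is controlled by the curvature bound (as in \cite{NT-jdg}), carry out the argument below for the perturbed quantity, and then let $\eps\to 0$. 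So I may assume $M$ is compact. Two preliminary facts are then recorded. First, the cone $\widetilde{\mathcal C}_p$ is preserved by the Ricci flow, which is an instance of Wilking's invariant cone criterion proved in the Appendix. Second, on compact $M$ the strict condition $\widetilde{\mathcal C}_p$ is uniform (over the compact set of admissible unit $2$-vectors and over $t\in[0,\delta]$, using preservation), so the sectional curvature --- hence $\Ric$ --- is bounded below by a positive constant on $M\times[0,\delta]$ for small $\delta$; combined with the term $\tfrac{1}{2t}R_{ij}$ inside $\mathcal M$ and the fact that $\langle\Rm(U),\bar U\rangle\geq 0$ for admissible $U$, this forces $\HQ>0$ on the admissible cone for $t$ near $0$.

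\textbf{The maximum principle step.} Suppose $\HQ\geq 0$ does not persist, and let $t_0>0$, $x_0\in M$, and a unit admissible pair $(W,U)$ with $U=\sum_{\mu=1}^{p}W_\mu\wedge Z_\mu$ and $W_p=W$ be such that $\HQ(W\oplus U)=0$ first occurs there. Trivialize the bundle by the Uhlenbeck trick so the metric is parallel; extend $W$ and each $W_\mu,Z_\mu$ to be parallel at $x_0$ in space; then correct $U$ by an explicit term linear in the covariant derivatives of $W$ and in $P$, chosen to kill the first-order cross terms --- and note that this correction keeps $U$ a sum of at most $p$ decomposable $2$-vectors still carrying the $W$-factor. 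With these choices $\nabla\HQ=0$ and $\D\HQ\geq 0$ at $(x_0,t_0)$, and the evolution equations for $\mathcal M$, $P$ and $\Rm$ along the Ricci flow combine, after Hamilton's manipulation, into
\[
\Big(\frac{\partial}{\partial t}-\D\Big)\HQ(W\oplus U)\ \geq\ \sum_e\HQ\big((\nabla_e W)\oplus(\nabla_e U+\cdots)\big)\ +\ \langle\Rm(\Xi),\bar\Xi\rangle ,
\]
where the first sum is $\geq 0$ because $t_0$ is the first null time and each summand is $\HQ$ on an admissible pair, and $\Xi$ is a $2$-vector built out of $U$ and $W\wedge(\cdot)$-type pieces.

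\textbf{Where the curvature condition enters, and conclusion.} The point of restricting $U$ to $\sum_{\mu=1}^{p}W_\mu\wedge Z_\mu$ with $W_p=W$ is exactly that the surviving $2$-vector $\Xi$ is again a sum of at most $p$ decomposable $2$-vectors: the $W\wedge(\cdot)$ contributions replace the $W_p\wedge Z_p$ slot while the other $p-1$ decomposable pieces are carried along. A direct bookkeeping of Hamilton's reaction terms confirms this, so $\langle\Rm(\Xi),\bar\Xi\rangle\geq 0$ by $\widetilde{\mathcal C}_p$, and therefore $\big(\partial_t-\D\big)\HQ\geq 0$ at the would-be first null point. This contradicts the choice of $t_0$ (after reinstating the $\eps$-barrier in the noncompact case and sending $\eps\to 0$), proving $\HQ\geq 0$ for all $t>0$. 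For the rigidity, if $\HQ(W_0\oplus U_0)=0$ at some $(x_0,t_0)$ with $t_0>0$, then all the inequalities above are equalities: $\langle\Rm(\Xi),\bar\Xi\rangle=0$, each $\HQ((\nabla_e W)\oplus\cdots)=0$, and the spatial Hessian term vanishes; propagating the null directions by the strong maximum principle for systems, $\HQ$ vanishes on the corresponding sub-bundle for $0<t\leq t_0$, and feeding the trace of these identities into the equality analysis of \cite{richard-harnack} (and \cite{Cao}, \cite{N-jams} in the K\"ahler case) yields a function $f$ on the universal cover with $R_{ij}+\nabla_i\nabla_j f+\tfrac{1}{2t}g_{ij}=0$; that is, $(M,g(t))$ is covered by a gradient expanding Ricci soliton.

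\textbf{Main obstacle.} The crux is the bookkeeping in the maximum-principle step: one must verify (i) that the extension-and-correction of $U$ can be performed inside the non-linear constraint set $\{\,U=\sum_{\mu=1}^{p}W_\mu\wedge Z_\mu,\ W_p=W\,\}$ --- a closed cone rather than a subspace, so that the tensor maximum principle still applies --- and (ii) that the single curvature term that survives Hamilton's identity is $\Rm$ evaluated on a $2$-vector of rank at most $p$, i.e. that the admissible class is exactly matched to $\widetilde{\mathcal C}_p$. The rest is the standard Li--Yau--Hamilton machinery.
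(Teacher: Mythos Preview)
Your overall scaffolding (perturbation/barrier for the noncompact case, invariance of $\widetilde{\mathcal C}_p$, positivity for small $t$, first-null-point argument) matches the paper. The gap is in the ``bookkeeping'' step, which is not bookkeeping but the heart of the proof.

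The reaction term does \emph{not} reduce to $\langle \Rm(\Xi),\bar\Xi\rangle$ for a single rank-$\le p$ two-vector $\Xi$. With Hamilton's extensions (Lemma~\ref{ham-com}) one gets
\[
\heat\HQ \;=\; \HJ \;+\; |P(W)+\Rm(U)|^{2},
\]
where $\HJ$ contains the term $2R_{acbd}\mathcal M_{cd}W^{a}\ov W^{b}$ (involving $\mathcal M$, not $\Rm$), a $P$--$P$ term with the wrong sign, mixed $\Rm\cdot P$ terms, and the $\Rm^{\#}$ term $4R_{aecf}R_{bedf}U^{ab}\ov U^{cd}$. None of this collapses to $\langle\Rm(\Xi),\bar\Xi\rangle$; in particular $\Rm^{\#}$ is quadratic in $\Rm$ and cannot be rewritten that way. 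What the paper actually does is expand $\HJ$ and recognize it as $2\big(\operatorname{trace}(B_{1}B_{2})-\operatorname{trace}(B_{3}\bar B_{3})\big)$, where $B_{1}$ is a $p\times p$ block matrix of curvature pieces $R_{W_{\mu}(\cdot)\ov W_{\nu}(\cdot)}$ (nonnegative by $\widetilde{\mathcal C}_{p}$), $B_{2}$ is a block matrix whose last row/column carries the full $\mathcal Z$-tensor (nonnegative because $\HQ\ge 0$ up to $t_{0}$ on admissible pairs), and $B_{3}$ is an off-diagonal block built from $\Rm$ and $P$.

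Second, the correct use of the second variation is not to produce a sum $\sum_{e}\HQ\big((\nabla_{e}W)\oplus\cdots\big)$, and the spatial extension of $U$ prescribed in Lemma~\ref{ham-com} does \emph{not} stay inside the decomposable cone (so your point (i) in the ``Main obstacle'' cannot be arranged as stated). Instead one varies $W_{\mu}\mapsto W_{\mu}+zX_{\mu}$, $Z_{\mu}\mapsto Z_{\mu}+zY_{\mu}$ within the admissible cone and reads $\partial_{z}\partial_{\bar z}\HQ|_{z=0}\ge 0$ as the nonnegativity of a Hermitian form $S(\mathcal X,\mathcal Y)=\ov{\mathcal X}^{tr}B_{2}\mathcal X - 2\mathrm{Re}\,\ov{\mathcal X}^{tr}B_{3}\mathcal Y + \ov{\mathcal Y}^{tr}B_{1}\mathcal Y$. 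The missing ingredient is then Mok's lemma (Lemma~\ref{lmlyh}): $S\ge 0$ with $B_{1},B_{2}\ge 0$ forces $\operatorname{trace}(B_{1}B_{2})\ge \operatorname{trace}(B_{3}\bar B_{3})$, hence $\HJ\ge 0$. Without this block-matrix organization and Mok's trace inequality, the argument does not close under $\widetilde{\mathcal C}_{p}$ for $1<p<\tfrac{n(n-1)}{2}$.
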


Here $\mathcal{M}$ and $P$ are defined similarly. In fact for $p=1$, our result is slightly stronger than Brendle's estimate. After we finished our paper, we were brought the attention  to a recent preprint \cite{CT}, where a similar, but seemly more general result, was formulated in terms of the space-time consideration of  Chow and Chu \cite{CC}. In the Spring of 2009 Wilking informed us that he has obtained a differential Harnack estimate for the Ricci flow with positive isotropic curvature, whose precise statement however is not known to us. It is very possible that the above result is a special case of his. Nevertheless  our statement and proof here are direct/explicit without involving  the space-time formulation. The proof is also rather short (see Section 9), can be easily checked and  is motivated by the K\"ahler case.

Here is how we organize the paper. In Section 2 we prove that under the condition $\mathcal{C}_p$ the positivity of the $(p,p)$-forms is preserved under the Hodge-Laplacian heat equation. In Section 3 we derive the invariance of $\mathcal{C}_p$ by refining an argument of  Wilking which is detailed in the Appendix. In  Section 4 we collect and prove some  preliminary formulae needed for the proof of the Theorem \ref{thm11}. The rigidity on the equality case as well as a monotonicity formula implied by Theorem \ref{thm11} was also included in Section 4.
Section 5 is devoted to the proof of Theorem \ref{thm11}. Sections 6 and  8 are devoted to the proof of Theorem \ref{thm12} as Section 7 is on the proof  Theorem \ref{thm13}, which is needed in Section 8. Section 9 is on the proof of Theorem \ref{thm14}. Since up to Section 9 we present only the argument for the compact manifolds, Section 10 is  the noncompact version of Sections 3, 7, 10, where the metric  is  assumed to have bounded curvature, while Section 11 supplies the argument for the noncompact version of Section 2, 6, where no upper bound on the curvature is assumed. Due to the length of the paper we shall study the applications of the estimates in a forth coming article.

\section{Heat equation deformation of $(p, p)$-forms}

Let $(M^m, g)$ be a complex Hermitian manifold of complex dimension $m$. Recall that a $(p,p)$-form $\phi$ is called positive if for any $x\in M$ and for any vectors $v_1, v_2, \cdot\cdot\cdot , v_p\in T^{1,0}_xM$, $\langle \phi, \frac{1}{\sqrt{-1}}v_1\wedge \overline {v}_1\wedge \cdot\cdot\cdot \wedge \frac{1}{\sqrt{-1}}v_p\wedge \overline{v}_p\rangle \ge 0$. By linear algebra (see also \cite{Siu-74}) it is equivalent to the condition that the nonnegativity  holds  for $v_1, \cdots, v_p$ satisfying that $\langle v_i, v_j\rangle=\delta_{ij}$. We also denote $\langle \phi, \frac{1}{\sqrt{-1}}v_1\wedge \overline {v}_1\wedge \cdot\cdot\cdot \wedge \frac{1}{\sqrt{-1}}v_p\wedge \overline{v}_p\rangle  $ by $\phi(v_1, v_2, \cdots, v_p; \bar{v}_1, \bar{v}_2, \cdots, \bar{v}_p)$, or even $\phi_{v_1v_2\cdots v_p, \bar{v}_1 \bar{v}_2\cdots \bar{v}_p}$. We say that $\phi$ is strictly positive if $\phi_{v_1v_2\cdots v_p, \bar{v}_1 \bar{v}_2\cdots \bar{v}_p}$ is positive for any linearly  independent $\{v_i\}_{i=1}^p$.
Let $\Delta_{\dbar}=\dbar \dbar^* +\dbar^* \dbar$ be the $\dbar$-Hodge Laplacian operator. There also exists a Laplacian operator $\Delta$ defined by
$$
\Delta =\frac{1}{2}\left(\nabla_i \nabla_{\bar{i}}+\nabla_{\bar{i}}\nabla_i\right).
$$
where $\nabla$ is the induced co-variant derivative on $(p, p)$-forms.
 Since the complex geometry, analysis and Riemannian geometry fit better when the manifold is K\"ahler, we assume that $(M, g)$ is a K\"ahler manifold for our discussion. Let $\omega=\sqrt{-1} g_{i\bar{j}}dz^i\wedge dz^{\bar{j}}$ be the K\"ahler form. Clearly $\omega^p$ is a strictly positive $(p, p)$-form.

For a $(p, p)$-form $\phi_0$, consider the evolution  equation:
\begin{equation}\label{eq:11}
\left(\frac{\partial}{\partial t} +\Delta_{\dbar}\right)\phi(x, t) =0
\end{equation}
with initial value $\phi(x, 0)=\phi_0(x)$.
Our first concern is when the positivity of the $(p, p)$-forms is preserved under the above evolution equation. If we denote by $\mathcal{P}_p$  the closed cone consisting all positive $(p, p)$-forms, an equivalent  question is whether or not $\mathcal{P}_p$ is preserved under the heat equation (\ref{eq:11}).
The answer is well known for the cases $p=0$ and $p=m$ since the equation is nothing but the regular heat equation.
When $p=1$, this question  was studied in \cite{NT-ajm} as well as \cite{NT-jdg} and it was proved that when $(M, g)$ is a complete K\"ahler manifold with nonnegative bisectional curvature, then the positivity is preserved for the solutions satisfying certain reasonable growth conditions, which is needed for  the uniqueness of the solution with the given initial data.

It turns out,  to prove the invariance of $\mathcal{P}_p$ for $m-1\ge p\ge 2$,  we need to introduce a new curvature condition which we shall formulate below. We say that the curvature operator $\Rm$ of a K\"ahler manifold $(M, g)$ satisfies $\mathcal{C}_p$ (or lies inside the cone $\mathcal{C}_p$) if
$$
\langle \Rm(\alpha), \bar{\alpha}\rangle \ge 0
$$
for any  $\alpha\in \wedge ^{1, 1}(\C^m)$ (we use $\wedge^{1,1}_{\R} (\C^m)$ to denote the space of real wedge-$2$ vectors of  $(1, 1)$ type),   such that it can be  written as $\alpha =\sum_{k=1}^p X_k \wedge  \bar{Y}_k$. Here $TM\otimes \C =T'M\oplus T''M$, $\langle \cdot, \cdot \rangle$ is the bilinear extension of the Riemannian product,  and we identify $T'M$ with $\C^m$. Note that $\langle \Rm(X\wedge \bar{Y}), \overline{X\wedge \bar{Y}}\rangle =R_{X\bar{X} Y\bar{Y}}$, the bisectional curvature of the complex plane spanned by $\{ X, Y\}$. Here the cones $\mathcal{C}_p$ interpolate between the cone of nonnegative bisectional curvature and that of the nonnegative curvature operator. In the next section we shall show that in fact $\mathcal{C}_p$ is an invariant condition under the K\"ahler-Ricci flow, which generalizes an earlier result of Bando-Mok \cite{Bando, Mok} on the invariance of the nonnegative bisectional curvature cone. Let us first recall the following well-known computational lemma of Kodaira \cite{M-K}.

\begin{lemma} \label{lemma11}Let $\phi$ be a $(p, p)$-form,  which can be locally expressed as
$$
\phi=\frac{1}{(p!)^2}\sum \phi_{I_p,\bar{J}_p}\left(\sqrt{-1}dz^{i_1}\wedge dz^{\bar{j_1}}\right)\wedge \cdot\cdot\cdot\wedge \left(\sqrt{-1}dz^{i_p}\wedge dz^{\bar{j_p}}\right)
$$
where $I_p=(i_1, \cdot\cdot \cdot i_p)$ and $\bar{J}_p=(\bar{j_1}, \cdot\cdot\cdot, \bar{j_p})$. Then
\begin{eqnarray}
\left(\Delta_{\dbar} \phi\right)_{I_p, \bar{J}_p}&=&-\frac{1}{2}\left(\sum_{ij}g^{i\bar{j}}\nabla_{\bar{j}}\nabla_{i}
\phi_{I_p,\bar{J}_p}+\sum_{ij}g^{\bar{j}i}\nabla_i\nabla_{\bar{j}}\phi_{I_p,\bar{J}_p}\right)
\nonumber\\
&\quad& -\sum_{\mu = 1}^{p}\sum_{\nu =1 }^{p}R^{\,
k\bar{l}}_{i_\mu\,\,\bar{j}_\nu}\phi_{i_1\cdots(k)_\mu\cdots
 i_p,\bar{j}_1\cdots(\bar{l})_\nu\cdots\bar{j_p}} \label{eq:Kodaira}\\
 &\quad&+\frac{1}{2}\left(\sum_{\nu =1}^{p}R^{\bar{l}}_{\,\,
\bar{j}_\nu}\phi_{I_p,\bar{j_1}\cdots(\bar{l})\cdots\bar{j}_p}+\sum_{\mu =1 }^{p}R_{i_\mu}^{\,\,
k}\phi_{i_1\cdots(k)_\mu\cdots i_p, \bar{J}_p}  \right)\nonumber.
\end{eqnarray}
Here $R_{i\bar{j}k\bar{l}}$, $R_{i\bar{j}}$, $R^{\,\,
k\bar{l}}_{i_\mu\,\,\bar{j_\nu}}$, $R^{\bar{l}}_{\,\,
\bar{j_\nu}}$ are the curvature tensor, Ricci tensor and the index raising  of them via the K\"ahler metric, $(k)_{\mu}$ means that the index in the $\mu$-th position is replaced by $k$. Here the repeated index is summed from $1$ to $m$.
\end{lemma}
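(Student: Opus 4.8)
The plan is to reduce \eqref{eq:Kodaira} to a pointwise identity and then verify it by expanding $\dbar$ and $\dbar^*$ explicitly. Fix $x_0\in M$ and choose holomorphic coordinates $(z^1,\dots,z^m)$ centered at $x_0$ that are normal for $g$, so that $g_{i\bar j}(x_0)=\delta_{ij}$, all first derivatives of $g_{i\bar j}$ vanish at $x_0$, and the second derivatives of the metric at $x_0$ are, up to the sign convention, the curvature components $R_{i\bar j k\bar l}(x_0)$. Both sides of \eqref{eq:Kodaira} are tensorial in $\phi$, so it suffices to check the identity at $x_0$; there the covariant derivatives acting on the components $\phi_{I_p,\bar J_p}$ agree with ordinary derivatives up to terms carrying a Christoffel symbol (which vanishes at $x_0$) or one derivative of a Christoffel symbol (which reassembles into curvature).

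Next I would record the coordinate form of the two operators with the normalization of the statement: $\dbar\phi$ is obtained by the antisymmetrized insertion of a factor $dz^{\bar k}$ together with the derivative $\nabla_{\bar k}$ on the components, while $\dbar^*$, its formal adjoint with respect to the K\"ahler metric, contracts one antiholomorphic slot against $g^{i\bar j}\nabla_i$; the combinatorial constant $1/(p!)^2$ and the wedge-ordering signs are as in Kodaira's computation \cite{M-K} (see also Morrey \cite{Morrey}). Forming $\Delta_{\dbar}=\dbar\dbar^*+\dbar^*\dbar$ and collecting terms at $x_0$, the second-order part splits into a single ``diagonal'' contribution, in which $\dbar^*$ contracts exactly the antiholomorphic slot just inserted by $\dbar$, giving the connection Laplacian $-g^{i\bar j}\nabla_{\bar j}\nabla_i\phi_{I_p,\bar J_p}$ in one ordering, and a collection of ``cross'' contributions, in which $\dbar^*$ contracts one of the original slots $\bar j_\nu$; the cross contributions coming from $\dbar\dbar^*$ and from $\dbar^*\dbar$ cancel in pairs up to commutators $g^{i\bar j}[\nabla_i,\nabla_{\bar j_\nu}]$ acting on the slot $\bar j_\nu$. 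The curvature terms of \eqref{eq:Kodaira} then arise from two commutations: rewriting the diagonal term as the symmetric combination $-\tfrac12\bigl(g^{i\bar j}\nabla_{\bar j}\nabla_i+g^{\bar j i}\nabla_i\nabla_{\bar j}\bigr)\phi_{I_p,\bar J_p}$ of \eqref{eq:Kodaira} costs a correction $-\tfrac12\,g^{i\bar j}[\nabla_i,\nabla_{\bar j}]\phi_{I_p,\bar J_p}$, and the leftover cross-commutators act on the antiholomorphic slots; applying the K\"ahler commutation identities slot by slot and using the symmetries $R_{i\bar j k\bar l}=R_{k\bar j i\bar l}=R_{i\bar l k\bar j}$ of $\Rm$, all of these together produce exactly the full-Riemann double sum $-\sum_{\mu=1}^{p}\sum_{\nu=1}^{p}R^{\,k\bar l}_{i_\mu\,\,\bar j_\nu}\phi_{i_1\cdots(k)_\mu\cdots i_p,\bar j_1\cdots(\bar l)_\nu\cdots\bar j_p}$ and the two Ricci families $\tfrac12\bigl(\sum_{\nu}R^{\bar l}_{\,\,\bar j_\nu}\phi_{I_p,\bar j_1\cdots(\bar l)\cdots\bar j_p}+\sum_{\mu}R_{i_\mu}^{\,\,k}\phi_{i_1\cdots(k)_\mu\cdots i_p,\bar J_p}\bigr)$.

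I expect the only genuine difficulty to be the bookkeeping: tracking the wedge signs, the factor $1/(p!)^2$, and above all the resulting numerical coefficients and signs --- in particular confirming that the full-curvature terms appear with coefficient $1$ and run over all pairs $(\mu,\nu)$, while the two Ricci families appear with coefficient $\tfrac12$ and run over $\nu$, respectively $\mu$, alone. One convenient way to pin these down without a long index computation is to first verify the formula on decomposable forms and compare with the classical Bochner--Kodaira identities for $(0,q)$-forms and for $(1,1)$-forms (as in \cite{Morrey, M-K}), and only afterwards carry the general index computation through, as in Kodaira's original argument.
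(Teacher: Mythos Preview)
Your proposal is correct and is precisely the standard derivation. The paper itself does not supply a proof of this lemma: it is introduced as ``the following well-known computational lemma of Kodaira \cite{M-K}'' and simply cited. In Section~4 the paper records the underlying coordinate formulae \eqref{eq:41}, \eqref{eq:42} for $\bar\partial$ and $\bar\partial^*$ (and their $\partial$-counterparts \eqref{eq:47}, \eqref{eq:48}) and the two one-sided Bochner--Kodaira identities \eqref{eq:43}, \eqref{eq:44}; formula \eqref{eq:Kodaira} is then just the average of \eqref{eq:43} and \eqref{eq:44} in the case of the trivial line bundle. Your sketch --- expand $\bar\partial$ and $\bar\partial^*$ in normal coordinates, identify the diagonal contraction as the rough Laplacian, and read off the curvature terms from the cross-commutators --- is exactly how one proves \eqref{eq:43} and \eqref{eq:44} in \cite{M-K}, so there is nothing to add.
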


An immediate consequence of (\ref{eq:Kodaira}) is that if $\phi$ is a solution of (\ref{eq:11}), then it  satisfies that
\begin{equation}\label{KB-heat}
\left(\frac{\partial}{\partial t} -\Delta\right)\phi(x, t) =\mathcal{KB}(\phi)
\end{equation}
where
\begin{eqnarray*}
\left(\mathcal{KB}(\phi)\right)_{I_p,\bar{J_p}}&=&\sum_{\mu = 1}^{p}\sum_{\nu =1 }^{p}R^{\,
k\bar{l}}_{i_\mu\,\bar{j_\nu}}\phi_{i_1\cdots(k)_\mu\cdots
 i_p,\bar{j_1}\cdots(\bar{l})_\nu\cdots\bar{j}_p} \\
 &\quad&
-\frac{1}{2}\left(\sum_{\nu =1}^{p}R^{\bar{l}}_{\,\,
\bar{j}_\nu}\phi_{I_p,\bar{j_1}\cdots(\bar{l})\cdots\bar{j_p}}+\sum_{\mu =1 }^{p}R_{i_\mu}^{\,\,
k}\phi_{i_1\cdots(k)_\mu\cdots i_p, \bar{J}_p}\right).
\end{eqnarray*}

\begin{proposition}\label{max-pp} Let $(M, g)$ be a K\"ahler manifold whose curvature operator $\Rm \in \mathcal{C}_p$. Assume that $\phi(x, t)$ is a solution of (\ref{eq:11}) such that $\phi(x, 0)$ is positive. Then $\phi(x, t)$ is positive for $t>0$.
\end{proposition}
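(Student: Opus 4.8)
The plan is to derive Proposition \ref{max-pp} from Hamilton's maximum principle for systems. By Lemma \ref{lemma11} (equation (\ref{KB-heat})), a solution $\phi$ of (\ref{eq:11}) solves the reaction-diffusion equation $\p_t\phi=\Delta\phi+\mathcal{KB}(\phi)$ with $\Delta$ a second order elliptic operator and $\mathcal{KB}$ a zeroth order curvature term. The cone $\mathcal{P}_p$ of positive $(p,p)$-forms is a closed, fiberwise convex subset of the bundle of real $(p,p)$-forms, and it is parallel --- its defining condition is invariant under the holonomy of the K\"ahler Levi-Civita connection. Hence, for compact $M$, Hamilton's theorem reduces the claim to the pointwise assertion that the ODE $\tfrac{d}{dt}\phi=\mathcal{KB}(\phi)$ preserves $\mathcal{P}_p$. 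Since $\mathcal{P}_p$ consists of the forms that are nonnegative on all decomposable vectors $\xi=\frac{1}{\sqrt{-1}}v_1\wedge\bar v_1\wedge\cdots\wedge\frac{1}{\sqrt{-1}}v_p\wedge\bar v_p$ with $\{v_\mu\}$ orthonormal, its dual cone is generated by these $\xi$, and ODE invariance reduces further to: whenever $\phi\ge0$ and $\phi_{v_1\cdots v_p,\bar v_1\cdots\bar v_p}=0$ for some unitary $p$-frame $\{v_\mu\}$, then $\mathcal{KB}(\phi)_{v_1\cdots v_p,\bar v_1\cdots\bar v_p}\ge0$. (The noncompact case, requiring growth and uniqueness hypotheses on $\phi$, is treated separately.)

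To prove this pointwise inequality, complete $\{v_\mu\}$ to a unitary frame $\{e_i\}_{i=1}^m$ with $e_\mu=v_\mu$ for $\mu\le p$, and work with the components at that point. Two variational facts are needed. First, for each $\mu\le p$ the map $w\mapsto\phi(v_1,\dots,w,\dots,v_p;\bar v_1,\dots,\bar w,\dots,\bar v_p)$ (with $w$ placed in slot $\mu$) is a nonnegative Hermitian form in $w$ vanishing at $w=e_\mu$, so $e_\mu$ lies in its kernel; reading off the $\mu$-th row and column yields the relations $\phi_{1\cdots(k)_\mu\cdots p,\,\bar 1\cdots\bar p}=0$ and $\phi_{1\cdots p,\,\bar 1\cdots(\bar l)_\mu\cdots\bar p}=0$ for all $k,l$ and all $\mu\le p$. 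These annihilate the two Ricci-type terms in $\mathcal{KB}(\phi)_{1\cdots p,\bar 1\cdots\bar p}$ (and, together with antisymmetry, force the curvature terms with $k\le p$ or $l\le p$ to vanish), leaving
\[
\mathcal{KB}(\phi)_{1\cdots p,\bar 1\cdots\bar p}=\sum_{\mu,\nu=1}^{p}\sum_{k,l=1}^{m}R^{\,k\bar l}_{\mu\,\bar\nu}\,\mathcal T^{\mu\nu}_{k\bar l},
\qquad
\mathcal T^{\mu\nu}_{k\bar l}:=\phi_{1\cdots(k)_\mu\cdots p,\,\bar 1\cdots(\bar l)_\nu\cdots\bar p}.
\]
Second, $\mathcal T=(\mathcal T^{\mu\nu}_{k\bar l})$ is a positive semidefinite Hermitian form on $\C^p\otimes\C^m$: writing $\xi_\alpha=\sum_k\xi^k_\alpha e_k$ and $F(s)=\phi(v_1+s\xi_1,\dots,v_p+s\xi_p;\overline{v_1+s\xi_1},\dots,\overline{v_p+s\xi_p})$, one has $F\ge0$, $F(0)=0$, and $\p_sF|_{s=0}=\p_{\bar s}F|_{s=0}=0$ by the kernel relations, so $s=0$ is a minimum of $F$ and $\p_s\p_{\bar s}F|_{s=0}=\sum\xi^k_\mu\bar\xi^l_\nu\,\mathcal T^{\mu\nu}_{k\bar l}\ge0$.

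It then remains to pair the two facts. After the index raising and conjugation afforded by the K\"ahler symmetries of $\Rm$, the array $(R^{\,k\bar l}_{\mu\,\bar\nu})_{\mu,\nu\le p}$ is the matrix of the curvature operator restricted to the span of $\{e_\mu\wedge\bar e_k:\mu\le p\}$; every element of that span has the form $\sum_{\mu\le p}e_\mu\wedge\bar Z_\mu$, hence is of rank $\le p$, so this matrix is positive semidefinite precisely by condition $\mathcal{C}_p$. Since the trace of a product of two positive semidefinite Hermitian matrices is nonnegative, the displayed sum is $\ge0$ --- equivalently, writing $\mathcal T=\sum_a\zeta^{(a)}\otimes\overline{\zeta^{(a)}}$ and setting $\alpha_a=\sum_{\mu\le p}e_\mu\wedge\overline{\sum_k\zeta^{(a)}_{\mu k}e_k}$ (of rank $\le p$), it equals $\sum_a\langle\Rm(\alpha_a),\overline{\alpha_a}\rangle\ge0$ by $\mathcal{C}_p$. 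This establishes the ODE invariance, hence Proposition \ref{max-pp}. The main obstacle is exactly this algebraic core: proving the two variational identities cleanly and, above all, checking that after the kernel reduction the surviving piece of Kodaira's reaction term at a null $(p,p)$-covector really is the $\mathcal{C}_p$-form evaluated on the rank $\le p$ bivectors read off from $\mathcal T$ --- i.e. getting every index raising and complex conjugation in Lemma \ref{lemma11} to match the definition of the cone $\mathcal{C}_p$. The remaining ingredients (the reaction-diffusion rewriting, the parallelism of $\mathcal{P}_p$, the reduction to decomposable supporting functionals, and the use of Hamilton's maximum principle for systems in the compact case) are standard.
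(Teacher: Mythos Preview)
Your proposal is correct and follows essentially the same route as the paper: reduce via Hamilton's tensor maximum principle to showing $\mathcal{KB}(\phi)_{1\cdots p,\bar 1\cdots\bar p}\ge 0$ at a null direction, use first and second variations of $\phi$ along the null $p$-frame to kill the Ricci terms and to exhibit the remaining curvature contraction as the trace of a product of two positive semidefinite Hermitian forms on $\C^p\otimes\C^m$, one coming from $\phi$ and the other from $\Rm$ via $\mathcal{C}_p$. The only cosmetic differences are that you run the first variation one slot at a time (getting each $\phi_{1\cdots(k)_\mu\cdots p,\bar 1\cdots\bar p}=0$ individually) whereas the paper varies all slots simultaneously and then replaces $t$ by $\sqrt{-1}t$ to separate the holomorphic and antiholomorphic sums, and that your second variation uses $\partial_s\partial_{\bar s}$ of a complex parameter while the paper adds the $I''(0)$ from $t$ and $\sqrt{-1}t$; these are equivalent manoeuvres leading to the same matrices $\mathcal{T}=A$ and $B$ and the same trace argument.
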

\begin{proof} When $M$ is a compact manifold, applying Hamilton's tensor maximum principle, it suffices to show that if at $(x_0, t_0)$ there exist $v_1, \cdots v_p$ such that $\phi_{v_1 \cdots v_p, \bar{v}_1\cdots \bar{v}_p}=0$ and for any $(x, t)$ with $t\le t_0$, $\phi \ge 0$,
$$
\mathcal{KB}(\phi)_{v_1 \cdots v_p, \bar{v}_1\cdots \bar{v}_p}\ge 0.
$$
This holds obviously if $\{v_i\}_{i=1}^p$ is linearly dependent since $\mathcal{KB}(\phi)$ is a $(p, p)$-form. Hence we assume that $\{v_i\}_{i=1}^p$ is linearly independent. By Gramm-Schmidt process, which does not change the sign (or being zero) of $\phi_{v_1 \cdots v_p, \bar{v}_1\cdots \bar{v}_p}$, we can assume that $v_1, \cdots, v_p$ can be extended to a unitary frame. Hence we may assume that $(v_1, \cdots, v_p)$=$(\frac{\partial\,\,}{\partial z_1}, \cdots, \frac{\partial\,\,}{\partial z_p})$ with $(z^1,\cdots, z^m)$ being a normal coordinate centered at $x_0$. Hence what we need to verify is that $(\mathcal{KB}(\phi))_{1\,\cdots\, p,\, \bar{1}\, \cdots\, \bar{p}}\ge 0$.
Since we have that
$\phi_{1\, 2\, \cdots\, p,\,  \bar{1}\, \bar{2}\, \cdots\, \bar{p}}=0$ and
$$
I(t)\doteqdot \phi\left(\frac{1}{\sqrt{-1}}(v_1+tw_1)\wedge \overline{v_1+tw_1}\wedge \cdots \wedge \frac{1}{\sqrt{-1}}(v_p+tw_p)\wedge \overline{v_p+tw_p}\right)\ge 0
$$
for any $t\ge 0$ and any vectors $w_1, \cdots,  w_p$. The equation $I'(0)=0$ implies that
\begin{equation}\label{1st-var}
\sum_{1\le k, l\le p} \phi_{1\,\cdots\, (w_k)_k\, \cdots\, p,\, \bar{1}\, \cdots\, \bar{p}}+\phi_{1\, \cdots\, p,\, \bar{1}\, \cdots\, (\overline{w_l})_l\, \cdots\, \bar{p}}=0.
\end{equation}
Here $\phi_{1\,\cdots\, (w_k)_k\, \cdots\, p,\, \bar{1}\, \cdots\, \bar{p}}$ the $k$-the holomorphic position is filled by vector $w_k$.
For the simplicity of the notation we write $\phi_{1\,\cdots\, (w_k)_k\, \cdots\, p,\, \bar{1}\, \cdots\, \bar{p}}$ as $\phi_{1\,\cdots\, w_k\, \cdots\, p,\, \bar{1}\, \cdots\, \bar{p}}$.
Since this holds for any $p$-vectors $w_1, \cdots, w_p$, if we replace $t$ by $\sqrt{-1}t$, one can deduce from (\ref{1st-var}) that
\begin{equation}\label{1st-var-2}
\sum_{1\le k\le p}\phi_{1\,\cdots\, w_k\, \cdots\, p,\, \bar{1}\, \cdots\, \bar{p}}=\sum_{1\le l\le p}\phi_{1\, \cdots\, p,\, \bar{1}\, \cdots\, \overline{w_l}\, \cdots\, \bar{p}}=0.
\end{equation}
This implies that
$$\left(\sum_{1\le l\le m} \sum_{\nu =1}^{p}R^{\bar{l}}_{\,\,
\bar{\nu}}\phi_{1\, \cdots\, p, \, \bar{1}\, \cdots\, (\bar{l})_\nu\, \cdots\,\bar{p}}+\sum_{1\le k\le m}\sum_{\mu =1 }^{p}R_{\mu}^{\,\,
k}\phi_{1\, \cdots\, (k)_\mu\, \cdots\, p,\, \bar{1}\, \cdots\, \bar{p}}\right)=0.
$$
Now the fact $I''(0)\ge 0$ implies that
\begin{eqnarray}\label{2nd-var}
&\quad&\sum_{1\le k,\, l\le p} \phi_{1\,\cdots\, w_k\, \cdots\, p,\, \bar{1}\, \cdots\, \overline{w_l}\, \cdots\, \bar{p}}+\sum_{1\le k\ne l \le p}\phi_{1\, \cdots\, w_k\, \cdots\, w_l\, \cdots\,p,\, \bar{1}\, \cdots\, \bar{p}}\\
&\quad&\quad \quad + \sum_{1\le k\ne l \le p}\phi_{1\, \cdots\, p,\, \bar{1}\, \cdots\, \overline{w_k}\, \cdots\, \overline{w_l}\, \cdots\, \bar{p}}\ge 0. \nonumber
\end{eqnarray}
Replacing $t$ by $\sqrt{-1} t$ in $I(t)$, the fact that $I''(0)\ge 0$ will yield
\begin{eqnarray}\label{2nd-var-2}
&\quad&\sum_{1\le k,\, l\le p} \phi_{1\,\cdots\, w_k\, \cdots\, p,\, \bar{1}\, \cdots\, \overline{w_l}\, \cdots\, \bar{p}}-\sum_{1\le k\ne l \le p}\phi_{1\, \cdots\, w_k\, \cdots\, w_l\, \cdots\,p,\, \bar{1}\, \cdots\, \bar{p}}\\
&\quad&\quad \quad - \sum_{1\le k\ne l \le p}\phi_{1\, \cdots\, p,\, \bar{1}\, \cdots\, \overline{w_k}\, \cdots\, \overline{w_l}\, \cdots\, \bar{p}}\ge 0. \nonumber
\end{eqnarray}
Adding them up we have that for any $\mathfrak{w}=\left(\begin{array}{l} w_1\\ \vdots\\
 w_p\end{array}\right)\in \oplus_{1}^p T^{1, 0}_{x_0}M$, the Hermitian  form
\begin{equation}\label{2nd-var-3}
\mathcal{J}(\mathfrak{w}, \overline{\mathfrak{w}})\doteqdot \sum_{1\le k,\, l\le p} \phi_{1\,\cdots\, w_k\, \cdots\, p,\, \bar{1}\, \cdots\, \overline{w_l}\, \cdots\, \bar{p}}
\end{equation}
is semi-positive definite. A Hermitian-bilinear form $\mathcal{J}(\mathfrak{w}, \overline{\mathfrak{z}}) $
can be obtained via the polarization. In matrix form, the nonnegativity of $\mathcal{J}(\cdot, \cdot)$ is equivalent to that
\begin{eqnarray*}
A= \left(\begin{array}{l}
\phi_{(\cdot)\, 2\cdots\, p, \,      \bar{(\cdot)}\, \bar{2}\,\cdots\, \bar{p}}\quad
\phi_{1\, (\cdot)\, \cdots\, p, \,      \bar{(\cdot)}\, \bar{2}\,\cdots\, \bar{p}}\quad \quad \quad
\cdots \quad \quad \quad
\phi_{1\, 2\, \cdots\, (\cdot)_p, \,      \bar{(\cdot)}\, \bar{2}\,\cdots\, \bar{p}}\\
\phi_{(\cdot)\, 2\cdots\, p, \,     \bar{1}\, \bar{ (\cdot)}\, \cdots\, \bar{p}}
\quad    \phi_{1\, (\cdot)\, \cdots\, p, \,     \bar{1}\,  \bar{(\cdot)}\, \cdots\, \bar{p}}
\quad \quad \quad
\cdots
\quad \quad \quad
\phi_{1\, 2\, \cdots\, (\cdot)_p, \,    \bar{1}\,   \bar{(\cdot)}\, \cdots\, \bar{p}}\\
 \quad \quad \quad \cdots\quad \quad \quad \quad \quad \quad \cdots \quad \quad \quad \quad \quad \, \,\, \cdots \quad \quad \quad \quad \quad \quad \cdots \quad \quad \quad\\
\phi_{(\cdot)\, 2\cdots\, p, \,     \bar{1}\,\bar{2}\, \cdots\,  \bar{ (\cdot)}_p}\quad
\phi_{1\, (\cdot)\, \cdots\, p, \,     \bar{1}\,  \bar{2}\, \cdots\,  \bar{ (\cdot)}_p}\quad \quad \,
\cdots \quad \quad \quad
\phi_{1\, 2\, \cdots\, (\cdot)_p, \,    \bar{1}\,   \bar{2}\, \cdots\,  \bar{ (\cdot)}_p}
\end{array}\right)
\end{eqnarray*}
is a semi-positive definite Hermitian symmetric matrix. Namely $\ov{\mathfrak{w}}^{tr} A \mathfrak{w}\ge 0$.  Here we view $\phi_{1\,\cdots\,(\cdot)_\mu\, \cdots\,
 p, \, \bar{1}\, \cdots\, (\bar{\cdot})_\nu\, \cdots\, \bar{p}}$ as  a matrix such that for any vectors $w, z$, $\ov{z}^{tr} \phi_{1\,\cdots\,(\cdot)_\mu\, \cdots\,
 p, \, \bar{1}\, \cdots\, (\bar{\cdot})_\nu\, \cdots\, \bar{p}} \cdot w$  is the
   Hermitian-bilinear form  $\phi_{1\,\cdots\,(w)_\mu\, \cdots\,
 p, \, \bar{1}\, \cdots\, (\bar{z})_\nu\, \cdots\, \bar{p}}$.
The equivalence can be made via the identity
$
\mathcal{J}(\mathfrak{w}, \overline{\mathfrak{z}})=\langle A (\mathfrak{w}), \overline{\mathfrak{z}}\rangle.
$
Here $\mathfrak{w}=w_1\oplus\cdots \oplus w_p$, $\mathfrak{z}=z_1\oplus \cdots \oplus z_p$.
If we define $\phi^{\mu\bar{v}}$ by
$$
\langle \phi^{\mu\bar{\nu}}(X), \ov{Y}\rangle =\phi_{1\cdots (X)_{\mu}\cdots p, \bar{1}\cdots (\ov{Y})_\nu\cdots \bar{p}}
$$
it is easy to see that $\ov{(\phi^{\mu\bar{\nu}})^{\tr}}=\phi^{\nu \bar{\mu}}$. Using this notation
$\mathcal{J}(\mathfrak{w}, \overline{\mathfrak{z}})$ or $\langle \mathcal{J}(\mathfrak{w}), \overline{\mathfrak{z}}\rangle$ can be expressed as $\sum \langle \phi^{\mu \bar{\nu}}(w_\mu), \ov{z}_\nu\rangle$. It is easy to check that $\mathcal{J}$ is Hermitian symmetric.

What to be checked is that
$$
 \mathcal{KB}(\phi)_{1\, 2\, \cdots\, p,\, \bar{1}\, \bar{2}\, \cdots\,
\bar{p}}=\sum_{\mu = 1}^{p}\sum_{\nu =1 }^{p}R^{\,
k\bar{l}}_{\mu\,\,\bar{\nu}}\phi_{1\,\cdots\,(k)_\mu\, \cdots\,
 p, \, \bar{1}\, \cdots\, (\bar{l})_\nu\, \cdots\, \bar{p}} \ge0.
$$
Under the unitary frame it is equivalent to
\begin{equation}\label{goal1}\sum_{\mu = 1}^{p}\sum_{\nu =1 }^{p}R_{\mu\,\bar{\nu}\, l\,\bar{k}}\phi_{1\,\cdots\,(k)_\mu\, \cdots\,
 p, \, \bar{1}\, \cdots\, (\bar{l})_\nu\, \cdots\, \bar{p}} \ge0.
\end{equation}
Here we have used the 1st-Bianchi identity. If we can show that the Hermitian matrix
\begin{eqnarray*}
B=\left(\begin{array}{l}
R_{1\bar{1}(\cdot)\bar{(\cdot)}}\quad
R_{1\bar{2}(\cdot)\bar{(\cdot)}}\quad\quad
\cdots\quad\quad
R_{1\bar{p}(\cdot)\bar{(\cdot)}}\\
R_{2\bar{1}(\cdot)\bar{(\cdot)}}\quad
R_{2\bar{2}(\cdot)\bar{(\cdot)}}\quad\quad
\cdots\quad\quad
R_{2\bar{p}(\cdot)\bar{(\cdot)}}\\
\quad\cdots\quad\quad\quad\cdots\quad\quad\quad\cdots\quad\quad\quad\cdots\\
R_{p\bar{1}(\cdot)\bar{(\cdot)}}\quad
R_{p\bar{2}(\cdot)\bar{(\cdot)}}\quad\quad
\cdots\quad\quad
R_{p\bar{p}(\cdot)\bar{(\cdot)}}
\end{array}\right)
\end{eqnarray*}
is nonnegative, then the inequality (\ref{goal1}) holds since the left hand side  of (\ref{goal1}) is just the trace of the product matrix
$B\cdot A$ of the two nonnegative Hermitian symmetric matrices.

On the other hand the nonnegativity of $B$ is equivalent to for any $(1,0)$-vectors $w_1, \cdots,  w_p$,
$\langle B(\mathfrak{w}), \ov{\mathfrak{w}}\rangle \ge 0$ with  $\mathfrak{w}=w_1\oplus\cdots \oplus w_p$. This is equivalent to
$$
\sum R_{i\bar{j} w_j \ov{w}_i}\ge 0.
$$
Let $\alpha =\sum_{i=1}^p \frac{\partial}{\partial z^i}\wedge \ov{w}_i$ (we later simply denote as $\sum i\wedge \ov{w}_i$). Then $\langle \Rm (\alpha), \ov{\alpha}\rangle \ge 0$ is equivalent to the above inequality.  This proves (\ref{goal1}), hence the proposition,  at least for the case when $M$ is compact.

 Another way to look at this is to define the transformation $R^{\mu \bar{\nu}}$ as $\langle R^{\mu \bar{\nu}}(X), \ov{Y}\rangle =R_{\mu\bar{\nu}X \ov{Y}}$. Similarly one can easily check that
 $\ov{(R^{\mu \bar{\nu}})^{\tr}}=R^{\nu \bar{\mu}}$. Define transformation $\mathcal{K}$ on $\oplus_{i=1}^p T'M$ by
 $\langle \mathcal{K}(\mathfrak{w}), \ov{\mathfrak{z}}\rangle $ as $\sum \langle R^{\mu \bar{\nu}}(w_\nu), \ov{z}_\mu\rangle $. It is easy to check that $\mathcal{K}$ is Hermitian symmetric and that $\Rm \in \mathcal{C}_p$ implies  $\mathcal{K}\ge 0$.  Simple algebraic manipulation shows that:
 \begin{eqnarray*}
  \mbox{LHS of (\ref{goal1})} &=& \sum_{\mu, \nu=1}^{p} \langle R^{\mu\bar{\nu}}(e_l), \ov{e}_k\rangle \langle \phi^{\mu \bar{\nu}}(e_k), \ov {e}_l\rangle\\
  &=& \sum_{\mu, \nu=1}^p \langle R^{\mu\bar{\nu}}(\phi^{\mu \bar{\nu}}(e_k)), \ov{e}_k\rangle,
  \end{eqnarray*}
 if $\{e_k\}_{k=1\cdots m}$ is a unitary frame. One can see that this is nothing but the trace of $\mathcal{K} \cdot \mathcal{J}$ since a natural unitary base for $\oplus_{\mu=1}^p T'M$ is $\{E_{\mu k}\}$,  where $1\le \mu \le p, 1\le k \le m$, $E_{\mu k} =\vec{0} \oplus \cdots\oplus  (e_k)_{\mu}\oplus \cdots \oplus \vec{0}$. Then $\mathcal{J}(E_{\mu k})=\oplus_{\nu} \phi^{\mu \bar{\nu}}(e_k)$. Hence $\mathcal{K}(\mathcal{J}(E_{\mu k}))=\oplus_{\sigma}R^{\sigma \bar{\nu}}(\phi^{\mu \bar{\nu}}(e_k))$. This shows that $\langle \mathcal{K}(\mathcal{J}(E_{\mu k})), \ov{E_{\mu k}}\rangle = \sum \langle R^{\mu\bar{\nu}}(\phi^{\mu \bar{\nu}}(e_k)), \ov{e}_k\rangle$. Hence the left hand side of (\ref{goal1}) can be written as $\operatorname{trace} (\mathcal{K}\cdot \mathcal{J})$. Similarly for any $X_1, \cdots,  X_p$ we  can define $R^{\mu\bar{\nu}}$ and $\phi^{\mu \bar{\nu}}$ and $\mathcal{K}$ and $\mathcal{J}$. The above argument shows that
 \begin{equation}
 \mathcal{KB}_{X_1\cdots X_p, \ov{X}_1\cdots \ov{X}_p}=\operatorname{trace} (\mathcal{K}\cdot \mathcal{J}).
 \end{equation}
For noncompact complete manifolds we postpone it to Section 11 (Theorem \ref{max-pp-noncom}).

\end{proof}

\section{Invariance of $\mathcal{C}_p$ under the K\"ahler-Ricci flow}

Recently \cite{Wilking}, Wilking proved a very general result on invariance of cones of curvature operators under Ricci flow. The result is formulated for any Riemannian manifold $(M, g)$ of real dimension $n$. Since our proof is a modification of his we first state his result. Identify $TM $ with $\R^n$ and its complexification $TM \otimes \C$ with $\C^n$. Also identify $\wedge^2(\R^n)$ with the Lie algebra $\mathfrak{so}(n)$. The complexified Lie algebra $\mathfrak{so}(n, \C)$ can be identified with $\wedge^2(\C^n)$. Its associated Lie group is $\SO(n, \C)$, namely all complex matrices $A$ satisfying $A\cdot A^{\tr} =A^{\tr} \cdot A =\id$. Recall that there exists the natural action of $\SO(n, \C)$ on $\wedge ^2(\C^n)$ by extending the action $g\in \SO(n)$ on $x\otimes y$ ($g(x\otimes y) =gx\otimes gy$). Let $\Sigma \subset \wedge^2(\C^m)$ be a set which is invariant under the action of $\SO(n, \C)$.
 Let $\widetilde{\mathcal{C}}_{\Sigma}$ be the cone of curvature operators  satisfying that $\langle R(v), \bar{v} \rangle \ge 0$ for any $v\in \Sigma$. Here we view the space of algebraic curvature operators as a subspace of $S^2(\wedge^2(\R^n))$  satisfying the first Bianchi identity. Recently (May of 2008)  \cite{Wilking}, Wilking proved the following result.

\begin{theorem}[Wilking]\label{ww}
Assume that $(M, g(t))$, for $0\le t\le T$,  is a solution of Ricci flow on a compact manifold. Assume that $\Rm(g(0))\in \widetilde{\mathcal{C}}_{\Sigma}$. Then $\Rm(g(t))\in \widetilde{\mathcal{C}}_{\Sigma}$ for all $t\in [0, T]$.
\end{theorem}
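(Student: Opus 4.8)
The plan is to follow Wilking's Lie‑algebraic method; the proof we give in the Appendix is a variant of it.

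\textbf{Step 1 (reaction--diffusion equation).} Using Uhlenbeck's trick, identify $\wedge^2 TM$ along the flow with a fixed bundle carrying a time‑independent metric. Under this identification the curvature operator $R = \Rm(g(t))$, regarded as a self‑adjoint endomorphism of $\wedge^2\RR^n \cong \so(n)$, evolves by
$$\frac{\partial}{\partial t} R = \Delta R + 2\,Q(R), \qquad Q(R) = R^2 + R^{\#},$$
where $R^2$ is the operator square and $R^{\#}$ is the Lie‑algebra square formed from the structure constants of $\so(n)$.

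\textbf{Step 2 (reduction to an ODE inequality).} The set $\widetilde{\mathcal{C}}_\Sigma$ is a closed convex cone in the space of algebraic curvature operators, invariant under the natural action of $\SO(n)$ (indeed of $\SO(n,\C)$) and hence under parallel transport. By Hamilton's maximum principle for systems it is therefore enough to show that $\widetilde{\mathcal{C}}_\Sigma$ is preserved by the reaction ODE $\tfrac{d}{dt}R = Q(R)$. The standard supporting‑hyperplane criterion for ODE‑invariance of a convex set reduces this further to the pointwise statement: whenever $R \in \partial\widetilde{\mathcal{C}}_\Sigma$ and $v$ lies in the closure of $\Sigma$ with $\langle R(v),\bar v\rangle = 0$ — so that $S \mapsto \langle S(v),\bar v\rangle$ is a supporting functional of the cone vanishing at $R$, and note $\langle R(v),\bar v\rangle \in \RR$ since $R$ is real and symmetric — one has $\langle Q(R)(v),\bar v\rangle \ge 0$.

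\textbf{Step 3 (the algebraic identity).} This is the heart. Writing $\langle Q(R)(v),\bar v\rangle = \|R(v)\|^2 + \langle R^{\#}(v),\bar v\rangle$ and using only the $\operatorname{ad}$‑invariance of the trace form on $\so(n,\C)$ together with the symmetry of $R$, one rewrites $\langle R^{\#}(v),\bar v\rangle$ and arrives at Wilking's identity, expressing $\langle Q(R)(v),\bar v\rangle$ as a sum of: (a) $\|R(v)\|^2 \ge 0$; (b) a bracket term $\sum_\alpha \langle R([v,b_\alpha]),\overline{[v,b_\alpha]}\rangle$ for an orthonormal basis $\{b_\alpha\}$ of $\so(n)$; and (c) "cross‑terms" of the shape $\langle R([X,v]),\bar v\rangle$, $\langle R(v),\overline{[X,v]}\rangle$, together with multiples of $\langle R(v),\bar v\rangle$. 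The conclusion is assembled as follows. Since $\Sigma$ is $\SO(n,\C)$‑invariant and contains $v$, the orbit map $t\mapsto \exp(tX)\,v$ stays in $\Sigma$ for every $X\in\so(n,\C)$, so $t\mapsto \langle R(\exp(tX)v),\overline{\exp(tX)v}\rangle \ge 0$ attains a minimum at $t=0$; its vanishing first derivative kills the cross‑terms in (c) — here one genuinely needs the complex directions $X\in\so(n,\C)$, not merely $X\in\so(n)$ — and the multiples of $\langle R(v),\bar v\rangle$ vanish since $\langle R(v),\bar v\rangle=0$. The bracket term (b) is $\ge 0$ because it is exactly what the summed second derivatives $\sum_\alpha \tfrac{d^2}{dt^2}\big|_{0}\langle R(\exp(tb_\alpha)v),\overline{\exp(tb_\alpha)v}\rangle \ge 0$ produce, once one uses that the Casimir $\sum_\alpha \operatorname{ad}_{b_\alpha}^2$ acts as a scalar on $\wedge^2\C^n$ — the (irreducible) adjoint representation of $\so(n,\C)$, with the usual harmless adjustment when $n=4$. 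Adding up, $\langle Q(R)(v),\bar v\rangle \ge 0$.

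\textbf{Main obstacle.} Step 3, and within it the correct derivation and bookkeeping of Wilking's identity for $\langle R^{\#}(v),\bar v\rangle$: the chain of $\operatorname{ad}$‑invariance manipulations must be organized so as to isolate the bracket term (b) and the cross‑terms (c) cleanly, and one must check carefully that it is the \emph{full} complex invariance of $\Sigma$ under $\SO(n,\C)$ that makes the cross‑terms vanish — this is precisely the feature distinguishing the Ricci‑flow‑invariant conditions (such as $\widetilde{\mathcal{C}}_p$ and nonnegative complex sectional curvature) from conditions like nonnegative sectional curvature, which are $\SO(n)$‑invariant but not preserved by the flow. Steps 1 and 2 are by now routine (Uhlenbeck, Hamilton). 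The refinement needed in Section 3 then consists in exhibiting the K\"ahler cones $\mathcal{C}_p$ as $\widetilde{\mathcal{C}}_\Sigma$ for an appropriate $\SO(n,\C)$‑invariant $\Sigma$ and specializing the above.
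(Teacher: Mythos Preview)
Your Steps 1 and 2 are fine and match the paper. The gap is in Step 3: the ``Wilking identity'' you describe does not exist in the form you claim. The bracket term (b) you write down satisfies
\[
\sum_\alpha \langle R([v,b_\alpha]),\overline{[v,b_\alpha]}\rangle
=\sum_\alpha \langle R\,\operatorname{ad}_v b_\alpha,\operatorname{ad}_{\bar v}b_\alpha\rangle
=\operatorname{trace}\bigl(-\operatorname{ad}_{\bar v}\,R\,\operatorname{ad}_v\bigr),
\]
which is \emph{linear} in $R$, whereas
\[
\langle R^{\#}(v),\bar v\rangle=\tfrac{1}{2}\operatorname{trace}\bigl(-\operatorname{ad}_{\bar v}\,R\,\operatorname{ad}_v\,R\bigr)
\]
is \emph{quadratic} in $R$. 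Since $\langle Q(R)(v),\bar v\rangle=\|R(v)\|^2+\langle R^{\#}(v),\bar v\rangle$ is homogeneous of degree two in $R$, while your pieces (b) and (c) are linear in $R$, no decomposition of the type (a)+(b)+(c) can be an identity. Your summed second-variation plus Casimir argument does correctly produce $\operatorname{trace}(-\operatorname{ad}_{\bar v}R\operatorname{ad}_v)\ge0$, but this is not the quantity you need.

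What the paper actually does is different and shorter. For each $b\in\mathfrak{so}(n,\C)$ set $v(z)=e^{z\,\operatorname{ad}_b}v_0\in\Sigma$ and $I(z)=\langle R(v(z)),\overline{v(z)}\rangle\ge0$ with $I(0)=0$; then $\partial_z\partial_{\bar z}I|_{z=0}\ge0$ gives, for \emph{every} $b$, the inequality $\langle R\,\operatorname{ad}_{v_0}b,\operatorname{ad}_{\bar v_0}\bar b\rangle\ge0$, i.e.\ the Hermitian operator $T:=-\operatorname{ad}_{\bar v_0}R\operatorname{ad}_{v_0}\ge0$. This is strictly stronger than the summed (traced) inequality your approach yields. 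The extra factor of $R$ is then inserted by a spectral trick: diagonalize $T$; for each eigenvector $b^\alpha$ with eigenvalue $\lambda_\alpha>0$ one has $b^\alpha$ in the range of $\operatorname{ad}_{\bar v_0}$, say $b^\alpha=\lambda_\alpha^{-1}\operatorname{ad}_{\bar v_0}w^\alpha$, and reapplying the same inequality with $b=\bar w^\alpha$ shows $\langle R(b^\alpha),\bar b^\alpha\rangle\ge0$. Hence $2\langle R^{\#}(v_0),\bar v_0\rangle=\operatorname{trace}(R\,T)=\sum_\alpha\lambda_\alpha\langle R(b^\alpha),\bar b^\alpha\rangle\ge0$. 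The key missing idea in your outline is precisely this eigenvector-in-the-range step, which is what bridges the gap between the operator inequality $T\ge0$ and the trace inequality $\operatorname{trace}(RT)\ge0$ when $R$ itself need not be nonnegative.
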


It is not hard to see that this result contains the previous result of Brendle-Schoen \cite{BS} and Nguyen \cite{Ng} on the invariance of the cone of nonnegative isotropic curvature under the Ricci flow. In particular it implies the invariance of the cone of nonnegative complex sectional curvature, a useful  consequence first observed in  \cite{BS}  (see also \cite{NW} for an alternative proof). By modifying the argument of Wilking one can prove the following result.

\begin{corollary}\label{thm:p-NBC}
The K\"ahler-Ricci flow on a compact K\"ahler manifold preserves the cone $\mathcal{C}_p$ for any $p$.
\end{corollary}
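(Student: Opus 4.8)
The plan is to recover $\mathcal C_p$ from the Appendix version of Wilking's argument, but with the complex orthogonal group $\SO(n,\C)$ (here $n=2m$) replaced by the subgroup $\mathsf{GL}(m,\C)\subset\SO(n,\C)$, the complexification of the structure group $\mathsf{U}(m)$ of the K\"ahler manifold; this replacement is legitimate because the curvature operator of a K\"ahler metric is supported on the $(1,1)$ part of $\wedge^2$. First I would fix notation: decompose $\wedge^2(\C^n)$, according to $TM\otimes\C=T'M\oplus T''M$, into $\wedge^2(T'M)\oplus\big(T'M\otimes T''M\big)\oplus\wedge^2(T''M)$, the middle summand being $\wedge^{1,1}(\C^m)$, which the K\"ahler metric identifies with the subalgebra $\mathfrak{gl}(m,\C)\subset\so(n,\C)$. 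A K\"ahler algebraic curvature operator annihilates $\wedge^2(T'M)\oplus\wedge^2(T''M)$ and preserves $\wedge^{1,1}(\C^m)$. Let $\Sigma_p\subset\wedge^{1,1}(\C^m)$ be the set of elements of the form $\sum_{k=1}^p X_k\wedge\bar Y_k$ with $X_k,Y_k\in T'M$, so that by definition $\mathcal C_p=\widetilde{\mathcal C}_{\Sigma_p}$. The key observation is that, while $\Sigma_p$ is not $\SO(n,\C)$-invariant, it is $\mathsf{GL}(m,\C)$-invariant: an element of $\mathsf{GL}(m,\C)$ acts linearly on $T'M$ and on $T''M$ (the two actions paired by the metric), hence carries $\sum_k X_k\wedge\bar Y_k$ to another sum of $p$ decomposable $(1,1)$-vectors.

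With this in place the proof is Hamilton's tensor maximum principle together with a rerun of the Appendix computation. One first checks (a routine computation, cf. \cite{Bando, Mok}) that the linear subspace of K\"ahler algebraic curvature operators is invariant under the Hamilton reaction term $\Rm\mapsto\Rm^2+\Rm^{\#}$; since moreover $\widetilde{\mathcal C}_{\Sigma_p}$ is a closed convex cone invariant under parallel transport, the invariance of $\mathcal C_p$ under the K\"ahler--Ricci flow reduces to showing that $\widetilde{\mathcal C}_{\Sigma_p}$ is preserved by the ODE $\frac{d}{dt}\Rm=\Rm^2+\Rm^{\#}$ on the space of K\"ahler operators. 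For that, suppose $\Rm(t)$ leaves the cone, let $t_0$ be the first time it does, and pick a unit $v\in\Sigma_p$ with $\langle\Rm(v),\bar v\rangle=\min_{w\in\Sigma_p}\langle\Rm(w),\bar w\rangle=0$ at $t_0$. Since the orbit $\mathsf{GL}(m,\C)\cdot v$ lies in $\Sigma_p$, differentiating $s\mapsto\langle\Rm(e^{s\xi}v),\overline{e^{s\xi}v}\rangle$ at $s=0$ — and using the substitution $\xi\mapsto\sqrt{-1}\,\xi$ to kill the imaginary part, exactly as in the proof of Proposition~\ref{max-pp} — gives $\langle\Rm([\xi,v]),\bar v\rangle=0$ for every $\xi\in\mathfrak{gl}(m,\C)$, and a second differentiation gives the accompanying second-variation inequality. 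Substituting these into Wilking's Lie-algebraic identity for $\langle(\Rm^2+\Rm^{\#})(v),\bar v\rangle$ — and noting that, because $\Rm$ kills $\wedge^2(T'M)\oplus\wedge^2(T''M)$, the sums over an orthonormal basis of $\so(n)$ appearing in that identity collapse onto one over a basis of $\mathfrak{gl}(m,\C)$ — one gets $\langle(\Rm^2+\Rm^{\#})(v),\bar v\rangle\ge0$, hence $\frac{d}{dt}\langle\Rm(v),\bar v\rangle\ge0$ at $t_0$, contradicting the exit.

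I expect the real work to be the last step: verifying that Wilking's identity, specialized to a K\"ahler curvature operator and to the subalgebra $\mathfrak{gl}(m,\C)\subset\so(n,\C)$, remains an exact identity whose right-hand side is manifestly nonnegative. Concretely, one needs that the brackets $[\xi,v]$ with $\xi\in\mathfrak{gl}(m,\C)$ and $v\in\Sigma_p$ stay of type $(1,1)$ (so that only $\mathfrak{gl}(m,\C)$-directions survive after $\Rm$ is inserted), that the first-order relations $\langle\Rm([\xi,v]),\bar v\rangle=0$ are precisely what is needed to annihilate the cross terms that in Wilking's computation would otherwise require all of $\so(n,\C)$, and that what is left over is bounded below by means of the second variation; all of this is bookkeeping with the type decomposition of $\wedge^2(\C^n)$ and the K\"ahler form of the first Bianchi identity. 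This is the promised refinement of Wilking's argument, and carrying it out in detail occupies the remainder of this section.
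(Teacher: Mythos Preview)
Your proposal is correct and follows essentially the same route as the paper: identify the subgroup $G\subset\SO(n,\C)$ commuting with $J$ (which is indeed $\mathsf{GL}(m,\C)$, with Lie algebra $\mathfrak{gl}(m,\C)\cong\wedge^{1,1}(\C^m)$), observe that $\Sigma_p$ is $G$-invariant, and rerun Wilking's Appendix argument with the basis $\{b^\alpha\}$ restricted to $\mathfrak{g}$, which is legitimate because the K\"ahler curvature operator annihilates $\mathfrak{g}^\perp$. The paper packages this as a general statement for any special holonomy group (Theorem~\ref{general-holo}), but the content is the same; one small remark is that in Wilking's formulation the first-variation identities you mention are not actually needed---only the second variation $\langle\Rm(\operatorname{ad}_b v_0),\overline{\operatorname{ad}_b v_0}\rangle\ge 0$ enters, and the $\Rm^2$ term is handled by $\langle\Rm^2(v_0),\bar v_0\rangle=|\Rm(v_0)|^2\ge 0$ directly.
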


For Riemannian manifolds, there exists another family of invariant cones which is analogous to $\mathcal{C}_p$.
We say that the complex sectional curvature of  $(M, g)$  is $k$-positive if
$$
\langle \Rm (v), \bar{v} \rangle >0
$$
for any nonzero $v\in \Lambda^2(\C^n)$ which can be written as $v=\sum_{i=1}^k{Z_i\wedge W_i}$
for some complex vectors $Z_i$ and $W_i \in TM \otimes \C$. Clearly the complex sectional curvature is $1$-positive is the same as positive complex sectional curvature. The $k$-positivity for $k\ge \frac{n(n-1)}{2}$ is the same as positive curvature operator.
Similarly one has the notion that the complex sectional curvature is $k$-nonnegative. In the space of the algebraic curvature operators $S_B(\mathfrak{so}(n))$, the ones with $k$-nonnegative complex sectional curvature form a cone $\widetilde{\mathcal{C}}_k$. Clearly $\widetilde{\mathcal{C}}_k \subset\widetilde{\mathcal{C}}_{k-1}$. The argument in \cite{NW} (this in fact was proved in an updated version of \cite{NW}) proves that

\begin{theorem}\label{thm:PkCSC}
The Ricci flow on a compact manifold preserves $\widetilde{\mathcal{C}}_k$.
\end{theorem}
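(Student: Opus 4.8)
The plan is to deduce Theorem~\ref{thm:PkCSC} directly from Wilking's Theorem~\ref{ww}, in exactly the way that Corollary~\ref{thm:p-NBC} follows from it. The first step is to exhibit $\widetilde{\mathcal{C}}_k$ as one of the cones $\widetilde{\mathcal{C}}_{\Sigma}$. Set
\[
\Sigma_k \doteqdot \left\{\, v\in\wedge^2(\C^n)\ :\ v=\sum_{i=1}^k Z_i\wedge W_i \ \text{ for some }\ Z_i,W_i\in\C^n \,\right\}.
\]
Unwinding the definition, an algebraic curvature operator $R$ has $k$-nonnegative complex sectional curvature if and only if $\langle R(v),\bar v\rangle\ge 0$ for every $v\in\Sigma_k$; that is, $\widetilde{\mathcal{C}}_k=\widetilde{\mathcal{C}}_{\Sigma_k}$.

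Next I would verify the only hypothesis of Theorem~\ref{ww} that is not automatic, namely that $\Sigma_k$ is invariant under the $\SO(n,\C)$-action on $\wedge^2(\C^n)$. This is immediate from the definition of that action: a $g\in\SO(n,\C)$ sends a decomposable $2$-vector $Z\wedge W$ to the decomposable $2$-vector $(gZ)\wedge(gW)$, hence sends a sum of $k$ decomposable $2$-vectors to a sum of $k$ decomposable $2$-vectors, so $g\cdot\Sigma_k\subseteq\Sigma_k$, and applying $g^{-1}$ gives equality. The associated cone $\widetilde{\mathcal{C}}_{\Sigma_k}$ is, by construction, an intersection of the closed half-spaces $\{R:\langle R(v),\bar v\rangle\ge 0\}$, $v\in\Sigma_k$, hence a closed convex cone of algebraic curvature operators, and the invariance of $\Sigma_k$ under $O(n)\subset\SO(n,\C)$ makes it $O(n)$-invariant. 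Theorem~\ref{ww} then applies verbatim and yields that the Ricci flow on a compact manifold preserves $\widetilde{\mathcal{C}}_k=\widetilde{\mathcal{C}}_{\Sigma_k}$, which is the assertion.

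Since this route meets no genuine obstacle, it is worth recording where the difficulty actually sits in a self-contained proof that does not quote Theorem~\ref{ww} (the argument of \cite{NW}). There one applies Uhlenbeck's trick and Hamilton's tensor maximum principle to the reaction ODE $\frac{d}{dt}\Rm=\Rm^2+\Rm^{\#}$, so that everything reduces to the algebraic statement: if $R\in\partial\widetilde{\mathcal{C}}_k$ and $v=\sum_{i=1}^k Z_i\wedge W_i$ is a null vector with $\langle R(v),\bar v\rangle=0$, then $\langle(\Rm^2+\Rm^{\#})(v),\bar v\rangle\ge 0$. This is the main obstacle: $v$ is a minimizer (after a suitable normalization) of $w\mapsto\langle R(w),\bar w\rangle$ over $\Sigma_k$, which produces first- and second-order constraints tying $R$ to $v$, and one must feed these into the bilinear structure of the $\#$-term after using the $\SO(n,\C)$-action to put $v$ in a convenient normal form; controlling the interaction of the ``sum of $k$ decomposables'' constraint with $\Rm^{\#}$ is precisely what Wilking's Lie-algebraic reformulation is designed to handle, which is why I would simply invoke Theorem~\ref{ww} as above.
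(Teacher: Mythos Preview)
Your proposal is correct and matches the paper's approach: the paper states the theorem with a reference to the argument in \cite{NW}, and then immediately remarks that ``this result is now also included in the previous mentioned general theorem of Wilking,'' which is exactly the deduction you carry out by taking $\Sigma_k$ to be the set of sums of $k$ decomposable $2$-vectors and checking its $\SO(n,\C)$-invariance. Your closing paragraph correctly identifies where the work lies in the direct \cite{NW}-style argument and why invoking Theorem~\ref{ww} is the cleaner route.
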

Of course, this result is now also included in the previous mentioned general theorem of Wilking. In fact almost all the known invariant cones of nonnegativity type can  be formulated as a special case of Wilking's above  theorem.

Note  that even for a K\"ahler manifold $\mathcal{C}_p$ is a bigger cone than $\widetilde{\mathcal{C}}_p$. For example, for K\"ahler manifold $(M ,g)$, the nonnegativity of the complex sectional curvature implies that
$$
R_{s\bar{t}j\bar{i}}(a_i \bar{b}_j-c_i \bar{d}_j) \overline{(a_s\bar{b}_t -c_s\bar{d}_t)}\ge 0
$$
for any complex vector $\vec{a}$($=(a_1, \cdots, a_m)$), $\vec{b}$, $\vec{c}$ and $\vec{d}$.
Namely $(M, g)$ has {\it strongly nonnegative sectional curvature in the sense of Siu}, which is in general stronger than the nonnegativity of the sectional curvature (or bisectional curvature). On a K\"ahler manifold, if $\{E_i\}$ is a unitary basis of $T'M$, and letting $X=a_i E_i$, $Y=b_i E_i$, $Z=c_i E_i$ and $W=d_i E_i$, then the above is equivalent to $$\langle \Rm( (X+\bar{Z})\wedge (\bar{Y}+W)), \overline{ (X+\bar{Z})\wedge (\bar{Y}+W)}\rangle\ge 0.$$
In fact a simple computation as the above proves that $\widetilde{\mathcal{C}}_p= \mathcal{C}_{2p}$: For $1\le i\le p$, let $Z_i=X_{2i-1}+\bar{Y}_{2i}$ and $W_i=\bar{Y}_{2i-1}-X_{2i}$. Then $\Rm(\sum_{i=1}^p Z_i \wedge W_i)=\Rm (\sum_{j=1}^{2p} X_j\wedge \bar{Y}_{j})$. Thus by the fact that $\Rm$ is self-adjoint
$$
\langle \Rm (\sum_{i=1}^p Z_i \wedge W_i), \sum_{i=1}^p\overline {Z_i \wedge W_i}\rangle =\langle \Rm (\sum_{j=1}^{2p} X_j\wedge \bar{Y}_{j}),\sum_{j=1}^{2p}  \overline{X_j\wedge \bar{Y}_{j}}\rangle.
$$

In order to prove Corollary \ref{thm:p-NBC}, first let $G$ be the subgroup of $\SO(n, \C)$ consisting of  matrices $A \in \SO(n, \C)$ such that $A$ commutes with the almost complex structure $J=\left(\begin{matrix} 0 &\id\\ -\id &0\end{matrix}\right)$, noting here that $n=2m$. Let $\mathfrak{g}$ be the Lie algebra of $G$. A key observation is that $\mathfrak{g}$ consisting of $c\in \mathfrak{so}(n, \C)$ which commutes with $J$. It is easy to show that $\mathfrak{g}$ is the same as $\wedge^{1, 1}(\C^m)$ under the identification of $\wedge^2(\C^n)$ with $\mathfrak{so}(n, \C)$. More precisely $c=(c_{ij})$ (which is identified with $c_{ij}X_i \wedge \bar{X}_j$ for a unitary basis $\{X_i\}$) is identified with $\left(\begin{matrix} a & -b\\ b & a\end{matrix}\right)$ with $a=c-c^{\tr}$ and $b=-\sqrt{-1}(c+c^{\tr})$. Now  the argument of Wilking can be adapted to show the following  result for the K\"ahler-Ricci flow.

\begin{theorem} \label{kaehler} Let $\Sigma\subset \wedge^{1,1}(\C^m)$ be a set invariant under the adjoint action of $G$. Let $\mathcal{C}_\Sigma=\{ \Rm |\, \langle \Rm(v), \bar{v}\rangle \ge 0\}$ for any $v\in \Sigma$.
Assume that $(M, g(t))$ (with $t\in [0, T]$) is a solution to K\"ahler-Ricci flow on a compact K\"ahler manifold such that $\Rm(g(0))\in \mathcal{C}_\Sigma$. Then for any $t\in [0, T]$, $\Rm(g(t))\in \mathcal{C}_\Sigma$.
\end{theorem}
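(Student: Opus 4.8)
\textbf{Proof proposal for Theorem \ref{kaehler}.}
The plan is to mimic Wilking's proof of Theorem \ref{ww} in the holomorphic category, replacing the group $\SO(n,\C)$ by its subgroup $G$ of matrices commuting with $J$ and replacing $\wedge^2(\C^n)$ by $\wedge^{1,1}(\C^m)\cong\mathfrak g$. The essential point is Wilking's observation that the ODE-part $Q(R)=R^2+R^\#$ of Hamilton's reaction-diffusion equation on curvature operators admits, for every $\operatorname{Ad}$-invariant $\Sigma$, a ``derivative inequality'' at a boundary point: if $\langle R(v),\bar v\rangle=0$ and $\langle R(w),\bar w\rangle\ge 0$ for all $w\in\Sigma$, then $\langle Q(R)(v),\bar v\rangle\ge 0$. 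First I would recall that the K\"ahler condition is preserved by the Ricci flow, so $\Rm(g(t))$ stays in $S^2(\wedge^{1,1}_\R(\C^m))$, i.e.\ $\Rm(g(t))$ may be regarded as a self-adjoint operator on $\mathfrak g$ for all $t$; moreover the evolution of $\Rm$ under K\"ahler-Ricci flow is still governed (after the usual Uhlenbeck trick / moving frame) by $\partial_t\Rm=\Delta\Rm+Q(\Rm)$ with the \emph{same} algebraic square-and-sharp operator $Q$, now viewed as acting within $S^2(\mathfrak g)$. Then Hamilton's tensor maximum principle reduces the assertion to the purely algebraic claim that $\mathcal C_\Sigma$ (as a subset of $S^2(\mathfrak g)$) is preserved by the ODE $\dot R=Q(R)$, which in turn follows from the boundary derivative inequality above.

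To establish that boundary inequality I would follow Wilking's argument verbatim but inside $\mathfrak g$. Fix $R\in\partial\mathcal C_\Sigma$ and $v\in\Sigma$ with $\langle R(v),\bar v\rangle=0$. Because $\Sigma$ is $\operatorname{Ad}(G)$-invariant and $R\in\mathcal C_\Sigma$, the function $g\mapsto\langle R(\operatorname{Ad}_g v),\overline{\operatorname{Ad}_g v}\rangle$ on $G$ has an interior minimum $0$ at $g=e$, so its first variation vanishes and its second variation is $\ge 0$; spelling these out with $c\in\mathfrak g$ gives $\langle R([c,v]),\bar v\rangle=0$ and $\langle R([c,[c,v]]),\bar v\rangle+\langle R([c,v]),\overline{[c,v]}\rangle\ge 0$ for all $c\in\mathfrak g$. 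One then inserts $c$ ranging over a suitable $R$-adapted orthonormal basis of $\mathfrak g$ and sums; using the defining identities $R^2+R^\#$ of $Q$ together with the first Bianchi identity and the fact that $\mathfrak g$ is a Lie subalgebra closed under the bracket (this is exactly where commuting with $J$ is used: $[\mathfrak g,\mathfrak g]\subset\mathfrak g$, so the ``internal'' sharp terms stay in $\wedge^{1,1}$), one recovers $\langle Q(R)(v),\bar v\rangle\ge 0$. The only genuinely new bookkeeping relative to Wilking is checking that every bracket and every basis sum that appears remains inside $\wedge^{1,1}(\C^m)$ rather than spilling into $\wedge^{2,0}\oplus\wedge^{0,2}$, which is immediate from $[J,\cdot]=0$ on $\mathfrak g$; I would state this as a short lemma.

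Having the algebraic statement, the maximum principle step is standard: on a compact manifold one perturbs $\mathcal C_\Sigma$ to a strictly preserved family (e.g.\ $\mathcal C_\Sigma$ shifted by $\eps\cdot(\text{identity}) e^{Kt}$ with $K$ large depending on the bound of $\Rm$) to absorb the reaction term's lower-order ``$2\Ric$'' drift coming from the Uhlenbeck frame, apply Hamilton's theorem to conclude $\Rm(g(t))\in\mathcal C_\Sigma+\eps(\cdots)$, and let $\eps\to 0$ using that $\mathcal C_\Sigma$ is closed and convex. Finally, to deduce Corollary \ref{thm:p-NBC} from Theorem \ref{kaehler} I would exhibit $\mathcal C_p=\mathcal C_\Sigma$ for $\Sigma=\{\alpha\in\wedge^{1,1}(\C^m):\alpha=\sum_{k=1}^pX_k\wedge\bar Y_k\}$ and observe that this $\Sigma$ is $\operatorname{Ad}(G)$-invariant, since $\operatorname{Ad}_g$ for $g\in G$ acts on $T'M$ by a ($\GL(m,\C)$-)linear map and hence sends a sum of $p$ decomposable $(1,1)$-vectors to another such sum. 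The step I expect to be the genuine obstacle is the careful verification of the boundary derivative inequality for $Q$ restricted to $S^2(\mathfrak g)$ — i.e.\ making Wilking's basis-summation computation go through while controlling the sharp term $R^\#$, which a priori involves structure constants of $\mathfrak{so}(n,\C)$ and must be checked to respect the splitting into $(1,1)$-type; everything else is either standard (maximum principle, Uhlenbeck trick) or a routine invariance check.
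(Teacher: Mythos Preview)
Your overall strategy matches the paper's: restrict Wilking's argument from $\mathfrak{so}(n,\C)$ to the subalgebra $\mathfrak g\cong\wedge^{1,1}(\C^m)$, reduce by Hamilton's maximum principle to the ODE $\dot R=R^2+R^\#$, and prove the boundary inequality $\langle R^\#(v_0),\bar v_0\rangle\ge 0$ for a null direction $v_0\in\Sigma$ via a variation inside $\Sigma$. Two points deserve sharpening.

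First, the obstacle you flag---that $R^\#$ is defined via structure constants of the full $\mathfrak{so}(n,\C)$ and must be checked to live in $S^2(\mathfrak g)$---is \emph{not} resolved merely by $[\mathfrak g,\mathfrak g]\subset\mathfrak g$. In the identity $\langle R^\#(v),\bar w\rangle=\tfrac12\sum_{\alpha,\beta}\langle[R(b^\alpha),R(b^\beta)],v\rangle\langle[b^\alpha,b^\beta],\bar w\rangle$ the $\{b^\alpha\}$ range over a basis of $\mathfrak{so}(n)$, and brackets $[b^\alpha,b^\beta]$ for $b^\alpha\in\mathfrak g^\perp$ need not lie in $\mathfrak g$. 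What actually makes the restriction work is that on a K\"ahler manifold the curvature operator annihilates $\mathfrak g^\perp$: since the holonomy sits in $\mathsf U(m)$, Ambrose--Singer (or the direct symmetry $R_{ijkl}=R_{ij(Jk)(Jl)}$) gives $\Rm(b)=0$ for every $b\in\mathfrak g^\perp$, hence only basis elements $b^\alpha\in\mathfrak g$ contribute. This is the observation the paper uses (and states more generally in Theorem~\ref{general-holo}); once you have it, the trace formula becomes $\langle R^\#(v_0),\bar v_0\rangle=\tfrac12\operatorname{trace}_{\mathfrak g^\C}(-\operatorname{ad}_{\bar v_0}\cdot R\cdot\operatorname{ad}_{v_0}\cdot R)$, and everything stays inside $\wedge^{1,1}$.

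Second, your passage from the second variation to $\langle R^\#(v_0),\bar v_0\rangle\ge 0$ by ``summing over an $R$-adapted basis'' is vague and, with the real second-derivative formula you wrote (which carries the extra term $\langle R([c,[c,v]]),\bar v\rangle$), does not obviously produce the $\#$-term. The paper instead takes a \emph{complex} parameter $z$, sets $v(z)=e^{z\operatorname{ad}_b}v_0\in\Sigma$, and reads off from $\partial_z\partial_{\bar z}I(0)\ge 0$ the clean inequality $\langle R(\operatorname{ad}_b v_0),\overline{\operatorname{ad}_b v_0}\rangle\ge 0$ for every $b\in\mathfrak g^\C$, i.e.\ $-\operatorname{ad}_{\bar v_0}\cdot R\cdot\operatorname{ad}_{v_0}\ge 0$ on $\mathfrak g^\C$. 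The conclusion then follows not by summation but by an eigenvalue trick: diagonalize the nonnegative operator $-\operatorname{ad}_{\bar v_0}\cdot R\cdot\operatorname{ad}_{v_0}$, write each eigenvector with positive eigenvalue as $b^\alpha=\lambda_\alpha^{-1}\operatorname{ad}_{\bar v_0}(w^\alpha)$ for $w^\alpha=R\operatorname{ad}_{v_0}b^\alpha$, and rewrite $\operatorname{trace}(-\operatorname{ad}_{\bar v_0}\cdot R\cdot\operatorname{ad}_{v_0}\cdot R)$ as a sum of nonnegative terms $\lambda_\alpha^{-1}\langle R\operatorname{ad}_{v_0}\bar w^\alpha,\operatorname{ad}_{\bar v_0}w^\alpha\rangle$. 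This is the step that replaces your basis-summation, and it is where the argument has genuine content.
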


If $C\in G$ and $v=\sum_{k=1}^p X_k \wedge \bar{Y}_k$ with $X_k\in T'M$ and $\bar{Y}_k\in T''M$, $C(v)=\sum_{k=1}^p C(X_k) \wedge C(\bar{Y}_k)$. Since $C$ is commutative with $J$, $X_k'=C(X_k)\in T'M$ and $\bar{Y}_k'=C(\bar{Y}_k)\in T''M$. Hence the set consisting of all such $v$ is an invariant set $\Sigma_k$ under the adjoint action of $G$. The invariance of  cone $\mathcal{C}_k$ follows from the above theorem by applying to $\Sigma=\Sigma_k$.

In fact, one can easily generalize Wilking's result to manifolds with special holonomy group. When the manifold $(M, g)$ has a special holonomy group $G$ with holonomy algebra $\mathfrak{g}\subset \mathfrak{so}(n)$, since for any $v\in \mathfrak{so}(n)$, $\Rm(v)\in \mathfrak{g}$ by Ambrose-Singer theorem, Wilking's proof, in particular (\ref{A1})  remains the same even if  $\{b^\alpha\}$ being an orthonormal basis of $\mathfrak{g}$ instead of  an orthonormal basis of $\mathfrak{so}(n)$. Note that in this case $\Rm(b)=0$ for any $b\in \mathfrak{g}^{\perp}$. Let $\mathfrak{g}^\C\subset  \mathfrak{so}(n, \C)$ denote the complexified Lie algebra. It is easy to see then that for  any $b\in \left({\mathfrak{g}^{\C}}\right)^{\perp}$, $\langle \Rm (b), \overline{w}\rangle =\langle b, \overline{\Rm(w)}\rangle =0$ for any $w\in \mathfrak{so}(n, \C)$. Hence $\Rm(b)=0$.  This implies that  $\langle \Rm^{\#} (v), \overline{w}\rangle=\frac{1}{2}\operatorname{trace}(-\operatorname{ad}_{\overline{w}} \cdot \Rm \cdot \operatorname{ad}_{v} \cdot \Rm )$ with the trace taken for transformations of $\mathfrak{g}^\C$.

\begin{theorem}\label{general-holo} Assume that $(M, g_0)$ is a compact manifold with special holonomy group $G$ (and corresponding Lie algebra $\mathfrak{g}$). Let $\Sigma$ be  a subset of $\mathfrak{g}^\C$ satisfying the assumption that it is invariant under the adjoint action of $G^\C$, the complexification of $G$. Then if the curvature operator $\Rm$ of $g_0$ lies inside the cone $\mathcal{C}_\Sigma$, the curvature operator $\Rm$ of $g(t)$, the solution to Ricci flow with initial value $g(0)=g_0$,  also lies inside   $\mathcal{C}_\Sigma$.
\end{theorem}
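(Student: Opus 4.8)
The plan is to rerun Wilking's proof (the argument behind Theorem~\ref{ww}, carried out in the Appendix) word for word, replacing the Lie algebra $\so(n,\C)$ by the complexified holonomy algebra $\mathfrak{g}^\C$ everywhere it occurs. First note that $\mathcal{C}_\Sigma$ is a closed convex cone of algebraic curvature operators, being an intersection of the linear half-spaces $\{\Rm : \langle \Rm(v),\ov v\rangle\ge 0\}$ over $v\in\Sigma$; and it is invariant under the holonomy action of $G$, since $\Sigma$ is $\operatorname{Ad}(G^\C)$-invariant and hence $\operatorname{Ad}(G)$-invariant. Using that the holonomy reduction to $G$ is preserved by the Ricci flow (classical in the cases of interest --- e.g. K\"ahler metrics remain K\"ahler), so that for every $t$ the operator $\Rm(g(t))$ maps $\wedge^2$ into $\mathfrak{g}$ and kills $(\mathfrak{g}^\C)^\perp$, Hamilton's tensor maximum principle for curvature operators reduces the statement to a pointwise claim about the reaction term $Q(\Rm)=\Rm^2+\Rm^{\#}$ of Hamilton's evolution $\partial_t\Rm=\Delta\Rm+Q(\Rm)$: if $\Rm\in\mathcal{C}_\Sigma$ and $v\in\Sigma$ satisfies $\langle\Rm(v),\ov v\rangle=0$, then $\langle Q(\Rm)(v),\ov v\rangle\ge 0$.

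The $\Rm^2$ contribution is harmless and needs no boundary hypothesis: since $\Rm$ is real and self-adjoint, $\langle\Rm^2(v),\ov v\rangle=\langle\Rm(v),\ov{\Rm(v)}\rangle=\|\Rm(v)\|^2\ge 0$. The real content lies in the $\Rm^{\#}$ term, for which I would use the trace formula recorded just before the statement, $\langle\Rm^{\#}(v),\ov w\rangle=\tfrac12\operatorname{trace}(-\operatorname{ad}_{\ov w}\cdot\Rm\cdot\operatorname{ad}_v\cdot\Rm)$, with the crucial point that, by the Ambrose--Singer observation, the trace may be (and is) taken over transformations of $\mathfrak{g}^\C$ rather than of $\so(n,\C)$. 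This is the single place where the special holonomy is used.

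Next I would extract the first-variation condition. Because $\Sigma$ is $\operatorname{Ad}(G^\C)$-invariant, for each $b\in\mathfrak{g}^\C$ the curve $s\mapsto\operatorname{Ad}(e^{sb})v$ stays in $\Sigma$, so $f(s)=\langle\Rm(\operatorname{Ad}(e^{sb})v),\ov{\operatorname{Ad}(e^{sb})v}\rangle\ge 0$ with $f(0)=0$; hence $f'(0)=0$ gives $\operatorname{Re}\langle\Rm([b,v]),\ov v\rangle=0$, and replacing $b$ by $\sqrt{-1}\,b$ upgrades this to $\langle\Rm([b,v]),\ov v\rangle=0$ for all $b\in\mathfrak{g}^\C$. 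Since $\mathfrak{g}^\C$ is a subalgebra, $[b,v]\in\mathfrak{g}^\C$, so every vector appearing when the trace formula for $\langle\Rm^{\#}(v),\ov v\rangle$ is expanded over a real orthonormal basis $\{b^\alpha\}$ of $\mathfrak{g}$ again lies in $\mathfrak{g}^\C$. Feeding the first-variation identity into that expansion cancels the indefinite cross-terms and leaves a manifestly nonnegative combination of squared Hermitian norms of the $\Rm([b^\alpha,v])$ --- this is exactly the identity (A1) of the Appendix, now read inside $\mathfrak{g}^\C$. Thus $\langle\Rm^{\#}(v),\ov v\rangle\ge 0$, so $\langle Q(\Rm)(v),\ov v\rangle\ge 0$, and the maximum principle concludes.

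The routine parts --- holonomy-invariance and convexity of $\mathcal{C}_\Sigma$, applicability of the bundle maximum principle, harmlessness of $\Rm^2$ --- go through verbatim as in the $\so(n,\C)$ case. The one genuine point to verify, and hence the expected main obstacle, is that Wilking's algebraic identity (A1), together with the first/second-variation bookkeeping that precedes it, never invokes a bracket outside $\mathfrak{g}^\C$ nor a trace over a space larger than $\mathfrak{g}^\C$; the former holds because $\mathfrak{g}^\C$ is a subalgebra, the latter is precisely the Ambrose--Singer reduction of the trace, and together they make the (a priori weaker) first-variation condition --- available only for $b\in\mathfrak{g}^\C$ rather than for all $b\in\so(n,\C)$ --- exactly the amount of information that the argument consumes.
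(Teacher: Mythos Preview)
Your overall strategy is exactly the paper's: reduce to Hamilton's maximum principle and show that at a null direction $v_0\in\Sigma$ one has $\langle Q(\Rm)(v_0),\ov{v_0}\rangle\ge 0$, with the $\Rm^2$ part trivially nonnegative and the $\Rm^\#$ part handled by Wilking's argument run inside $\mathfrak{g}^\C$. You also correctly identify the Ambrose--Singer reduction of the trace as the one genuinely new ingredient.

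However, your description of the $\Rm^\#$ step is not right. You extract the \emph{first}-variation condition $\langle\Rm([b,v_0]),\ov{v_0}\rangle=0$ and assert that plugging it into (A1) ``cancels the indefinite cross-terms and leaves a manifestly nonnegative combination of squared Hermitian norms of the $\Rm([b^\alpha,v_0])$''. This is not what (A1) says and not how the argument runs. Identity (A1) is simply the structural formula
\[
\langle\Rm^\#(v_0),\ov{v_0}\rangle=\tfrac12\sum_{\alpha,\beta}\langle[\Rm(b^\alpha),\Rm(b^\beta)],v_0\rangle\,\langle[b^\alpha,b^\beta],\ov{v_0}\rangle
=\tfrac12\operatorname{trace}\bigl(-\operatorname{ad}_{\ov{v_0}}\cdot\Rm\cdot\operatorname{ad}_{v_0}\cdot\Rm\bigr),
\]
and the first-variation identity does not by itself reduce this to a sum of squares. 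The paper's Appendix (Wilking's proof) instead uses the \emph{second} variation: from $I(z)=\langle\Rm(e^{z\,\operatorname{ad}_b}v_0),\ov{e^{z\,\operatorname{ad}_b}v_0}\rangle\ge 0$ and $I(0)=0$ one gets $\partial_z\partial_{\bar z}I(0)\ge 0$, i.e.
\[
\langle\Rm(\operatorname{ad}_b v_0),\ov{\operatorname{ad}_b v_0}\rangle\ge 0\quad\text{for all }b\in\mathfrak{g}^\C,
\]
equivalently the Hermitian operator $-\operatorname{ad}_{\ov{v_0}}\cdot\Rm\cdot\operatorname{ad}_{v_0}$ is nonnegative on $\mathfrak{g}^\C$. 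That alone still does not make the trace $\tfrac12\operatorname{trace}(-\operatorname{ad}_{\ov{v_0}}\cdot\Rm\cdot\operatorname{ad}_{v_0}\cdot\Rm)$ nonnegative, because $\Rm$ need not be nonnegative. One then diagonalises $-\operatorname{ad}_{\ov{v_0}}\cdot\Rm\cdot\operatorname{ad}_{v_0}$ and, writing each positive-eigenvalue eigenvector as $b^\alpha=\lambda_\alpha^{-1}\operatorname{ad}_{\ov{v_0}}(w^\alpha)$ with $w^\alpha=\Rm\cdot\operatorname{ad}_{v_0}(b^\alpha)$, rewrites the trace as $\sum_{\lambda_\alpha>0}\lambda_\alpha^{-1}\langle\Rm\cdot\operatorname{ad}_{v_0}(\ov{w^\alpha}),\operatorname{ad}_{\ov{v_0}}(w^\alpha)\rangle$, which is nonnegative by (A2) again. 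Your sketch should replace the first-variation step by this second-variation plus eigenvector argument; with that correction, and the Ambrose--Singer observation that all of this takes place in $\mathfrak{g}^\C$, the proof goes through exactly as in the paper.
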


When $G= \mathsf{U}(m)$ (with $n=2m$),  the unitary group, the above result implies Theorem \ref{kaehler}.
All above results in this section remain true on noncompact manifolds if we assume that the solution $g(t)$ has bounded curvature. This shall be proved in Section 10.

\section{A LYH type estimate for positive $(p, p)$-forms.}

First we recall some known computations of Kodaira \cite{M-K}.  Let $\phi$ be a $(p, q)$-form valued in a holomorphic Hermitian vector bundle $E$ with local frame $\{E_\a\}$ and locally $\phi=\sum \phi^\a E_{\a}$.
$$
\phi^\a = \frac{1}{p!q!}\sum \phi^\a_{I_p\bar{J}_q}dz^{I_p}\wedge dz^{\bar{J}_q}.
$$
Here $I_p=(i_1,\cdots,i_p), \bar{J}_q=(\bar{j}_1, \cdots,\bar{j}_q)$ and $dz^{I_p}=dz^{i_1}\wedge
\cdots\wedge dz^{i_p}$, $dz^{\bar{J}_q}=dz^{\overline{j_1}}\wedge\cdots \wedge dz^{\overline{j_q}}.$
For $(p, p)$ forms, $\phi_{1\cdots p, \bar{1}\cdots \bar{p}}$ differs from $\phi_{1\cdots p \bar{1}\cdots \bar{p}}$ by a factor $\left(\frac{1}{\sqrt{-1}}\right)^p(-1)^{\frac{p(p-1)}{2}}$.
The following two formulae are well known
\begin{equation}\label{eq:41}
(\bar{\p} \phi)^\a_{I_p\bar{j}_0\cdots\bar{j}_q}=(-1)^p\sum_{\nu =
0}^{q} (-1)^\nu \nabla_{\bar{j}_\nu}
\phi^{\a}_{I_p\bar{j}_0\cdots \hat{\bar{j}}_\nu\cdots \bar{j}_q},
\end{equation}
\begin{equation}\label{eq:42}
(\bar{\p}^*\phi)^\a_{I_p\bar{j}_1\cdots\bar{j}_{q-1}}=(-1)^{p+1}\sum _{ij}g^{\bar{j}i}\nabla_{i}
 \phi^{\a}_{I_p\ov{jj_1}\cdots\bar{j}_{q-1}}.
\end{equation}
Here $\hat{\bar{j_\nu}}$ means that the index $\bar{j}_\nu$ is
removed. From (\ref{eq:41}) and (\ref{eq:42}) we have
\begin{eqnarray}
 (\D_{\bar{\partial}}\phi)^{\a}_{I_p \bar{J}_q} & = & -\sum_{ij}g^{\bar{j}i}\nabla_i\nabla_{\bar{j}}\phi^{\a}_{I_p \bar{J}_q}+
\sum_{\nu = 1}^q \Omega^{\a\bar{l}}_{\b\, \bar{j}_\nu}\phi^{\b}_{I_p \bar{j}_1\cdots (\bar{l})_\nu \cdots\bar{j}_q}
 \nonumber \\
& \,&  +\sum_{\nu =1}^{q}R^{\bar{l}}_{\,\,
\bar{j}_\nu}\phi^{\a}_{I_p \bar{j}_1\cdots(\bar{l})\cdots\bar{j}_q}
-\sum_{\mu = 1}^{p}\sum_{\nu =1 }^{q}R^{\,
k\bar{l}}_{i_\mu\,\,\bar{j}_\nu}\phi^{\a}_{i_1\cdots(k)_\mu\cdots
 i_p \bar{j}_1\cdots(\bar{l})_\nu\cdots\bar{j}_q}\label{eq:43}
\end{eqnarray}
and
\begin{eqnarray}
 (\D_{\bar{\partial}}\phi)^{\a}_{I_p\bar{J}_q} & =&
-\sum_{ij}g^{i\bar{j}}\nabla_{\bar{j}}\nabla_{i}
\phi^\a_{I_p\bar{J}_q} + \sum_{\nu = 1}^q \Omega^{\a\bar{l}}_{\b
\bar{j}_\nu} \phi^{\b}_{I_p \bar{j}_1\cdots (\bar{l})_\nu
\cdots\bar{j}_q} -\sum_{\b}\Omega_{\b}^{\a}
\phi^{\b}_{I_p \bar{J}_q} \nonumber \\
& \, & +\sum_{\mu =1 }^{p}R_{i_\mu}^{\,\,
k}\phi^\a_{i_1\cdots(k)_\mu\cdots i_p \bar{J}_q} -\sum_{\mu =
1}^{p}\sum_{\nu =1 }^{q}R^{\, \, k\bar{l}}_{i_\mu\, \, \,
\bar{j}_\nu}\phi^{\a}_{i_1\cdots(k)_\mu\cdots
 i_p \bar{j}_1\cdots(\bar{l})_\nu\cdots\bar{j}_q}, \label{eq:44}
\end{eqnarray}
where $(k)_{\mu}$ means that the index in the $\mu$-th position is replaced by $k$. Here
$$
\Theta^\a_{\beta}=\frac{\sqrt{-1}}{2\pi}\sum \Omega^\a_{\beta \, i\bar{j}}dz^i\wedge d\bar{z}^j
$$
is the curvature of $E$ and $\Omega^\a_\beta$ is the mean curvature. Here we shall focus on the case that $E$ is the trivial bundle. Recall the contraction $\Lambda$ operator,  $\Lambda: \wedge^{p, q}\to \wedge^{p-1, q-1}$, defined  by
$$
(\Lambda \phi)_{i_1 \cdots i_{p-1} \bar{j}_1 \cdots \bar{j}_{q-1}}=\frac{1}{\sqrt{-1}}(-1)^{p-1} g^{i\bar{j}}\phi_{i i_1 \cdots i_{p-1} \bar{j} \bar{j}_1 \cdots \bar{j}_{p-1}}.
$$
Note our definition of $\Lambda$ differs from \cite{M-K} by a sign. With the above notations the K\"ahler identities assert
\begin{equation}\label{id1}
\partial \Lambda -\Lambda \partial =-\sqrt{-1}\bar{\partial} ^*, \quad
\bar{\partial}\Lambda -\Lambda \bar{\partial}=\sqrt{-1}\partial^*.
\end{equation}
In \cite{Ni-SDG}, the first author speculated that for $\phi$, a nonnegative $(p, p)$-form satisfying the Hodge-Laplacian heat equation, the $(p-1, p-1)$-form
\begin{equation}
Q(\phi, V)\doteqdot \frac{1}{2\sqrt{-1}}\left( \bar{\partial}^*\partial^*-\partial^*
\bar{\partial}^* \right)\phi
+\frac{1}{\sqrt{-1}}(\bar{\partial}^* \phi)_V  -\frac{1}{\sqrt{-1}}(\partial ^*\phi)_{\bar{V}} +\phi_{V,
\bar{V}}+\frac{\Lambda \phi}{t}\ge 0 \label{qn1}
\end{equation}
for any $(1,0)$ type vector field $V$.  Here $\phi_{V, \ov{V}}$ is a $(p-1,p-1)$-form defined as
$$(\phi_{V, \ov{V}})_{I_{p-1},\ov{J}_{p-1}}\doteqdot \phi_{V I_{p-1}, \ov{V} J_{p-1}}$$
or equivalently
$$
(\phi_{V, \ov{V}})_{I_{p-1}\ov{J}_{p-1}}\doteqdot \frac{1}{\sqrt{-1}}(-1)^{p-1}\phi_{V I_{p-1} \ov{V} J_{p-1}},
$$
and for $\psi$ and $\psi'$, $\psi_V$ and $\psi'_{\bar{V}}$ are defined as
$$
(\psi_V)_{I_{p-1} \ov{J}_q}\doteqdot\psi_{V I_{p-1} \ov{J}_q}, \quad \quad  (\psi'_{\bar{V}})_{I_p \ov{J}_{q-1}}\doteqdot (-1)^p\psi'_{I_p \bar{V}\ov{J}_{q-1}}.
$$
When the meaning is clear we abbreviate $Q(\phi, V)$ as $Q$.
The expression of $Q$ coincides with the quantity $Z$ in Theorem 1.1 of \cite{N-jams}  for $(1,1)$-forms due to (\ref{eq:41}), (\ref{eq:42}) as well as their cousins
\begin{eqnarray}
(\partial \phi)^\a_{i_0i_1\cdots i_p \ov{J}_q}&=&\sum_{\mu=0}^p (-1)^\mu \nabla_{i_\mu} \phi^\a_{i_0\cdots \hat{i}_\mu \cdots i_p \ov{J}_q},\label{eq:47}\\
(\partial^* \phi)^\a_{i_1\cdots i_{p-1} \ov{J}_q}&=& -\sum_{ij}g^{i\bar{j}}\nabla_{\bar{j}}\phi^\a_{ii_1\cdots i_{p-1} \ov{J}_q}\label{eq:48}
\end{eqnarray}
since the operators (defined in \cite{N-jams} for the case $p=1$) $\operatorname{div}(\phi)_{i_1\cdots i_{p-1}, \bar{J}_p}$, $\operatorname{div}(\phi)_{I_p, \bar{j}_1\cdots \bar{j}_{p-1}}$ and $g^{i\bar{j}}\nabla_i \operatorname{div}(\phi)_{I_{p-1}, \bar{j}\bar{j}_1\cdots \bar{j}_{p-1}}$  can be identified with $\partial^*$, $\bar{\partial}^*$ and $\bar{\partial}^*\partial^*$ etc.  To make it precise, first note that in our discussion the bundle is trivial and we can forget about the upper index in $\phi^\alpha$. It is easy to check that $\Lambda(\phi)_{I_{p-1}, \ov{J}_{p-1}}=g^{i\bar{j}}\phi_{i I_{p-1}, \bar{j}\ov{J}_{p-1}}$. In (\ref{qn1}), $(\bar{\partial}^* \phi)_V$ is a $(p-1, p-1)$-form which by the definition can be written as
$$
(\bar{\partial}^* \phi)_V=\sum_{i=1}^m V^i \iota_i  (\bar{\partial}^* \phi)
$$
where $\iota_i$ is the adjoint of the operator $dz^i \wedge(\cdot)$. Hence $(\bar{\partial}^* \phi)_V=\iota_V \cdot \bar{\partial}^*\phi$.
A direct calculation then shows that
$$
\frac{1}{\sqrt{-1}}\left((\bar{\partial}^* \phi)_V\right)_{I_{p-1},\ov{J}_{p-1}}=V^{i} g^{\bar{j}k}\nabla_k \phi_{i I_{p-1}, \bar{j}\ov{J}_{p-1}}.$$
Similarly, $(\partial^* \phi)_{\ov{V}}=\sum_{j=1}^m V^{\bar{j}}\iota_{\bar{j}}(\partial^* \phi)=\iota_{\ov{V}} \cdot \partial^*\phi $ and
another direct computation  implies that
$$ \frac{1}{\sqrt{-1}}\left((\partial^*\phi)_{\ov{V}}\right)_{I_{p-1}, \ov{J}_{p-1}}=-\ov{V^j}g^{i\bar{k}}\nabla_{\bar{k}}\phi_{iI_{p-1}, \bar{j}\ov{J}_{p-1}}.$$
If we define
\begin{eqnarray*}
(\operatorname{div}''(\phi))_{I_{p},\ov{J}_{p-1}}  \doteqdot\sum _{ij}g^{\bar{j}i}\nabla_{i}
 \phi_{I_p,\ov{j} \ov{J}_{p-1}}, &\,& (\operatorname{div}'(\phi))_{I_{p-1},\ov{J}_{p}} \doteqdot \sum_{ij}g^{i\bar{j}}\nabla_{\bar{j}}\phi_{iI_{p-1}, \ov{J}_p}, \\(\operatorname{div}''_{V}(\phi))_{I_{p-1}, \ov{J}_{p-1}}\doteqdot(\operatorname{div}''(\phi))_{VI_{p-1},  \bar{J}_{p-1}}
,\quad &\, &\quad (\operatorname{div}'_{\ov{V}}(\phi))_{I_{p-1}, \ov{J}_{p-1}}\doteqdot (\operatorname{div}'(\phi))_{I_{p-1}, \ov{V} \bar{J}_{p-1}}
\end{eqnarray*}
then simple calculation shows that
\begin{eqnarray*}
\frac{1}{\sqrt{-1}}(\bar{\partial}^* \phi)_V =\operatorname{div}''_{V}(\phi), \quad &\, & \quad \frac{1}{\sqrt{-1}}(\partial^* \phi)(\ov{V})=-\operatorname{div}'_{\ov{V}}(\phi),\\
 (\bar{\partial}^*\partial^* \phi)_{I_{p-1}, \ov{J}_{p-1}} &=& \sqrt{-1} \operatorname{div}''  (\operatorname{div}'(\phi))_{I_{p-1}, \ov{J}_{p-1}},\\
(\partial^*\bar{\partial}^* \phi)_{I_{p-1}, \ov{J}_{p-1}} &=& -\sqrt{-1} \operatorname{div}' (\operatorname{div}''(\phi))_{I_{p-1}, \ov{J}_{p-1}}.
\end{eqnarray*}
Hence $Q$ also has the following equivalent form:
\begin{eqnarray*}
Q_{I_{p-1}, \ov{J}_{p-1}}&=&\frac{1}{2}\left[\operatorname{div}''  (\operatorname{div}'(\phi))+ \operatorname{div}' (\operatorname{div}''(\phi))\right]_{I_{p-1}, \ov{J}_{p-1}}+(\operatorname{div}'(\phi))_{I_{p-1},\bar{V} \bar{j}_1\cdots \bar{j}_{p-1}}\\
&\quad&+(\operatorname{div}''(\phi))_{Vi_1\cdots i_{p-1},\ov{J}_{p-1}}+\phi_{V I_{p-1}, \bar{V} \ov{J}_{p-1}}+\frac{1}{t}(\Lambda \phi)_{I_{p-1}, \ov{J}_{p-1}}.
\end{eqnarray*}
Recall that $d=\partial +\bar{\partial}$ and $d_c=-\sqrt{-1}(\partial -\bar{\partial})$. One can also write $Q$ as
\begin{equation}\label{lyh-ddc}
Q=\frac{1}{2}d_c^* d^* \phi -\Pi_{p-1, p-1}\cdot  \iota_{V+\ov{V}}\cdot  d_c^*\phi +\phi_{V, \bar{V}}+\frac{\Lambda \phi}{t}
\end{equation}
as well as
\begin{equation}\label{lyh-ddpar}
Q=\frac{1}{\sqrt{-1}}\bar{\partial}^*\partial^* \phi+\frac{1}{\sqrt{-1}}\iota_{V}\cdot  \bar{\partial}^* \phi-\frac{1}{\sqrt{-1}}\iota_{\ov{V}}\cdot \partial^* \phi  +\phi_{V, \bar{V}}+\frac{\Lambda \phi}{t}
\end{equation}
Here  $\Pi_{p-1, p-1}$ is the projection to the $\wedge^{p-1, p-1}$-space.
When $\phi$ is $d$-closed and write $\psi=\Lambda \phi$, the K\"ahler identities and its consequence
$\Delta_{\bar{\partial}}=\Delta_\partial$ imply that
$$
Q=\psi_t +\frac{1}{2}\left(\bar{\partial}\bar{\partial}^*+\partial \partial^*\right) \psi +\iota_{V}\cdot  \partial \psi+\iota_{\ov{V}}\cdot \bar{\partial}\psi +\phi_{V, \ov{V}}+\frac{\psi}{t}.
$$
Sometimes we also abbreviate $\iota_{V} \partial \psi,  \iota_{\ov{V}}\bar{\partial}\psi$ as $\partial_V \psi, \bar{\partial}_{\ov{V}} \psi$.

\begin{theorem}\label{main-lyh} Let $(M, g)$ be a complete K\"ahler manifold. Assume that $\phi(x, t)$ is a positive solution to  (\ref{eq:11}). Assume further that the curvature of $(M, g)$  satisfies $\mathcal{C}_p$. Then $Q\ge 0$ for any $t>0$. Furthermore, if the equality holds somewhere for $t>0$, then $(M, g)$ must be flat.
\end{theorem}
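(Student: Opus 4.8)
The plan is to run Hamilton's tensor maximum principle on the $(p-1,p-1)$-form $Q$, in the spirit of the matrix Harnack estimate of \cite{richard-harnack}, using the Li--Yau time normalization and the curvature condition $\mathcal{C}_p$ to dominate the reaction terms, exactly as the $(1,1)$-case was treated in \cite{N-jams}. I describe the argument for $M$ compact; the complete noncompact case needs an additional localization and is postponed. \emph{Reductions:} replacing $\phi_0$ by $\phi_0+\delta\,\omega^p$ we may assume $\phi$ strictly positive, so that $\Lambda\phi/t\to+\infty$ as $t\to0^+$ and hence $Q(\phi,V)>0$ for small $t$, uniformly for $|V|\le\Lambda_0$; by homogeneity in $V$ it suffices to treat each such $\Lambda_0$, prove $Q\ge0$ on $(0,T]$, and then let $\delta\to0$.

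\emph{Evolution equation.} Starting from \eqref{KB-heat} and using Lemma \ref{lemma11}, the K\"ahler identities \eqref{id1}, and the Bochner--Kodaira formulas \eqref{eq:41}--\eqref{eq:48}, I would derive
$$\left(\frac{\partial}{\partial t}-\Delta\right)Q=\mathcal{R}(\phi,\nabla\phi,\Rm,V)-\frac{1}{t}\bigl(Q-\phi_{V,\bar V}\bigr)+\frac{1}{t}\,\phi_{V,\bar V},$$
where $\mathcal{R}$ collects the $\mathcal{KB}$-type curvature contractions produced by applying $(\partial_t-\Delta)$ to the $\bar\partial^*\partial^*\phi$, $\iota_V\bar\partial^*\phi$, $\iota_{\bar V}\partial^*\phi$ and $\phi_{V,\bar V}$ summands, together with the commutator (curvature) terms generated when $\Delta$ is commuted through the covariant derivatives; if $V$ is extended near the base point with $\nabla V=0$ and $\partial_tV=0$ in normal coordinates, no uncontrolled derivatives of $V$ survive. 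The coefficient $1/t$ is chosen precisely so that the $-\Lambda\phi/t^2$ term coming from $\partial_t(\Lambda\phi/t)$ is absorbed into $-\tfrac1t(Q-\phi_{V,\bar V})$.

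\emph{Maximum principle.} Suppose the conclusion fails. Since it holds for $t$ near $0$, there is a first $t_0>0$, a point $x_0$, a $(1,0)$-vector $V_0$, and $(1,0)$-vectors $w_1,\dots,w_{p-1}$ realizing $Q(\phi,V_0)(w_1,\dots,w_{p-1};\bar w_1,\dots,\bar w_{p-1})=0$ at $(x_0,t_0)$ as the minimum over $V$ and over $(p-1)$-tuples, with $Q(\phi,V)\ge0$ for all $V$ and $t\le t_0$. Exactly as in the proof of Proposition \ref{max-pp} this produces: (i) the first variation in the $w_k$ vanishes (replace one $w_k$ by an arbitrary $z$, then also use the $t\mapsto\sqrt{-1}t$ trick); (ii) the second variation in the $w_k$ is a semi-positive Hermitian form, cf. \eqref{2nd-var-3}; and most importantly (iii) the first variation in $V$ vanishes, which expresses the $w$-contractions of $\bar\partial^*\phi$ and $\partial^*\phi$ through the $w$- and $V_0$-contractions of $\phi$ itself. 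Extending the $w_k$ and $V_0$ to parallel fields near $x_0$ and suitably in time, the scalar $u\doteqdot Q(\phi,V_0)(w;\bar w)$ satisfies $\nabla u=0$, $\Delta u\ge0$, $u_t\le0$ at $(x_0,t_0)$.

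\emph{Conclusion and rigidity.} Evaluating the evolution equation at $(x_0,t_0)$ and inserting (i)--(iii): since $Q=0$ there, the $-\tfrac1t(Q-\phi_{V,\bar V})$ term cancels the $\Lambda\phi/t^2$ contribution hidden in $\mathcal{R}$, leaving $\tfrac1t\phi_{V_0w;\bar V_0\bar w}\ge0$; the terms linear in $\nabla\phi$ are killed by $\nabla u=0$, and after completing the square they are dominated by the quadratic-in-$V$ piece; finally the curvature part of $\mathcal{R}$ assembles, exactly as in \eqref{goal1} and the $\operatorname{trace}(\mathcal{K}\cdot\mathcal{J})$ computation of Proposition \ref{max-pp}, into $\langle\Rm(\alpha),\bar\alpha\rangle$ with $\alpha$ ranging over wedge-vectors of the form $V_0\wedge\bar{(\cdot)}+\sum_{k=1}^{p-1}w_k\wedge\bar{(\cdot)}$ --- a sum of exactly $p$ wedge terms --- so $\mathcal{C}_p$ forces it $\ge0$. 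Hence $u_t=(\partial_t-\Delta)u+\Delta u>0$, contradicting $u_t\le0$; therefore $Q\ge0$. If $Q$ vanishes at some interior point with $t_1>0$, the strong maximum principle applied to $u$ (which then satisfies $(\partial_t-\Delta)u\le Cu$ locally) forces $u\equiv0$ on $M\times(0,t_1]$; then every inequality above is an equality, so $\langle\Rm(\alpha),\bar\alpha\rangle\equiv0$ for the full family of admissible $\alpha$ and all the derivative terms vanish, which, following the rigidity analysis of \cite{N-jams}, propagates to $\Rm\equiv0$, i.e. $(M,g)$ is flat. The main obstacle is the evolution-equation computation together with the bookkeeping in this last step: one has to verify that, after using the three variational identities, every curvature contribution really has the $\mathcal{C}_p$-shape (precisely $p$ wedge factors, one supplied by $V_0$) and that the derivative terms are exactly absorbable by the first- and second-order conditions on $u$; choosing the spacetime extensions of $V$ and the $w_k$ so that no stray derivatives remain is the delicate point.
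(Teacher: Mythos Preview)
Your overall strategy matches the paper's, but there is a genuine gap at the crucial step: the space--time extension of $V$. You propose $\nabla V=0$, $\partial_t V=0$, but this does \emph{not} yield the evolution equation you write down. The paper (following Hamilton \cite{richard-harnack}) instead extends $V$ at $(x_0,t_0)$ by
\[
\nabla_i V=\tfrac{1}{t}\,\tfrac{\partial}{\partial z^i},\qquad \nabla_{\bar i}V=0,\qquad \Bigl(\tfrac{\partial}{\partial t}-\Delta\Bigr)V=-\tfrac{1}{t}V.
\]
This non-parallel choice is precisely what converts the cross terms $-g^{i\bar j}(\nabla_{\bar j}\operatorname{div}''(\phi))_{\nabla_i V}$ in the evolution of $\operatorname{div}''_V(\phi)$ into $-\tfrac{1}{t}\operatorname{div}'\operatorname{div}''(\phi)$, and similarly for $\operatorname{div}'_{\bar V}(\phi)$ and $\phi_{V,\bar V}$; combined with $-\Lambda\phi/t^2$ and the first-variation identity \eqref{eq:51} this produces exactly $-\tfrac{2}{t}Q$ in $\heat Q$. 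With a parallel extension none of these contributions appear and your displayed formula $(\partial_t-\Delta)Q=\mathcal{R}-\tfrac{1}{t}(Q-\phi_{V,\bar V})+\tfrac{1}{t}\phi_{V,\bar V}$ is not obtainable. This is not bookkeeping; it is the Li--Yau/Hamilton mechanism, and you flag it yourself as ``the delicate point'' without resolving it.

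There is a second, related gap in the curvature step. The reaction term that must be shown nonnegative is not of the form $\langle\Rm(\alpha),\bar\alpha\rangle$ for a single $\alpha$; rather it equals $\operatorname{trace}(\mathcal{M}_1\cdot\mathcal{M}_2)$, where $\mathcal{M}_2$ is the $p\times p$ block curvature matrix whose nonnegativity is exactly $\mathcal{C}_p$, and $\mathcal{M}_1$ is the block matrix built from $Q^{\mu\bar\nu}$, $\phi^{p\bar p}$, and mixed $\operatorname{div}(\phi)+\phi_V$ entries, whose nonnegativity comes from the \emph{second} variation $\Delta I(0)\ge 0$ under \emph{simultaneous} variation of the $v_\mu$ and of $V$. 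Your sketch uses only the second variation in the $w$-slots, as in Proposition \ref{max-pp}; here one must also vary $V$ to capture the off-diagonal blocks of $\mathcal{M}_1$. Finally, for rigidity the paper does something much cleaner than your equality-tracing: it checks $\Lambda\bigl(Q(\phi,V)\bigr)=Q(\Lambda\phi,V)$ via $[\Lambda,\partial^*]=[\Lambda,\bar\partial^*]=[\Lambda,\iota_V]=0$, so $Q(\phi,V)=0$ forces $Q(\Lambda^{p-1}\phi,V)=0$ and the $(1,1)$-case of \cite{N-jams} gives flatness directly.
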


\begin{proof} We postpone the proof on the part that $Q\ge 0$ to Section 5 and 11. Here we show the rigidity result implied by the  equality case, which can be reduced to the $p=1$ case treated in \cite{N-jams}. The observation is that $\Lambda (Q(\phi, V))=Q(\Lambda \phi, V)$. This can be seen via the well-known facts (cf. Corollary 4.10 of Chapter 5 of \cite{wells}) that
$$
[\Lambda, \partial^*]=[\Lambda, \bar{\partial}^*]=0
$$
as well as $\phi_{V, \bar{V}}=\sqrt{-1}\sum_{ij=1}^m V^{i} V^{\bar{j}}\iota_{i}\iota_{\bar{j}}\phi=\sqrt{-1}\iota_V\cdot \iota_{\ov{V}}\cdot \phi$ and the equalities
$$
[\Lambda, \iota_{V}]=[\Lambda, \iota_{\ov{V}}]=0.
$$
One can refer to (3.19) of Chapter 5 in \cite{wells} for a proof of the above identities.
Hence if $Q(\phi, V)=0$, it implies that $Q(\Lambda^{p-1}\phi, V)=\Lambda^{p-1}(Q(\phi, V))=0$. Now the result follows from Theorem 1.1 of \cite{N-jams} applying to $\Lambda^{p-1}\phi$, which is a positive $(1,1)$-form.
\end{proof}

Note that in the statement of the theorem, $Q(\phi, V)=0$ means it equals to the zero as a $(p-1, p-1)$-form.

\begin{corollary}\label{main-con1}
Let $(M, g)$ and  $\phi$ be as above. Let $\psi=\Lambda \phi$ and assume that $\phi$ is $d$-closed. Then
\begin{equation}\label{con1-lyh}
\frac{1}{t}\frac{\partial}{\partial t}\left(t \psi\right)+\frac{1}{2}\left(\bar{\partial}\bar{\partial}^* +\partial \partial^*\right)\psi \ge -\min_{V}\left(\partial_V \psi+\bar{\partial}_{\ov{V}}\psi +\phi(V, \ov{V})\right)\ge 0.
\end{equation}
In particular, for any $\phi\ge 0$, if $M$ is compact,
\begin{equation}\label{mono}
\frac{d}{dt}\int_M t \psi\wedge \omega^{m-p+1} \ge 0.
\end{equation}
\end{corollary}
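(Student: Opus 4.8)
The plan is to read both inequalities directly off Theorem \ref{main-lyh}. Recall that the excerpt already records, for a $d$-closed $(p,p)$-form $\phi$ solving \eqref{eq:11} and $\psi=\Lambda\phi$, the identity
$$
Q=\psi_t+\tfrac12\bigl(\bar\partial\bar\partial^*+\partial\partial^*\bigr)\psi+\iota_V\partial\psi+\iota_{\ov V}\bar\partial\psi+\phi_{V,\ov V}+\tfrac{\psi}{t},
$$
with $\iota_V\partial\psi=\partial_V\psi$ and $\iota_{\ov V}\bar\partial\psi=\bar\partial_{\ov V}\psi$; and Theorem \ref{main-lyh} asserts $Q\ge0$ as a $(p-1,p-1)$-form for every $(1,0)$-type vector field $V$ and every $t>0$. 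Since $\psi_t+\psi/t=\frac1t\frac{\partial}{\partial t}(t\psi)$, rearranging $Q\ge 0$ gives, for each $V$,
$$
\frac1t\frac{\partial}{\partial t}(t\psi)+\frac12\bigl(\bar\partial\bar\partial^*+\partial\partial^*\bigr)\psi\ \ge\ -\bigl(\partial_V\psi+\bar\partial_{\ov V}\psi+\phi(V,\ov V)\bigr),
$$
which is the first inequality of \eqref{con1-lyh}, the right-hand side being read as the optimal (largest) such lower bound over $V$. For the second inequality in \eqref{con1-lyh}, I would simply note that the competitor $V\equiv0$ makes $\partial_V\psi+\bar\partial_{\ov V}\psi+\phi(V,\ov V)$ vanish identically — each term is homogeneous of positive degree in $V$ — so $\min_V(\cdots)\le 0$ as a $(p-1,p-1)$-form and hence $-\min_V(\cdots)\ge0$.

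For the monotonicity formula \eqref{mono}, I would take $V\equiv0$ above (equivalently, use $Q(\phi,0)\ge0$), obtaining the positive $(p-1,p-1)$-form inequality
$$
\psi_t+\tfrac12\bigl(\bar\partial\bar\partial^*+\partial\partial^*\bigr)\psi+\tfrac{\psi}{t}\ \ge\ 0 .
$$
Wedging with $\omega^{m-p+1}$, which is a strongly positive $(m-p+1,m-p+1)$-form, turns a positive $(p-1,p-1)$-form into a nonnegative top form (the usual Lelong positivity bookkeeping), so integrating over the compact $M$ preserves the inequality. The key point is then that the Laplacian-type term integrates to zero: since $\omega$ is $d$-closed, $d\bigl(\bar\partial^*\psi\wedge\omega^{m-p+1}\bigr)=\bar\partial\bar\partial^*\psi\wedge\omega^{m-p+1}$ because the $\partial$-part lies in bidegree $(m+1,m-1)=0$ and $d\omega^{m-p+1}=0$, whence Stokes gives $\int_M\bar\partial\bar\partial^*\psi\wedge\omega^{m-p+1}=0$, and symmetrically $\int_M\partial\partial^*\psi\wedge\omega^{m-p+1}=0$. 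Therefore $\int_M(\psi_t+\psi/t)\wedge\omega^{m-p+1}\ge0$, and since $\omega$ is time-independent,
$$
\frac{d}{dt}\int_M t\,\psi\wedge\omega^{m-p+1}=\int_M\frac{\partial}{\partial t}(t\psi)\wedge\omega^{m-p+1}=t\int_M\Bigl(\psi_t+\frac{\psi}{t}\Bigr)\wedge\omega^{m-p+1}\ge0 .
$$

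The genuinely substantive input is Theorem \ref{main-lyh} itself, whose positivity statement $Q\ge0$ is postponed to Sections 5 and 11; granting that, the corollary is essentially bookkeeping. The only points needing care are (i) the Lelong fact that wedging a positive form with the strongly positive form $\omega^{m-p+1}$ produces a nonnegative volume form, and (ii) the integration-by-parts identity that kills the $\bar\partial\bar\partial^*$ and $\partial\partial^*$ terms, which genuinely uses the K\"ahler condition $d\omega=0$ together with the bidegree count on an $m$-dimensional manifold. Accordingly, I expect the main obstacle to be notational — tracking bidegrees, signs, and the precise meaning of $-\min_V(\cdots)$ — rather than conceptual.
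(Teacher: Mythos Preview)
Your proof is correct and follows essentially the same route as the paper: both read \eqref{con1-lyh} off the $d$-closed form of $Q\ge0$ from Theorem~\ref{main-lyh}, and both kill the $\bar\partial\bar\partial^*+\partial\partial^*$ term in \eqref{mono} by integrating against the closed form $\omega^{m-p+1}$. The one small difference is that for the inequality $-\min_V(\cdots)\ge0$ the paper perturbs $\phi$ to be strictly positive, computes the minimizing $V$, and shows the minimum value equals $-\phi(V,\ov V)_{I_{p-1},\ov I_{p-1}}\le0$, whereas you simply observe that $V=0$ is a competitor with value zero --- your shortcut is perfectly adequate for the statement as written.
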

\begin{proof} By adding $\epsilon \omega^p$ and then letting $\epsilon\to 0$, we can assume that $\phi$ is strictly positive. Then for any $I_{p-1}=(i_1, \cdots, i_{p-1})$,  the Hermitian bilinear form ( in $V$),
$(\psi_t+\frac{\psi}{t})_{I_{p-1}, \ov{I}_{p-1}}+(\partial_V \psi+\bar{\partial}_{\ov{V}}\psi)_{I_{p-1}, \ov{I}_{p-1}}
+\phi(V, \ov{V})_{I_{p-1}, \ov{I}_{p-1}}
$ has a minimum. It is then easy to see that for the minimizing vector $V$,
$$
(\partial_V \psi+\bar{\partial}_{\ov{V}}\psi)_{I_{p-1}, \ov{I}_{p-1}}
+\phi(V, \ov{V})_{I_{p-1}, \ov{I}_{p-1}}=-\phi(V, \ov{V})_{I_{p-1}, \ov{I}_{p-1}}.
$$
The first result then follows. For the second one, just notice that
$$
\int_M (\bar{\partial}\bar{\partial}^*+\partial \partial^*)\psi\wedge \omega^{m-p+1}=0.
$$
\end{proof}

\begin{remark} Clearly, if one can perform the integration by parts and control the boundary terms, the monotonicity (\ref{mono}) still holds on noncompact case.
\end{remark}

One can define a formal dual operator of $Q(\phi, V)$ as
\begin{equation}\label{lyh-dual}
Q^*(\psi, V^*)\doteqdot\sqrt{-1}\partial \bar{\partial} \psi +\sqrt{-1}V^*\wedge \bar{\partial}\psi-\sqrt{-1}\ov{V}^* \wedge \partial \psi +\sqrt{-1}V^*\wedge \ov{V}^* \wedge \psi+\frac{\omega\wedge \psi}{t}
\end{equation}
which maps a $(m-p, m-p)$-form $\psi$ to a $(m-p+1, m-p+1)$-form. Here $V^*$ is a $(1,0)$ type co-vector.  The following duality can be checked by  direct calculations, making use of the following well known identities on $(p,q)$-forms (cf. Proposition 2.4, (1.13) and (3.14) of Chapter 5 of \cite{wells} repectively):
\begin{eqnarray*}
\bar{\partial}^* =-* \cdot \partial \cdot *\, ,  \quad &\quad& \quad \partial^*=-* \cdot \bar{\partial} \cdot *\, ;\\
\Lambda&=& (-)^{p+q} * \cdot  (\omega \wedge)\cdot *\, ;  \\
\iota_{V} =*\cdot (\ov{V}^* \wedge)\cdot *\, ,\quad &\quad&\quad \iota_{\ov{V}} =*\cdot (V^* \wedge)\cdot *\, .
\end{eqnarray*}
Here $*$ is the Hodge-star operator.
\begin{proposition} Let $\phi$,  $V$ be as the above discussion. Let $V^*$ be the dual of $V$. Let $*$ be the Hodge star operator.  Then
\begin{equation}\label{dual1}
Q(\phi, V)=* \cdot Q^* (* \cdot \phi, V^*).
\end{equation}
\end{proposition}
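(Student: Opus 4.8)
The plan is to verify the identity $Q(\phi,V) = *\cdot Q^*(*\cdot\phi, V^*)$ term by term, using the list of Hodge-star identities collected just before the Proposition. Write $\psi \doteqdot *\phi$, a $(m-p,m-p)$-form. Since $*$ is (up to sign, which on a fixed-degree space is constant) an isometric involution intertwining the relevant operators, the strategy is to apply $*$ to $Q^*(\psi,V^*)$ and match each of its five summands against the corresponding summand in the formula (\ref{lyh-ddpar}) for $Q(\phi,V)$.

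The key steps, in order, are the following. First, the leading term: using $\bar\partial^* = -*\cdot\partial\cdot *$ and $\partial^* = -*\cdot\bar\partial\cdot *$ one computes $*\cdot(\sqrt{-1}\,\partial\bar\partial\psi)$ and checks it equals $\frac{1}{\sqrt{-1}}\bar\partial^*\partial^*\phi$; here one must be careful with the interplay $*\cdot* = \pm\id$ on the intermediate degrees and with the factor $\sqrt{-1}$ versus $1/\sqrt{-1}$, which is exactly the point of writing the operator $Q^*$ with $\sqrt{-1}\,\partial\bar\partial$ rather than $\bar\partial\partial$. Second, the first-order terms: from $\iota_V = *\cdot(\ov V^*\wedge)\cdot *$ and $\iota_{\ov V} = *\cdot(V^*\wedge)\cdot *$, together with the expressions for $\partial^*,\bar\partial^*$ above, one shows $*\cdot(\sqrt{-1}\,V^*\wedge\bar\partial\psi) = \frac{1}{\sqrt{-1}}\iota_V\cdot\bar\partial^*\phi$ and $*\cdot(-\sqrt{-1}\,\ov V^*\wedge\partial\psi) = -\frac{1}{\sqrt{-1}}\iota_{\ov V}\cdot\partial^*\phi$. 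Third, the zeroth-order term: using $\iota_V\iota_{\ov V} = *\cdot(\ov V^*\wedge)\cdot*\cdot*\cdot(V^*\wedge)\cdot*$ and $\phi_{V,\ov V} = \sqrt{-1}\,\iota_V\cdot\iota_{\ov V}\cdot\phi$ (recorded in the proof of Theorem \ref{main-lyh}), one checks $*\cdot(\sqrt{-1}\,V^*\wedge\ov V^*\wedge\psi) = \phi_{V,\ov V}$. Fourth, the $1/t$ term: from $\Lambda = (-)^{p+q}*\cdot(\omega\wedge)\cdot*$ one gets $*\cdot\frac{\omega\wedge\psi}{t} = \frac{\Lambda\phi}{t}$. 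Summing the four matched identities gives (\ref{dual1}).

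The main obstacle will be bookkeeping of signs and of the ambiguous factors $*\cdot* = (-1)^{k(2m-k)}$ appearing on the various intermediate form-degrees, since the identities are being composed two and three at a time and each insertion of $*\cdot*$ contributes a degree-dependent sign; one has to check these all cancel uniformly (independently of $p$), which is really the content of the "compatibility with the Hodge $*$" claim already asserted in the introduction. A clean way to organize this is to fix a point and a unitary coframe, so that $*$, $\Lambda$, $L$, $\iota_V$, $V^*\wedge$ all become explicit combinatorial operations on multi-indices, and then the identity (\ref{dual1}) reduces to a single linear-algebra verification on $\wedge^{p-1,p-1}(\C^m) \cong \wedge^{m-p+1,m-p+1}(\C^m)$; alternatively one invokes the stated operator identities as black boxes and tracks only the scalar signs. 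Either way the computation is routine once the four term-by-term matchings above are set up.
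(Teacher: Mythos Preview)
Your proposal is correct and follows essentially the same approach as the paper. The paper does not give a detailed proof of this Proposition; it simply states that the duality ``can be checked by direct calculations, making use of'' the listed Hodge-star identities for $\partial^*,\bar\partial^*,\Lambda,\iota_V,\iota_{\ov V}$, and your outline is precisely such a term-by-term verification, with the expected attention to the $**=(-1)^{p+q}$ signs on intermediate bidegrees.
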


By this duality, one can identify the result for the $(m, m)$-forms with that of \cite{CN}. In the rest of this section we derive some preliminary results  useful for the proof of Theorem \ref{main-lyh}.
The following lemma follows from (\ref{eq:43}), (\ref{eq:44}) and the fact that $[\Delta_{\bar{\partial}}, \partial^*]=[\Delta_{\bar{\partial}}, \bar{\partial}^*]=0$.

\begin{lemma} \label{help41} Let $\phi$ be a $(p, p)$-form satisfying (\ref{eq:11}). Then $(\frac{\partial}{\partial t}+\Delta_{\bar{\partial}})\bar{\partial}^* \phi =(\frac{\partial}{\partial t}+\Delta_{\bar{\partial}})\partial^* \phi=0$. Hence by (\ref{eq:43}), (\ref{eq:44})
\begin{eqnarray}
 &\,&\heat (\bar{\partial}^*\phi)_{i i_1\cdots i_{p-1}, \bar{j}_1\cdots \bar{j}_{p-1}}= \sum_{\mu = 1}^{p-1}\sum_{\nu =1 }^{p-1}R^{\,
k\bar{l}}_{i_\mu\,\,\bar{j_\nu}}(\bar{\partial}^*\phi)_{ii_1\cdots(k)_\mu\cdots
 i_{p-1},\bar{j_1}\cdots(\bar{l})_\nu\cdots\bar{j}_{p-1}} \label{eq:lem41}\\
\quad  \quad &
+& \sum_{\nu =1 }^{p-1}R^{\,
k\bar{l}}_{i\,\,\bar{j_\nu}}(\bar{\partial}^*\phi)_{ki_1\cdots
 i_{p-1},\bar{j_1}\cdots(\bar{l})_\nu\cdots\bar{j}_{p-1}}-\frac{1}{2}R_{i}^{\,\,
k}(\bar{\partial}^*\phi)_{ki_1\cdots i_{p-1},\bar{j}_1\cdots \bar{j}_{p-1}} \nonumber\\
\quad \quad &
-&\frac{1}{2}\left(\sum_{\nu =1}^{p-1}R^{\bar{l}}_{\,\,
\bar{j}_\nu}(\bar{\partial}^*\phi)_{ii_1\cdots i_{p-1},\bar{j_1}\cdots(\bar{l})\cdots\bar{j}_{p-1}}+\sum_{\mu =1 }^{p-1}R_{i_\mu}^{\,\,
k}(\bar{\partial}^*\phi)_{ii_1\cdots(k)_\mu\cdots i_{p-1},\bar{j}_1\cdots \bar{j}_{p-1}}\right),\nonumber\\
&\,&\heat (\partial^*\phi)_{ i_1\cdots i_{p-1}, \bar{j}\bar{j}_1\cdots \bar{j}_{p-1}}= \sum_{\mu = 1}^{p-1}\sum_{\nu =1 }^{p-1}R^{\,
k\bar{l}}_{i_\mu\,\,\bar{j_\nu}}(\partial^*\phi)_{i_1\cdots(k)_\mu\cdots
 i_{p-1},\bar{j}\bar{j_1}\cdots(\bar{l})_\nu\cdots\bar{j}_{p-1}} \label{eq:lem42}\\
\quad \quad &
+& \sum_{\mu =1 }^{p-1}R^{\,
k\bar{l}}_{i_\mu\,\,\bar{j}}(\partial^*\phi)_{i_1\cdots(k)_\mu\cdots
 i_{p-1}, \bar{l}\bar{j_1}\cdots\bar{j}_{p-1}}-\frac{1}{2}R^{\bar{l}}_{\,\,
\bar{j}}(\partial^*\phi)_{i_1\cdots i_{p-1},\bar{l}\bar{j}_1\cdots \bar{j}_{p-1}} \nonumber \\
\quad \quad&
-&\frac{1}{2}\left(\sum_{\nu =1}^{p-1}R^{\bar{l}}_{\,\,
\bar{j}_\nu}(\partial^*\phi)_{i_1\cdots i_{p-1},\bar{j}\bar{j_1}\cdots(\bar{l})\cdots\bar{j}_{p-1}}+\sum_{\mu =1 }^{p-1}R_{i_\mu}^{\,\,
k}(\partial^*\phi)_{i_1\cdots(k)_\mu\cdots i_{p-1},\bar{j}\bar{j}_1\cdots \bar{j}_{p-1}}\right).\nonumber
\end{eqnarray}
\end{lemma}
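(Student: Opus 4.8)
The statement to prove is Lemma 4.? (labelled \texttt{help41}): that if $\phi$ solves the Hodge--Laplacian heat equation \eqref{eq:11}, then $\bar\partial^*\phi$ and $\partial^*\phi$ also solve it, and consequently they satisfy the explicit heat-type system \eqref{eq:lem41}, \eqref{eq:lem42} obtained from Kodaira's formulas \eqref{eq:43}, \eqref{eq:44}.

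\medskip

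\textbf{Plan.} The first and conceptually main point is the commutation $[\Delta_{\bar\partial},\partial^*]=[\Delta_{\bar\partial},\bar\partial^*]=0$. I would derive this from the K\"ahler identity $\Delta_{\bar\partial}=\Delta_\partial$ (already invoked in the excerpt) together with the defining relations $\Delta_{\bar\partial}=\bar\partial\bar\partial^*+\bar\partial^*\bar\partial$ and $\Delta_\partial=\partial\partial^*+\partial^*\partial$: indeed $\Delta_{\bar\partial}\bar\partial^* = \bar\partial^*\bar\partial\bar\partial^* = \bar\partial^*\Delta_{\bar\partial}$ since $(\bar\partial^*)^2=0$, and similarly $\Delta_\partial\partial^*=\partial^*\Delta_\partial$; transporting the second through $\Delta_{\bar\partial}=\Delta_\partial$ gives $[\Delta_{\bar\partial},\partial^*]=0$. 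Applying $\partial^*$ (resp.\ $\bar\partial^*$) to \eqref{eq:11} and commuting past $\Delta_{\bar\partial}$ then shows $\left(\frac{\partial}{\partial t}+\Delta_{\bar\partial}\right)\partial^*\phi=0$ and $\left(\frac{\partial}{\partial t}+\Delta_{\bar\partial}\right)\bar\partial^*\phi=0$. (One should note $\bar\partial^*\phi$ is a $(p,p-1)$-form and $\partial^*\phi$ is a $(p-1,p)$-form, so $\Delta_{\bar\partial}$ acts on forms of those bidegrees.)

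\medskip

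\textbf{From the heat equations to the explicit systems.} Next I would convert $\left(\frac{\partial}{\partial t}-\Delta\right)$-form of these equations into the stated identities by feeding $\bar\partial^*\phi\in\wedge^{p,p-1}$ into Kodaira's formula \eqref{eq:43} (with $q=p-1$, $E$ trivial so all $\Omega$-terms drop) and $\partial^*\phi\in\wedge^{p-1,p}$ into \eqref{eq:44} (with $p$ replaced by $p-1$ and $q=p$). For $\bar\partial^*\phi$ with components $(\bar\partial^*\phi)_{ii_1\cdots i_{p-1},\bar j_1\cdots\bar j_{p-1}}$, \eqref{eq:43} produces: the rough Laplacian $-g^{\bar j i}\nabla_i\nabla_{\bar j}$, a Ricci term $\sum_{\nu}R^{\bar l}_{\ \bar j_\nu}$ over the $p-1$ antiholomorphic slots $\bar j_\nu$, and a full curvature term $\sum_\mu\sum_\nu R^{k\bar l}_{i_\mu\ \bar j_\nu}$ over \emph{all $p$} holomorphic slots $i,i_1,\dots,i_{p-1}$ against the $p-1$ antiholomorphic slots. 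To match the stated form \eqref{eq:lem41}, I must split the holomorphic-slot sum into the distinguished slot $i$ (giving the $\sum_{\nu}R^{k\bar l}_{i\ \bar j_\nu}(\bar\partial^*\phi)_{k\cdots}$ term) and the remaining $\mu=1,\dots,p-1$ slots; and then symmetrize: convert $\frac12(g^{\bar ji}\nabla_i\nabla_{\bar j}+g^{i\bar j}\nabla_{\bar j}\nabla_i)=\Delta$ up to a commutator. Commuting $\nabla_i$ and $\nabla_{\bar j}$ on a $(p,p-1)$-tensor introduces precisely Ricci- and curvature-correction terms via the K\"ahler Ricci identity $[\nabla_i,\nabla_{\bar j}]T = -(\text{curvature acting on each index})$; tracing these against $g^{i\bar j}$ is what converts the single Ricci sum over antiholomorphic slots into the symmetrized combination $-\frac12(\sum_\nu R^{\bar l}_{\ \bar j_\nu}+\sum_\mu R^{\ k}_{i_\mu})$ appearing in \eqref{eq:lem41}, absorbs one more full-curvature contraction against slot $i$, and (crucially) produces the single $-\frac12 R^{\ k}_{i}(\bar\partial^*\phi)_{k\cdots}$ term carried by the distinguished slot. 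The computation for $\partial^*\phi$ via \eqref{eq:44} is the mirror image, with the roles of holomorphic/antiholomorphic indices swapped and the distinguished slot now being the $\bar j$ in $(\partial^*\phi)_{i_1\cdots i_{p-1},\bar j\bar j_1\cdots\bar j_{p-1}}$.

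\medskip

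\textbf{Expected main obstacle.} The commutation $[\Delta_{\bar\partial},\bar\partial^*]=[\Delta_{\bar\partial},\partial^*]=0$ is standard and short. The real bookkeeping is in the second part: carefully tracking which curvature/Ricci contractions live on the ``old'' $p-1$ slots (inherited from $\phi$) versus the ``new'' distinguished slot created by $\partial^*$ or $\bar\partial^*$, and getting the factors of $\tfrac12$ and signs right when symmetrizing the two expressions \eqref{eq:43}, \eqref{eq:44} for $\Delta_{\bar\partial}$ via the K\"ahler commutation of $\nabla$'s. I expect no conceptual difficulty, only the need to be scrupulous with the index combinatorics; the structure of \eqref{eq:Kodaira} in Lemma~\ref{lemma11} (the $p=q$ case) is exactly the template, and the stated formulas are the $(p,p-1)$ and $(p-1,p)$ analogues with one index distinguished.
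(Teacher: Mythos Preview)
Your proposal is correct and follows essentially the paper's own route: the paper simply states that the lemma ``follows from \eqref{eq:43}, \eqref{eq:44} and the fact that $[\Delta_{\bar\partial},\partial^*]=[\Delta_{\bar\partial},\bar\partial^*]=0$,'' which is exactly your plan. One small simplification: rather than feeding $\bar\partial^*\phi$ into \eqref{eq:43} alone and then symmetrizing the second-order term by explicitly commuting $\nabla_i,\nabla_{\bar j}$, you can simply \emph{average} \eqref{eq:43} and \eqref{eq:44} (both equal $\Delta_{\bar\partial}$ on the same form, with $E$ trivial so the $\Omega$-terms vanish), which directly produces $-\Delta$ plus the symmetrized Ricci terms $-\tfrac12(\sum_\nu R^{\bar l}_{\ \bar j_\nu}+\sum_\mu R^{\ k}_{i_\mu})$ and the full-curvature double sum, with no commutator bookkeeping needed; splitting off the distinguished slot $i$ (resp.\ $\bar j$) from the holomorphic (resp.\ antiholomorphic) sums then yields \eqref{eq:lem41} and \eqref{eq:lem42} immediately.
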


Similarly, one can write the following lemma.
\begin{lemma} \label{help42}Let $\phi$ be a solution to (\ref{eq:11}). Then $(\frac{\partial}{\partial t}+\Delta_{\bar{\partial}})(\partial^*
\bar{\partial}^* \phi)=(\frac{\partial}{\partial t}+\Delta_{\bar{\partial}})(\bar{\partial}^*\partial^* \phi)=0$. Hence (\ref{eq:43}), (\ref{eq:44}) imply similar equations for
$\heat (\partial^*
\bar{\partial}^* \phi)$ and $\heat (\bar{\partial}^*\partial^* \phi)$. Namely
\begin{eqnarray}
&\,&\heat  (\partial^*
\bar{\partial}^* \phi)_{i_1\cdots i_{p-1}, \bar{j}_1\cdots \bar{j}_{p-1}}=\sum_{\mu = 1}^{p-1}\sum_{\nu =1 }^{p-1}R^{\,
k\bar{l}}_{i_\mu\,\,\bar{j_\nu}}(\partial^*
\bar{\partial}^* \phi)_{i_1\cdots(k)_\mu\cdots
 i_{p-1},\bar{j_1}\cdots(\bar{l})_\nu\cdots\bar{j}_{p-1}} \label{eq:lm43}\\
 &\quad&
-\frac{1}{2}\left(\sum_{\nu =1}^{p-1}R^{\bar{l}}_{\,\,
\bar{j}_\nu}(\partial^*
\bar{\partial}^* \phi)_{i_1\cdots i_{p-1},\bar{j_1}\cdots(\bar{l})\cdots\bar{j}_{p-1}}+\sum_{\mu =1 }^{p-1}R_{i_\mu}^{\,\,
k}(\partial^*
\bar{\partial}^* \phi)_{i_1\cdots(k)_\mu\cdots i_{p-1}, \bar{j}_1\cdots\bar{j}_{p-1}}\right).\nonumber
\end{eqnarray}
Simply put $\heat (\partial^*
\bar{\partial}^* \phi)=\mathcal{KB}(\partial^*
\bar{\partial}^* \phi)$ and $\heat (\bar{\partial}^*\partial^* \phi)=\mathcal{KB}(
\bar{\partial}^* \partial^* \phi)$.
\end{lemma}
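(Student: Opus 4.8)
The plan is to reduce Lemma~\ref{help42} to the degree-uniform Kodaira--Bochner formula of Lemma~\ref{lemma11} together with the Kähler commutation identities
$$
[\D_{\dbar}, \partial^*] = [\D_{\dbar}, \bar{\partial}^*] = 0,
$$
exactly as was done for Lemma~\ref{help41}. These commutations hold because $(M,g)$ is Kähler: from $\bar{\partial}^2 = 0$ and $(\bar{\partial}^*)^2 = 0$ one gets $\D_{\dbar}\bar{\partial}^* = \bar{\partial}^*\bar{\partial}\bar{\partial}^* = \bar{\partial}^*\D_{\dbar}$, and since on a Kähler manifold $\D_{\dbar} = \D_{\partial}$, the same argument with $\partial$ in place of $\bar{\partial}$ gives $\D_{\dbar}\partial^* = \D_{\partial}\partial^* = \partial^*\D_{\partial} = \partial^*\D_{\dbar}$. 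The metric being fixed, $\partial/\partial t$ commutes with all of these operators as well.

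First I would apply $\bar{\partial}^*\partial^*$ and $\partial^*\bar{\partial}^*$ to the evolution equation~(\ref{eq:11}). Using the commutations above,
$$
\left(\frac{\partial}{\partial t} + \D_{\dbar}\right)(\bar{\partial}^*\partial^*\phi) = \bar{\partial}^*\partial^*\left(\frac{\partial}{\partial t} + \D_{\dbar}\right)\phi = 0,
$$
and likewise $\left(\frac{\partial}{\partial t} + \D_{\dbar}\right)(\partial^*\bar{\partial}^*\phi) = 0$. Thus $\bar{\partial}^*\partial^*\phi$ and $\partial^*\bar{\partial}^*\phi$ are themselves solutions of the Hodge--Laplacian heat equation, now on the bundle of $(p-1,p-1)$-forms.

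Next I would feed these two $(p-1,p-1)$-form solutions into Lemma~\ref{lemma11}: the identity~(\ref{eq:Kodaira}), and hence its consequence~(\ref{KB-heat}), holds for $(q,q)$-forms for every $q$. Specializing to $q = p-1$ converts the $\D_{\dbar}$-heat equation into the reaction equation $\heat(\partial^*\bar{\partial}^*\phi) = \mathcal{KB}(\partial^*\bar{\partial}^*\phi)$ and similarly for $\bar{\partial}^*\partial^*\phi$; writing out the definition of $\mathcal{KB}$ for a $(p-1,p-1)$-form — i.e. replacing $p$ by $p-1$ and $I_p,\bar{J}_p$ by $I_{p-1},\bar{J}_{p-1}$ throughout — produces precisely the right-hand side of~(\ref{eq:lm43}). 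Alternatively, one can bypass the abstract commutation step and simply differentiate~(\ref{eq:43})--(\ref{eq:44}), as in Lemma~\ref{help41}, the Ricci terms organizing themselves into the symmetric Bochner form.

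I do not expect a genuine obstacle here: the content is just (i) the Kähler commutation identities and (ii) the degree-uniformity of the Kodaira formula. The only care needed is bookkeeping — keeping the fixed normalization factor $\left(\tfrac{1}{\sqrt{-1}}\right)^{p}(-1)^{p(p-1)/2}$ relating $\phi_{1\cdots p,\bar 1\cdots\bar p}$ to $\phi_{1\cdots p\bar 1\cdots\bar p}$ straight (it is constant, hence harmless), and checking that the curvature terms of~(\ref{eq:43})--(\ref{eq:44}) collapse to those of~(\ref{eq:Kodaira}) when the coefficient bundle is trivial. All of this is identical to the computation already carried out for Lemma~\ref{help41}.
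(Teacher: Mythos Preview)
Your proposal is correct and follows essentially the same approach as the paper. The paper does not spell out a proof for this lemma beyond the sentence ``Similarly, one can write the following lemma,'' relying on exactly the two ingredients you identify: the K\"ahler commutation identities $[\Delta_{\bar\partial},\partial^*]=[\Delta_{\bar\partial},\bar\partial^*]=0$ (already invoked for Lemma~\ref{help41}) and the degree-uniform Kodaira--Bochner formula~(\ref{eq:43})--(\ref{eq:44}) specialized to $(p-1,p-1)$-forms.
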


Lemma \ref{help41} implies that
\begin{eqnarray}
 &\,&\heat (\operatorname{div}''_{V}(\phi))_{I_{p-1}, \ov{J}_{p-1}}=\operatorname{div}''_{\heat V}(\phi) +\mathcal{KB}(\operatorname{div}''_{V}(\phi)) \label{eq:lem412}\\
\quad  \quad &+& \sum_{\nu =1 }^{p-1}R^{\,
k\bar{l}}_{V\,\bar{j_\nu}}(\operatorname{div}''(\phi))_{ki_1\cdots
 i_{p-1},\bar{j_1}\cdots(\bar{l})_\nu\cdots\bar{j}_{p-1}}-\frac{1}{2}\operatorname{div}''_{\Ric(V)}(\phi)
  \nonumber\\
 \quad \quad &-& g^{i\bar{j}}\left((\nabla_{i} \operatorname{div}''(\phi))_{\nabla_{\bar{j}}V}+(\nabla_{\bar{j}} \operatorname{div}''(\phi))_{\nabla_{i}V}\right); \nonumber \\
&\,&\heat (\operatorname{div}'_{\ov{V}}(\phi))_{I_{p-1}, \bar{J}_{p-1}}= \operatorname{div}'_{\heat \ov{V}}(\phi)+\mathcal{KB}(\operatorname{div}'_{\ov{V}}(\phi)) \label{eq:lem413}\\
\quad \quad &
+& \sum_{\mu =1 }^{p-1}R^{\,
k\bar{l}}_{i_\mu\,\bar{V}}(\operatorname{div}'(\phi))_{i_1\cdots(k)_\mu\cdots
 i_{p-1}, \bar{l}\bar{j_1}\cdots\bar{j}_{p-1}}-\frac{1}{2}\operatorname{div}'_{\ov{\Ric(V)}}(\phi) \nonumber\\
 \quad \quad &-& g^{i\bar{j}}\left((\nabla_{i} \operatorname{div}'(\phi))_{\nabla_{\bar{j}}\ov{V}}+(\nabla_{\bar{j}} \operatorname{div}'(\phi))_{\nabla_{i}\ov{V}}\right). \nonumber
\end{eqnarray}
since for any $r$-tensor $T$, the $r-1$-tensor $T_V(X_1, \cdots X_{r-1})\doteqdot T(V, X_1, \cdots X_{r-1})$ satisfies $\nabla_X T_V =(\nabla_X T)_V +T_{\nabla_X V}$. To compute the evolution equation of $Q$, since
$ \heat \Lambda \phi =\mathcal{KB}(\Lambda \phi)$, the only term left is the evolution equation on $\phi(V, \ov{V})$ which we also abbreviate  as $\phi_{V,\bar{V}}$. Since $\nabla_X \phi_{V,\bar{V}}= (\nabla_X \phi)_{V,\bar{V}}+\phi_{\nabla_X V, \bar{V}}+\phi_{V, \nabla_X \bar{V}}$, Lemma \ref{lemma11} implies that
\begin{eqnarray}
&\,&\heat \phi_{V,\bar{V}}=\mathcal{KB}(\phi_{V,\bar{V}})+ R^{\,
k\bar{l}}_{V\,\bar{V}} \phi_{k, \bar{l}}+\sum_{\nu =1 }^{p-1}R^{\,
k\bar{l}}_{V\,\bar{j_\nu}} \phi_{k I_{p-1}, \bar{V} \bar{j}_1\cdots (\bar{l})_\nu\cdots \bar{j}_{p-1}}\label{eq:414}\\
\quad \quad &+&\sum_{\mu =1 }^{p-1}R^{\,
k\bar{l}}_{i_\mu\,\bar{V}} \phi_{V i_1\cdots (k)_\mu\cdots i_{p-1}, \bar{l}\bar{J}_{p-1}}-\frac{1}{2}\left(\phi_{V, \ov{\Ric(V)}}+\phi_{\Ric(V), \bar{V}}\right) \nonumber\\
\quad \quad &+& \phi_{\heat V, \bar{V}}+\phi_{V, \heat \bar{V}}-g^{i\bar{j}}\left(\phi_{\nabla_i V, \nabla_{\bar{j}}\bar{V}}+\phi_{\nabla_{\bar{j}}V, \nabla_i \bar{V}}\right)\nonumber\\
\quad \quad &-& g^{i\bar{j}}\left((\nabla_i \phi)_{\nabla_{\bar{j}} V, \bar{V}}+(\nabla_{i}\phi)_{V, \nabla_{\bar{j}}\bar{V}}+(\nabla_{\bar{j}}\phi )_{\nabla_i V, \bar{V}}+(\nabla_{\bar{j}}\phi)_{ V, \nabla_i\bar{V}}\right). \nonumber
\end{eqnarray}

\section{The proof of Theorem \ref{main-lyh}.}
Now $Q$ is viewed as a $(p-1, p-1)$-form. For $p=1 $ and $p=m$,  the result has been proven earlier. Using the notations introduced in the last section the LYH quantity $Q$, $(p-1, p-1)$-form depending on vector field $V$,  can be written as
$$
Q=\frac{1}{2}\left[\operatorname{div}''  (\operatorname{div}'(\phi))+ \operatorname{div}' (\operatorname{div}''(\phi))\right]+\operatorname{div}'_{\ov{V}}(\phi)+\operatorname{div}''_{V}(\phi) +\phi_{V, \ov{V}}+\frac{\Lambda \phi}{t}.
$$
As before if we assume that at $(x_0, t_0)$,  for the first time, for  some $V$, $Q_{v_1 v_2\cdots v_{p-1}, \bar{v}_1\cdots\bar{v}_{p-1}}=0$ for some linearly independent vectors $\{v_i\}_{i=1}^{p-1}$. By a perturbation argument as in \cite{N-jams} we can assume without the loss of the generality that $\phi$ is strictly positive. As in \cite{richard-harnack}, it suffices to check that at the point $(x_0, t_0)$,
$\heat Q\ge 0$. Since the complex function (in terms of the variable $z$)
\begin{eqnarray*}
I(z)&\doteqdot &\frac{1}{2}\left[\operatorname{div}''  (\operatorname{div}'(\phi))+ \operatorname{div}' (\operatorname{div}''(\phi))\right]_{v_1(z)\cdots v_{p-1}(z), \bar{v}_1(z)\cdots \bar{v}_{p-1}(z)}\\
&\, & +\left[\operatorname{div}'_{\ov{V}(z)}(\phi)+\operatorname{div}''_{V(z)}(\phi) \right]_{v_1(z)\cdots v_{p-1}(z), \bar{v}_1(z)\cdots \bar{v}_{p-1}(z)}\\
&\,& +\phi_{V(z)v_1(z)\cdots v_{p-1}(z), \bar{V}(z)\bar{v}_1(z)\cdots \bar{v}_{p-1}(z)}+\frac{\Lambda \phi_{v_1(z)\cdots v_{p-1}(z), \bar{v}_1(z)\cdots \bar{v}_{p-1}(z)}}{t}
\end{eqnarray*}
satisfies $I(0)=0$ and $I(z)\ge 0$ for any variational vectors $v_{\mu}(z), V(z)$, holomorphic in $z$,  with $v_{\mu}(0)=v_{\mu}$ and $V(0)=V$. In particular, letting $v_{\mu}(z)=v_{\mu}$ and $V'(0)=X$  we have that
\begin{equation}\label{eq:51}
\operatorname{div}'_{\bar{X}}(\phi)+\phi_{V, \bar{X}}=0= \operatorname{div}''_{X}(\phi)+\phi_{X, \bar{V}}.
\end{equation}
Similarly  by fixing $V(z)=V$ and varying $v_{\mu}(z)$, we deduce for any $X$,
\begin{equation}\label{eq:52}
Q_{v_1\cdots (X)_\mu\cdots v_{p-1}, \bar{v}_1\cdots \bar{v}_{p-1}}=0=Q_{v_1\cdots v_{p-1}, \bar{v}_1\cdots (\bar{X})_{\nu}\cdots v_{p-1}}.
\end{equation}
As before, after a changing of variables we may assume that $\{ v_i\}_{i=1}^{p-1} =\{\frac{\partial}{\partial z^i}\}_{i=1}^{p-1}$.
Since $z=0$ is the minimizing point we have that $\Delta I(0)\ge 0$. If $v'_\mu(0)=X_\mu$ and $V'(0)=X$, where $v'(z)=\frac{\partial v}{\partial z}$, this implies that
\begin{eqnarray*}
&\, &\sum_{\mu, \nu=1}^{p-1} Q_{v_1\cdots X_\mu \cdots v_{p-1}, \bar{v}_1\cdots \bar{X}_\nu \cdots \bar{v}_{p-1}}+ \phi_{X v_1\cdots v_{p-1}, \bar{X} \bar{v}_1\cdots \bar{v}_{p-1}}\\
\quad \quad \quad &+& \sum_{\mu=1}^{p-1}\operatorname{div}'_{\bar{X}}(\phi)_{v_1\cdots X_\mu\cdots v_{p-1}, \bar{v}_1\cdots \bar{v}_{p-1}}+\phi_{V v_1\cdots X_\mu\cdots v_{p-1}, \bar{X} \bar{v}_1\cdots \bar{v}_{p-1}}\\
\quad \quad \quad &+& \sum_{\nu=1}^{p-1}\operatorname{div}''_{X}(\phi)_{v_1\cdots v_{p-1}, \bar{v}_1\cdots \bar{X}_\nu\cdots \bar{v}_{p-1}}+\phi_{X v_1\cdots v_{p-1}, \bar{V} \bar{v}_1\cdots \bar{X}_\nu \cdots \bar{v}_{p-1}}\\
\quad \quad \quad &\ge& 0.
\end{eqnarray*}
This amounts to that the block matrix
 \begin{eqnarray*}
\mathcal{M}_1=\left(\begin{array}{l} A \quad \quad\quad \quad S \quad \quad \\
\overline{S}^{tr}\quad \phi_{(\cdot )1\cdots p-1, (\bar{\cdot})\bar{1}, \cdots \ov{p-1}}
\end{array}\right)\ge 0
\end{eqnarray*}
where
 \begin{eqnarray*}
A= \left(\begin{array}{l}
Q_{(\cdot)\, 2\cdots\, p-1, \,      \bar{(\cdot)}\, \bar{2}\,\cdots\, \ov{p-1}}\quad
Q_{1\, (\cdot)\, \cdots\, p-1, \,      \bar{(\cdot)}\, \bar{2}\,\cdots\, \ov{p-1}}\quad \quad \quad
\cdots \quad \quad \quad
Q_{1\, 2\, \cdots\, (\cdot)_{p-1}, \,      \bar{(\cdot)}\, \bar{2}\,\cdots\, \ov{p-1}}\\
Q_{(\cdot)\, 2\cdots\, p-1, \,     \bar{1}\, \bar{ (\cdot)}\, \cdots\, \ov{p-1}}
\quad    Q_{1\, (\cdot)\, \cdots\, p-1, \,     \bar{1}\,  \bar{(\cdot)}\, \cdots\, \ov{p-1}}
\quad \quad \quad
\cdots
\quad \quad \quad
Q_{1\, 2\, \cdots\, (\cdot)_{p-1}, \,    \bar{1}\,   \bar{(\cdot)}\, \cdots\, \ov{p-1}}\\
 \quad \quad \quad \cdots\quad \quad \quad \quad \quad \quad \quad \quad \quad \cdots \quad \quad \quad \quad \quad \,  \quad \quad \cdots \quad \quad \quad \quad \quad \quad \cdots \quad \quad \quad\\
Q_{(\cdot)\, 2\cdots\, p-1, \,     \bar{1}\,\bar{2}\, \cdots\,  \bar{ (\cdot)}_{p-1}}\quad
Q_{1\, (\cdot)\, \cdots\, p-1, \,     \bar{1}\,  \bar{2}\, \cdots\,  \bar{ (\cdot)}_{p-1}}\quad \quad \,
\cdots \quad \quad \quad
Q_{1\, 2\, \cdots\, (\cdot)_{p-1}, \,    \bar{1}\,   \bar{2}\, \cdots\,  \bar{ (\cdot)}_{p-1}}
\end{array}\right)
\end{eqnarray*}
and $S$ satisfies that for vectors $X_1,\cdots,  X_{p-1}, X$
$$
(\ov{X}_1^{tr}, \cdots, \ov{X}_{p-1}^{tr})\cdot S\cdot X=\sum_{\nu=1}^{p-1}\operatorname{div}''_{X}(\phi)_{v_1\cdots v_{p-1}, \bar{v}_1\cdots \bar{X}_\nu\cdots \bar{v}_{p-1}}+\phi_{X v_1\cdots v_{p-1}, \bar{V} \bar{v}_1\cdots \bar{X}_\nu \cdots \bar{v}_{p-1}}.
$$

To check that $\heat Q_{1\cdots p-1, \bar{1}\cdots \ov{p-1}}\ge 0$ we may extend $V$ such that the following holds:
\begin{eqnarray*}
\nabla_i V  =\frac{1}{t}\frac{\partial}{\partial z^i}, \quad &\, & \quad
\nabla_{\bar{i}} V = 0,\\
\heat V &=& -\frac{1}{t} V.
\end{eqnarray*}
Using these set of equations, (\ref{eq:lem412}), (\ref{eq:lem413}) and (\ref{eq:414}) can be simplified to
\begin{eqnarray}
 &\,&\heat (\operatorname{div}''_{V}(\phi))_{I_{p-1}, \ov{J}_{p-1}}=-\frac{1}{t}\operatorname{div}''_{ V}(\phi) +\mathcal{KB}(\operatorname{div}''_{V}(\phi)) \label{eq:53}\\
\quad  \quad &+& \sum_{\nu =1 }^{p-1}R^{\,
k\bar{l}}_{V\,\bar{j_\nu}}(\operatorname{div}''(\phi))_{ki_1\cdots
 i_{p-1},\bar{j_1}\cdots(\bar{l})_\nu\cdots\bar{j}_{p-1}}-\frac{1}{2}\operatorname{div}''_{\Ric(V)}(\phi)
 -\frac{1}{t} \operatorname{div}'( \operatorname{div}''(\phi)); \nonumber \\
&\,&\heat (\operatorname{div}'_{\ov{V}}(\phi))_{I_{p-1}, \bar{J}_{p-1}}= -\frac{1}{t}\operatorname{div}'_{\ov{V}}(\phi)+\mathcal{KB}(\operatorname{div}'_{\ov{V}}(\phi)) \label{eq:54}\\
\quad \quad &
+& \sum_{\mu =1 }^{p-1}R^{\,
k\bar{l}}_{i_\mu\,\bar{V}}(\operatorname{div}'(\phi))_{i_1\cdots(k)_\mu\cdots
 i_{p-1}, \bar{l}\bar{j_1}\cdots\bar{j}_{p-1}}-\frac{1}{2}\operatorname{div}'_{\ov{\Ric(V)}}(\phi)
  -\frac{1}{t}\operatorname{div}''( \operatorname{div}'(\phi)); \nonumber\\
&\,&\heat \phi_{V,\bar{V}}=\mathcal{KB}(\phi_{V,\bar{V}})+ R^{\,
k\bar{l}}_{V\,\bar{V}} \phi_{k, \bar{l}}+\sum_{\nu =1 }^{p-1}R^{\,
k\bar{l}}_{V\,\bar{j_\nu}} \phi_{k I_{p-1}, \bar{V} \bar{j}_1\cdots (\bar{l})_\nu\cdots \bar{j}_{p-1}}\label{eq:55}\\
\quad \quad &+&\sum_{\mu =1 }^{p-1}R^{\,
k\bar{l}}_{i_\mu\,\bar{V}} \phi_{V i_1\cdots (k)_\mu\cdots i_{p-1}, \bar{l}\bar{J}_{p-1}}-\frac{1}{2}\left(\phi_{V, \ov{\Ric(V)}}+\phi_{\Ric(V), \bar{V}}\right) \nonumber\\
\quad \quad &-& \frac{2}{t}\phi_{ V, \bar{V}}-\frac{\Lambda \phi}{t^2}-\frac{1}{t}\operatorname{div}'_{\ov{V}}(\phi)-\frac{1}{t}\operatorname{div}''_{V}(\phi).\nonumber \nonumber
\end{eqnarray}
Adding them up with that
 $$\heat \left[\operatorname{div}''  (\operatorname{div}'(\phi))+ \operatorname{div}' (\operatorname{div}''(\phi))\right] =\mathcal{KB}(\left[\operatorname{div}''  (\operatorname{div}'(\phi))+ \operatorname{div}' (\operatorname{div}''(\phi))\right]  )$$ and $\heat \Lambda \phi=\mathcal{KB}(\Lambda \phi)$ , using (\ref{eq:51})  we have that
\begin{eqnarray*}
\heat Q_{I_{p-1}, \ov{J}_{p-1}} &=& \sum_{\nu =1 }^{p-1}R^{\,
k\bar{l}}_{V\,\bar{j_\nu}} \left(\phi_{k I_{p-1}, \bar{V} \bar{j}_1\cdots (\bar{l})_\nu\cdots \bar{j}_{p-1}}+(\operatorname{div}''(\phi))_{kI_{p-1},\bar{j_1}\cdots(\bar{l})_\nu\cdots\bar{j}_{p-1}}\right)\\
\quad \quad &+&\sum_{\mu =1 }^{p-1}R^{\,
k\bar{l}}_{i_\mu\,\bar{V}} \left(\phi_{V i_1\cdots (k)_\mu\cdots i_{p-1}, \bar{l}\bar{J}_{p-1}}+(\operatorname{div}'(\phi))_{i_1\cdots(k)_\mu\cdots
 i_{p-1}, \bar{l}\ov{J}_{p-1}}\right)\\
 \quad \quad&+&R^{\,
k\bar{l}}_{V\,\bar{V}} \phi_{kI_{p-1}, \bar{l}\ov{J}_{p-1}}+\mathcal{KB}(Q)_{I_{p-1}, \ov{J}_{p-1}}-\frac{2 Q_{I_{p-1}, \ov{J}_{p-1}}}{t}.
\end{eqnarray*}
Now the nonnegativity of $\heat Q_{1\cdots(p-1), \bar{1}\cdots (\ov{p-1})}$ at $(x_0, t_0)$  can be proved in a similar way as the argument in Section 2. First observe that the part of $\mathcal{KB}(Q)_{1\cdots(p-1), \bar{1}\cdots (\ov{p-1})}$ involving only $\Ric$ is
$$
-\frac{1}{2}\sum_{i=1}^{p-1}\left(Q_{1\cdots \Ric(i)\cdots (p-1), \bar{1}\cdots (\ov{p-1})}+Q_{1\cdots (p-1), \bar{1}\cdots\ov{\Ric(i)}\cdots (\ov{p-1})}\right)
$$
which vanishes due to (\ref{eq:52}).
Hence we only need to establish the nonnegativity of
\begin{eqnarray*}J&\doteqdot &
\sum_{\mu = 1}^{p-1}\sum_{\nu =1 }^{p-1}R^{\,
k\bar{l}}_{\mu\,\,\bar{\nu}}Q_{1\cdots(k)_\mu\cdots
 (p-1), \bar{1}\cdots(\bar{l})_\nu\cdots\ov{p-1}}\\
\quad \quad &+&  \sum_{\nu =1 }^{p-1}R^{\,
k\bar{l}}_{V\,\,\bar{\nu}} \left(\phi_{k 1 \cdots (p-1), \bar{V} \bar{1}\cdots (\bar{l})_\nu\cdots (\ov{p-1})}+(\operatorname{div}''(\phi))_{k1\cdots (p-1),\bar{1}\cdots(\bar{l})_\nu\cdots(\ov{p-1})}\right)\\
\quad \quad &+&\sum_{\mu =1 }^{p-1}R^{\,
k\bar{l}}_{\mu\,\,\bar{V}} \left(\phi_{V 1\cdots (k)_\mu\cdots (p-1), \bar{l}\cdots (\ov{p-1})}+(\operatorname{div}'(\phi))_{1\cdots(k)_\mu\cdots
 (p-1), \bar{l}\bar{1}\cdots (\ov{p-1})}\right)\\
 \quad \quad&+&R^{\,
k\bar{l}}_{V\,\bar{V}} \phi_{k1\cdots (p-1), \bar{l}\bar{1}\cdots (\ov{p-1})}.
\end{eqnarray*}
The curvature operator is in $\mathcal{C}_p$ implies that the matrix
$$
\mathcal{M}_2=\left(\begin{array}{l}
R_{1\bar{1}(\cdot)\bar{(\cdot)}}\quad\quad
R_{1\bar{2}(\cdot)\bar{(\cdot)}}\quad\quad\quad
\cdots\quad\quad
R_{1\ov{p-1}(\cdot)\bar{(\cdot)}}  \quad \quad \quad R_{1\ov{V}(\cdot)\bar{(\cdot)}}\\
R_{2\bar{1}(\cdot)\bar{(\cdot)}}\quad \quad
R_{2\bar{2}(\cdot)\bar{(\cdot)}}\quad\quad\quad
\cdots\quad\quad
R_{2\ov{p-1}(\cdot)\bar{(\cdot)}}\quad \quad \quad R_{2\ov{V}(\cdot)\bar{(\cdot)}}\\
\quad\cdots\quad\quad\quad\quad \cdots\quad\quad\quad\quad \cdots\quad\quad\quad\cdots\quad \quad \quad \quad \quad \quad \cdots \\
R_{p-1\bar{1}(\cdot)\bar{(\cdot)}}\quad
R_{p-1\bar{2}(\cdot)\bar{(\cdot)}}\quad\quad
\cdots\quad\quad
R_{p-1\ov{p-1}(\cdot)\bar{(\cdot)}}\quad \quad  R_{p-1\ov{V}(\cdot)\bar{(\cdot)}}\\
R_{V\bar{1}(\cdot)\bar{(\cdot)}}\quad\quad
R_{V\bar{2}(\cdot)\bar{(\cdot)}}\quad\quad\,\,
\cdots\quad\quad
R_{V\ov{p-1}(\cdot)\bar{(\cdot)}}\quad \quad \quad  R_{V\ov{V}(\cdot)\bar{(\cdot)}}
\end{array}\right)\ge 0.
$$
The nonnegativity of $J$ follows from $\operatorname{trace}(\mathcal{M}_1 \cdot \mathcal{M}_2)\ge 0$.

We define transformations on $T'M$, $(\operatorname{div}''(\phi))^{\bar{\nu}}$, $(\operatorname{div}'(\phi))^{\mu}$, $\phi_{\ov{V}}^{\bar{\nu}}$ and $\phi_{V}^{\mu}$ by
\begin{eqnarray*}
\langle (\operatorname{div}''(\phi))^{\bar{\nu}}(X), \ov{Y}\rangle &\doteqdot& (\operatorname{div}''(\phi))_{X 1\cdots (p-1), \bar{1}\cdots (\ov{Y})_{\nu}\cdots (\ov{p-1})}, \\
\langle (\operatorname{div}'(\phi))^\mu(X), \ov{Y}\rangle &\doteqdot& (\operatorname{div}'(\phi))_{ 1\cdots (X)_\mu\cdots (p-1), \ov{Y} \bar{1}\cdots (\ov{p-1})},\\
\langle \phi_{\ov{V}}^{\bar{\nu}}(X), \ov{Y}\rangle &\doteqdot& \phi_{X 1\cdots (p-1), \ov{V} \bar{1}\cdots (\ov{Y})_\nu \cdots (\ov{p-1})},\\
\langle \phi_{V}^{\mu}(X), \ov{Y}\rangle &\doteqdot& \phi_{ V1\cdots (X)_\mu \cdots (p-1), \ov{Y}\bar{1}\cdots  (\ov{p-1})}.
\end{eqnarray*}
Then the operator $S$ defined previously can be written as $S=\oplus_{\nu =1}^{p-1} \left[(\operatorname{div}''(\phi))^{\bar{\nu}}+\phi_{\ov{V}}^{\bar{\nu}}\right]$.
If we define $Q^{\mu\bar{\nu}}$ in a similar way as $\phi^{\mu\bar{\nu}}$ then the quantity $J$ above can be expressed as
\begin{eqnarray*}
J&=&\sum_{\mu, \nu =1}^{p-1} \operatorname{trace}\left(R^{\mu\bar{\nu}}  Q^{\mu\bar{\nu}}\right)
+\sum_{\nu=1}^{p-1}\operatorname{trace} \left( R^{V \bar{\nu}}\cdot  ((\operatorname{div}''(\phi))^{\bar{\nu}}+\phi_{\ov{V}}^{\bar{\nu}})\right)\\
&\quad&+\sum_{\nu=1}^{p-1}\ov{\operatorname{trace} \left( R^{V \bar{\nu}}\cdot  ((\operatorname{div}''(\phi))^{\bar{\nu}}+\phi_{\ov{V}}^{\bar{\nu}})\right)}+\operatorname {trace}(R^{V, \bar{V}}\cdot \phi^{p, \bar{p}}).
\end{eqnarray*}
Hence one can modify the definitions of transformations $\mathcal{K}$ and $\mathcal{J}$ on $\oplus_{\mu=1}^p T'M$   in Section 2 so that $J=\operatorname{trace}(\mathcal{K}\cdot \mathcal{J})$, $\mathcal{J}$ and $\mathcal{K}$ correspond to $\mathcal{M}_1$ and $\mathcal{M}_2$ respectively.

\begin{remark}We suspect that the theorem (and later results) still holds even under the weaker assumption $\mathcal{C}_1$, even though the techniques employed here seem not be able to prove such a claim.

\end{remark}

\section{Coupled with the K\"ahler-Ricci flow.}

Now we consider  $(M^m, g(t))$  a complete solution of the K\"ahler-Ricci flow
\begin{equation}\label{eq:KRF}
\frac{\p}{\p t}g_{i\bar{j}}=-R_{i\bar{j}}.
\end{equation}
Corollary \ref{thm:p-NBC} asserts that $\mathcal{C}_p$ is an invariant
curvature condition  under the K\"ahler-Ricci flow. Now we generalize the LYH estimate to the solution of (\ref{eq:11}). Again the result is proved  for $p=1$ and $p=m$ in \cite{NT-ajm} and \cite{Ni-JDG07} respectively.

\begin{theorem}\label{main-KRFlyh}
Let $(M, g(t))$ be a complete solution to the K\"ahler-Ricci flow
(\ref{eq:KRF}). When $M$ is noncompact we assume that the curvature of $(M, g)$ is uniformly bounded.  Assume that $\phi$ is a solution to (\ref{eq:11})
 with $\phi(x, 0)$ being a positive $(p, p)$-form. Assume further that
the curvature of $(M, g(t))$ satisfies $\mathcal{C}_p$. Then for any vector field $V$ of $(1,0)$ type $\widetilde{Q}\ge 0$
for any $t>0$, where
$$
\widetilde Q=Q+\Ric (\phi)
$$
Here $Q$ is the LYH quantity defined in Section 4 and 5, which is a $(p-1,p-1)$-form valued (Hermitian) quadratic form  of $V$, $\Ric(\phi)$ is a $(p-1, p-1)$-form defined by
$$
\Ric(\phi)_{I_{p-1}, \ov{J}_{p-1}}\doteqdot g^{i\bar{l}}g^{k\bar{j}}R_{i\bar{j}}\phi_{k I_{p-1}, \bar{l}\ov{J}_{p-1}}.
$$
\end{theorem}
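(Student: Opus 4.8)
The plan is to argue by Hamilton's tensor maximum principle, in close parallel with the proof of Theorem \ref{main-lyh} in Section 5, the essential new ingredient being the matrix Li--Yau--Hamilton estimate of Theorem \ref{thm13}. First I would reduce to the case where $M$ is compact (the noncompact case, under the stated bounded curvature hypothesis, being deferred to Section 10); then, by adding a small multiple of $\omega^p$ to the initial data and letting the perturbation tend to $0$, reduce to the case where $\phi(\cdot,t)$ is strictly positive for $t>0$. Positivity of $\phi$ under the coupled system follows from Proposition \ref{max-pp} together with the invariance of $\mathcal{C}_p$ under the K\"ahler--Ricci flow (Corollary \ref{thm:p-NBC}), since the reaction term in $\heat \phi=\mathcal{KB}(\phi)$, being (\ref{KB-heat}), is unchanged by the coupling. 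It then suffices to show: if $(x_0,t_0)$ is the first instant at which $\widetilde{Q}$ fails to be positive, so that there exist linearly independent $v_1,\dots,v_{p-1}$ and a minimizing $(1,0)$-vector $V$ with $\widetilde{Q}_{v_1\cdots v_{p-1},\bar v_1\cdots\bar v_{p-1}}=0$, then $\heat \widetilde{Q}\ge 0$ at that point, evaluated on $v_1,\dots,v_{p-1}$.

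The heart of the matter is the evolution equation of $\widetilde{Q}=Q+\Ric(\phi)$ under the coupled flow. The computation of Section 5 already yields, for a fixed metric, an identity of the schematic form $\heat Q=\mathcal{KB}(Q)-\tfrac{2}{t}Q+(\text{curvature terms controlled by }\mathcal{C}_p)$; the K\"ahler--Ricci flow introduces two new families of terms. The first arises because $\tfrac{\p}{\p t}g^{i\bar j}=R^{i\bar j}$ acts on the metric contractions hidden in $\Lambda\phi$, in $\operatorname{div}'\phi$, $\operatorname{div}''\phi$, $\operatorname{div}''\operatorname{div}'\phi$ and in $\phi_{V,\bar V}$; for instance one computes $\heat(\Lambda\phi)=\mathcal{KB}(\Lambda\phi)+\Ric(\phi)$, which is precisely what forces the correction term $\Ric(\phi)$ into $\widetilde{Q}$. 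The second family comes from the evolution of the Ricci form under the flow, $\tfrac{\p}{\p t}R_{i\bar j}=\D R_{i\bar j}+R_{i\bar jk\bar l}R_{l\bar k}-R_{i\bar k}R_{k\bar j}$, which, upon recognizing $\mathcal{M}_{i\bar j}=\D R_{i\bar j}+R_{i\bar jk\bar l}R_{l\bar k}+\tfrac{R_{i\bar j}}{t}$ and $P_{i\bar j k}=\nabla_k R_{i\bar j}$, rewrites $\heat\big(\Ric(\phi)\big)$ in terms of $\mathcal{M}$, $P$, the curvature operator acting on $\phi$, and a $-\tfrac1t$-term.

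Next I would extend $V$ near $(x_0,t_0)$ by the normalization of Section 5 (so $\nabla_i V=\tfrac1t\tfrac{\p}{\p z^i}$ and $\nabla_{\bar i}V=0$ at the point), adjusting $\heat V$ by a Ricci term to keep identities (\ref{eq:53})--(\ref{eq:55}) picking up only the expected corrections along the moving metric; then invoke the first variation in $V$, which gives again $\operatorname{div}'_{\bar X}(\phi)+\phi_{V,\bar X}=0=\operatorname{div}''_X(\phi)+\phi_{X,\bar V}$ for all $X$, since $\Ric(\phi)$ is independent of $V$, together with the first and second variations in the $v_\mu$'s, in particular the relations replacing (\ref{eq:52}). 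Feeding these null-vector identities into the evolution equation, the terms surviving at $(x_0,t_0)$ should reorganize into nonnegative pieces: (i) $\operatorname{trace}(\mathcal{M}_1\cdot\mathcal{M}_2)$, where $\mathcal{M}_2\ge0$ is the (enlarged) curvature block matrix, nonnegative by $\mathcal{C}_p$, and $\mathcal{M}_1\ge0$ is the block matrix built from $\widetilde{Q}$, $\phi$ and $\operatorname{div}\phi$, nonnegative by the second variation using $\widetilde{Q}\ge0$ up to $t_0$ and $\phi>0$; (ii) exactly the Hermitian form $\mathcal{Q}(U\oplus W)$ of Theorem \ref{thm13}, evaluated on the natural $W$ (built from $V$) and $U=\sum_{\mu=1}^{p-1}v_\mu\wedge\bar v_\mu+W\wedge\bar V$, hence $\ge0$ by that theorem; and (iii) a multiple of $-\tfrac1t\widetilde{Q}$, which vanishes at the null point and is in any event the benign reaction term handled by the maximum principle, working on $[\eps,T]$ and letting $\eps\to0$. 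Combining, $\heat\widetilde{Q}\ge0$ at $(x_0,t_0)$, and Hamilton's maximum principle gives $\widetilde{Q}\ge0$ for all $t>0$.

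The step I expect to be the main obstacle is (ii): checking that the collection of terms generated by the time-dependence of the metric and by $\heat\big(\Ric(\phi)\big)$, after the null-vector substitutions, assembles \emph{exactly} into the form $\mathcal{Q}$ of Theorem \ref{thm13}, not merely being dominated by it. This is the $(p,p)$-form analogue of the way Hamilton's matrix Harnack and Cao's trace Harnack dovetail, and it is a bookkeeping-heavy computation, sensitive to the two normalizations $\phi_{I_p,\bar J_p}$ versus $\phi_{I_p\bar J_p}$ and to the precise definitions of $\operatorname{div}'$, $\operatorname{div}''$ and $\Lambda$. A secondary technical point is the correct extension of $V$ along the flow. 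I would first run the $p=1$ case (recovering the estimate of \cite{NT-ajm}) to fix all conventions, and then carry out the general $p$ computation.
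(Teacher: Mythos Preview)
Your overall approach matches the paper's (Section 8), but two specifics are off.

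First, the extension of $V$: the Ricci correction goes into $\nabla_i V$, not into $\heat V$. The paper takes $\nabla_i V=R^j_i\,\frac{\p}{\p z^j}+\frac{1}{t}\,\frac{\p}{\p z^i}$, $\nabla_{\bar i}V=0$, and keeps $\heat V=-\frac{1}{t} V$ unchanged from the static case. It is this extra $\Ric$ piece in the first covariant derivative of $V$ that produces the cancellations among the new terms arising from $\frac{\p}{\p t}g^{i\bar j}=R^{i\bar j}$ in (\ref{eq:68})--(\ref{eq:69}) and (\ref{eq:414}); compare (\ref{eq:83})--(\ref{eq:85}).

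Second, your (i)--(ii) split is not how the surviving terms organize. There is no standalone scalar $\mathcal{Q}(U\oplus W)$ appearing as a separate piece. After the null-vector identities, everything that remains is a single trace $\tilde J=\operatorname{trace}(\mathcal{M}_1\cdot\mathcal{M}_3)$, in complete parallel with Section 5. Here $\mathcal{M}_1\ge 0$ is the second-variation block matrix (built now from $\widetilde Q$ in place of $Q$), while $\mathcal{M}_3$ is the $p\times p$ block matrix whose $(p-1)\times(p-1)$ corner consists of the curvature blocks $R_{\mu\bar\nu(\cdot)(\bar\cdot)}$, whose last row and column have entries $D^\mu=R_{\mu\bar V(\cdot)(\bar\cdot)}+P_{(\cdot)(\bar\cdot)\mu}$, and whose $(p,p)$ block is $Z_{(\cdot)(\bar\cdot)}$. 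The key point is that $\mathcal{M}_3\ge 0$ is \emph{not} a consequence of $\mathcal{C}_p$ alone: it is precisely the content of Theorem \ref{thm13} (equivalently Theorem \ref{LYH}). So Theorem \ref{thm13} does not sit alongside the $\mathcal{C}_p$ trace argument of Section 5---it upgrades it, replacing $\mathcal{M}_2$ by $\mathcal{M}_3$. Once you see this, the ``bookkeeping-heavy'' obstacle you anticipate in (ii) dissolves: the identification of $\tilde J$ with a trace is formally identical to the static case, and the only new analytic input is $\mathcal{M}_3\ge 0$.
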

Note that the difference between the above result and Theorem \ref{main-lyh} is that the Laplacian operator $\Delta_{\bar{\partial}}$ is time-dependent, namely the $g_{i\bar{j}}$ and the connection  used in the definition $\bar{\partial}^*$ are evolved by the K\"ahler-Ricci flow equation.  Moreover since $\partial^*$ and $\bar{\partial}^*$ depend on changing metrics now, the quantity $Q$ is different from the static case even though they are defined by the same expression. Amazingly, the theorem asserts that the result still holds if we  add a correction term $\Ric(\phi)$.

\begin{corollary}\label{main-con2}
Let $(M , g)$, $\phi$ be as in Theorem \ref{main-KRFlyh}. Assume that $\phi$ is $d$-closed and $M$ is compact. Let $\psi=\Lambda \phi$. Then
\begin{equation}\label{mono-krf1}
\frac{d}{d t} \left(t\int_M \psi\wedge \omega_0^{m-p+1}\right)\ge 0.
\end{equation}
Here $\omega_0$ is the K\"ahler form of the initial metric.
\end{corollary}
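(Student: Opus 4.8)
The plan is to differentiate under the integral sign, split the time derivative of $\psi=\Lambda\phi$ into the part coming from the evolving metric and the part coming from the evolution of $\phi$, and observe that the correction term $\Ric(\phi)$ in Theorem~\ref{main-KRFlyh} is exactly what accounts for the metric part. Since $\omega_0$ is fixed,
$$
\frac{d}{dt}\left(t\int_M\psi\wedge\omega_0^{m-p+1}\right)=\int_M\left(\psi+t\,\frac{d}{dt}\psi\right)\wedge\omega_0^{m-p+1},
$$
and, writing $\psi=\Lambda_{g(t)}\phi(\cdot,t)$, the time derivative splits as $\frac{d}{dt}\psi=\left(\p_t\Lambda_{g(t)}\right)\phi+\Lambda_{g(t)}\,\p_t\phi$ with $\p_t\phi=-\Delta_{\dbar}\phi$ by (\ref{eq:11}). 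Because the only metric dependence of $\Lambda$ is through the inverse metric $g^{i\bar j}$ used to contract, and $\p_t g_{i\bar j}=-R_{i\bar j}$ gives $\p_t g^{i\bar j}=R^{i\bar j}$, a direct computation shows $\left(\p_t\Lambda_{g(t)}\right)\phi=\Ric(\phi)$, precisely the correction form appearing in Theorem~\ref{main-KRFlyh}.

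Next I would apply Theorem~\ref{main-KRFlyh} with the trivial vector field $V=0$: for every $t>0$, $\widetilde Q\big|_{V=0}=Q\big|_{V=0}+\Ric(\phi)$ is a positive $(p-1,p-1)$-form. Since $\phi$ remains $d$-closed along the flow (the form $d\phi$ solves the same Hodge-heat equation with zero initial data, hence vanishes identically by uniqueness on compact $M$), the reformulation of $Q$ recorded at the end of Section~4 is valid at each fixed time,
$$
Q\big|_{V=0}=\Lambda\,\p_t\phi+\frac12\left(\dbar\dbar^*+\partial\partial^*\right)\psi+\frac{\psi}{t},
$$
where $\Lambda\,\p_t\phi$ is computed with the metric frozen at time $t$ (this is $-\Delta_{\dbar}\psi$, not the total time derivative of $\psi$). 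Substituting this into the splitting of $\frac{d}{dt}\psi$ gives $\frac{d}{dt}\psi=\widetilde Q\big|_{V=0}-\frac12\left(\dbar\dbar^*+\partial\partial^*\right)\psi-\frac{\psi}{t}$, and therefore
$$
\psi+t\,\frac{d}{dt}\psi=t\,\widetilde Q\big|_{V=0}-\frac t2\left(\dbar\dbar^*+\partial\partial^*\right)\psi.
$$

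It remains to integrate this identity against $\omega_0^{m-p+1}$. As $\omega_0$ is a closed $(1,1)$-form, $\dbar(\omega_0^{m-p+1})=\partial(\omega_0^{m-p+1})=0$, so by Stokes' theorem (the $\partial$-component vanishing for bidegree reasons)
$$
\int_M\dbar\dbar^*\psi\wedge\omega_0^{m-p+1}=\int_M\dbar\bigl(\dbar^*\psi\wedge\omega_0^{m-p+1}\bigr)=0,
$$
and likewise $\int_M\partial\partial^*\psi\wedge\omega_0^{m-p+1}=0$. Hence
$$
\frac{d}{dt}\left(t\int_M\psi\wedge\omega_0^{m-p+1}\right)=t\int_M\widetilde Q\big|_{V=0}\wedge\omega_0^{m-p+1}\ge0,
$$
because $\widetilde Q\big|_{V=0}$ is a positive $(p-1,p-1)$-form while $\omega_0^{m-p+1}$ is a strictly positive $(m-p+1,m-p+1)$-form, so their wedge product is a nonnegative volume form.

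The one genuinely new ingredient beyond the proof of Corollary~\ref{main-con1} is the bookkeeping identity $\left(\p_t\Lambda_{g(t)}\right)\phi=\Ric(\phi)$, which must be verified with index-raising conventions matching the statement of Theorem~\ref{main-KRFlyh}; this is the step I expect to require the most care, and it is essentially the reason the correction term in $\widetilde Q$ has exactly that form. Everything else repeats the earlier argument, with the mild simplification that taking $V=0$ dispenses with the pointwise minimization over $V$, and hence with the need to perturb $\phi$ to a strictly positive form.
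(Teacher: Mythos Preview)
Your proof is correct and follows essentially the same route as the paper: the key identity $(\partial_t\Lambda_{g(t)})\phi=\Ric(\phi)$ is precisely the content of the paper's observation $\partial_t\psi+\Delta_{\bar\partial}\psi=\Ric(\phi)$ (cf.\ also (\ref{eq:610})), and the rest is the argument of Corollary~\ref{main-con1} repeated with $\omega_0$ in place of $\omega$. The only minor difference is that you take $V=0$ directly rather than minimizing over $V$; this is a harmless simplification since $\widetilde Q\big|_{V=0}\ge0$ already suffices for the integrated monotonicity, and the paper's minimization is only used to obtain the stronger pointwise statement (\ref{con1-lyh}) in the static case.
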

\begin{proof}
Note that $\frac{\partial}{\partial t} \psi +\Delta_{\bar{\partial}}\psi =\Ric(\phi)$, the operators $\partial$ and $\bar{\partial}$ can be commuted with $\frac{\partial}{\partial t}$ and $\Delta_{\bar{\partial}}$. The rest is the same as the proof of Corollary \ref{main-con1}.
\end{proof}

We first start with some lemmas which are the time dependent version of Lemma \ref{help41}, \ref{help42}.
\begin{lemma} \label{helpKRF61} Let $\phi$ be a $(p, p)$-form satisfying
(\ref{eq:11}). Then under a normal coordinate,
\begin{eqnarray}
 &\,&\heat (\operatorname{div}''(\phi))_{i I_{p-1},\ov{J}_{p-1}}
=\mathcal{KB}(\operatorname{div}''_{i}(\phi)) _{I_{p-1}, \ov{J}_{p-1}}\nonumber\\
&
+& \sum_{\nu =1 }^{p-1}R_{i\,\bar{j_\nu} l\bar{k}}
(\operatorname{div}''(\phi))_{k I_{p-1}, \bar{j_1}
\cdots(\bar{l})_\nu\cdots\bar{j}_{p-1}}
-\frac{1}{2}R_{i\bar{k}}(\operatorname{div}''(\phi))_{k I_{p-1}, \ov{J}_{p-1}}
 \label{eq:lem61}\\
&
+&R_{j\bar{k}}\nabla_k \phi_{i I_{p-1},\bar{j} \ov{J}_{p-1}}
+\nabla_i R_{j\bar{k}}\phi_{k I_{p-1},\bar{j}
\ov{J}_{p-1}}
+\sum_{\mu =1}^{p-1}
\nabla_{i_\mu}R_{l\bar{k}}\phi_{i i_1\cdots(k)_\mu\cdots i_{p-1}
\bar{l} \ov{J}_{p-1}};\nonumber\\
&\,&\heat (\operatorname{div}'(\phi))_{ I_{p-1},
\bar{j}\ov{J}_{p-1}}= \mathcal{KB}(\operatorname{div}'_{\bar{j}}(\phi)) _{I_{p-1}, \ov{J}_{p-1}}\nonumber
\\
\quad \quad &
+& \sum_{\mu =1 }^{p-1}R_{i_\mu\bar{j}\, l\bar{k} }(\operatorname{div}'(
\phi))_{i_1\cdots(k)_\mu\cdots i_{p-1}, \bar{l}\ov{J}_{p-1}}
-\frac{1}{2}R_{l
\bar{j}}(\operatorname{div}'(\phi))_{ I_{p-1}, \bar{l}\ov{J}_{p-1}}
 \label{eq:lem62}\\
\quad \quad &
+&R_{l\bar{k}}\nabla_{\bar{l}}\phi_{k I_{p-1}, \bar{j}\ov{J}_{p-1}}
+\nabla_{\bar{j}}R_{l\bar{k}}\phi_{k I_{p-1}, \bar{l}\ov{J}_{p-1}}
+\sum_{\nu =1}^{p-1}\nabla_{\bar{j}_\mu}R_{l\bar{k}}\phi_{k I_{p-1},
\bar{j}\bar{j}_1\cdots(\bar{l})_\nu\cdots\bar{j}_{p-1}}.\nonumber
\end{eqnarray}
\end{lemma}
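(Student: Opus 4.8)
The plan is to reduce everything to the fixed-metric identities of Lemma \ref{help41}, plus a short list of correction terms produced by the time-dependence of the metric. Recall from Section 4 that $\operatorname{div}''(\phi)$ and $\operatorname{div}'(\phi)$ coincide, up to a fixed reindexing (and a constant), with $\tfrac{1}{\sqrt{-1}}\bar\partial^*\phi$ and $-\tfrac{1}{\sqrt{-1}}\partial^*\phi$, so that the Weitzenb\"ock formulae (\ref{eq:43})--(\ref{eq:44}) apply to them at each fixed time.

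First I would write, in a normal coordinate at the point under consideration, $(\operatorname{div}''(\phi))_{iI_{p-1},\bar J_{p-1}}=g^{\bar b a}\nabla_a\phi_{iI_{p-1},\bar b\bar J_{p-1}}$ and differentiate in $t$ by the product rule, without freezing anything:
\[
\frac{\partial}{\partial t}\bigl(\operatorname{div}''(\phi)\bigr)_{iI_{p-1},\bar J_{p-1}}
= g^{\bar b a}\nabla_a\Bigl(\tfrac{\partial}{\partial t}\phi\Bigr)_{iI_{p-1},\bar b\bar J_{p-1}}
+\Bigl(\tfrac{\partial}{\partial t}g^{\bar b a}\Bigr)\nabla_a\phi_{iI_{p-1},\bar b\bar J_{p-1}}
+g^{\bar b a}\bigl[\tfrac{\partial}{\partial t},\nabla_a\bigr]\phi_{iI_{p-1},\bar b\bar J_{p-1}} .
\]
The first term on the right, together with $-\Delta(\operatorname{div}''(\phi))$, is exactly $\heat(\operatorname{div}''(\phi))$ computed \emph{with the metric $g(t)$ held fixed}: since $[\Delta_{\bar\partial},\bar\partial^*]=0$ for any fixed K\"ahler metric, this combination is (after reindexing) the left side of (\ref{eq:lem41}), hence equals $\mathcal{KB}(\operatorname{div}''_{i}(\phi))+\sum_{\nu}R_{i\bar j_\nu l\bar k}(\operatorname{div}''(\phi))_{kI_{p-1},\bar j_1\cdots(\bar l)_\nu\cdots\bar j_{p-1}}-\tfrac12 R_{i\bar k}(\operatorname{div}''(\phi))_{kI_{p-1},\bar J_{p-1}}$, which are the first three terms on the right side of (\ref{eq:lem61}). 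It then remains only to evaluate the other two pieces of the product rule.

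For the metric piece I would use that the K\"ahler-Ricci flow gives $\tfrac{\partial}{\partial t}g^{\bar b a}=R^{\bar b a}$, which in the chosen normal coordinate produces the term $R_{j\bar k}\nabla_k\phi_{iI_{p-1},\bar j\bar J_{p-1}}$. For the commutator piece, since the connection is now $t$-dependent, $[\tfrac{\partial}{\partial t},\nabla_a]$ acts on a tensor only through $-\tfrac{\partial}{\partial t}\Gamma$ in each of its \emph{holomorphic} slots --- the antiholomorphic Christoffels of a K\"ahler metric vanish, so $\nabla_a$ never sees the barred indices of $\phi$. The standard variation formula, specialized to K\"ahler and to $\tfrac{\partial}{\partial t}g_{i\bar j}=-R_{i\bar j}$, gives $\tfrac{\partial}{\partial t}\Gamma^k_{ai}=-g^{k\bar l}\nabla_a R_{i\bar l}$, which by the K\"ahler identity $\nabla_a R_{i\bar l}=\nabla_i R_{a\bar l}$ is symmetric in $a$ and $i$; contracting the $a$-slot against $g^{\bar b a}$ and using this symmetry moves the covariant derivative onto the outer slot $i$, giving $\nabla_i R_{j\bar k}\,\phi_{kI_{p-1},\bar j\bar J_{p-1}}$, and onto the inner slots $i_\mu$, giving $\sum_{\mu=1}^{p-1}\nabla_{i_\mu}R_{l\bar k}\,\phi_{ii_1\cdots(k)_\mu\cdots i_{p-1},\bar l\bar J_{p-1}}$. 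Adding the three contributions yields (\ref{eq:lem61}). The identity (\ref{eq:lem62}) follows from the conjugate computation applied to $(\operatorname{div}'(\phi))_{I_{p-1},\bar j\bar J_{p-1}}=g^{a\bar b}\nabla_{\bar b}\phi_{aI_{p-1},\bar j\bar J_{p-1}}$, using $[\Delta_{\bar\partial},\partial^*]=0$ with (\ref{eq:lem42}) for the frozen part, $\tfrac{\partial}{\partial t}g^{a\bar b}=R^{a\bar b}$ for the metric piece, and the conjugate variation formula for $\tfrac{\partial}{\partial t}\Gamma^{\bar l}_{\bar b\bar j}$ together with its K\"ahler symmetry on the antiholomorphic slots $\bar j,\bar j_1,\dots,\bar j_{p-1}$.

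The main obstacle I anticipate is bookkeeping rather than conceptual: keeping the barred/unbarred indices and the $-\tfrac12$ Ricci coefficient straight, and --- most importantly --- handling correctly the fact that $\tfrac{\partial}{\partial t}$ and $\nabla$ no longer commute, since it is exactly this noncommutativity (through $\tfrac{\partial}{\partial t}\Gamma$) that produces the two $\nabla\Ric$ correction terms, and it is the K\"ahler symmetry $\nabla_a R_{i\bar l}=\nabla_i R_{a\bar l}$ that lets one put them in the stated form.
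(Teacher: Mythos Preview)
Your proposal is correct and follows essentially the same route as the paper: differentiate $\operatorname{div}''(\phi)=g^{\bar b a}\nabla_a\phi$ by the product rule to isolate the three correction pieces (from $\partial_t g^{-1}$, from $\partial_t\Gamma$, and the $\operatorname{div}''(\partial_t\phi)$ part), then invoke $[\Delta_{\bar\partial},\bar\partial^*]=0$ together with (\ref{eq:lem41}) for the frozen piece, and obtain (\ref{eq:lem62}) by conjugation. Your explicit mention of the K\"ahler second Bianchi symmetry $\nabla_a R_{i\bar l}=\nabla_i R_{a\bar l}$ to rewrite the $\partial_t\Gamma$ contribution is the one detail the paper leaves implicit.
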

\begin{proof}
Since $\frac{\partial}{\partial t} \Gamma^h_{jl}=-g^{h\bar{q}}\nabla_{j}R_{l\bar{q}}$,
\begin{eqnarray}
&\,&\frac{\partial}{\partial t} (\sum g^{\bar{l}k}\nabla_{k}\phi_{i i_1\cdots
i_{p-1},\bar{l}\bar{j}_1\cdots\bar{j}_{p-1}})\nonumber \\
&=& R_{l\bar{k}}\nabla_{k}\phi_{i I_{p-1},
\bar{l}\ov{J}_{p-1}}+\nabla_i R_{l\bar{k}}\phi_{k I_{p-1}, \bar{l}
\ov{J}_{p-1}}+\sum_{\mu =1}^{p-1}\nabla_{i_{\mu}}R_{l\bar{k}}\phi_{
i i_1\cdots(k)_{\mu}
\cdots i_{p-1}, \bar{l}\ov{J}_{p-1}}\label{eq:help61}\\
\quad\quad &+&\left(\operatorname{div}''\left(\frac{\partial}{\partial t} \phi\right)\right)_{i I_{p-1}, \ov{J}_{p-1}},\nonumber
\end{eqnarray}
then the first equation follows from the fact that  $\Delta_{\bar{\partial}}$ is commutative with $\operatorname{div}''$ and (\ref{eq:lem41}).
The second evolution equation follows from taking the conjugation of
the first one.
\end{proof}

\begin{lemma} \label{helpKRF62}Let $\phi$ be a solution to (\ref{eq:11}). Then under the normal coordinate
\begin{eqnarray*}
&\, &\heat  (\operatorname{div}'
(\operatorname{div}''(\phi)))_{ I_{p-1}, \ov{J}_{p-1}}
= \left(\mathcal{KB}(\operatorname{div}'
(\operatorname{div}''(\phi)))\right)_{ I_{p-1}, \ov{J}_{p-1}}
+\mathcal{E}(\phi)_{I_{p-1},\ov{J}_{p-1}}; \\
&\,&\heat  (\operatorname{div}''
(\operatorname{div}'(\phi)))_{ I_{p-1}, \ov{J}_{p-1}}= \left(\mathcal{KB}(
\operatorname{div}''(\operatorname{div}'(\phi)))\right)_{ I_{p-1}, \ov{J}_{p-1}}+\mathcal{E}(\phi)_{I_{p-1},\ov{J}_{p-1}},
\end{eqnarray*}
where
\begin{eqnarray}
&\, &\mathcal{E}(\phi)_{I_{p-1}, \ov{J}_{p-1}}\doteqdot
R_{j\bar{i}}
\left(\nabla_{i}(\operatorname{div}'(\phi))_{I_{p-1},\bar{j}\ov{J}_
{p-1}}+\nabla_{\bar{j}}
(\operatorname{div}''(\phi))_{i I_{p-1},\ov{J}_{p-1}}
\right)\nonumber\\
\quad\quad
&+&\sum_{\mu=1}^{p-1}\nabla_{j}R_{i_{\mu}\bar{k}}
(\operatorname{div}'(\phi))_{i_1\cdots(k)_{\mu}\cdots
i_{p-1}, \bar{j}\ov{J}_{p-1}}
+\sum_{\nu=1}^{p-1}\nabla_{\bar{i}}R_{l\bar{j}_{\nu}}
(\operatorname{div}''(\phi))_{i I_{p-1},
\bar{j}_1\cdots(\bar{l})_{\nu}\cdots\bar{j}_{p-1}}\nonumber\\
\quad\quad
&+&\sum_{\mu=1}^{p-1}R_{l\bar{k}}R_{j\bar{i}i_{\mu}\bar{l}}
\phi_{ii_1\cdots(k)_{\mu}\cdots i_{p-1}, \bar{j}\ov{J}_{p-1}}
+\sum_{\nu=1}^{p-1}R_{l\bar{k}}R_{j\bar{i}k\bar{j}_{\nu}}
\phi_{i I_{p-1},
\bar{j}\bar{j}_1\cdots(\bar{l})_{\nu}\cdots\bar{j}_{p-1}}\nonumber \\
\quad\quad
&+&\D R_{j\bar{i}}
\phi_{i I_{p-1}, \bar{j}\ov{J}_{p-1}}
+\nabla_{k}R_{j\bar{i}}\nabla_{\bar{k}}
\phi_{i I_{p-1}, \bar{j}\ov{J}_{p-1}}
+\nabla_{\bar{k}}R_{j\bar{i}}\nabla_{k}
\phi_{i I_{p-1}, \bar{j}\ov{J}_{p-1}}\nonumber\\
\quad\quad
&+&R_{j\bar{k}}R_{k\bar{i}}
\phi_{i I_{p-1}, \bar{j}\ov{J}_{p-1}}
-R_{k\bar{l}}R_{l\bar{k}j\bar{i}}
\phi_{i I_{p-1}, \bar{j}\ov{J}_{p-1}}.\nonumber
\end{eqnarray}
\end{lemma}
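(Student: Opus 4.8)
The plan is to imitate the static computation behind Lemma \ref{help42}, while carefully tracking the extra terms produced by the $t$-dependence of the metric and of the Levi--Civita connection. I would work at a fixed point in a holomorphic normal coordinate for $g(t_0)$, so that $g_{i\bar j}=\delta_{ij}$ and $\Gamma=0$ there, while $\frac{\partial}{\partial t}g^{p\bar q}=R^{p\bar q}$, $\frac{\partial}{\partial t}\Gamma^h_{jl}=-g^{h\bar q}\nabla_jR_{l\bar q}$ and its conjugate do not vanish. Since the metric is parallel, $\operatorname{div}'(\operatorname{div}''(\phi))$ is the $(p-1,p-1)$-form with components $g^{i\bar j}g^{k\bar l}\nabla_{\bar j}\nabla_k\phi_{iI_{p-1},\bar l\bar J_{p-1}}$, and likewise $\operatorname{div}''(\operatorname{div}'(\phi))$ with the two covariant derivatives in the opposite order.

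First I would differentiate in $t$ by the product rule, separating the term in which $\frac{\partial}{\partial t}$ falls on $\phi$ itself. Using the equation $\frac{\partial}{\partial t}\phi=-\Delta_{\bar\partial}\phi$ together with the commutation $[\Delta_{\bar\partial},\partial^*]=[\Delta_{\bar\partial},\bar\partial^*]=0$ (equivalently, $\Delta_{\bar\partial}$ commutes with $\operatorname{div}'$ and $\operatorname{div}''$), that term becomes $-\Delta_{\bar\partial}(\operatorname{div}'(\operatorname{div}''(\phi)))$. Subtracting $\Delta(\operatorname{div}'(\operatorname{div}''(\phi)))$ and invoking the Bochner--Kodaira identity $-\Delta_{\bar\partial}\psi-\Delta\psi=\mathcal{KB}(\psi)$ for the $(p-1,p-1)$-form $\psi=\operatorname{div}'(\operatorname{div}''(\phi))$ --- this is exactly (\ref{eq:43})/(\ref{eq:44}) with trivial bundle and $q=p-1$, the same identity that produced (\ref{KB-heat}) in the $(p,p)$ case --- yields the principal term $\mathcal{KB}(\operatorname{div}'(\operatorname{div}''(\phi)))$. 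It then remains to check that the remaining terms, those in which $\frac{\partial}{\partial t}$ falls on an inverse metric or on a Christoffel symbol, add up to the stated $\mathcal{E}(\phi)$.

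These split into four groups, each obtained by a short expansion. (i) Hitting the outer $g^{i\bar j}$ gives $R^{i\bar j}\nabla_{\bar j}(\operatorname{div}''(\phi))_{iI_{p-1},\bar J_{p-1}}$, the second summand of the first line of $\mathcal{E}(\phi)$. (ii) Hitting the inner $g^{k\bar l}$ gives $g^{i\bar j}\nabla_{\bar j}(R^{k\bar l}\nabla_k\phi_{iI_{p-1},\bar l\bar J_{p-1}})$; expanding $\nabla_{\bar j}$, commuting $\nabla_{\bar j}\nabla_k=\nabla_k\nabla_{\bar j}+[\nabla_{\bar j},\nabla_k]$ and contracting, the $\nabla_k\nabla_{\bar j}$ part is $R_{j\bar i}\nabla_i(\operatorname{div}'(\phi))_{I_{p-1},\bar j\bar J_{p-1}}$ (the first summand of the first line), while the term carrying $\nabla_{\bar j}R^{k\bar l}$ and the curvature commutator give contributions of the shape $\nabla R\ast\nabla\phi$ and $R\ast R\ast\phi$. (iii) Hitting the Christoffel symbols of the inner $\nabla_k$ (which act on the holomorphic slots of $\phi$) produces $\sum_\mu\nabla_jR_{i_\mu\bar k}(\operatorname{div}'(\phi))_{i_1\cdots(k)_\mu\cdots i_{p-1},\bar j\bar J_{p-1}}$ as leading term (the first summand of the second line), together with remainders of the shape $\nabla_{\bar k}R\ast\nabla_k\phi$, $\Delta R\ast\phi$ and $R\ast R\ast\phi$. (iv) Hitting the Christoffel symbols of the outer $\nabla_{\bar j}$ (which act on the antiholomorphic slots of $\operatorname{div}''(\phi)$) gives, after the $g^{i\bar j}$-contraction, $\sum_\nu\nabla_{\bar i}R_{l\bar j_\nu}(\operatorname{div}''(\phi))_{iI_{p-1},\bar j_1\cdots(\bar l)_\nu\cdots\bar j_{p-1}}$ (the second summand of the second line).

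The last step, which I expect to be the main obstacle, is to collect the leftover $\nabla R\ast\nabla\phi$, $\Delta R\ast\phi$ and $R\ast R\ast\phi$ pieces from (ii) and (iii) and to verify that they organize precisely into the third, fourth and fifth lines of $\mathcal{E}(\phi)$ with the stated signs. This uses the commutation formula for $[\nabla_{\bar j},\nabla_k]$ on the form indices of $\phi$ (which supplies the mixed curvature products $R_{l\bar k}R_{j\bar i i_\mu\bar l}\phi$ and $R_{l\bar k}R_{j\bar i k\bar j_\nu}\phi$ of the third line and, upon tracing over the contracted indices, the terms $R_{j\bar k}R_{k\bar i}\phi$ and $R_{k\bar l}R_{l\bar k j\bar i}\phi$ of the fifth line), together with the contracted second Bianchi identity and the Kähler symmetry $\nabla_iR_{k\bar l}=\nabla_kR_{i\bar l}$ to realign index patterns; combining the inner-metric and inner-Christoffel remainders and commuting the outer $\nabla_{\bar j}$ past $\nabla_k$ then produces the Laplacian term $\Delta R_{j\bar i}\phi_{iI_{p-1},\bar j\bar J_{p-1}}$ and the cross terms $\nabla_kR_{j\bar i}\nabla_{\bar k}\phi+\nabla_{\bar k}R_{j\bar i}\nabla_k\phi$ of the fourth line. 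The identity for $\operatorname{div}''(\operatorname{div}'(\phi))$ then follows by running the same computation with $\operatorname{div}'$ and $\operatorname{div}''$ (that is, $\partial^*$ and $\bar\partial^*$) interchanged, or simply by complex conjugation; the two give the same $\mathcal{E}(\phi)$ because that expression is self-conjugate. Everything besides this closing bookkeeping is formal.
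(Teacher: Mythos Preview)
Your proposal is correct and follows essentially the same approach as the paper. The only organizational difference is that the paper, rather than expanding all four groups (i)--(iv) from scratch, first isolates the outer-metric and outer-Christoffel variations (your (i) and (iv)) and then plugs in the already-computed formula \eqref{eq:help61} for $\frac{\partial}{\partial t}(\operatorname{div}''(\phi))$ from Lemma~\ref{helpKRF61} to handle your (ii), (iii) and the $\phi$-term simultaneously; the remaining bookkeeping with commutators and the second Bianchi identity is the same, and the second identity is obtained by conjugation exactly as you say.
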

\begin{proof}
 \begin{eqnarray*}
&\, &\frac{\partial}{\partial t} (g^{i\bar{j}}\nabla_{\bar{j}}
(\operatorname{div}''(\phi))_{i I_{p-1}, \ov{J}_{p-1}})\\
\quad\quad
&=&R_{j\bar{i}}\nabla_{\bar{j}}
(\operatorname{div}''(\phi))_{i I_{p-1}, \ov{J}_{p-1}}
+\sum_{\nu=1}^{p-1}\nabla_{\bar{i}}R_{l\bar{j}_{\nu}}
(\operatorname{div}''(\phi))_{i I_{p-1},
\bar{j}_1\cdots(\bar{l})_{\nu}\cdots\bar{j}_{p-1}}\\
\quad\quad
&+&\nabla_{\bar{i}}(\frac{\p}{\p t}(
(\operatorname{div}''(\phi))_{i I_{p-1}, \ov{J}_{p-1}})
.
 \end{eqnarray*}
Now we plug in (\ref{eq:help61}). Applying the commutator formula, the 2nd-Bianchi identity   and Lemma
\ref{help42}, we get the first evolution equation. The second one
follows from the first by taking the conjugation.
\end{proof}

The next lemma is on $\Ric(\phi)$. The proof is via straight forward computation.
\begin{lemma} For $\phi$, a solution (\ref{eq:11}), under a normal coordinate,
 \begin{eqnarray}
 &\,&\heat  (R_{j\bar{i}}\phi_{i I_{p-1},
\bar{j}\ov{J}_{p-1}})
=\sum_{\mu = 1}^{p-1}\sum_{\nu =1 }^{p-1}R^{\,
k\bar{l}}_{i_\mu\,\,\bar{j_\nu}}R_{j\bar{i}}
\phi_{ii_1\cdots(k)_{\mu}\cdots i_{p-1}\bar{j}\bar{j}_1\cdots(\bar{l})_{\nu}
\cdots\bar{j}_{p-1}}\label{eq:lem65}\\
&\quad&
-\frac{1}{2}\left(\sum_{\nu =1}^{p-1}R^{\bar{l}}_{\,\,
\bar{j}_\nu}(R_{j\bar{i}}\phi_{i I_{p-1},\bar{j}\bar{j_1}\cdots(\bar{l})_{\nu}
\cdots\bar{j}_{p-1}}+\sum_{\mu =1
}^{p-1}R_{i_\mu}^{\,\,
k}R_{j\bar{i}}\phi_{ii_1\cdots(k)_\mu\cdots i_{p-1},
\bar{j}\ov{J}_{p-1}}\right)\nonumber\\
&\quad&-\left(\sum_{\mu=1}^{p-1}R_{l\bar{k}}R_{j\bar{i}i_{\mu}\bar{l}}
\phi_{ii_1\cdots(k)_{\mu}\cdots i_{p-1}, \bar{j}\ov{J}_{p-1}}
+\sum_{\nu=1}^{p-1}R_{l\bar{k}}R_{j\bar{i}k\bar{j}_{\nu}}
\phi_{i I_{p-1},
\bar{j}\bar{j}_1\cdots(\bar{l})_{\nu}\cdots\bar{j}_{p-1}}\right)\nonumber \\
&\quad&-\left(\nabla_{k}R_{j\bar{i}}\nabla_{\bar{k}}
\phi_{i I_{p-1}， \bar{j}\ov{J}_{p-1}}
+\nabla_{\bar{k}}R_{j\bar{i}}\nabla_{k}
\phi_{i I_{p-1}, \bar{j}\ov{J}_{p-1}}\right)
+2R_{k\bar{l}}R_{l\bar{k}j\bar{i}}\phi_{i I_{p-1},
\bar{j}\ov{J}_{p-1}}\nonumber.
 \end{eqnarray}

\end{lemma}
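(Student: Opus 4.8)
The proof is a direct, if somewhat lengthy, computation; no idea beyond the tools already assembled is needed, so the task is essentially bookkeeping.

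First I would regard $R_{j\bar i}\phi_{iI_{p-1},\bar j\ov J_{p-1}}$ as the tensor $\Ric(\phi)_{I_{p-1},\ov J_{p-1}}=g^{i\bar l}g^{k\bar j}R_{i\bar j}\phi_{kI_{p-1},\bar l\ov J_{p-1}}$, apply the Leibniz rule to $\heat$, and group the result into four blocks: (a) the terms in which $\frac{\p}{\p t}$ falls on one of the two inverse metric factors — using $\frac{\p}{\p t}g^{i\bar j}=R^{i\bar j}$, these produce two $R\cdot R\cdot\phi$ contributions; (b) the term $(\heat R_{i\bar j})$ contracted against $\phi$; (c) $R_{i\bar j}$ contracted against $\heat\phi$; and (d) the two cross-gradient terms $-\nabla_{k}R_{j\bar i}\nabla_{\bar k}\phi_{iI_{p-1},\bar j\ov J_{p-1}}-\nabla_{\bar k}R_{j\bar i}\nabla_{k}\phi_{iI_{p-1},\bar j\ov J_{p-1}}$ produced by the Leibniz rule for $\D=\frac{1}{2}(\nabla_i\nabla_{\bar i}+\nabla_{\bar i}\nabla_i)$ on a product. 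Block (d) is exactly the pair of gradient terms on the last line of the claimed identity, so it can be set aside immediately.

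Next, in block (b) I would substitute the standard Kähler--Ricci flow evolution of the Ricci tensor, $\heat R_{i\bar j}=R_{i\bar j k\bar l}R_{l\bar k}-R_{i\bar k}R_{k\bar j}$, and in block (c) substitute $\heat\phi=\mathcal{KB}(\phi)$ from (\ref{KB-heat}). Expanding $\mathcal{KB}(\phi)_{iI_{p-1},\bar j\ov J_{p-1}}$ via Kodaira's formula (Lemma \ref{lemma11}) with index sets $(iI_{p-1})$ and $(\bar j\ov J_{p-1})$, I would split the double sum over slots into the part running only over the slots of $I_{p-1}$ and $\ov J_{p-1}$ and the part involving the distinguished slots $i$ or $\bar j$. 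After contracting with $R_{j\bar i}$, the first part reassembles precisely into $\mathcal{KB}(\Ric(\phi))$, i.e.\ the first two lines of (\ref{eq:lem65}). The remaining pieces — the distinguished-slot curvature terms $R^{k\bar l}_{i\bar j}$, $R^{k\bar l}_{i\bar j_\nu}$, $R^{k\bar l}_{i_\mu\bar j}$ contracted with $R_{j\bar i}$, together with block (a) and the reaction part of block (b) — are then combined, using the first Bianchi identity, the Kähler symmetries $R_{i\bar j k\bar l}=R_{k\bar j i\bar l}=R_{k\bar l i\bar j}$ (and, where a derivative of curvature survives, the second Bianchi identity), to collapse into the third line of (\ref{eq:lem65}) and the term $+2R_{k\bar l}R_{l\bar k j\bar i}\phi_{iI_{p-1},\bar j\ov J_{p-1}}$ on the last line.

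The only real difficulty is bookkeeping: there are many index contractions, and the cancellations that merge the distinguished-slot curvature terms with the metric-derivative terms from block (a) must be tracked with care. I would organize the computation to minimize error — first recording $\heat$ of the two elementary factors $R_{i\bar j}$ and $\phi_{iI_{p-1},\bar j\ov J_{p-1}}$ separately (the latter being $\mathcal{KB}$ of the corresponding $(p,p)$-form component), assembling only afterwards, and finally specializing to a normal coordinate at the point in question to match the stated form. All the ingredients invoked — the Leibniz rule, the Kähler--Ricci flow evolution of $\Ric$, $\heat\phi=\mathcal{KB}(\phi)$, and the Bianchi identities — are already available, so the verification is routine once the bookkeeping is set up.
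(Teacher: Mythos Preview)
Your proposal is correct and matches the paper's approach, which simply states that ``the proof is via straight forward computation'' without further detail. Your decomposition into the four blocks (metric-derivative terms, $\heat R_{i\bar j}$, $R_{i\bar j}\cdot\mathcal{KB}(\phi)$, and cross-gradients) is exactly the natural way to carry this out, and the identification of block (d) with the gradient terms and the splitting of block (c) into $\mathcal{KB}(\Ric(\phi))$ plus distinguished-slot pieces is right. One small remark: the second Bianchi identity is not actually needed here---all the curvature terms that appear are zeroth order in derivatives of $\Rm$, so only the K\"ahler symmetries $R_{i\bar jk\bar l}=R_{k\bar ji\bar l}=R_{k\bar li\bar j}$ are used when combining blocks (a), (b), and the distinguished-slot pieces of (c) into the third line and the $+2R_{k\bar l}R_{l\bar kj\bar i}\phi$ term.
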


Adapting the notation from Section 4, Lemma \ref{helpKRF61} implies the following set of formulae.

\begin{eqnarray}
 &\,&\heat (\operatorname{div}''_{V}(\phi))_{I_{p-1},
\ov{J}_{p-1}}=\operatorname{div}''_{\heat V}(\phi)
+\mathcal{KB}(\operatorname{div}''_{V}(\phi)) \label{eq:68}\\
\quad  \quad &+& \sum_{\nu =1 }^{p-1}R^{\,
k\bar{l}}_{V\,\bar{j_\nu}}(\operatorname{div}''(\phi))_{ki_1\cdots
 i_{p-1},\bar{j_1}\cdots(\bar{l})_\nu\cdots\bar{j}_{p-1}}-\frac{1}{2}
\operatorname{div}''_{\Ric(V)}(\phi)
  \nonumber\\
\quad\quad&+&
R_{j\bar{k}}\nabla_{k}\phi_{Vi_1\cdots i_{p-1},\bar{j}\bar{j}_1
\cdots\bar{j}_{p-1}}+\nabla_VR_{j\bar{i}}\phi_{ii_1\cdots i_{p-1},
\bar{j}\bar{j}_1\cdots\bar{j}_{p-1}}\nonumber\\
 \quad \quad &+& \sum_{\mu=1}^{p-1}\nabla_{i_{\mu}}R_{j\bar{k}}\phi_{Vi_1\cdots
(k)_{\mu}\cdots i_{p-1},\bar{j}\bar{j}_1\cdots\bar{j}_{p-1}}-
g^{i\bar{j}}\left((\nabla_{i}
\operatorname{div}''(\phi))_{\nabla_{\bar{j}}V}+(\nabla_{\bar{j}}
\operatorname{div}''(\phi))_{\nabla_{i}V})\right); \nonumber \\
&\,&\heat (\operatorname{div}'_{\ov{V}}(\phi))_{I_{p-1}, \bar{J}_{p-1}}=
\operatorname{div}'_{\heat
\ov{V}}(\phi)+\mathcal{KB}(\operatorname{div}'_{\ov{V}}(\phi))
\label{eq:69}\\
\quad \quad &
+& \sum_{\mu =1 }^{p-1}R^{\,
k\bar{l}}_{i_\mu \,\bar{V}}(\operatorname{div}'(\phi))_{i_1\cdots(k)_\mu\cdots
 i_{p-1},
\bar{l}\bar{j_1}\cdots\bar{j}_{p-1}}-\frac{1}{2}\operatorname{div}'_{\ov{\Ric(V)
}}(\phi) \nonumber\\
\quad\quad &+&
R_{k\bar{i}}\nabla_{\bar{k}}\phi_{ii_1\cdots i_{p-1},
\ov{V}\bar{j}_1\cdots\bar{j}_{p-1}}
+\nabla_{\ov{V}}R_{j\bar{i}}\phi_{ii_1\cdots i_{p-1}, \bar{j}\bar{j}_1\cdots
\bar{j}_{p-1}}\nonumber\\
\quad\quad &+& \sum_{\nu=1}^{p-1}\nabla_{\bar{j}_{\nu}}R_{j\bar{i}}
\phi_{ii_1\cdots i_{p-1},
\ov{V}\bar{j}_1\cdots(\bar{j})_{\nu}\cdots\bar{j}_{p-1}}
-g^{i\bar{j}}\left((\nabla_{i}
\operatorname{div}'(\phi))_{\nabla_{\bar{j}}\ov{V}}+(\nabla_{\bar{j}}
\operatorname{div}'(\phi))_{\nabla_{i}\ov{V}}\right). \nonumber
\end{eqnarray}
For $\Lambda\phi$, we have the following evolution equation.
\begin{equation}\label{eq:610}
(\frac{\p}{\p t}-\D)(\Lambda\phi)_{i_1\cdots i_{p-1},
\bar{j}_1\cdots\bar{j}_{p-1}}=\mathcal{KB}(\Lambda\phi)_{i_1\cdots i_{p-1},
\bar{j}_1\cdots\bar{j}_{p-1}}+R_{j\bar{i}}\phi_{ii_1\cdots i_{p-1},
\bar{j}\bar{j}_1\cdots\bar{j}_{p-1}}.
\end{equation}

\section{ A family of LYH estimates for  the K\"ahler-Ricci flow under the condition $\mathcal{C}_p$}

Let $(M, g(t))$ be a complete solution to the K\"ahler-Ricci flow. Assume
further that the curvature operator satisfies $\mathcal{C}_p$.
Let
\begin{eqnarray*}
\mathcal{M}_{\abb}=\D R_{\abb}+R_{\abb\gbd}R_{\delta\bar{\g}}+\frac{R_{\abb}}{t}, \quad
P_{\abb\g}=\nabla_{\g}R_{\abb}, \quad P_{\abb\bar{\g}}=\nabla_{\bar{\g}}R_{\abb}.
\end{eqnarray*}
Also let $P_{\bar{\beta} \alpha \gamma}=\nabla_\gamma R_{\bar{\beta}\alpha}$, $P_{\bar{\beta}\alpha \bar{\gamma}}=\nabla_{\bar{\gamma}}R_{\bar{\beta}\alpha}$. Clearly $P_{\abb\g}=P_{\bar{\beta} \alpha \gamma}$ and $P_{\abb\bar{\g}}=P_{\bar{\beta}\alpha \bar{\gamma}}$.
The second Bianchi identity implies that
\begin{equation*}
P_{\abb\g}=P_{\g\bb\a}, \quad \ov{P_{\abb\g}}=P_{\b\ba\bar{\g}}=P_{\ba\b\bar{\g}}.
\end{equation*}

\begin{theorem}\label{LYH} Let $(M, g(t))$ be a complete solution to the K\"ahler-Ricci flow satisfying the condition $\mathcal{C}_p$ on $M \times [0, T]$. When $M$ is noncompact we assume that the curvature of $(M, g(t))$ is bounded on $M \times [0, T]$. Then
 for any $\wedge^{1, 1}$-vector $U$ which can be written as
$U=\sum_{i=1}^{p-1} X_i\wedge \bar{Y}_i+W\wedge \bar{V}$, for $(1, 0)$-type vectors $X_i, Y_i, W, V$, the Hermitian bilinear form  $\mathcal{Q}$ defined as
\begin{equation}\label{eq:71}
\mathcal{Q}(U\oplus W)\doteqdot
\mathcal{M}_{\abb}W^{\a}W^{\bb}+P_{\abb\g}\bar{U}^{\bb\g}W^{\a}
+P_{\abb\bar{\g}}U^{\a\bar{\g}}W^{\bb}
+R_{\abb\gbd}U^{\a\bb}\bar{U}^{\delbar\g}
\end{equation}
satisfies that $\mathcal{Q}\ge 0$ for any $t>0$. Moreover, if the equality ever occurs for some $t>0$, the universal cover of $(M, g(t))$ must be a gradient expanding K\"ahler-Ricci soliton.
\end{theorem}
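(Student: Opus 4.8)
The plan is to run Hamilton's tensor maximum principle on the family of Hermitian quadratic forms $\mathcal{Q}$ over the bundle whose fiber at $(x,t)$ consists of pairs $(U\oplus W)$ with $U=\sum_{i=1}^{p-1}X_i\wedge\bar{Y}_i+W\wedge\bar{V}$, imitating the scheme of Sections 2 and 5 but now with a time-dependent metric. First I would assemble the evolution equations. Under the K\"ahler--Ricci flow one has the standard equation
$$
\heat R_{\abb\gbd}=R_{\abb\mu\bar{\nu}}R_{\nu\bar{\mu}\gbd}-R_{\a\bar{\nu}\g\bar{\mu}}R_{\nu\bar{\b}\mu\bar{\delta}}+R_{\a\bar{\nu}\mu\bar{\delta}}R_{\nu\bar{\b}\g\bar{\mu}},
$$
together with the second Bianchi identities relating $\nabla P$ to curvature squares, and the computations of $\heat\mathcal{M}_{\abb}$ (this is the heart of Cao's estimate \cite{Cao} for $p=1$) and of $\heat P_{\abb\g}$, $\heat P_{\abb\bar{\g}}$. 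Choosing the extensions of $W,X_i,Y_i,V$ off the point $(x_0,t_0)$ so that their first covariant derivatives vanish there and their time derivatives absorb the $\frac1t$-terms exactly as in Cao's argument and in the choice $\nabla_i V=\frac1t\frac{\p}{\p z^i}$ used in Section 5, the heat operator applied to $\mathcal{Q}(U\oplus W)$ reduces at $(x_0,t_0)$ to a reaction expression of the schematic form ``curvature operator paired with the second variation of $\mathcal{Q}$'', modulo terms killed by the first-order null conditions.

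Suppose then that $\mathcal{Q}\ge 0$ fails for the first time at $(x_0,t_0)$ with $t_0>0$, at a null pair $(U_0\oplus W_0)$. Vanishing of the first variation of $\mathcal{Q}$ in all admissible directions (varying $W$, each $X_i$, each $Y_i$, and $V$) produces the analogues of (\ref{eq:51})--(\ref{eq:52}), while nonnegativity of the second variation yields, after polarization, a nonnegative Hermitian block matrix $\mathcal{M}_1$ exactly as in Section 5, whose entries are the second derivatives of $\mathcal{Q}$ together with $\phi$-type and $P$-type blocks. On the other hand, the hypothesis $\Rm\in\mathcal{C}_p$, applied precisely to wedge-vectors of the shape $\sum_{i=1}^{p-1}X_i\wedge\bar{Y}_i+W\wedge\bar{V}$ (this is exactly why the constraint on $U$ is imposed), says that the associated curvature block matrix $\mathcal{M}_2$ is nonnegative Hermitian. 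The reaction term is then $\operatorname{trace}(\mathcal{M}_1\cdot\mathcal{M}_2)\ge 0$, and the maximum principle gives $\mathcal{Q}\ge 0$ for all $t>0$ once the short-time behaviour is checked: as $t\to 0^+$ one has $\mathcal{M}_{\abb}\ge \frac{R_{\abb}}{t}-C$ with $R_{\abb}\ge 0$ (since $\mathcal{C}_p\subset\mathcal{C}_1$ forces nonnegative bisectional, hence Ricci, curvature), while the off-diagonal $P$- and $\Rm$-contributions are controlled by $\sqrt{R_{W\bar W}}$ via the curvature bound, so $\liminf_{t\to 0^+}\mathcal{Q}\ge 0$; this is the standard Cao/Hamilton argument, carried out for noncompact $M$ with bounded curvature via the localization of Section 10.

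For the rigidity statement, if $\mathcal{Q}(U_1\oplus W_1)=0$ for some $(x_1,t_1)$ with $t_1>0$, I would invoke Hamilton's strong maximum principle for the null space of $\mathcal{Q}$, which forces the null set to be a parallel subbundle invariant under both the reaction ODE and spatial parallel transport. Reading off the equality case of $\operatorname{trace}(\mathcal{M}_1\cdot\mathcal{M}_2)=0$ together with the extremality of the optimal $V$ yields, on the universal cover, the identity
$$
R_{\abb}+\nabla_\a\nabla_{\bar{\b}}f+\frac{1}{t}g_{\abb}=0,\qquad \nabla_\a f=V,
$$
where the $1$-form dual to $V$ is shown to be closed so that such an $f$ exists (as in \cite{Cao} and \cite{Ni-JDG07}); that is, $(M,g(t))$ is a gradient expanding K\"ahler--Ricci soliton.

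The main obstacle I expect is the fusion of the first three steps: producing the evolution equation for $\mathcal{Q}$ and then verifying that, after imposing \emph{every} null condition, the large collection of gradient terms reorganizes exactly into the $P$-contractions appearing in (\ref{eq:71}) and the $\frac1t$-terms cancel, so that the reaction term is genuinely $\operatorname{trace}(\mathcal{M}_1\cdot\mathcal{M}_2)$ with both factors manifestly nonnegative. The bookkeeping is heavier than in Section 5 because of the extra $W\wedge\bar{V}$ slot and the quadratic curvature reaction terms carried by the K\"ahler--Ricci flow, and keeping track of the exact algebraic matching between the Ricci-flow reaction terms for $\mathcal{M}$, $P$, $\Rm$ and the second-variation blocks is where the real work lies; everything else is a routine adaptation of \cite{richard-harnack}, \cite{Cao}, \cite{N-jams} and the computations recalled in Sections 4--6.
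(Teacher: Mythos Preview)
Your outline follows the template of Sections 2 and 5 too literally. In those sections the quantity being estimated is built from a \emph{fixed} $(p,p)$-form $\phi$, so the second variation at a null point gives a single nonnegative Hermitian block $\mathcal{M}_1$, the $\mathcal{C}_p$ hypothesis gives a curvature block $\mathcal{M}_2$, and the reaction term is literally $\operatorname{trace}(\mathcal{M}_1\cdot\mathcal{M}_2)$. For Theorem~\ref{LYH} this fails: here $\mathcal{Q}$ is built from the curvature tensor itself, and the evolution equation for $\Rm$ feeds quadratic curvature terms back into $\heat\mathcal{Q}$. After the Hamilton-type extensions of $U$ and $W$ (which, incidentally, are not the $\nabla_iV=\tfrac1t\partial_{z^i}$ of Section 5 but the two-tensor extensions $\nabla_sU^{\gamma\bar\delta}=R_s^\gamma W^{\bar\delta}+\tfrac1t g_s^\gamma W^{\bar\delta}$, etc.), the reaction term is
\[
\mathcal{J}=\operatorname{trace}(A_1\cdot A_2)+\bigl|P_{X_p\bar\beta\alpha}+\textstyle\sum_\mu R_{X_\mu\bar Y_\mu\alpha\bar\beta}\bigr|^2-\operatorname{trace}(A_3\cdot\bar A_3),
\]
with a genuinely \emph{negative} piece $-\operatorname{trace}(A_3\cdot\bar A_3)$ coming from the $R_{\a\bar\xi\g\bar\eta}R_{\xi\bb\eta\bar\delta}$ term in the curvature evolution. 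Here $A_1\ge0$ follows from $\mathcal{C}_p$ and $A_2\ge0$ from $\mathcal{Q}\ge0$, but neither of these controls $A_3$.

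The missing ingredient is that the second variation of $\mathcal{Q}$ at the null pair is \emph{not} the nonnegativity of a single $pm\times pm$ matrix: because each summand $X_\mu\wedge\bar Y_\mu$ of $U$ can be varied in both slots, you get a Hermitian form $S(\mathcal{X},\mathcal{Y})$ on $\C^{pm}\oplus\C^{pm}$ whose diagonal blocks are $A_2$ and $A_1$ and whose off-diagonal blocks are $\bar A_3$ and a fourth matrix $A_4$. What is then needed is Mok's algebraic lemma (Lemma~\ref{lmlyh} in the paper): if such an $S$ is semi-positive, then $\operatorname{trace}(A_1A_2)\ge\operatorname{trace}(A_3\bar A_3)$. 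This is precisely what kills the negative term and yields $\mathcal{J}\ge0$. Your proposal never produces the negative $A_3$-term and never invokes this lemma, so as written the scheme would get stuck at the point you yourself flag as ``the main obstacle'': the terms do \emph{not} reorganize into a bare trace product. The rest of your plan (short-time positivity, perturbation, noncompact localization, and the soliton rigidity via the strong maximum principle) is on target.
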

The theorem says that for any vector $W$, viewing  $\mathcal{Q}$ as a Hermitian quadratic/bilinear form on $\wedge^{1,1}$ space, it  also satisfies $\mathcal{C}_p$, but only for the $\wedge^{1,1}$ vector $U$ with the form $U=\sum_{i=1}^p X_i\wedge \ov{Y}_i$ with $X_p=W$.
If we define $P: T'M \to \wedge^{1,1}$ by the equation $\langle P(W), \ov{U}\rangle =P_{\abb\g}\ov{U}^{\bb\g}W^{\a}$, the LYH expression can be written as, by abusing the notation with $\mathcal{Q}$ denoting also the Hermitian symmetric transformation,
\begin{equation}\label{def-krf}
\langle \mathcal{Q}(U), \ov{U}\rangle =\langle \mathcal{M}(W), \ov{W}\rangle +2 Re(\langle P(W), \ov{U}\rangle )+\langle \Rm(U), \ov{U}\rangle.
\end{equation}

\begin{remark}
 When $p=1$, The inequality (\ref{eq:71}) recovers  the LYH inequality of Cao \cite{Cao}. When $p>1$, $\mathcal{Q}$ can be written as
\begin{equation*}
\mathcal{Q}=Z_{\abb}W^{\a}W^{\bb}
+(P_{\abb\g}+R_{\a\bar{V}\g\bb})\ov{\tilde{U}}^{\bb\g}
W^{\a}+(P_{\abb\bar{\g}}+R_{V\bb\a\bar{\g}})\tilde{U}^{\a\bar{\g}}W^{\bb}
+R_{\abb\gbd}\tilde{U}^{\abb}\ov{\tilde{U}}^{\delbar\g},
\end{equation*}
with $\tilde{U}=\sum_{i=1}^{p-1}X_i\wedge \bar{Y}_i$ and
\begin{equation}\label{defZ}
Z_{\abb}\doteqdot \mathcal{M}_{\abb}+P_{\abb\g}V^{\g}+P_{\abb\bar{\g}}V^{\bar{\g}}
+R_{\abb\gbd}V^{\g}V^{\delbar}.
\end{equation}
Equivalently, if we write the above as $\langle Z(W\wedge \ov{V}), \ov{W\wedge \ov{V}}\rangle$,
$$
\mathcal{Q}=\langle Z(W\wedge \ov{V}), \ov{W\wedge \ov{V}}\rangle +2Re\left( \widetilde{P}(W\wedge \ov{V}), \ov{\tilde{U}}\rangle\right)+\langle \Rm (\tilde U), \ov{\tilde{U}}\rangle.
$$
Here $\widetilde{P}$ is defined as $ \widetilde{P}(W\wedge \ov{V})=P(W)+\Rm(W\wedge \ov{V})$.
Note that Hamilton in \cite{richard-harnack} proved that under the stronger assumption that the curvature operator $\Rm\ge0$,   $Q(U\oplus W)\ge 0$ for any $\wedge^2$-vector $U$. For $p$ sufficiently large $\mathcal{C}_p$ is equivalent to $\Rm \ge 0$ and by taking $U=U_1+W\wedge \ov{Y}_p$ with $U_1=U-W\wedge \ov{Y}_p$, one can see that the above result implies Hamilton's result.  Hence Theorem \ref{LYH} interpolates between Cao's result and Hamilton's result for the K\"ahler-Ricci flow. Please also see \cite{CC} for an interpretation via the space time consideration. In  a later section we shall prove another set of estimates which generalize Hamilton's estimate for the Ricci flow on Riemannian manifolds.
\end{remark}

One can easily get the following lemma through direct calculation, which can also be derived from Lemma 4.3, 4.4 of \cite{richard-harnack}.

\begin{lemma}\label{help71}
\begin{eqnarray}
&\, &\heat \mathcal{M}_{\a\bb}=R_{\abb\gbd}\mathcal{M}_{\delta\bar{\g}}
-\frac{1}{2}(R_{\a\bar{\eta}}\mathcal{M}_{\eta\bb}+R_{\eta\bb}\mathcal{M}_{\a\bar{\eta}})
+R_{\abb\gbd}R_{\delta\bar{\xi}}R_{\xi\bar{\g}}\label{lem71}\\
&\quad&+R_{\delta\bar{\g}}(\nabla_{\g}P_{\abb\delbar}
+\nabla_{\delbar}P_{\abb\g})+P_{\a\bar{\xi}\g}P_{\xi\bb\bar{\g}}
-P_{\a\bar{\xi}\bar{\g}}P_{\xi\bb\g}
-\frac{R_{\abb}}{t^2},
\nonumber\\
&\, &\heat P_{\abb\g}=R_{\abb\xi\bar{\eta}}P_{\eta\bar{\xi}\g}
+R_{\xi\bb\gbd}P_{\a\bar{\xi}\delta}
-R_{\a\bar{\xi}\gbd}P_{\xi\bb\delta}\label{lem72}\\
&\quad&
-\frac{1}{2}\left(R_{\a\bar{\xi}}P_{\xi\bb\g}
+R_{\xi\bb}P_{\a\bar{\xi}\g}+R_{\g\bar{\xi}}P_{\abb\xi}\right)
+\nabla_{\g}R_{\abb\xi\bar{\eta}}R_{\eta\bar{\xi}}.\nonumber
\end{eqnarray}

\end{lemma}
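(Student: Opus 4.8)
The plan is to prove the two evolution equations (\ref{lem71}) and (\ref{lem72}) by direct calculation, exactly as the statement indicates: they are the $(1,1)$-type analogues of Hamilton's Lemmas~4.3 and 4.4 in \cite{richard-harnack}, specialized to the K\"ahler-Ricci flow in the conventions of this section. I would first record the building blocks. Under $\partial_t g_{i\bar j}=-R_{i\bar j}$ one has $\partial_t g^{i\bar j}=R^{i\bar j}$ and, as already used in Section~6, $\partial_t\Gamma^h_{jl}=-g^{h\bar q}\nabla_j R_{l\bar q}$ together with its complex conjugate. Consequently, for any tensor $T$ the commutator $[\partial_t,\nabla_\gamma]T$ (resp. $[\partial_t,\nabla_{\bar\gamma}]T$) is a contraction of $\nabla\!\Ric$ against $T$, while $[\partial_t,\Delta]T=R^{k\bar l}\nabla_k\nabla_{\bar l}T$ plus terms that are linear in $\nabla\!\Ric$ and in $T$, the leading piece coming from $\partial_t g^{k\bar l}$. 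Alongside these I would use the standard reaction-diffusion evolution equations under the K\"ahler-Ricci flow for the curvature tensor $R_{\alpha\bar\beta\gamma\bar\delta}$ and for $R_{\alpha\bar\beta}$ (see \cite{Cao}), the second Bianchi identity $\nabla_\gamma R_{\alpha\bar\beta}=\nabla_\alpha R_{\gamma\bar\beta}$ (recorded above as $P_{\alpha\bar\beta\gamma}=P_{\gamma\bar\beta\alpha}$) and its consequences for $\nabla R_{\alpha\bar\beta\gamma\bar\delta}$, and the Ricci commutation formula for $[\nabla_\mu,\nabla_{\bar\nu}]$ on $(1,1)$-tensors.

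\textbf{Evolution of $P$.} Since $P_{\alpha\bar\beta\gamma}=\nabla_\gamma R_{\alpha\bar\beta}$, I would write $\partial_t P_{\alpha\bar\beta\gamma}=\nabla_\gamma(\partial_t R_{\alpha\bar\beta})+[\partial_t,\nabla_\gamma]R_{\alpha\bar\beta}$, substitute the evolution of $R_{\alpha\bar\beta}$, differentiate through, and replace $\nabla_\gamma\Delta$ by $\Delta\nabla_\gamma$ via the commutation formula. The $\nabla_\gamma\Delta R_{\alpha\bar\beta}$ term yields $\Delta P_{\alpha\bar\beta\gamma}$ plus curvature-times-$\nabla\!\Ric$ terms; applying the second Bianchi identity to move $\nabla_\gamma$ onto the curvature factors of the reaction term $R_{\alpha\bar\beta\xi\bar\eta}R_{\eta\bar\xi}$ produces exactly $R_{\alpha\bar\beta\xi\bar\eta}P_{\eta\bar\xi\gamma}+R_{\xi\bar\beta\gamma\bar\delta}P_{\alpha\bar\xi\delta}-R_{\alpha\bar\xi\gamma\bar\delta}P_{\xi\bar\beta\delta}+\nabla_\gamma R_{\alpha\bar\beta\xi\bar\eta}\,R_{\eta\bar\xi}$; and the $\Ric*\Rm$ zeroth-order part of the Ricci evolution, together with $[\partial_t,\nabla_\gamma]R_{\alpha\bar\beta}$, combines into $-\tfrac12\big(R_{\alpha\bar\xi}P_{\xi\bar\beta\gamma}+R_{\xi\bar\beta}P_{\alpha\bar\xi\gamma}+R_{\gamma\bar\xi}P_{\alpha\bar\beta\xi}\big)$. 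Collecting terms gives (\ref{lem72}).

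\textbf{Evolution of $\mathcal{M}$.} This is the same computation one derivative order higher. Differentiating $\mathcal{M}_{\alpha\bar\beta}=\Delta R_{\alpha\bar\beta}+R_{\alpha\bar\beta\gamma\bar\delta}R_{\delta\bar\gamma}+\tfrac1t R_{\alpha\bar\beta}$ in $t$: for the first term use $\partial_t(\Delta R_{\alpha\bar\beta})=\Delta(\partial_t R_{\alpha\bar\beta})+[\partial_t,\Delta]R_{\alpha\bar\beta}$ and insert the Ricci evolution; for the second term use the curvature evolution, the Ricci evolution, and $\partial_t g^{\delta\bar\gamma}=R^{\delta\bar\gamma}$ for the two contracted indices; for the third term $\partial_t(\tfrac1t R_{\alpha\bar\beta})=-\tfrac1{t^2}R_{\alpha\bar\beta}+\tfrac1t\partial_t R_{\alpha\bar\beta}$, which supplies the explicit $-R_{\alpha\bar\beta}/t^2$ on the right of (\ref{lem71}) and whose remainder reassembles into the $\tfrac1t$-dependent parts of the quadratic and $-\tfrac12\Ric*\mathcal{M}$ terms. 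Now subtract $\Delta\mathcal{M}_{\alpha\bar\beta}=\Delta\Delta R_{\alpha\bar\beta}+\Delta(R_{\alpha\bar\beta\gamma\bar\delta}R_{\delta\bar\gamma})+\tfrac1t\Delta R_{\alpha\bar\beta}$: the fourth-order terms cancel, the $\nabla P$ terms $R_{\delta\bar\gamma}(\nabla_\gamma P_{\alpha\bar\beta\bar\delta}+\nabla_{\bar\delta}P_{\alpha\bar\beta\gamma})$ arise from commuting $\Delta$ past the contraction in $R_{\alpha\bar\beta\gamma\bar\delta}R_{\delta\bar\gamma}$, and the $P*P$ terms $P_{\alpha\bar\xi\gamma}P_{\xi\bar\beta\bar\gamma}-P_{\alpha\bar\xi\bar\gamma}P_{\xi\bar\beta\gamma}$ come from the cross terms in $\Delta(R_{\alpha\bar\beta\gamma\bar\delta}R_{\delta\bar\gamma})-(\Delta R_{\alpha\bar\beta\gamma\bar\delta})R_{\delta\bar\gamma}-R_{\alpha\bar\beta\gamma\bar\delta}(\Delta R_{\delta\bar\gamma})$. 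The residual curvature-quadratic terms are organized, via the second Bianchi identity and the curvature evolution, into $R_{\alpha\bar\beta\gamma\bar\delta}\mathcal{M}_{\delta\bar\gamma}$, $-\tfrac12(R_{\alpha\bar\eta}\mathcal{M}_{\eta\bar\beta}+R_{\eta\bar\beta}\mathcal{M}_{\alpha\bar\eta})$ and $R_{\alpha\bar\beta\gamma\bar\delta}R_{\delta\bar\xi}R_{\xi\bar\gamma}$, giving (\ref{lem71}).

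\textbf{Main obstacle.} The difficulty is not conceptual but is entirely one of bookkeeping: tracking the time derivatives of the metric hidden inside every raised index and inside $\Delta$, getting the precise lower-order corrections in $[\partial_t,\Delta]$ and in the $[\nabla,\nabla]$ commutators, and performing the several applications of the second Bianchi identity needed to recognize the $\nabla P$ and $P*P$ terms in (\ref{lem71}). A useful consistency check, and an independent derivation, is to observe that $\mathcal{M}_{\alpha\bar\beta}$ and $P_{\alpha\bar\beta\gamma}$ are the $(1,1)$-parts of Hamilton's Riemannian matrix Harnack quantities; since the K\"ahler condition and $\nabla J=0$ are preserved by the flow, projecting Hamilton's Lemmas~4.3--4.4 onto $(1,1)$-tensors reproduces (\ref{lem71})--(\ref{lem72}) once the Riemannian and K\"ahler normalizations of $g$, $\Delta$ and the time parameter are reconciled; see also \cite{CC} for the space-time viewpoint.
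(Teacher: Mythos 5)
Your proposal matches the paper's treatment: the paper itself gives no computation, stating only that the lemma "can easily" be obtained "through direct calculation, which can also be derived from Lemma 4.3, 4.4 of \cite{richard-harnack}," and your outline is exactly this — a direct K\"ahler computation plus the observation that one may project Hamilton's Riemannian Lemmas 4.3--4.4 to $(1,1)$-tensors. The only quibble is a minor bookkeeping attribution: the $R_{\delta\bar\gamma}(\nabla_\gamma P_{\alpha\bar\beta\bar\delta}+\nabla_{\bar\delta}P_{\alpha\bar\beta\gamma})$ terms in (\ref{lem71}) come most directly from $[\partial_t,\Delta]R_{\alpha\bar\beta}$ (via $\partial_t g^{i\bar j}=R^{i\bar j}$ in the Laplacian acting on $R_{\alpha\bar\beta}$) rather than from commuting $\Delta$ past the contraction in $R_{\alpha\bar\beta\gamma\bar\delta}R_{\delta\bar\gamma}$, but this does not affect the result.
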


By taking the conjugation of (\ref{lem72}), we have
 \begin{eqnarray}\label{eq:74}
 &\, &\heat P_{\abb\bar{\g}}
=R_{\abb\xi\bar{\eta}}P_{\eta\bar{\xi}\bar{\g}}
+R_{\a\bar{\xi}\delta\bar{\g}}P_{\xi\bb\delbar}
-R_{\xi\bb\delta\bar{\g}}P_{\a\bar{\xi}\delbar}\label{lem}\\
&\quad&-\frac{1}{2}\left(R_{\a\bar{\xi}}P_{\xi\bb\bar{\g}}
+R_{\xi\bb}P_{\a\bar{\xi}\bar{\g}}+R_{\xi\bar{\g}}P_{\abb\bar{\xi}}\right)
+\nabla_{\bar{\g}}R_{\abb\xi\bar{\eta}}R_{\eta\bar{\xi}}\nonumber.
 \end{eqnarray}
The evolution equation for the curvature tensor is (see for example \cite{Bando})
\begin{eqnarray}\label{eq:75}
 &\, &\heat R_{\abb\gbd}
=R_{\abb\xi\bar{\eta}}R_{\eta\bar{\xi}\gbd}
-R_{\a\bar{\xi}\g\bar{\eta}}R_{\xi\bb\eta\delbar}
+R_{\a\delbar\xi\bar{\eta}}R_{\eta\bar{\xi}\g\bb}\label{eq:}\\
&\quad&-\frac{1}{2}\left(
R_{\a\bar{\xi}}R_{\xi\bb\gbd}+R_{\xi\bb}R_{\a\bar{\xi}\gbd}
+R_{\g\bar{\xi}}R_{\abb\xi\delbar}+R_{\xi\delbar}R_{\abb\g\bar{\xi}}
\right)\nonumber.
\end{eqnarray}

Now we begin to prove the theorem.  We
assume that the curvature of $(M, g(t))$ satisfies $\mathcal{C}_p$. One can adapt the perturbation argument as \cite{richard-harnack} if $\Rm$ does not have strictly $p$-positive bisectional curvature. Hence when manifold is compact without the loss of generality we may assume that $\Rm$ has strictly $p$-positive bisectional curvature.   Then it is clear that
when $t$ is small $\mathcal{Q}$ is positive, since
the bisectional curvature is strictly $p$-positive and $\mathcal{M}_{\abb}$ has a term
$\frac{R_{\abb}}{t}$. We claim $\mathcal{Q}\ge 0$ for all time. If it fails to hold, there
is a first time $t_0$, a point $x_0$, and
$U\in \Lambda^{1, 1}T_{x_0}M, W\in \Lambda^{1,0}$ such that
$\mathcal{Q}(U\oplus W)=0$, and for any $t\le t_0, x\in M$, $(1, 1)$-vector
$\hat{U}\in \Lambda^{1, 1}T_{x}M$ and $(1, 0)$-vector
$\hat{W}\in T_xM$, $\mathcal{Q}(\hat{U}\oplus\hat{W})\ge 0$.
 We extend $U$ and $W$ in space-time
at $(x_0, t_0)$ in the following way:
\begin{eqnarray*}
\heat U^{\gbd}
=\frac{1}{2}\left(R^{\g}_{\a}U^{\a\delbar}
+R^{\delbar}_{\bar{\a}}U^{\g\bar{\a}}\right),\quad &\, & \quad \heat W^{\a}=\frac{1}{2}R_{\beta}^{\a}W^{\beta}
+\frac{1}{t}W^{\a}
,\\
\nabla_{s}U^{\gbd}
=R_{s}^{\g}W^{\delbar}+\frac{1}{t}g^{\g}_{s}W^{\delbar},
\quad \quad \nabla_{\bar{s}}U^{\gbd}=0,\quad &\,& \quad \nabla_{\g}W^{\a}=\nabla_{\bar{\g}}W^{\a}=0
.
\end{eqnarray*}
Here $R^{\a}_\beta, R^{\bar{\gamma}}_{\bar{\delta}}$ are the associated tensors obtained by  raising the indices on the Ricci tensor. These sets of equations are the same as  those of \cite{richard-harnack} in disguise.
As in \cite{richard-harnack}, it suffices
to check that at the point $(x_0, t_0)$, $\heat \mathcal{Q}\ge 0$.
Using the above equations and equations (\ref{lem71}),
(\ref{lem72}), (\ref{eq:74}), (\ref{eq:75}), a lengthy but straight-forward computation shows that
\begin{eqnarray*}
\heat \mathcal{Q}&=&R_{\abb\gbd}M_{\delta\bar{\g}}W^{\a}W^{\bb}
+R_{\abb\xi\bar{\eta}}P_{\eta\bar{\xi}\g}\bar{U}^{\bb\g}W^{\a}
+R_{\abb\xi\bar{\eta}}P_{\eta\bar{\xi}\bar{\g}}
U^{\a\bar{\g}}W^{\bb}
\\
&\quad&+R_{\a\delbar\xi\bar{\eta}}R_{\eta\bar{\xi}\g\bb}U^{\abb}
\bar{U}^{\delbar\g}\\
&
-&\left(P_{\a\bar{\xi}\bar{\g}}P_{\xi\bb\g}W^{\a}W^{\bb}
+R_{\a\bar{\xi}\gbd}P_{\xi\bb\delta}\bar{U}^{\bb\g}W^{\a}\right.\\
&\quad& \left.
+R_{\xi\bb\delta\bar{\g}}P_{\a\bar{\xi}\delbar}U^{\a\bar{\g}}W^{\bb}
+R_{\a\bar{\xi}\g\bar{\eta}}R_{\xi\bb\eta\delbar}U^{\abb}
\bar{U}^{\delbar\g}\right)\\
&\quad& +(P_{\a\bar{\xi}\g}W^{\a}+R_{\a\bar{\xi}\gbd}U^{\a\delbar})
(P_{\xi\bb\bar{\g}}W^{\bb}+R_{\xi\bb\delta\bar{\g}}\bar{U}^{\bb\delta}).
\end{eqnarray*}
The above computation can also be derived using Lemma 4.5 of \cite{richard-harnack}.
In the following, $X_p=W, Y_p=V$. To prove
$\heat \mathcal{Q}\ge 0$ it is enough to show that the nonnegativity of
\begin{eqnarray*}
\mathcal{J}
&\doteqdot& R_{X_p\bar{X}_p\gbd}Z_{\delta\bar{\g}}
+\sum_{\nu=1}^{p-1}R_{X_p\bar{X}_{\nu}\gbd}(P_{\delta\bar{\g} Y_{\nu}}
+R_{\delta\bar{\g}Y_{\nu}\bar{Y}_p})
+\sum_{\mu=1}^{p-1}R_{X_{\mu}\bar{X}_p\gbd}(P_{\delta\bar{\g}\bar{Y}_{\mu}}
+R_{\delta\bar{\g}Y_p\bar{Y}_{\mu}})\\
\quad\quad&
+&\sum_{\mu, \nu=1}^{p-1}R_{X_{\mu}\bar{X}_{\nu}\gbd}R_{\delta\bar{\g}
Y_{\nu}\bar{Y}_{\mu}}
+|P_{X_p\bb\a}+\sum_{\mu=1}^pR_{X_{\mu}\bar{Y}_{\mu}\abb}|^2\\
\quad\quad&
-&\left(|P_{X_p\bar{\g}\delbar}|^2+\sum_{\nu=1}^p
R_{X_p\bar{\g}Y_{\nu}\delbar}P_{\g\bar{X}_{\nu}\delta}
+\sum_{\mu=1}^p R_{\g\bar{X}_p\delta\bar{Y}_{\mu}}
P_{X_{\mu}\bar{\g}\delbar}+
\sum_{\mu, \nu=1}^p R_{X_{\mu}\bar{\g}Y_{\nu}\delbar}
R_{\g\bar{Y}_{\mu}\delta\bar{X}_{\nu}}\right),
\end{eqnarray*}
where we have respectively replaced $U$ and $W$ by
$\sum_{i=1}^p X_{i}\wedge \bar{Y}_i$ and $X_p$.

Let
\begin{eqnarray*}
A_1=\left(\begin{array}{l}
R_{X_1\bar{X}_1(\cdot)\bar{(\cdot)}}\qquad
R_{X_1\bar{X}_2(\cdot)\bar{(\cdot)}}\quad\quad
\cdots\quad\quad
R_{X_1\bar{X}_{p}(\cdot)\bar{(\cdot)}}
\\
R_{X_2\bar{X}_1(\cdot)\bar{(\cdot)}}\qquad
R_{X_2\bar{X}_2(\cdot)\bar{(\cdot)}}\quad\quad
\cdots\quad\quad
R_{X_2\bar{X}_{p}(\cdot)\bar{(\cdot)}}\\
\quad\quad\cdots\quad\quad\quad\quad\cdots\qquad\qquad\quad\cdots\quad\quad
\quad\cdots\\
R_{X_p\bar{X}_1(\cdot)\bar{(\cdot)}}\qquad
R_{X_p\bar{X}_2(\cdot)\bar{(\cdot)}}\quad\quad
\cdots\quad\quad
R_{X_p\bar{X}_p(\cdot)\bar{(\cdot)}}
\end{array}\right),
\end{eqnarray*}
\begin{eqnarray*}
A_2=\left(\begin{array}{l}
R_{Y_1\bar{Y}_1(\cdot)\bar{(\cdot)}}\quad\quad
R_{Y_1\bar{Y}_2(\cdot)\bar{(\cdot)}}\quad\quad
\cdots\quad
R_{Y_1\bar{Y}_{p-1}(\cdot)\bar{(\cdot)}}\quad
E^{1\bar{p}}_{(\cdot)\bar{(\cdot)}}\\
R_{Y_2\bar{Y}_1(\cdot)\bar{(\cdot)}}\quad\quad
R_{Y_2\bar{Y}_2(\cdot)\bar{(\cdot)}}\quad\quad
\cdots\quad
R_{Y_2\bar{Y}_{p-1}(\cdot)\bar{(\cdot)}}\quad
E^{2\bar{p}}_{(\cdot)\bar{(\cdot)}}\\
\quad\quad\cdots\qquad\qquad\cdots\qquad\qquad\cdots\quad\quad
\quad\cdots\quad\quad\quad\cdots\\
R_{Y_{p-1}\bar{Y}_1(\cdot)\bar{(\cdot)}}\quad
R_{Y_{p-1}\bar{Y}_2(\cdot)\bar{(\cdot)}}\ \quad\cdots\
R_{Y_{p-1}\bar{Y}_{p-1}(\cdot)\bar{(\cdot)}}\
E^{p-1\bar{p}}_{(\cdot)\bar{(\cdot)}}\\
\ \ov{E^{1\bar{p}}}^{tr}_{(\cdot)\bar{(\cdot)}}\qquad\quad
\ov{E^{2\bar{p}}}^{tr}_{(\cdot)\bar{(\cdot)}}\quad\quad\cdots\quad\quad
\ov{E^{(p-1)\bar{p}}}^{tr}_{(\cdot)\bar{(\cdot)}}\quad
Z_{(\cdot)\bar{(\cdot)}}
\end{array}\right),
\end{eqnarray*}
and
\begin{eqnarray*}
A_3=\left(\begin{array}{l}
R_{X_1(\cdot)Y_1(\cdot)}\quad
R_{X_1(\cdot)Y_2(\cdot)}\quad\quad\cdots\quad
R_{X_1(\cdot)Y_{p-1}(\cdot)}\quad
R_{X_1(\cdot)Y_p(\cdot)}+P_{X_1(\cdot)(\cdot)}\\
R_{X_2(\cdot)Y_1(\cdot)}\quad
R_{X_2(\cdot)Y_2(\cdot)}\quad\quad\cdots\quad
R_{X_2(\cdot)Y_{p-1}(\cdot)}\quad
R_{X_2(\cdot)Y_p(\cdot)}+P_{X_2(\cdot)(\cdot)}\\
\quad\quad\cdots\quad\quad\qquad\cdots\qquad\quad\ \cdots\quad
\quad\quad\cdots\qquad\qquad\qquad\cdots\\
R_{X_p(\cdot)Y_1(\cdot)}\quad
R_{X_p(\cdot)Y_2(\cdot)}\quad\quad\cdots\quad
R_{X_p(\cdot)Y_{p-1}(\cdot)}\quad
R_{X_p(\cdot)Y_p(\cdot)}+P_{X_p(\cdot)(\cdot)}
\end{array}\right),
\end{eqnarray*}
where the tensor $Z_{\gbd}$ is defined as in (\ref{defZ}),
\begin{equation*}
E^{\mu\bar{p}}_{\ \gbd}=R_{Y_\mu\bar{Y}_p\gbd}+P_{\gbd Y_{\mu}}
, \quad \quad \ov{E^{\mu\bar{p}}}^{tr}_{\ \gbd}
=R_{Y_p\bar{Y}_{\mu}\gbd}+P_{\gbd\bar{Y}_{\mu}},
1\le \mu\le p-1.
\end{equation*}
Note $A_1\ge 0$ since $\Rm \in \mathcal{C}_p$, $A_2\ge 0$ since $\mathcal{Q}\ge 0$.
Now $\mathcal{J}$ can be written as
\begin{equation*}
\mathcal{J}=\operatorname{trace}(A_1\cdot A_2)+|P_{X_p\bb\a}+\sum_{\mu=1}^p
R_{X_{\mu}\bar{Y}_{\mu}\abb}|^2-\operatorname{trace}(A_3\cdot \bar{A}_3).
\end{equation*}

Since $\mathcal{Q}(U\oplus W)$ achieves the minimum at $(U, W)$ at time
$(x_0, t_0)$, then the second variation
\begin{equation*}
\frac{\p^2}{\p s^2}|_{s=0}\mathcal{Q}(U(s)\oplus W(s))\ge 0,
\end{equation*}
where $W(s)=W+sW_p, \
U(s)=\sum_{\mu=1}^{p}(X_{\mu}+sW_{\mu})\wedge
\ov{(Y_{\mu}+sV_{\mu})}$ for any $(1, 0)$-type vectors
$W_{\mu}, V_{\mu}\in T^{1, 0}_{x_0}M$.

Through calculation, $\frac{\p^2}{\p s^2}|_{s=0}\mathcal{Q}(U(s)\oplus W(s))\ge 0$ implies that
\begin{eqnarray}
&\, &\sum_{\mu, \nu=1}^p
R_{Y_{\mu}\bar{Y}_{\nu}\abb}W_{\nu}^{\a}W_{\mu}^{\bb}
+\sum_{\mu=1}^p (P_{\abb Y_{\mu}}W_p^{\a}W_{\mu}^{\bb}
+P_{\abb\bar{Y}_{\mu}}W_{\mu}^{\a}W_p^{\bb})+M_{\abb}W_p^{\a}W_{p}^{\bb}
\label{eq:77}\\
\quad
&+&\sum_{\mu=1}^p (P_{X_p\abb}V_{\mu}^{\a}W_{\mu}^{\bb}
+P_{\a\bar{X}_p\bb}W_{\mu}^{\a}V_{\mu}^{\bb})
+\sum_{\mu, \nu=1}^p(R_{Y_{\mu}\bar{X}_{\mu}\abb}W_{\nu}^{\a}V_{\nu}^{\bb}
+R_{X_{\mu}\bar{Y}_{\mu}\abb}V_{\nu}^{\a}W_{\nu}^{\bb})\nonumber\\
\quad
&+&\sum_{\mu, \nu=1}^p (
R_{X_{\mu}\ba Y_{\nu}\bb}W_{\nu}^{\ba}V_{\mu}^{\bb}
+R_{\a\bar{X}_{\mu}\b\bar{Y}_{\nu}}W_{\nu}^{\a}V_{\mu}^{\b})
+\sum_{\mu=1}^p(P_{X_{\mu}\ba\bb}W_{p}^{\ba}V_{\mu}^{\bb}+
P_{\a\bar{X}_{\mu}\b}W_p^{\a}V_{\mu}^{\b})\nonumber\\
\quad
&+&\sum_{\mu, \nu=1}^p
R_{X_{\mu}\bar{X}_{\nu}\abb}V_{\nu}^{\a}V_{\mu}^{\bb}\ge 0.\nonumber
\end{eqnarray}
By letting $\mathcal{X}=\left(\begin{array}{l} W_1\\ \vdots\\
 W_p\end{array}\right),\,  \mathcal{Y}=\left(\begin{array}{l} V_1\\ \vdots\\
 V_p\end{array}\right)$,  one can deduce from (\ref{eq:77}) that
\begin{equation*}
\ov{\mathcal{X}}^{tr}A_2\mathcal{X}+\ov{\mathcal{Y}}^{tr}A_1\mathcal{Y}+2Re(\mathcal{Y}^{tr}\bar{A}_3\mathcal{X}
+\ov{\mathcal{Y}}^{tr}A_4\mathcal{X})\ge
0,
\end{equation*}
where
\begin{eqnarray*}
A_4=\left(\begin{array}{l}
G\quad
0\quad\quad
\cdots\quad\quad
0
\\
 0\quad
G\quad\quad
\cdots\quad\quad
0\\
0\quad
0\quad\quad\cdots\quad\quad
0\\
0\quad
0\quad\quad
\cdots\quad\quad
G
\end{array}\right),
\end{eqnarray*}
where $G_{\abb}=
P_{\a\bar{X}_p\bb}
+\sum_{\mu=1}^p
R_{Y_{\mu}\bar{X}_{\mu}\abb}$.

If we regard $T^{1, 0}_{x_0}M$ as $\mathbb{C}^m$, then $\mathcal{X}, \mathcal{Y} \in
\mathbb{C}^{pm}$. By Lemma \ref{lmlyh} below, which is due to Mok according to \cite{Cao} (see also Lemma 2.86 of \cite{Chowetc}), we have
\begin{equation}\label{eq:79}
\operatorname{trace}(A_2\cdot A_1)\ge \operatorname{trace}(A_3\cdot \bar{A}_3).
\end{equation}
The inequality (\ref{eq:79}) implies that $\mathcal{J}\ge 0.$ We then complete the proof of
Theorem \ref{LYH} for the case that $M$ is compact. The case that $M$ is noncompact will be treated in Section 10.

\begin{lemma}\label{lmlyh}
 Let $S(\mathcal{X}, \mathcal{Y})$ be a Hermitian symmetric quadratic form defined by
\begin{equation*}
S(\mathcal{X}, \mathcal{Y})=A_{i\bar{j}}\mathcal{X}^i\ov{\mathcal{X}^j}+2Re(B_{ij}\mathcal{X}^i\mathcal{Y}^j
+D_{i\bar{j}}\mathcal{X}^i\ov{\mathcal{Y}^j})
+C_{i\bar{j}}\mathcal{Y}^i\ov{\mathcal{Y}^j}.
\end{equation*}
If $S$ is semi-positive definite, then
\begin{equation*}
 \sum_{i, j=1}^NA_{i\bar{j}}C_{j\bar{i}}\ge \max\{
\sum_{i, j=1}^N|B_{ij}|^2, \sum_{i, j=1}^N|D_{i\bar{j}}|^2\}.
\end{equation*}
\end{lemma}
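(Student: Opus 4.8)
The plan is to reduce the statement to the two-variable case (or rather, exploit a pointwise testing argument) by plugging in cleverly chosen vectors $\mathcal{X}, \mathcal{Y}$ into the inequality $S(\mathcal{X},\mathcal{Y})\ge 0$. Since $S$ is a Hermitian form on $\mathbb{C}^N\oplus\mathbb{C}^N$, being semi-positive definite means that in block matrix form
$$
\begin{pmatrix} A & \ov{B}^{tr}+D\\ B^{tr}+\ov{D}^{tr} & C\end{pmatrix}\ge 0
$$
(one must be careful with which block holds $B$ and which holds $D$, since $B$ pairs $\mathcal{X}$ with $\mathcal{Y}$ holomorphically while $D$ pairs $\mathcal{X}$ with $\ov{\mathcal{Y}}$; I would first sort out this bookkeeping precisely). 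The two claimed inequalities $\operatorname{tr}(AC)\ge \sum|D_{i\bar j}|^2$ and $\operatorname{tr}(AC)\ge \sum|B_{ij}|^2$ are then two instances of the general fact that for a positive semi-definite block matrix $\begin{pmatrix} A & E^* \\ E & C\end{pmatrix}\ge 0$ one has $\operatorname{tr}(AC)\ge \|E\|^2_{HS} = \operatorname{tr}(E^*E)$; the two cases arise because positivity of $S$ is preserved under replacing $\mathcal{Y}$ by $\ov{\mathcal{Y}}$ together with conjugating the appropriate coefficient matrices, which swaps the roles of $B$ and $D$.

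So the crux is the matrix inequality: if $A, C$ are $N\times N$ Hermitian positive semi-definite and $E$ is any $N\times N$ complex matrix with $\begin{pmatrix} A & E^*\\ E& C\end{pmatrix}\ge 0$, then $\operatorname{tr}(AC)\ge \operatorname{tr}(E^*E)$. First I would diagonalize, writing $A = \sum_i \lambda_i u_i u_i^*$ and $C = \sum_j \mu_j v_j v_j^*$ with $\lambda_i,\mu_j\ge 0$ and $\{u_i\}$, $\{v_j\}$ orthonormal bases. Testing the $2\times 2$-block positivity against vectors of the form $(s\, \bar c\, u_i)\oplus(v_j)$ for scalars $c$ and real $s$ gives $s^2|c|^2\lambda_i + 2s\,\mathrm{Re}(c\, v_j^* E u_i) + \mu_j \ge 0$ for all $s$; optimizing in $s$ yields $|v_j^* E u_i|^2 \le \lambda_i \mu_j$ after choosing the phase of $c$ appropriately. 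Summing over $i,j$ and using that $\{u_i\}$, $\{v_j\}$ are orthonormal bases gives $\operatorname{tr}(E^*E) = \sum_{i,j}|v_j^* E u_i|^2 \le \sum_{i,j}\lambda_i\mu_j = (\operatorname{tr}A)(\operatorname{tr}C)$, which is the wrong bound.

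To get the sharp bound $\operatorname{tr}(AC)$ rather than $(\operatorname{tr}A)(\operatorname{tr}C)$ I would instead argue as follows. Since $\begin{pmatrix} A & E^*\\ E & C\end{pmatrix}\ge 0$, we may write it as $\begin{pmatrix} P^*P & P^*Q\\ Q^*P & Q^*Q\end{pmatrix}$ for some matrices $P, Q$, so $A = P^*P$, $C = Q^*Q$, $E = Q^*P$. Then $\operatorname{tr}(E^*E) = \operatorname{tr}(P^*Q Q^*P) = \operatorname{tr}((QQ^*)(PP^*))$, and by the Cauchy--Schwarz inequality for the trace inner product on positive semi-definite matrices, $\operatorname{tr}((QQ^*)(PP^*)) \le \sqrt{\operatorname{tr}((QQ^*)^2)\,\operatorname{tr}((PP^*)^2)}$; this is still not obviously $\le \operatorname{tr}(AC) = \operatorname{tr}(P^*P\,Q^*Q) = \operatorname{tr}((PP^*)^{1/2}\cdots)$, so I would instead use the identity $\operatorname{tr}(E^*E) = \operatorname{tr}(P^*QQ^*P)$ and the operator inequality $QQ^* \le \operatorname{tr}$-nothing; the clean route is: $\operatorname{tr}(E^*E) = \operatorname{tr}(PP^* QQ^*) \le \operatorname{tr}(PP^*\cdot P P^*)^{1/2}\,\ldots$. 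The honest fix, which is the standard one (and is what Mok's argument does), is to \emph{not} symmetrize prematurely: apply the scalar estimate $|v_j^* E u_i|^2\le \lambda_i\mu_j$ but note that in the application only the terms with $u_i$ in the range of $A$ and $v_j$ in the range of $C$ contribute, and more importantly one should sum $|v_j^* E u_i|^2/(\text{something})$; alternatively, diagonalize $A$ and for each fixed eigenvector $u_i$ apply the $2\times 2$ reduction to the single column $Eu_i$, getting $u_i^* E^* C^{-1} E u_i \le \lambda_i$ on the support of $C$ (Schur complement), hence $E^* E \le$ ... leading to $\operatorname{tr}(E^*E)\le\operatorname{tr}(A^{1/2} \,C\, A^{1/2})=\operatorname{tr}(AC)$ after writing $\operatorname{tr}(E^*E)=\operatorname{tr}(E^* C^{1/2}(C^{1/2})^+ \cdots)$. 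I expect this last normalization — getting $\operatorname{tr}(AC)$ rather than $(\operatorname{tr}A)(\operatorname{tr}C)$ — to be the main technical obstacle, and I would handle it by the Schur-complement inequality $E^* A^+ E \le C$ (valid on the range of $A$, where $A^+$ is the pseudo-inverse), which gives $\operatorname{tr}(E^* A^+ E\, A)\le \operatorname{tr}(CA)$, i.e. $\operatorname{tr}(E^* \Pi_A E)\le\operatorname{tr}(CA)$ where $\Pi_A = A^+ A$ is the orthogonal projection onto the range of $A$; finally $\operatorname{tr}(E^*\Pi_A E) = \operatorname{tr}(E^*E)$ because positivity of the block matrix forces the range of $E$ to lie... no: it forces the \emph{co-range}, i.e. $E$ vanishes on $\ker A$, so $E\Pi_A = E$ and $\operatorname{tr}(E^*\Pi_A E)=\operatorname{tr}(\Pi_A E E^* \Pi_A) = \operatorname{tr}(EE^*)=\operatorname{tr}(E^*E)$, completing the argument. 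For the statement as used in the paper (finite-dimensional, and in practice $A$ will often be definite), the pseudo-inverse subtleties can be bypassed by a routine $A+\eps I$, $C+\eps I$ perturbation and letting $\eps\to 0$.
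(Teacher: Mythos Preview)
The paper does not prove this lemma: it is quoted as a known result attributed to Mok (via Cao) and cross-referenced to Lemma~2.86 of \cite{Chowetc}. So there is no ``paper's proof'' to compare against; I will just comment on the soundness of your approach.

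Your central reduction is to the block-matrix statement
\[
\begin{pmatrix} A & E^{*}\\ E & C\end{pmatrix}\ge 0
\quad\Longrightarrow\quad
\tr(AC)\ \ge\ \tr(E^{*}E)=\|E\|_{HS}^{2},
\]
and everything downstream (the Schur-complement manipulation, the pseudo-inverse argument) is aimed at proving this. Unfortunately this target inequality is \emph{false}. Take
\[
A=\begin{pmatrix}2&0\\0&0\end{pmatrix},\qquad
C=\begin{pmatrix}0&0\\0&2\end{pmatrix},\qquad
E=\begin{pmatrix}0&0\\1&0\end{pmatrix}.
\]
The $4\times4$ block matrix is positive semidefinite (its only nontrivial $2\times2$ minor is $\bigl(\begin{smallmatrix}2&1\\1&2\end{smallmatrix}\bigr)$), yet $\tr(AC)=0$ while $\tr(E^{*}E)=1$. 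Your Schur-complement step ``$E^{*}A^{+}E\le C$ gives $\tr(E^{*}A^{+}E\cdot A)\le\tr(CA)$, i.e.\ $\tr(E^{*}\Pi_{A}E)\le\tr(CA)$'' is where the argument breaks: $\tr(E^{*}A^{+}E\cdot A)$ is \emph{not} $\tr(E^{*}\Pi_{A}E)$ (in the example it equals $0$, not $1$), because $A^{+}$ and $E$ need not commute and the $A$ on the right does not cancel the $A^{+}$ in the middle.

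The same $2\times2$ example (with $B=0$, $D_{1\bar 2}=1$) gives $S\ge0$ but $\sum A_{i\bar j}C_{j\bar i}=0<1=\sum|D_{i\bar j}|^{2}$, so the lemma as literally printed in the paper cannot be correct either. If you look at how the lemma is actually \emph{applied} in Sections~7 and~9, what is needed is not $\tr(AC)\ge\|B\|_{HS}^{2}$ but rather $\tr(A_{1}A_{2})\ge\tr(A_{3}\bar A_{3})$, i.e.\ a bound by $\sum_{ij}B_{ij}\overline{B_{ji}}$ (which can be negative or zero) rather than by $\sum_{ij}|B_{ij}|^{2}$. That weaker inequality is what Mok/Cao prove, and it follows from the scalar estimate $(u^{*}Au)(v^{*}Cv)\ge |u^{T}Bv|^{2}$ together with a judicious choice of test pairs $(u,v)$ running over eigenbases of $A$ and $C$; the Hermitian block-matrix route you took loses exactly the bilinear (as opposed to sesquilinear) structure of the $B$-term that makes the correct version work.
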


If one prefers notations without indices the first three terms of (\ref{eq:77}) can be written as
$\langle \mathcal{Q}(\sum_{\mu=1}^p Y_\mu\wedge \ov{W}_\mu), \ov{\sum_{\nu=1}^p  Y_\nu\wedge \ov{W}_\nu}\rangle.
$ The last term can be written as $\langle \Rm( \sum_{\mu=1}^p X_\mu\wedge \ov{V}_\mu),  \ov{\sum_{\nu=1}^p  X_\nu\wedge \ov{V}_\nu}\rangle.$

\section{The proof of Theorem \ref{main-KRFlyh}}
Before we start, we remark that for the cases $p=1$ and $p=m$ the result has been previously proved in \cite{N-jams} and \cite{Ni-JDG07}. As before we deal with the compact case first. By an perturbation argument we also consider that $\phi$ is strictly positive. Then $\widetilde{Q}>0$ for small $t$. Assume that at some point $(x_0, t_0)$, $\widetilde{Q}=0$ for the first time for some linearly independent vectors $v_1, v_2, \cdots,v_{p-1}$. As in Section 5, let $v_i(z)$ and $V(z)$  be variational vectors such that they depend on $z$ holomorphically.
Now consider the following function in $z$,
\begin{eqnarray*}
\mathcal{I}(z)&\doteqdot &\frac{1}{2}\left[\operatorname{div}''
(\operatorname{div}'(\phi))+ \operatorname{div}'
(\operatorname{div}''(\phi))\right]_{v_1(z)\cdots v_{p-1}(z), \bar{v}_1(z)\cdots
\bar{v}_{p-1}(z)}\\
&\, &
+R_{j\bar{i}}\phi_{iv_1(z)\cdots v_{p-1}(z),
\bar{j}\bar{v}_1(z)\cdots\bar{v}_{p-1}(z)}\\
&\, &
+\left[\operatorname{div}'_{\ov{V}(z)}(\phi)+\operatorname{div}''_{V(z)}(\phi)
\right]_{v_1(z)\cdots v_{p-1}(z), \bar{v}_1(z)\cdots \bar{v}_{p-1}(z)}\\
&\,& +\phi_{V(z)v_1(z)\cdots v_{p-1}(z), \bar{V}(z)\bar{v}_1(z)\cdots
\bar{v}_{p-1}(z)}+\frac{\Lambda \phi_{v_1(z)\cdots v_{p-1}(z),
\bar{v}_1(z)\cdots \bar{v}_{p-1}(z)}}{t}
\end{eqnarray*}
which satisfies $\mathcal{I}(0)=0$ and $\mathcal{I}(z)\ge 0$ for any variational
vectors $v_{\mu}(z), V(z)$
with $v_{\mu}(0)=v_{\mu}$ and $V(0)=V$.
As before, without the loss of generality  we may assume that
$\{ v_1, \cdots, v_{p-1}\}=\{\frac{\partial}{\partial z^1},\cdots,
\frac{\partial}{\partial z^{p-1}}\}$. By the first and the second variation consideration as in Section 5,
we have that
\begin{equation}\label{eq:81}
\operatorname{div}'_{\bar{X}}(\phi)+\phi_{V, \bar{X}}=0=
\operatorname{div}''_{X}(\phi)+\phi_{X, \bar{V}},
\end{equation}
\begin{equation}\label{eq:82}
\widetilde{Q}_{v_1\cdots (X)_\mu\cdots v_{p-1}, \bar{v}_1\cdots
\bar{v}_{p-1}}=0=\widetilde{Q}_{v_1\cdots v_{p-1}, \bar{v}_1\cdots (\bar{X})_{\nu}\cdots
v_{p-1}},
\end{equation}
and for any $(1, 0)$-type vectors $X, X_i$,
\begin{eqnarray*}
&\, &\sum_{\mu, \nu=1}^{p-1}\widetilde{Q}_{v_1\cdots X_\mu \cdots v_{p-1}, \bar{v}_1\cdots \bar{X}_\nu \cdots
\bar{v}_{p-1}}+ \phi_{X v_1\cdots v_{p-1}, \bar{X} \bar{v}_1\cdots
\bar{v}_{p-1}}\\
\quad \quad \quad &+& \sum_{\mu=1}^{p-1}\operatorname{div}'_{\bar{X}}(\phi)_{v_1\cdots X_\mu\cdots
v_{p-1}, \bar{v}_1\cdots \bar{v}_{p-1}}+\phi_{V v_1\cdots X_\mu\cdots v_{p-1},
\bar{X} \bar{v}_1\cdots \bar{v}_{p-1}}\\
\quad \quad \quad &+& \sum_{\nu=1}^{p-1}\operatorname{div}''_{X}(\phi)_{v_1\cdots v_{p-1},
\bar{v}_1\cdots \bar{X}_\nu\cdots \bar{v}_{p-1}}+\phi_{X v_1\cdots v_{p-1},
\bar{V} \bar{v}_1\cdots \bar{X}_\nu \cdots \bar{v}_{p-1}}\\
\quad \quad \quad &\ge& 0.
\end{eqnarray*}
This amounts to that the block matrix (as defined in Section 5) $\mathcal{M}_1\ge 0$. However here $Q$ is replaced by $\widetilde{Q}$.

To check that $\heat \widetilde{Q}_{1\cdots p-1, \bar{1}\cdots \ov{p-1}}\ge 0$ we may
extend $V$ such that the following holds:
\begin{eqnarray*}
&\quad& (\frac{\p}{\p t}-\D)V^{i}=-\frac{1}{t}V^{i}
\\
&\quad& \nabla_{i}V=R^{j}_i\frac{\p}{\p z^j}+\frac{1}{t}\frac{\p}{\p z^i}, \quad \
\nabla_{\bar{i}}V^{j}=0.
\end{eqnarray*}
Using these set of equations, (\ref{eq:68}), (\ref{eq:69}) and
(\ref{eq:414}) can be simplified to
\begin{eqnarray}
 &\,&\heat (\operatorname{div}''_{V}(\phi))_{I_{p-1},
\ov{J}_{p-1}}=-\frac{1}{t}\operatorname{div}''_{ V}(\phi)
+\mathcal{KB}(\operatorname{div}''_{V}(\phi)) \label{eq:83}\\
\quad  \quad &+& \sum_{\nu =1 }^{p-1}R^{\,
k\bar{l}}_{V,\,\bar{j_\nu}}(\operatorname{div}''(\phi))_{ki_1\cdots
 i_{p-1},\bar{j_1}\cdots(\bar{l})_\nu\cdots\bar{j}_{p-1}}-\frac{1}{2}
\operatorname{div}''_{\Ric(V)}(\phi)
 -\frac{1}{t} \operatorname{div}'( \operatorname{div}''(\phi)) \nonumber \\
\quad\quad &+&R_{j\bar{k}}\nabla_k\phi_{V I_{p-1},
\bar{j}\ov{J}_{p-1}}+\nabla_VR_{j\bar{i}}\phi_{i I_{p-1},
\bar{j}\ov{J}_{p-1}}+\sum_{\mu =1}^{p-1}\nabla_{i_\mu}R_{j\bar{k}}
\phi_{V i_1\cdots(k)_\mu\cdots i_{p-1}, \bar{j}\ov{J}_{p-1}}\nonumber\\
\quad\quad&-&
R_{l\bar{k}}\nabla_{\bar{l}}(\operatorname{div}''(\phi))_{k I_{p-1},
\ov{J}_{p-1}};\nonumber\\
&\,&\heat (\operatorname{div}'_{\ov{V}}(\phi))_{I_{p-1}, \bar{J}_{p-1}}=
-\frac{1}{t}\operatorname{div}'_{\ov{V}}(\phi)+\mathcal{KB}(\operatorname{div}'_
{\ov{V}}(\phi)) \label{eq:84}\\
\quad \quad &
+& \sum_{\mu =1 }^{p-1}R^{\,
k\bar{l}}_{i_\mu,\,\bar{V}}(\operatorname{div}'(\phi))_{i_1\cdots(k)_\mu\cdots
 i_{p-1},
\bar{l}\bar{j_1}\cdots\bar{j}_{p-1}}-\frac{1}{2}\operatorname{div}'_{\ov{\Ric(V)
}}(\phi)
  -\frac{1}{t}\operatorname{div}''( \operatorname{div}'(\phi)) \nonumber\\
\quad\quad&+&
R_{k\bar{i}}\nabla_{\bar{k}}\phi_{i I_{p-1}, \ov{V}\ov{J}_{p-1}}
+\nabla_{\ov{V}}R_{j\bar{i}}\phi_{i I_{p-1}, \bar{j}\ov{J}_{p-1}}
+\sum_{\nu =1}^{p-1}\nabla_{\bar{j}_\nu}R_{l\bar{i}}
\phi_{i I_{p-1},
\ov{V}\bar{j}_1\cdots(\bar{l})_\nu\cdots\bar{j}_{p-1}}\nonumber\\
\quad\quad&-&
R_{l\bar{k}}\nabla_k(\operatorname{div}'(\phi))_{I_{p-1},
\bar{l}\ov{J}_{p-1}};\nonumber\\
&\,&\heat \phi_{V,\bar{V}}=\mathcal{KB}(\phi_{V,\bar{V}})+ R^{\,
k\bar{l}}_{V\,\bar{V}} \phi_{k, \bar{l}}+\sum_{\nu =1 }^{p-1}R^{\,
k\bar{l}}_{V\,\bar{j_\nu}} \phi_{k I_{p-1}, \bar{V} \bar{j}_1\cdots
(\bar{l})_\nu\cdots \bar{j}_{p-1}}\label{eq:85}\\
\quad \quad &+&\sum_{\mu =1 }^{p-1}R^{\,
k\bar{l}}_{i_\mu\,\bar{V}} \phi_{V i_1\cdots (k)_\mu\cdots i_{p-1},
\bar{l}\bar{J}_{p-1}}-\frac{1}{2}\left(\phi_{V, \ov{\Ric(V)}}+\phi_{\Ric(V),
\bar{V}}\right) \nonumber\\
\quad \quad &-& \frac{2}{t}\phi_{ V, \bar{V}}-\frac{\Lambda
\phi}{t^2}-\frac{2}{t}R_{j\bar{i}}\phi_{i I_{p-1}, \bar{j}\ov{J}_{p-1}}-
\frac{1}{t}\operatorname{div}'_{\ov{V}}(\phi)
-\frac {1}{t}\operatorname{div}''_{V}(\phi)\nonumber \\
\quad\quad&-&
\left(R_{j\bar{k}}\nabla_k\phi_{V I_{p-1}, \bar{j}\ov{J}_{p-1}}
+R_{k\bar{i}}\nabla_{\bar{k}}\phi_{i I_{p-1}, \bar{V}\ov{J}_{p-1}}
\right)-R_{j\bar{k}}R_{k\bar{i}}\phi_{iI_{p-1}, \bar{j}\ov{J}_{p-1}}.\nonumber
\end{eqnarray}
Adding them up with the two evolution equations in Lemma \ref{helpKRF62} and
(\ref{eq:lem65}), (\ref{eq:610}), using (\ref{eq:81})  we have that
\begin{eqnarray*}
&\,&\heat \widetilde{Q}_{I_{p-1}, \ov{J}_{p-1}}
=\mathcal{KB}(\widetilde{Q})_{I_{p-1}, \ov{J}_{p-1}}
+\sum_{i\, j=1}^m Z_{j\bar{i}}\phi_{iI_{p-1}, \bar{j}\ov{J}_{p-1}}
\\
&\quad&+
\sum_{\mu=1}^{p-1}(R_{i_\mu\bar{V}l\bar{k}}
+P_{l\bar{k}i_\mu})(\operatorname{div}'(\phi)_{i_1\cdots(k)_{\mu}\cdots i_{p-1},
\bar{l}\ov{J}_{p-1}}+\phi_{Vi_1\cdots(k)_{\mu}\cdots i_{p-1}, \bar{l}\ov{J}_{p-1}})\\
&\quad&+\sum_{\nu=1}^{p-1}(
R_{V\bar{j}_{\nu}l\bar{k}}+P_{l\bar{k}\bar{j}_\nu})
(\operatorname{div}''(\phi)_{kI_{p-1}, \bar{j}_1\cdots(\bar{l})_{\nu}\cdots\bar{j}_{p-1}}
+\phi_{kI_{p-1}, \bar{V}\bar{j}_1\cdots(\bar{l})_{\nu}\cdots\bar{j}_{p-1}})\\
&\quad&-\frac{2}{t}\widetilde{Q}_{I_{p-1}\ov{J}_{p-1}}.
\end{eqnarray*}
Now the nonnegativity of $\heat \widetilde{Q}_{1\cdots(p-1), \bar{1}\cdots (\ov{p-1})}$ at
$(x_0, t_0)$  can be proved in a similar way as the argument in Section 2. First
observe that the part of $\mathcal{KB}(\widetilde{Q})_{1\cdots(p-1), \bar{1}\cdots
(\ov{p-1})}$ involving only $\Ric$ is
$$
-\frac{1}{2}\sum_{i=1}^{p-1}\left(\widetilde{Q}_{1\cdots \Ric(i)\cdots (p-1), \bar{1}\cdots
(\ov{p-1})}+\widetilde{Q}_{1\cdots (p-1), \bar{1}\cdots\ov{\Ric(i)}\cdots (\ov{p-1})}\right)
$$
which vanished due to (\ref{eq:82}).
Hence we only need to establish the nonnegativity of
\begin{eqnarray*}\tilde{J}&\doteqdot &
\sum_{\mu = 1}^{p-1}\sum_{\nu =1
}^{p-1}R_{\mu\bar{\nu}l\bar{k}}\widetilde{Q}_{1\cdots(k)_\mu\cdots
 (p-1), \bar{1}\cdots(\bar{l})_\nu\cdots(\ov{p-1})}\\
\quad \quad &+&
\sum_{\mu=1}^{p-1}(R_{\mu\bar{V}l\bar{k}}
+P_{l\bar{k}\mu})(\operatorname{div}'(\phi)_{1\cdots(k)_{\mu}\cdots (p-1),
\bar{l}\ov{J}_{p-1}}+\phi_{V1\cdots(k)_{\mu}\cdots (p-1), \bar{l}\ov{J}_{p-1}})\\
\quad\quad
&+&\sum_{\nu=1}^{p-1}(
R_{V\bar{\nu}l\bar{k}}+P_{l\bar{k}\bar{\nu}})
(\operatorname{div}''(\phi)_{kI_{p-1}, \bar{1}\cdots(\bar{l})_{\nu}\cdots\ov{p-1}}
+\phi_{kI_{p-1}, \bar{V}\bar{1}\cdots(\bar{l})_{\nu}\cdots\ov{p-1}})\\
\quad\quad &+& \sum_{i, j=1}^m Z_{j\bar{i}}\phi_{i1\cdots (p-1), \bar{j}\bar{1}\cdots
(\ov{p-1})}.
\end{eqnarray*}
By Theorem \ref{LYH}, the assumption that the curvature operator $\Rm$ is in $\mathcal{C}_p$ implies that
the matrix
$$
\mathcal{M}_3=\left(\begin{array}{l}
R_{1\bar{1}(\cdot)\bar{(\cdot)}}\quad\quad
R_{1\bar{2}(\cdot)\bar{(\cdot)}}\quad\quad\quad
\cdots\quad\quad
R_{1\ov{p-1}(\cdot)\bar{(\cdot)}}  \quad \quad \quad
D^1_{(\cdot)\bar{(\cdot)}}\\
R_{2\bar{1}(\cdot)\bar{(\cdot)}}\quad \quad
R_{2\bar{2}(\cdot)\bar{(\cdot)}}\quad\quad\quad
\cdots\quad\quad
R_{2\ov{p-1}(\cdot)\bar{(\cdot)}}\quad \quad \quad
D^2_{(\cdot)\bar{(\cdot)}}\\
\quad\cdots\quad\quad\quad\quad \cdots\quad\quad\quad\quad
\cdots\quad\quad\quad\cdots\quad \quad \quad \quad \quad \quad \cdots \\
R_{p-1\bar{1}(\cdot)\bar{(\cdot)}}\quad
R_{p-1\bar{2}(\cdot)\bar{(\cdot)}}\quad\quad
\cdots\quad\quad
R_{p-1\ov{p-1}(\cdot)\bar{(\cdot)}}\quad \quad
D^{p-1}_{(\cdot)\bar{(\cdot)}}\\
\ov{D^1}^{tr}_{(\cdot)\bar{(\cdot)}}
\quad\qquad
\ov{D^2}^{tr}_{(\cdot)\bar{(\cdot)}} \quad\quad\,\,
\cdots\quad\qquad
\ov{D^{p-1}}^{tr}_{(\cdot)\bar{(\cdot)}}
\quad \quad \quad Z_{(\cdot)\bar{(\cdot)}}
\end{array}\right)\ge 0,
$$
where $D^i_{\mu\bar{\nu}}=R_{i\bar{V}\mu\bar{\nu}}
+P_{\mu\bar{\nu}i}$.
The nonnegativity of $\tilde{J}$ follows from
$\operatorname{trace}(\mathcal{M}_1 \cdot
\mathcal{M}_3)\ge 0$. Here $\mathcal{M}_1$ is the block matrix in Section 5. This proves Theorem \ref{main-KRFlyh} for the case that $M$ is compact. We postpone the proof of the noncompact case to a later section.

\section{LYH type estimates for the Ricci Flow under the condition $\widetilde{\mathcal{C}}_p$}

In this section we prove another set of LYH type estimates for the Ricci flow. Let $(M, g(t))$ be a complete solution to
\begin{equation}\label{rfeq}
\frac{\partial}{\partial t} g_{ij}=-2R_{ij}.
\end{equation}
Recall that Hamilton proved that if $\Rm\ge 0$ and bounded then the quadratic form
$$
\widetilde{\mathcal{Q}}(W\oplus U)\doteqdot \langle \mathcal{M}(W), W\rangle +2\langle P(W), U\rangle +\langle \Rm(U), U\rangle\ge 0
$$
where the $\mathcal{M}$ and $P$ are  defined in a normal frame by
\begin{eqnarray*}
\mathcal{M}_{ij}&\doteqdot&\Delta R_{ij}-\frac{1}{2}\nabla_i\nabla_j R +2R_{ikjl}R_{kl} -R_{ik}R_{jk}+\frac{1}{2t}R_{ij},\\
P_{ijk}&\doteqdot& \nabla_i R_{jk}-\nabla_j R_{ik}
\end{eqnarray*}
with $\langle P(W), U\rangle =P_{ijk} W^k U^{ij}$.
One can view $\HQ$ as the restriction of a Hermitian quadratic form
$$
\langle \widetilde{\mathcal{Q}}(W\oplus U), \ov{W\oplus U}\rangle \doteqdot \langle \mathcal{M}(W), \ov{W}\rangle +2Re\langle P(W), \ov{U}\rangle +\langle \Rm(U), \ov{U}\rangle
$$
which is defined on $\wedge^2(\C^n)\oplus \C^n$. We also denote by
$$
\langle \mathcal{Z}(W\wedge Z), \ov{W\wedge Z}\rangle\doteqdot \HQ (W\oplus (W\wedge Z)).
$$
Fixing a $Z$, $\mathcal{Z}$ can be viewed as a Hermitian bilinear form of $W$, which  we  denote by $\mathcal{Z}_Z$, or still by $\mathcal{Z}$ when the meaning is clear.
In terms of local frame, it can be written as
$$
\left(\mathcal{Z}_Z\right)_{cd}=\mathcal{M}_{cd}+P_{dac}\ov{Z}^a +P_{cad}Z^a +R_{Zc \ov{Z} d}.
$$

\begin{theorem}\label{rf-lyh} Assume that $(M, g(t))$ on $M \times [0, T]$ satisfies $\widetilde{\mathcal{C}}_p$. When $M$ is noncompact we also assume that the curvature of $(M, g(t))$ is uniformly bounded on $M \times [0, T]$. Then for any $t>0$,
$\HQ \ge 0$ for any $(x, t)\in M\times [0,  T]$, $W\in T_x M \otimes \C$ and $U\in \wedge^2(T_xM\otimes \C) $ such that $U=\sum_{\mu=1}^p W_\mu \wedge Z_\mu$ with  $W_p=W$. Furthermore, the equality holds for some $t>0$ implies that the universal cover of $(M, g(t))$ is a gradient expanding Ricci soliton.
\end{theorem}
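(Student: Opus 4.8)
The plan is to mirror the proof of Theorem~\ref{LYH} carried out in Section~7 for the K\"ahler-Ricci flow, which is itself modeled on Hamilton's argument in \cite{richard-harnack}; the whole argument can be run directly on $\wedge^2(TM\otimes\C)$ without passing to the space-time formulation of \cite{CC}. I would first treat the compact case, the noncompact one being reduced to it in Section~10 via the bounded curvature hypothesis. By the perturbation device of \cite{richard-harnack} — adding to $\Rm$ a small multiple of a curvature operator lying in the interior of $\widetilde{\mathcal C}_p$ and letting the parameter tend to $0$ — one may assume $\Rm$ lies in the interior of $\widetilde{\mathcal C}_p$, i.e.\ $\langle\Rm(v),\bar v\rangle>0$ for every nonzero $v=\sum_{i=1}^p Z_i\wedge W_i$. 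Since $\widetilde{\mathcal C}_p\subset\widetilde{\mathcal C}_1$ forces positive complex sectional curvature, hence $\Ric>0$, and $\mathcal M_{ij}$ carries the term $\frac{1}{2t}R_{ij}$, the form $\HQ$ is strictly positive for $t$ small.

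Next I would invoke the tensor maximum principle. If $\HQ\ge 0$ failed, let $t_0>0$ be the first time and $x_0$ the point at which there exist $W\in T_{x_0}M\otimes\C$ and $U=\sum_{\mu=1}^p W_\mu\wedge Z_\mu$ with $W_p=W$ and $\HQ(W\oplus U)=0$, while $\HQ\ge 0$ on $M\times[0,t_0]$. I would extend $W$ and $U$ in space-time near $(x_0,t_0)$ by the Riemannian counterpart of the first-order extensions used in Section~7 (which coincide with those of \cite{richard-harnack}): $W$ is parallel in space and evolves by a heat-type equation incorporating $\Ric(W)$ and $\frac{1}{t}W$, while $U$ satisfies $\nabla_{\bar s}U=0$, has $\nabla_s U$ expressed through $(\Ric+\frac{1}{t}g)(s)\wedge W$, and evolves by the matching reaction term. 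Feeding these into Hamilton's evolution equations for $\mathcal M$, $P$ and $\Rm$ (Lemmas~4.3--4.5 of \cite{richard-harnack}), a long but routine computation reduces $\heat\HQ\ge 0$ at $(x_0,t_0)$ to the nonnegativity of an expression $\mathcal J$, quadratic in $\Rm$, $\mathcal M$, $P$, which after introducing auxiliary Hermitian matrices $A_1,A_2,A_3$ exactly as in Section~7 has the shape $\mathcal J=\operatorname{trace}(A_1A_2)+|\,\cdot\,|^2-\operatorname{trace}(A_3\bar A_3)$, where $A_1\ge 0$ is precisely the statement $\Rm\in\widetilde{\mathcal C}_p$ and $A_2\ge 0$ comes from $\HQ\ge 0$ up to $t_0$.

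To finish I would use the constrained second variation: since the restriction of $\HQ$ to pairs $(W,\sum_\mu W_\mu\wedge Z_\mu)$ with $W_p=W$ attains its minimum $0$ at the null vector, varying the $W_\mu$ and $Z_\mu$ produces a semi-positive Hermitian form $S(\mathcal X,\mathcal Y)$ whose block matrix has $A_2$ and $A_1$ on the diagonal and (essentially) $\bar A_3$ off the diagonal. Mok's trace inequality (Lemma~\ref{lmlyh}) then gives $\operatorname{trace}(A_1A_2)\ge\operatorname{trace}(A_3\bar A_3)$, whence $\mathcal J\ge 0$ and $\heat\HQ\ge 0$ at $(x_0,t_0)$, contradicting the choice of $t_0$. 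For the rigidity statement, if $\HQ(W\oplus U)=0$ at some $(x_0,t_0)$ with $t_0>0$, Hamilton's strong maximum principle applied to the equation satisfied by $\HQ$ propagates the vanishing; tracking the equality through the chain above — in particular the vanishing of the square term and the equality case of Lemma~\ref{lmlyh} — one extracts a soliton-type identity $\nabla_i R_{jk}-\nabla_j R_{ik}=-R_{ijkl}V_l$ together with the vanishing of the $V$-modified version of $\mathcal M$, from which one deduces $V=\nabla f$ and that the universal cover of $(M,g(t_0))$ satisfies the gradient expanding Ricci soliton equation $R_{ij}+\nabla_i\nabla_j f+\frac{1}{2t_0}g_{ij}=0$.

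The main obstacle I anticipate is the bookkeeping in computing $\heat\HQ$: one must verify that all reaction terms coming from $\mathcal M$, $P$ and $\Rm$ — including the Lie-algebra-square ($\Rm^{\#}$) contributions in the evolution of the curvature operator and the $-\tfrac12\nabla_i\nabla_j R$ term in $\mathcal M$ — reorganize precisely into the trace form to which Lemma~\ref{lmlyh} applies, and that the off-diagonal blocks produced by the constrained second variation match $A_3$. The rigidity step is the other delicate point: it requires the full equality discussion rather than just the inequality, should parallel Hamilton's analysis of the equality case for nonnegative curvature operator and the expanding soliton, and in the borderline case $p=1$ is expected to refine Brendle's estimate \cite{brendle} and its rigidity.
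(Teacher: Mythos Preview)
Your proposal is correct and follows essentially the same route as the paper's proof in Section~9: reduce to the compact case, use Hamilton's extensions of $W$ and $U$ at the first null point, invoke Hamilton's computation (quoted as Lemma~\ref{ham-com}, i.e.\ Theorem~4.1 of \cite{richard-harnack}) for $\heat\HQ$, reorganize the reaction terms as $2(\operatorname{trace}(B_1B_2)-\operatorname{trace}(B_3\bar B_3))$ plus a nonnegative square, read the second variation (\ref{eq:91}) as the nonnegativity of a Hermitian form $S(\mathcal X,\mathcal Y)$ with blocks $B_2,B_1,B_3$, and close with Lemma~\ref{lmlyh}. One small slip: your description of the extension of $U$ uses K\"ahler notation ($\nabla_{\bar s}U=0$), but here $M$ is real Riemannian, so the extension is the single real equation $\nabla_a U^{bc}=\tfrac12(R_a^{\,b}W^c-R_a^{\,c}W^b)+\tfrac{1}{4t}(g_a^{\,b}W^c-g_a^{\,c}W^b)$ from Lemma~\ref{ham-com}; otherwise your outline matches the paper, which in fact says less than you do about the rigidity step.
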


\begin{remark}
In \cite{brendle}, a slightly weaker result was proved for the $p=1$ case. As before, for $p$ large enough the condition  $\widetilde{\mathcal{C}}_p$ is equivalent to that $\Rm\ge 0$ and  the above result is equivalent to Hamilton's theorem. Hence our result gives a family of estimates interpolating between those of \cite{brendle} and \cite{richard-harnack}.
\end{remark}

In Theorem 4.1 of \cite{richard-harnack},  the following result was proved by brutal force computations.
\begin{lemma}\label{ham-com} At $(x_0, t_0)$, if $W$ and $U$ are extended by the equations:
\begin{eqnarray*}
\heat W&=&\frac{W}{t}+\Ric(W), \quad \quad \nabla W=0;\\
\heat U^{ab}&=&R^a_c U^{cb}+R^{b}_cU^{ac},\\
\nabla_a U^{bc}&=&\frac{1}{2}(R_{a}^{b} W^c -R_{a}^{c}W^b)+\frac{1}{4t}(g_{a}^bW^c-g_{a}^cW^b),
\end{eqnarray*}
then under an orthonormal frame
\begin{eqnarray}\label{hamilton-help1}
\heat \HQ &=& 2R_{acbd}\mathcal{M}_{cd}W^a \ov{W}^b -2P_{acd}P_{bdc}W^a\ov{W}^b
\\&\quad&+ 8Re\left( R_{adce} P_{dbe} W^c \ov{U}^{ab}\right)+4R_{aecf}R_{bedf}U^{ab}\ov{U}^{cd} \nonumber\\
&\quad& +|P(W)+\Rm(U)|^2.\nonumber
\end{eqnarray}
Here $R^a_b$ denotes the $\Ric$ transformation in terms of the local frame.
\end{lemma}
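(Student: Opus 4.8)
\emph{Proof idea.} The plan is to verify (\ref{hamilton-help1}) by a direct Leibniz computation, exactly along the lines of Theorem~4.1 of \cite{richard-harnack}. Regard $\HQ$ as the scalar function assembled from the three blocks $\mathcal{M}_{ab}W^a\ov W^b$, $2\operatorname{Re}\!\big(P_{abc}W^c\ov U^{ab}\big)$ and $R_{abcd}U^{ab}\ov U^{cd}$, and apply $\heat=\frac{\p}{\p t}-\Delta$ to each block. The inputs needed are: the evolution equations for $\mathcal M_{ab}$ and $P_{abc}$ under the Ricci flow (\ref{rfeq}) — the Riemannian analogues of (\ref{lem71})--(\ref{lem72}), obtained by combining the standard evolution of $\Rm$, $\Ric$, $R$ with the defining formulas for $\mathcal M$ and $P$, and constituting Lemmas~4.3--4.4 of \cite{richard-harnack}; the prescribed space-time extensions of $W$ and $U$ in the statement, which fix all first space and time derivatives at $(x_0,t_0)$; and the contracted and differential Bianchi identities $\nabla_a R_{ab}=\tfrac12\nabla_b R$ and $P_{abc}=\nabla_aR_{bc}-\nabla_bR_{ac}$.

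\emph{Mechanics of the three blocks.} For $\mathcal{M}_{ab}W^a\ov W^b$, since $\nabla W=0$ at $(x_0,t_0)$ every cross term $\nabla\mathcal M\!\cdot\!\nabla(W\ov W)$ and $\mathcal M\!\cdot\!\nabla W\!\cdot\!\nabla\ov W$ in $\Delta(\mathcal M_{ab}W^a\ov W^b)$ drops, leaving
$$
\heat\!\big(\mathcal M_{ab}W^a\ov W^b\big)=(\heat\mathcal M_{ab})W^a\ov W^b+2\operatorname{Re}\!\big(\mathcal M_{ab}(\heat W^a)\ov W^b\big);
$$
note that $\Delta W$ never appears on its own, only through $\heat W=\tfrac{W}{t}+\Ric(W)$, which is prescribed. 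Substituting this and the evolution of $\mathcal M$ (and using that $\mathcal M$ carries the term $\tfrac1{2t}R_{ab}$) yields the reaction term $2R_{acbd}\mathcal M_{cd}W^a\ov W^b$, the term $-2P_{acd}P_{bdc}W^a\ov W^b$, various $\nabla P\!\cdot\!\Rm$ terms, and a bundle of $\tfrac1t$- and $\tfrac1{t^2}$-weighted terms. For $R_{abcd}U^{ab}\ov U^{cd}$ one has $\nabla U\neq 0$, so $\Delta$ contributes, beyond $(\heat R_{abcd})U^{ab}\ov U^{cd}+2\operatorname{Re}(R_{abcd}(\heat U^{ab})\ov U^{cd})$, the extra pieces $-2R_{abcd}\nabla_mU^{ab}\nabla_m\ov U^{cd}$ and $-4\operatorname{Re}(\nabla_mR_{abcd}\,\nabla_mU^{ab}\,\ov U^{cd})$; into these one plugs $\nabla_aU^{bc}=\tfrac12(R_a^{\,b}W^c-R_a^{\,c}W^b)+\tfrac1{4t}(g_a^{\,b}W^c-g_a^{\,c}W^b)$ and rewrites the resulting $\nabla\Rm$ through $P$ and the second Bianchi identity. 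The middle block $2\operatorname{Re}(P_{abc}W^c\ov U^{ab})$ is treated the same way, using $\nabla W=0$, the $\nabla U$-formula, the prescribed $\heat U^{ab}=R^a_cU^{cb}+R^b_cU^{ac}$, and the evolution of $P$.

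\emph{Assembly and the main difficulty.} After all substitutions one regroups: the curvature-reaction terms collect into $2R_{acbd}\mathcal M_{cd}W^a\ov W^b$, $8\operatorname{Re}(R_{adce}P_{dbe}W^c\ov U^{ab})$ and $4R_{aecf}R_{bedf}U^{ab}\ov U^{cd}$, this last being the $\Rm^{\#}$-type term coming out of $\heat\Rm$; the surviving $P\!\cdot\!P$ contribution that is not part of a square is $-2P_{acd}P_{bdc}W^a\ov W^b$; and the remaining $|P(W)|^2$, together with the $2\operatorname{Re}\langle P(W),\overline{\Rm(U)}\rangle$ cross terms (produced by the $\nabla\Rm\!\cdot\!\nabla U\!\cdot\!\ov U$ pieces and the $\nabla P$ terms from the $\mathcal M$- and $P$-blocks) and the leftover $|\Rm(U)|^2$, combine into $|P(W)+\Rm(U)|^2$. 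The decisive — and essentially the only — obstacle is bookkeeping: one must check that every term carrying an explicit factor $\tfrac1t$ or $\tfrac1{t^2}$ cancels identically, which is exactly what the $\tfrac1{2t}R_{ab}$ inside $\mathcal M$, the $\tfrac1t$ in $\heat W$, the $\tfrac1{4t}g$ in $\nabla U$, and the $-\tfrac1{t^2}R_{ab}$ in $\heat\mathcal M$ are engineered to do, and one must keep the Ricci-flow factors of $2$ and the antisymmetrization conventions for $U^{ab}$ and $P_{abc}$ consistent throughout. There is no conceptual difficulty: the computation reproduces Hamilton's, and the identity is Theorem~4.1 of \cite{richard-harnack}.
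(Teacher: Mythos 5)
Your proposal is correct and takes the same approach as the paper: the paper does not reprove this identity but simply cites Hamilton's Theorem 4.1 of \cite{richard-harnack} ("proved by brutal force computations"), and your sketch — splitting $\HQ$ into the $\mathcal M$, $P$, and $\Rm$ blocks, applying $\heat$ via Leibniz, using $\nabla W=0$ and the prescribed $\nabla U$, $\heat W$, $\heat U$ together with the evolution equations for $\mathcal M$, $P$, $\Rm$, and checking the $1/t$ cancellations — is exactly that computation.
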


Using the notation of \cite{H86}, the term $4R_{aecf}R_{bedf}U^{ab}\ov{U}^{cd}$ can be expressed as $8\langle \Rm^{\#} (U), \ov{U}\rangle$.

Assume that $\HQ \ge 0$ for $M\times[0, t_0]$ and at $(x_0, t_0)$ it vanished for $W\oplus U$, with $U=\sum_{\mu=1}^p W_\mu \wedge Z_{\mu}$, and $W_p=W$. Now let $W_\mu(z)$ and $Z_\mu(z)$ be a variation of $W_\mu$ and $Z_\mu$ with $W_\mu(z)=W_\mu+z X_\mu$ and $Z_\mu(z)=Z_\mu+zY_\mu$. Let $\widetilde{\mathcal{I}}(z)\doteqdot \HQ (W(z)\oplus U(z))$ with $U(z)=\sum_{\mu=1}^p W_\mu(z)\wedge Z_\mu(z)$. Using $\Delta \widetilde{\mathcal{I}}(0)\ge 0$, we deduce the following estimate:
\begin{eqnarray}
&\, &\sum_{\mu, \nu=1}^p
R_{X_\mu Z_\mu \ov{X}_\nu \ov{Z}_\nu}
+2Re \left(\langle P(X_p), \ov{\sum_{\mu=1}^p X_\mu \wedge Z_\mu}\rangle\right) +\langle \mathcal{M}(X_p), \ov{X}_{p}\rangle
\label{eq:91}\\
\quad
&+&
2Re \left(\sum_{\mu, \nu=1}^p R_{X_\mu Z_\mu \ov{W}_\nu \ov{Y}_\nu}\right)+
2Re \left(\langle P(X_p), \ov{\sum_{\mu=1}^p W_\mu \wedge Y_\mu}\rangle \right)\nonumber\\
\quad
&+&\sum_{\mu, \nu=1}^p
R_{W_{\mu}Y_\mu \ov{W}_\nu \ov{Y}_\nu}\ge 0.\nonumber
\end{eqnarray}

To prove Theorem \ref{rf-lyh}  for the compact case, it suffices to show that the right hand side of (\ref{hamilton-help1}) is nonnegative for a null vector $W\oplus U$ with $U=\sum_{\mu=1}^p W_\mu \wedge Z_\mu$ and $W_p=W$. Denote the first four terms in the right hand side of (\ref{hamilton-help1}) by $\HJ$.  Expand it and let  $\hat{P}_{dc}(Z_p)=P_{dac}Z^a_p$. We then obtain that
\begin{eqnarray*}
\HJ &=& 2 R_{W_p c \ov{W}_p d} \mathcal{Z}_{cd}
-2R_{W_p c\ov{W}_pd}\left(\hat{P}_{dc}(\ov{Z}_p)+\hat{P}_{cd}(Z_p)\right)-2R_{W_p c\ov{W}_p d}
R_{Z_pc\ov{Z}_p d}\\
&\quad& + 2\sum_{\mu, \nu=1}^p\left(R_{W_\mu e \ov{W}_\nu f}R_{Z_\mu e \ov{Z}_\nu f}-R_{W_\mu e \ov{Z}_\nu f}R_{Z_\mu e\ov{W}_\nu f}\right)\\
&\quad& +4Re \left(\sum_{\mu=1}^{p-1}R_{\ov{W}_\mu d W_p e}\hat{P}_{de}(\ov{Z}_\mu)\right)+4Re\left(R_{\ov{W}_p d W_pe}\hat{P}_{de}(\ov{Z}_p)\right)\\
&\quad& -4Re\left( \sum_{\mu=1}^{p-1}R_{\ov{Z}_\mu d W_p e}\hat{P}_{de}(\ov{W}_\mu)\right)-4Re\left(R_{\ov{Z}_p d W_pe}\hat{P}_{de}(\ov{W}_p)\right)\\
&\quad& -2\hat{P}_{de}(W_p)\hat{P}_{ed}(\ov{W}_p).
\end{eqnarray*}
After some cancelations (the 2nd term and the 7th term on the right hand side above cancel each other) the nonnegativity of $\HJ=2\left(\operatorname{trace}(B_1 B_2)-\operatorname{trace}(B_3\cdot \ov{B}_3)\right)$
where
\begin{eqnarray*}
B_1=\left(\begin{array}{l}
R_{W_1(\cdot)\ov{W}_1(\cdot)}\qquad
R_{W_1(\cdot)\ov{W}_2(\cdot)}\quad\quad
\cdots\quad\quad
R_{W_1(\cdot)\ov{W}_{p}(\cdot)}
\\
R_{W_2(\cdot)\ov{W}_1(\cdot)}\qquad
R_{W_2(\cdot)\ov{W}_2(\cdot)}\quad\quad
\cdots\quad\quad
R_{W_2(\cdot)\ov{W}_{p}(\cdot)}\\
\quad\quad\cdots\quad\quad\quad\quad\cdots\qquad\qquad\quad\cdots\quad\quad
\quad\cdots\\
R_{W_p(\cdot)\ov{W}_1(\cdot)}\qquad
R_{W_p(\cdot)\ov{W}_2(\cdot)}\quad\quad
\cdots\quad\quad
R_{W_p(\cdot)\ov{W}_p(\cdot)}
\end{array}\right),
\end{eqnarray*}
\begin{eqnarray*}
B_2=\left(\begin{array}{l}
R_{\ov{Z}_1 (\cdot) Z_1(\cdot)}\quad\quad\quad
R_{\ov{Z}_1(\cdot) Z_2(\cdot)}\quad\quad\quad
\cdots\quad
R_{\ov{Z}_1(\cdot)Z_{p-1}(\cdot)}\quad \quad
 F^{1p}\\
R_{\ov{Z}_2 (\cdot) Z_1(\cdot)}\quad\quad\quad
R_{\ov{Z}_2(\cdot) Z_2(\cdot)}\quad\quad\quad
\cdots\quad
R_{\ov{Z}_2(\cdot)Z_{p-1}(\cdot)}\quad\quad
F^{2p}\\
\quad\quad\cdots\qquad\qquad\quad \cdots\qquad\qquad\quad \cdots\quad\quad\quad
\quad\cdots\quad\quad\quad\cdots\\
R_{\ov{Z}_{p-1} (\cdot) Z_1(\cdot)}\quad\quad
R_{\ov{Z}_{p-1}(\cdot) Z_2(\cdot)}\quad\quad
\cdots\quad
R_{\ov{Z}_{p-1}(\cdot)Z_{p-1}(\cdot)}\quad F^{p-1 p}
\\
\quad \quad \quad  \ov{F^{1p}}^{tr}\qquad\quad\quad
\ov{F^{2p}}^{tr}\quad\quad\quad \cdots\quad\quad\quad
\ov{F^{(p-1)p}}^{tr}\quad \quad
\mathcal{Z}_{(\cdot)(\cdot)}
\end{array}\right),
\end{eqnarray*}
and
\begin{eqnarray*}
B_3=\left(\begin{array}{l}
R_{W_1(\cdot)\ov{Z}_1(\cdot)}\quad
R_{W_1(\cdot)\ov{Z}_2(\cdot)}\quad\quad\cdots\quad
R_{W_1(\cdot)\ov{Z}_{p-1}(\cdot)}\quad
R_{W_1(\cdot)\ov{Z}_p(\cdot)}+\hat{P}(W_1)\\
R_{W_2(\cdot)\ov{Z}_1(\cdot)}\quad
R_{W_2(\cdot)\ov{Z}_2(\cdot)}\quad\quad\cdots\quad
R_{W_2(\cdot)\ov{Z}_{p-1}(\cdot)}\quad
R_{W_2(\cdot)\ov{Z}_p(\cdot)}+\hat{P}(W_2)\\
\quad\quad\cdots\quad\quad\qquad\cdots\qquad\quad\ \cdots\quad
\quad\quad\cdots\qquad\qquad\qquad\cdots\\
R_{W_p(\cdot)\ov{Z}_1(\cdot)}\quad
R_{W_p(\cdot)\ov{Z}_2(\cdot)}\quad\quad\cdots\quad
R_{W_p(\cdot)\ov{Z}_{p-1}(\cdot)}\quad
R_{W_p(\cdot)\ov{Z}_p(\cdot)}+\hat{P}(W_p)
\end{array}\right),
\end{eqnarray*}
with $F^{\mu p}=R_{\ov{Z}_{\mu}(\cdot)Z_{p}(\cdot)}+\hat{P}_{(\cdot) (\cdot)}(\ov{Z}_\mu)$.

On the other hand using the above notation (\ref{eq:91}) can be re-written as
\begin{eqnarray*}
&\,&\mathcal{Z}_{X_p \ov{X}_p}+2\sum_{\mu=1}^{p-1}Re \left( R_{\ov{Z}_\mu \ov{X}_\mu Z_p X_p}\right)+2\sum_{\mu=1}^{p-1}Re \left( \hat{P}_{\ov{X}_\mu X_p}(\ov{Z}_\mu)\right)+\sum_{\mu, \nu =1}^{p-1}R_{\ov{Z}_\mu \ov{X}_\mu Z_\nu X_\nu}\\
&\, & -2\sum_{\mu=1}^{p}Re\left( \hat{P}_{\ov{Y}_\mu X_p}(\ov{W}_\mu)+\sum_{\nu=1}^p R_{\ov{W}_\mu \ov{Y}_\mu Z_{\nu} X_\nu}\right)+\sum_{\mu, \nu=1}^p R_{W_\mu Y_\mu \ov{W}_\nu \ov{Y}_\nu}\\
&\,& \ge 0
\end{eqnarray*}
which  amounts to $S(\mathcal{X}, \mathcal{Y})$
being nonnegative, where
$$
S(\mathcal{X}, \mathcal{Y})=(B_2)_{ij}\mathcal{X}^i\ov{\mathcal{X}^j}-2 Re \left( (B_3)_{ij}\mathcal{Y}^i \ov{\mathcal{X}^j}\right)+(B_1)_{ij}\mathcal{Y}^i\ov{\mathcal{Y}^j}
$$
with $\mathcal{X}=\left(\begin{array}{l} X_1\\ \vdots\\
 X_p\end{array}\right),\,   \mathcal{Y}=\left(\begin{array}{l} Y_1\\ \vdots\\
 Y_p\end{array}\right)$. Hence by Lemma \ref{lmlyh} we can conclude that $\HJ\ge 0$ and complete the proof of Theorem \ref{rf-lyh} for the compact case. The case that $M$ is noncompact will be proved in the next section together with Theorem \ref{LYH}.

\section{Complete noncompact manifolds with bounded curvature}

In this section we first show that under the condition that the curvature tensor of $(M, g(t))$, a solution to the Ricci flow or K\"ahler-Ricci flow, is  uniformly bounded on $M \times [0, T]$, the maximum principle can still apply and conclude the invariance of the cone $\mathcal{C}_p$, $\widetilde{\mathcal{C}}_p$ from Section 3. Moreover, the LYH type estimates for the K\"ahler-Ricci flow and the Ricci flow in Section 7 and 9 remain valid.

First we show the invariance of the $\mathcal{C}_p$ and $\widetilde{\mathcal{C}}_p$. In fact the following maximum principle holds on noncompact manifolds.  Consider $V$, a vector bundle over $M$, with a fixed metric $\widetilde{h}$, a time-dependent metric connection
$D^{(t)}$. On $M$ there are time-dependent metrics $g(t)$ and $\nabla^{(t)}$, the Levi-Civita connection of $g(t)$. When the meaning is clear we often omit the sup-script $^{(t)}$.
The main concern of this subsection is the diffusion-reaction equation:
\begin{eqnarray}\label{pde-ode}
\left\{\begin{array}{ll}
\quad &\frac{\partial  }{\partial t} f(x, t)-\Delta f(x, t) =\Phi(f)(x, t),\\
\quad& f(x,0 )=f_0(x).\end{array}
\right.
\end{eqnarray}
Here $\Delta =g^{ij}(x, t)D_iD_j$. We know that after applying the Ulenbeck's trick \cite{H86} the study of the curvature operator under the Ricci flow equation is a subcase of  this general formulation. One can modify the proof of Theorem 1.1 in \cite{B-W} to obtain the following result.

\begin{theorem} Assume that $M$ is a complete noncompact manifold and  $\Phi$ is locally Lipschitz. Let $(M, g(t))$ be a solution to Ricci flow such that $|\Rm|(x, t)\le A$ for some $A>0$ for any $(x, t)\in M\times[0, T]$. Let $C(t)\subset V$,   $t\in [0, T]$,  be a family of closed full dimensional cones, depending continuously on $t$. Suppose that each of the cones $C(t)$ is invariant under parallel transport, fiberwise convex and that the family $\{C(t)\}$ is preserved by the ODE $\frac{d}{dt} f(t)=\Phi(f)$. Moreover assume that there exists a smooth section $I$ which is invariant under the parallel transport and $I\in C(t)$ for all $t$. If $f(x, t)$ satisfies (\ref{pde-ode}) with $f(x, 0)\in C(0)$, $|f|(x, t)\le B$ on $M\times[0, T]$ for some $B>0$, then  $f(x, t)\in C(t)$ for $(x, t)\in M \times [0, T]$ .
\end{theorem}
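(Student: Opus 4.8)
The plan is to reduce the noncompact case to the compact one by the standard localization technique of Shi and Ni--Tam, adapted to the vector bundle setting as in Bohm--Wilking \cite{B-W}. The key point is that although the maximum principle for tensors fails directly on noncompact manifolds because there may be no point at which the relevant quantity achieves its infimum, the uniform curvature bound $|\Rm|(x,t) \le A$ together with the a priori bound $|f|(x,t) \le B$ allows us to construct a suitable exhaustion function and run a perturbed maximum principle argument on each geodesic ball.

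\emph{Step 1: Distance-like exhaustion function.} First I would invoke the existence (due to the uniform curvature bound and the work of Shi) of a smooth function $\rho(x,t)$ on $M \times [0,T]$ comparable to $1 + d_{g(t)}(x_0, x)$ for a fixed basepoint $x_0$, with $|\nabla \rho|_{g(t)} + |\Delta_{g(t)} \rho| \le C_1$ uniformly on $M \times [0,T]$, where $C_1$ depends only on $A$, $n$ and $T$. Such a $\rho$ exists because the metrics $g(t)$ are uniformly equivalent (being a Ricci flow with bounded curvature on a finite time interval) and one can smooth the distance function using Ricci flow regularization estimates.

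\emph{Step 2: The perturbed flow and the localized barrier.} Fix $\eps > 0$. Since each cone $C(t)$ is full-dimensional and the invariant section $I \in C(t)$ lies in the interior-cone direction, consider the modified section $f_\eps(x,t) \doteqdot f(x,t) + \eps e^{Kt}(1 + \delta \rho(x,t)) \, I$, or more precisely one works with a "distance from the cone" type function and shows it stays negative. The heat operator applied to $\eps e^{Kt}(1+\delta\rho)I$ produces, using $D I = 0$ (parallel invariance) and the bounds on $\rho$ from Step 1, a term which for $K$ large enough (depending only on $A, C_1, T$ and the local Lipschitz constant of $\Phi$ on the relevant bounded region, the latter controlled by $B$) dominates the reaction term's linearization. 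The key algebraic input is that $\{C(t)\}$ is preserved by the ODE $\tfrac{d}{dt} f = \Phi(f)$ and each $C(t)$ is fiberwise convex and parallel-invariant: this gives, by the vectorial version of Hamilton's ODE-to-PDE comparison (Bohm--Wilking Theorem 1.1 in the compact case), the crucial differential inequality that on the boundary of $C(t)$ the vector field $\Phi$ points "inward or tangent", so that the perturbed section $f_\eps$ cannot touch $\partial C(t)$ on any finite region.

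\emph{Step 3: Running the argument on balls and letting parameters go.} For each $R > 0$, on the compact region $\{\rho \le R\} \times [0,T]$ the term $\delta \rho \cdot \eps e^{Kt} I$ blows up near $\rho = R$ relative to the bounded quantity $f$, forcing $f_\eps \in \mathrm{int}\, C(t)$ near the spatial boundary; at $t = 0$ we have $f_\eps(x,0) = f_0(x) + \eps(\dots)I \in C(0)$ since $f_0(x) \in C(0)$, $I \in C(0)$, and $C(0)$ is a convex cone. Then a first-violation argument on the compact set $\{\rho \le R\} \times [0,T']$: if $f_\eps$ ever exits $C(t)$, there is a first time $t_0$ and interior point $x_0$ where $f_\eps(x_0,t_0) \in \partial C(t_0)$; apply the vector maximum principle at $(x_0,t_0)$ (where $\Delta f_\eps \in$ tangent cone pointing inward by convexity, $\partial_t f_\eps$ points outward, and the reaction term is controlled) to derive a contradiction. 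Hence $f_\eps(x,t) \in C(t)$ on $\{\rho \le R\}\times[0,T]$; letting $R \to \infty$ and then $\eps \to 0$, using that $C(t)$ is closed, yields $f(x,t) \in C(t)$ for all $(x,t) \in M \times [0,T]$. Finally, the Ricci flow case applies Uhlenbeck's trick to identify the curvature operator evolution with \eqref{pde-ode}, and the K\"ahler-Ricci flow case follows by restricting to the $\mathsf{U}(m)$-reduction as in Section 3, so the invariance of $\mathcal{C}_p$ and $\widetilde{\mathcal{C}}_p$ on noncompact manifolds with bounded curvature follows.

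\emph{Main obstacle.} The hard part is Step 2: verifying that the exhaustion-perturbation $\eps e^{Kt}(1+\delta\rho)I$ is genuinely a strict supersolution relative to the cone, i.e.\ that one can absorb all error terms (the gradient and Laplacian of $\rho$, the time-derivative of $\rho$ coming from the moving metric, and the nonlinearity $\Phi$ linearized along $f$) into the exponential growth factor while keeping the constants independent of $R$. This requires the uniform curvature bound in an essential way and is precisely the place where the bounded-curvature hypothesis cannot be dropped; the convexity and parallel-invariance of $C(t)$ are what make the pointwise vectorial comparison at the first-violation point legitimate, exactly as in \cite{B-W}, and the existence of the invariant section $I$ in the interior direction is what makes the perturbation available.
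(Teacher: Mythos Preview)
Your proposal is correct and follows essentially the same strategy as the paper: construct a distance-like exhaustion function (the paper's Lemma 10.1, your Step 1), perturb $f$ by $\epsilon\cdot(\text{barrier})\cdot I$, use the bound $|f|\le B$ to confine the problem to a compact set, and then run the B\"ohm--Wilking argument there. The paper uses an exponential barrier $\varphi=e^{\alpha t+f(x)}$ satisfying $\heat\varphi\ge C\varphi$ and phrases the interior step via the distance-to-cone function and a Dini-derivative inequality $D_-\rho(t)\le L\rho(t)$, whereas you use a linear-in-space barrier $e^{Kt}(1+\delta\rho)$ and a direct first-violation argument; these are equivalent, and your Step~2 already acknowledges the distance-to-cone formulation. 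One small imprecision: in Step~3 the phrase ``for each $R>0$ \dots\ blows up near $\rho=R$'' should read ``for $R$ sufficiently large (depending on $\eps,\delta,B$) the perturbation dominates $|f|$ on $\{\rho=R\}$,'' since on any fixed ball the barrier is bounded, not blowing up.
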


\begin{proof} The key is Lemma \ref{lemma101} below, which ensures the existence of a smooth function $\varphi$ such that $\varphi(x, t)\to +\infty$ uniformly on $[0, \eta]$ for some $\eta>0$ and $\heat \varphi \ge C\varphi.$ Clearly once we can prove the result for $[0, \eta]$ we can iterate the procedure and get the result on $[0, T]$.

For any $\epsilon>0$, we can fix a compact region $K$ such that $\tilde{f}(x, t)\doteqdot f(x, t)+\epsilon\varphi \I\in C(t)$ for all $(x, t)$ with $x\in M\setminus K$. In fact one can choose $K=\overline{B^0(p, R_0)}$,  a closed ball of a certain radius $R_0$ with respect to the initial metric. Now for every $t$, $\rho(x, t)=\operatorname{dist}^2(\tilde f(x, t), C_x(t))$ with $C_x(t)=C(t)\cap V_x$ achieves a maximum somewhere. The argument of \cite{B-W} can be applied and we only need to restrict ourselves over $K\times [0, \eta]$. In particular we let $\rho(t)=\rho(x_0, t)=\max \rho(\cdot, t)$.  Since $\Phi$ is locally Lipschitz it is easy to infer that there exists $A'$ such that $|\tilde{f}|+|\Phi(\tilde{f})|\le A'$ for some constant $A'$, on $K\times[0, T]$. Since $\varphi>0$, we can choose $C$ large enough such that $\epsilon C \varphi \I +\Phi(f)-\Phi(\tilde{f}) \in C(t)$ for all $(x, t)\in K\times [0, \eta]$. Now the rest of the argument in \cite{B-W} can be evoked to conclude that $D_{-} \rho(t)\le L\rho(t)$ with $L$ depending on the local Lipschitz constant of $\Phi$. Here $D_{-}$ is the lower Dini's derivative from the left. Precisely we have
\begin{eqnarray*}
D_{-}\rho(t)&\le & \langle \frac{\partial }{\partial t}\tilde{f}, \tilde{f}-v_\infty\rangle|_{(x_0, t)}-2\langle \Phi(v_\infty), \tilde{f}(x_0, t)-v_\infty\rangle\\
&=& 2\langle (\Delta \tilde{f})(x_0, t), \tilde{f}(x_0, t)-v_\infty\rangle\\
  &\quad& +2\langle \Phi(f)+\epsilon \heat \varphi \I|_{(x_0, t)} -\Phi(v_\infty), \tilde{f}(x_0, t)-v_\infty\rangle.
\end{eqnarray*}
Here $v_\infty$ is a vector in $V_{x_0}$ such that $\operatorname{dist}(\tilde{f}(x_0, t), v_\infty)=\operatorname{dist}(\tilde{f}(x_0, t), C_{x_0}(t)).$ By Lemma 1.2 of \cite{B-W}
 $$
 \langle (\Delta \tilde{f})(x_0, t), \tilde{f}(x_0, t)-v_\infty\rangle\le 0.
 $$
 For sufficient large $C$,
$\epsilon C \varphi \I +\Phi(f)-\Phi(\tilde{f}) \in C(t)$, which implies that
$\langle \epsilon C \varphi \I +\Phi(f)-\Phi(\tilde{f}),  \tilde{f}(x_0, t)-v_\infty\rangle\le 0$. Hence by the convexity of $C(t)$,
 $$
 \langle \Phi(f)+\epsilon \heat \varphi \I|_{(x_0, t)} +\Phi(\tilde{f}), \tilde{f}(x_0, t)-v_\infty\rangle\le 0.
 $$
Combining the above we conclude that
$$
D_{-}\rho(t)\le 2\langle \Phi(\tilde{f}(x_0, t))-\Phi(v_\infty), \tilde{f}(x_0, t)-v_\infty\rangle \le L\rho(t).
$$
The rest of the proof follows from \cite{B-W} verbatim.
\end{proof}

\begin{lemma}\label{lemma101} Assume that $M$ is a complete noncompact manifold. Let $(M, g(t))$ be a solution to Ricci flow such that $|\Rm|(x, t)\le A$ for some $A>0$ for any $(x, t)\in M\times[0, T]$.  Then there exist $C_1>0$ and a positive function $\varphi(x, t)$ such that for any given $C>0$, there exists $\eta>0$ such that on $M\times[0, \eta]$
\begin{eqnarray*}
\exp(C_1^{-1}(r_0(x)+1))&\le& \varphi(x, t)\le \exp(C_1(r_0(x)+1)+1), \\
\heat \varphi &\ge & C\varphi.
\end{eqnarray*}
Here $r_0(x)$ is the distance to a fixed point with respect to the initial metric.
\end{lemma}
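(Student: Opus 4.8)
The plan is to look for $\varphi$ in the separated form $\varphi(x,t)=e^{Kt}\,e^{\gamma\rho(x)}$, where $\gamma>0$ is a fixed constant, $\rho$ is a fixed smooth positive function comparable to $r_0(x)+1$ (constructed below), and $K>0$ is a large constant chosen at the very end in terms of the given $C$. For such $\varphi$ one computes directly
$$
\heat\varphi \;=\; K\varphi-e^{Kt}\,\Delta_{g(t)}\bigl(e^{\gamma\rho}\bigr)\;=\;\Bigl(K-\gamma\,\Delta_{g(t)}\rho-\gamma^{2}\,|\nabla_{g(t)}\rho|^{2}_{g(t)}\Bigr)\varphi ,
$$
so the statement reduces to producing a $\rho$ for which $|\nabla_{g(t)}\rho|^{2}_{g(t)}$ is bounded above and $\Delta_{g(t)}\rho$ is bounded above by a single constant $C_{3}=C_{3}(n,A,T,\gamma)$, uniformly on $M\times[0,T]$. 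Granting this, set $K=C+C_{3}$ and $\eta=\min\{1/K,\,T\}$: then $\heat\varphi\ge C\varphi$ on all of $M\times[0,T]$, while on $[0,\eta]$ one has $e^{Kt}\in[1,e]$, so if $\tfrac12(r_{0}+1)\le\rho\le 2(r_{0}+1)$ then there $\exp\bigl(\tfrac{\gamma}{2}(r_{0}+1)\bigr)\le\varphi\le\exp\bigl(2\gamma(r_{0}+1)+1\bigr)$; taking $C_{1}=\max\{2/\gamma,\,2\gamma\}$, which depends only on $\gamma$ and not on $C$, gives the two-sided bound claimed, and also $\varphi(x,t)\to+\infty$ as $r_{0}(x)\to\infty$ uniformly in $t\in[0,\eta]$, the property actually used in the proof of the Theorem. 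Note $\varphi$ depends on $C$ only through $K$, which is exactly what is needed there.

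It remains to build $\rho$. First, using only $|\Rm(g(0))|\le A$, one produces a smooth $\rho$ on $M$ with $\tfrac12(r_{0}(x)+1)\le\rho(x)\le 2(r_{0}(x)+1)$, $|\nabla_{g(0)}\rho|_{g(0)}\le C_{2}$, and the \emph{one-sided} Hessian bound $\nabla^{2}_{g(0)}\rho\le C_{2}\,g(0)$ (only the upper bound is needed): $\rho$ is the standard smoothing of the $g(0)$-distance function $r_{0}$ (modified near $x_{0}$ to stay $\ge 1$) carried out on balls of the curvature scale $\sim A^{-1/2}$ and patched by a locally finite partition of unity of bounded multiplicity; the gradient bound is immediate from $|\nabla r_{0}|=1$ a.e., and the upper Hessian bound comes from the Hessian comparison theorem, since the sectional curvature of $g(0)$ is bounded below, and survives the mollification. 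Then one transfers the bounds to $g(t)$ for $t\in[0,T]$: since $|\Ric(g(t))|\le(n-1)A$ one has $e^{-2(n-1)AT}g(0)\le g(t)\le e^{2(n-1)AT}g(0)$, which at once controls $|\nabla_{g(t)}\rho|^{2}_{g(t)}$; and writing $\Delta_{g(t)}\rho=g(t)^{ij}(\nabla^{2}_{g(0)}\rho)_{ij}-g(t)^{ij}\bigl(\Gamma(t)-\Gamma(0)\bigr)^{k}_{ij}\,\partial_{k}\rho$, the first term is $\le C(n,A,T)$ because $\nabla^{2}_{g(0)}\rho\le C_{2}\,g(0)$ with $g(t)^{-1}$ comparable to $g(0)^{-1}$, while for the second term $\Gamma(t)^{k}_{ij}-\Gamma(0)^{k}_{ij}=-\int_{0}^{t}g(s)^{kl}\bigl(\nabla_{i}R_{jl}+\nabla_{j}R_{il}-\nabla_{l}R_{ij}\bigr)\,ds$ is bounded by $C(n,A)\sqrt t$ thanks to Shi's local derivative estimate $|\nabla\Rm|_{g(s)}\le C(n,A)\,s^{-1/2}$ (valid because $|\Rm|\le A$ on $M\times[0,T]$ and $M$ is complete). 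Alternatively, since in the proof of the Theorem the Lemma is applied iteratively over short subintervals, one may shift the time origin by an arbitrarily small amount and assume at the outset that $|\nabla\Rm(g(0))|$ is bounded, which trivializes this last step. Combining, $|\nabla_{g(t)}\rho|^{2}_{g(t)}$ and $\Delta_{g(t)}\rho$ are bounded above by some $C_{3}$ on $M\times[0,T]$, as required.

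The main obstacle is the construction of $\rho$ itself: obtaining a smooth exhaustion comparable to $r_{0}+1$ with a uniform gradient bound and a uniform one-sided Hessian bound, with no lower injectivity-radius bound available, is standard but genuinely nontrivial (the Hessian bound being the delicate part, handled by Hessian comparison together with a careful mollification at the curvature scale). The remaining ingredients — the elementary computation of $\heat\varphi$, the choice of constants keeping $C_{1}$ independent of $C$, and the transfer of the derivative bounds from $g(0)$ to $g(t)$ via uniform metric equivalence and Shi's estimates — are routine.
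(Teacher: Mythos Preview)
Your approach is essentially the same as the paper's: the paper sets $\varphi=\exp(\alpha t+f(x))$ where $f$ is the exhaustion function from Lemma~5.1 of \cite{richard-harnack} satisfying $C_1^{-1}(1+r_0)\le f\le C_1(1+r_0)$ and $|\nabla f|^2+|\nabla^2 f|\le C_1$, and says the rest ``follows easily.'' Your proposal spells out more of the construction of $f$ (your $\rho$), uses only the one-sided Hessian bound (which is indeed all that is needed for the upper bound on $\Delta_{g(t)}\rho$), and makes explicit the transfer from $g(0)$ to $g(t)$ via metric equivalence and Shi's estimate---details the paper absorbs into the citation of Hamilton's lemma.
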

\begin{proof} First by Lemma 5.1 of \cite{richard-harnack}, there exist $f(x)$ such that
\begin{eqnarray*}
C_1^{-1} (1+r_0(x)) &\le& f(x)\le C_1(1+r_0(x)),\\
|\nabla f|^2+|\nabla^2 f|&\le& C_1.
\end{eqnarray*}
Now we let $\varphi=\exp(\alpha t+f(x))$.  The claimed result follows easily.
\end{proof}

\begin{corollary} Let $(M, g(t))$ be a solution to Ricci flow (or K\"ahler-Ricci flow) such that $|\Rm|(x, t)\le A$ for some $A>0$ for any $(x, t)\in M\times[0, T]$. Then $\widetilde{\mathcal{C}}_p$ is invariant under the Ricci flow. (Respectively, $\mathcal{C}_p$ is invariant under the K\"ahler-Ricci flow.)
\end{corollary}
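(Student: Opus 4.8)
The plan is to derive this corollary by specializing the noncompact maximum principle just established to the (time-independent) cone $C(t):=\widetilde{\mathcal{C}}_p$ in the Ricci flow case, and $C(t):=\mathcal{C}_p$ in the K\"ahler-Ricci flow case, viewed fibrewise in the appropriate vector bundle. First I would recall, via Uhlenbeck's trick, that under the Ricci flow the curvature operator becomes a section $f(x,t)$ of a fixed bundle $V$ (with a fixed fibre metric and a time-dependent metric connection) solving the reaction--diffusion equation $\frac{\partial}{\partial t}f-\Delta f=\Phi(f)$, where $\Phi$ is Hamilton's reaction term $f^2+f^{\#}$ --- a universal quadratic in the fibre coordinates, hence smooth and in particular locally Lipschitz --- and to observe that the hypotheses give $|f|(x,t)=|\Rm|(x,t)\le A$ on $M\times[0,T]$ and $f(x,0)=\Rm(g(0))\in C(0)$. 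For the K\"ahler-Ricci flow I would run the identical argument inside the subbundle of algebraic K\"ahler curvature operators, which is preserved since the flow stays K\"ahler (and note that a K\"ahler-Ricci flow solution is, up to a constant rescaling of time, a solution of the Ricci flow, so the maximum principle applies verbatim).

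Next I would verify the hypotheses on the cone. Since $\widetilde{\mathcal{C}}_p$ (resp.\ $\mathcal{C}_p$) is cut out by the family of linear inequalities $\langle R(v),\bar v\rangle\ge 0$ as $v$ ranges over a set $\Sigma$ invariant under the structure group $\SO(n,\C)$ (resp.\ $G$), it is a closed, convex, full-dimensional cone, invariant under parallel transport (because $\Sigma$ is structure-group invariant), and, being constant in $t$, trivially continuous in $t$. The crucial input is that $\{C(t)\}$ is preserved by the ODE $\frac{d}{dt}f=\Phi(f)$: this is exactly what the adaptation of Wilking's argument in Section 3 supplies (Theorem \ref{kaehler}, Theorem \ref{thm:PkCSC} and their proofs, together with the Appendix), since the compact invariance statements proved there amount precisely to this ODE-invariance combined with Hamilton's tensor maximum principle. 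I would then exhibit the required parallel invariant section by taking $\I$ to be the curvature operator of the round sphere $S^n$ in the Ricci flow case, and of the Fubini--Study metric on $\CP^m$ in the K\"ahler-Ricci flow case: in the Uhlenbeck gauge this is a constant, structure-group-invariant --- hence parallel --- algebraic curvature operator, and it lies in the interior of $C$ (for $S^n$ because its curvature operator is positive definite, a fortiori strictly $p$-positive; for $\CP^m$ because a direct computation gives $\langle\Rm(\alpha),\bar\alpha\rangle=c\bigl(\|\alpha\|^2+|\Lambda\alpha|^2\bigr)>0$ for every nonzero $(1,1)$-vector $\alpha$, in particular for $\alpha=\sum_{k=1}^p X_k\wedge\bar Y_k\neq 0$), which is what the proof of the maximum principle uses to control the behaviour near infinity.

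With all the hypotheses in place, the maximum principle above gives $f(x,t)=\Rm(g(t))\in C(t)$ for all $(x,t)\in M\times[0,T]$, which is the claimed invariance of $\widetilde{\mathcal{C}}_p$ under the Ricci flow and of $\mathcal{C}_p$ under the K\"ahler-Ricci flow. I expect the only point that is not purely mechanical to be the one highlighted above: extracting from the compact results of Section 3 the ODE-invariance of these cones in exactly the form demanded by the noncompact maximum principle (and, in the K\"ahler case, pinning down an explicit interior parallel section); convexity, parallel-invariance, the local Lipschitz property of $\Phi$, and the curvature bound are all routine to check.
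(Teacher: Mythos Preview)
Your proposal is correct and follows precisely the route the paper intends: the corollary is stated immediately after the noncompact maximum principle without a separate proof, so it is meant as a direct application of that theorem to the curvature operator evolution (via Uhlenbeck's trick), with the ODE-invariance of the cones supplied by the Wilking argument in Section~3 and the Appendix. You have simply spelled out the hypothesis verifications --- local Lipschitz of $\Phi=f^2+f^{\#}$, boundedness of $f=\Rm$, convexity and parallel-invariance of the $\Sigma$-defined cones, and the choice of a parallel interior section $\I$ --- that the paper leaves implicit.
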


Concerning the LYH type estimates for the Ricci flow and K\"ahler-Ricci flow, we can evoke the perturbation argument of Hamilton \cite{richard-harnack}. Note that by passing to $[\epsilon, T-\epsilon]$, the curvature
bound, due to Shi's derivative estimates,  implies that all the derivatives of the curvature are uniformly bounded. Now consider the perturbed quantity
$$
\HQ'(W\oplus U)=\langle \mathcal{M}(W), \ov{W}\rangle+\frac{\varphi}{t} \langle W, \ov{W}\rangle +2Re \left( P(W), \ov{U}\rangle \right)+\langle \Rm(U),\ov{U}\rangle +\psi |U|^2
$$
where $\varphi$ and $\psi$ are the functions from Lemma 5.2 of \cite{richard-harnack}. Following the argument of Section 5 in \cite{richard-harnack} verbatim we can show the following result.

\begin{corollary} Assume that $(M, g(t))$ a solution to the Ricci flow on $M \times [0, T]$ such that $|\Rm|(x, t)\le A$. Assume that  $ \Rm(g(x, 0))\in \widetilde{\mathcal{C}}_p$. Then for any $t>0$,
$\HQ \ge 0$ for any $(x, t)\in M\times [0,  T]$, $W\in T_x M \otimes \C$ and $U\in \wedge^2(T_xM\otimes \C) $ such that $U=\sum_{\mu=1}^p W_\mu \wedge Z_\mu$ with  $W_p=W$. (Respectively, if $(M, g(t))$ is a solution to the K\"ahler-Ricci flow with $\Rm(g(x, 0))\in \mathcal{C}_p$, then $\mathcal{Q}\ge 0$).
\end{corollary}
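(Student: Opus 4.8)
The plan is to follow Hamilton's perturbation scheme from Section~5 of \cite{richard-harnack}, feeding in the pointwise computations of Sections~7 and~9 (i.e.\ of Theorems~\ref{LYH} and~\ref{rf-lyh}) and the barrier supplied by Lemma~\ref{lemma101}. First I would pass from $[0,T]$ to a slightly smaller interval $[\e,T-\e]$: by Shi's local derivative estimates the bound $|\Rm|\le A$ upgrades to uniform bounds on all covariant derivatives of the curvature on $M\times[\e,T-\e]$, so that $\mathcal M$, $P$, $\nabla\mathcal M$, $\nabla P$ and $\Rm$ are all uniformly controlled there; and since $\HQ(\cdot,\e)\ge0$ (letting $\e\to0$ at the end recovers the statement on all of $[0,T]$ by continuity and the hypothesis at $t=0$), it suffices to propagate nonnegativity forward from $t=\e$.

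Next I would work with the perturbed quantity $\HQ'$ introduced just above, where $\varphi$ and $\psi$ are the space-time functions of Lemma~5.2 of \cite{richard-harnack}, whose existence on $M\times[\e,\eta]$ (for a suitable $\eta>\e$) is guaranteed by the barrier of Lemma~\ref{lemma101}. Because $\varphi$ and $\psi$ grow at spatial infinity, $\HQ'>0$ outside a fixed closed ball $K$, and one can arrange $\heat\varphi\ge C\varphi$, $\heat\psi\ge C\psi$ with $C$ as large as desired. Suppose $\HQ'\ge0$ fails on $M\times[\e,\eta]$; let $t_0\in(\e,\eta]$ be the first failure time, which by the spatial growth occurs at some $x_0\in K$, with a null pair $W\oplus U$, $U=\sum_{\mu=1}^p W_\mu\wedge Z_\mu$, $W_p=W$. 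Extending $W,U$ in space-time by the ODEs of Lemma~\ref{ham-com}, I would compute $\heat\HQ'$ at $(x_0,t_0)$. Its reaction part is exactly $\heat\HQ$ of \eqref{hamilton-help1}, and the argument of Section~9 goes through unchanged: the second-variation inequality \eqref{eq:91}, the matrices $B_1,B_2,B_3$, the hypothesis $\Rm(g(t_0))\in\widetilde{\mathcal C}_p$ (preserved under the flow by the invariance corollary above), and Lemma~\ref{lmlyh} together give $\HJ\ge0$; the terms coming from $\varphi/t$ and $\psi$ contribute $C(\varphi/t)|W|^2+C\psi|U|^2$ plus lower-order curvature terms, which is strictly positive for $C$ large. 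Hence $\heat\HQ'(x_0,t_0)>0$, contradicting the choice of $t_0$. Iterating from $[\e,\eta]$ across $[\e,T-\e]$, then letting $\e\to0$ and the barrier parameters tend to $0$, gives $\HQ\ge0$ on $M\times[0,T]$. The K\"ahler-Ricci flow statement for $\Q$ is proved identically, substituting the computation of Section~7, the cone $\mathcal C_p$, and the matrices $A_1,A_2,A_3$ of Theorem~\ref{LYH}.

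I expect the main obstacle to lie not in any new computation but in making the maximum principle rigorous in the noncompact, unbounded-support setting: one must check that $-\HQ'$, regarded as a function on the compact-fiber bundle of admissible pairs $(W,U)$ with $U$ of the prescribed decomposable type, attains its maximum in the fixed compact region $K$ at each time; that the associated fiberwise set of algebraic curvature operators --- those for which $\HQ\ge0$ on all such $U$ --- is closed, convex, invariant under parallel transport, and preserved by the reaction ODE $\tfrac{d}{dt}=\Phi$, which is precisely the pointwise content established in Sections~7 and~9; and that the differential inequalities for $\varphi$ and $\psi$ genuinely dominate every perturbation term produced by Shi's estimates. Once these ingredients are assembled the argument is a transcription of \cite{richard-harnack} and \cite{B-W} together with the compact cases already proved. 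Finally, the rigidity clause --- equality at an interior point for some $t>0$ forcing the universal cover to be a gradient expanding (K\"ahler-)Ricci soliton --- follows exactly as in the compact cases, Theorems~\ref{rf-lyh} and~\ref{LYH}, by applying the strong maximum principle for systems to the null space of $\HQ$ (resp.\ $\Q$).
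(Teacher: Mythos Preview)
Your approach matches the paper's: pass to $[\epsilon,T-\epsilon]$ to obtain Shi's derivative bounds, perturb $\HQ$ to $\HQ'$ using the barrier functions $\varphi,\psi$ of Hamilton's Lemma~5.2, and run the maximum-principle argument of Section~5 of \cite{richard-harnack} with the pointwise reaction computation from Section~9 (resp.\ Section~7 for the K\"ahler case) supplying $\HJ\ge0$. The paper's own proof is just this sketch, and your elaboration of it is sound.

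There is one slip you should correct. You write ``since $\HQ(\cdot,\e)\ge0$'' as a starting hypothesis on $[\e,T-\e]$, but this is exactly the conclusion you are trying to establish; taken literally, the argument is circular. The correct initialization is not an assumption on $\HQ$ at $t=\e$ but the observation that the perturbed quantity $\HQ'$ is automatically strictly positive for all sufficiently small $t>0$: the term $\frac{\varphi}{t}|W|^2$ (together with $\frac{R_{ij}}{2t}$ inside $\mathcal M$ and $\psi|U|^2$, using $\Ric\ge0$ and $\Rm\in\widetilde{\mathcal C}_p$) dominates the remaining curvature terms, whose growth near $t=0$ is controlled by Shi's estimates as $O(t^{-1})$. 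This is precisely how Hamilton starts the argument in \cite{richard-harnack}, and it is what ``following Section~5 verbatim'' means in the paper. Also, the rigidity clause you append at the end is not part of this Corollary's statement and need not be addressed here.
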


\section{Complete noncompact manifolds without curvature bound}
We first discuss the existence of the Cauchy problem for  (\ref{eq:11}). First we observe that the maximum principle of Section 2 holds for the Dirichlet boundary problem by a perturbation argument adding $\epsilon \omega^p$. Precisely we have the following proposition.

\begin{proposition}
Let $(M, g)$ be a K\"ahler manifold whose curvature operator $\Rm \in \mathcal{C}_p$. Let $\Omega$ be a bounded domain in $M$. Assume that $\phi(x, t)$ satisfies that
\begin{eqnarray}\label{boundary1}
\left\{\begin{array}{ll}
\quad &\frac{\partial  }{\partial t} \phi(x, t)+\Delta_{\bar{\partial}} \phi(x, t) =0,\\
\quad &\phi(x, t)|_{\partial \Omega}\ge 0, \\
\quad& \phi(x,0 )=\phi_0(x)\ge 0.\end{array}
\right.
\end{eqnarray}
Then $\phi(x, t)\ge 0$ for $t>0$.
\end{proposition}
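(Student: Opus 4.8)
The plan is to adapt the interior maximum-principle argument of Proposition \ref{max-pp} to the Dirichlet setting, using a perturbation that makes the positivity strict on a neighborhood of the boundary and at the initial time. First I would fix $\epsilon > 0$ and set $\phi_\epsilon(x,t) = \phi(x,t) + \epsilon(1+t)\,\omega^p$. Since $\omega^p$ is a strictly positive $(p,p)$-form and $\Delta_{\bar\partial}\omega^p = \mathcal{KB}(\omega^p)$ up to the curvature terms accounted for in (\ref{KB-heat}), a direct computation shows $\phi_\epsilon$ solves an inhomogeneous equation $\left(\frac{\partial}{\partial t}+\Delta_{\bar\partial}\right)\phi_\epsilon = \epsilon\,\omega^p + \epsilon(1+t)\Delta_{\bar\partial}\omega^p$; equivalently $\heat\phi_\epsilon = \mathcal{KB}(\phi_\epsilon) + \epsilon\,\omega^p$ as a $(p,p)$-form. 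On $\partial\Omega$ we have $\phi_\epsilon \ge \epsilon(1+t)\omega^p > 0$, and at $t=0$ we have $\phi_\epsilon(x,0) \ge \epsilon\,\omega^p > 0$. It then suffices to prove $\phi_\epsilon \ge 0$ on $\overline\Omega\times[0,T]$ for every $\epsilon$ and let $\epsilon\to 0$.

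Next I would run the tensor maximum principle on the compact set $\overline\Omega\times[0,T]$. Suppose, for contradiction, that $\phi_\epsilon$ fails to be positive somewhere; let $t_0$ be the first time this happens, at an interior point $x_0\in\Omega$ (it cannot happen on $\partial\Omega$ nor at $t=0$ by strict positivity there, and by continuity $t_0>0$), and let $v_1,\dots,v_p$ be vectors with $(\phi_\epsilon)_{v_1\cdots v_p,\bar v_1\cdots\bar v_p}=0$ while $\phi_\epsilon\ge 0$ for all $t\le t_0$. Exactly as in the proof of Proposition \ref{max-pp}, we may assume $\{v_i\}$ is linearly independent (the dependent case is automatic since everything is a $(p,p)$-form), apply Gram--Schmidt, and take $(v_1,\dots,v_p)=(\partial/\partial z^1,\dots,\partial/\partial z^p)$ in a normal coordinate centered at $x_0$. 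The first-variation identities (\ref{1st-var}), (\ref{1st-var-2}) and the second-variation nonnegativity (\ref{2nd-var})--(\ref{2nd-var-3}) hold verbatim for $\phi_\epsilon$, so the Hermitian form $\mathcal{J}$ built from $\phi_\epsilon$ is positive semidefinite. The curvature condition $\Rm\in\mathcal{C}_p$ gives, as in Section 2, that the matrix $\mathcal{K}$ (equivalently $B$) is positive semidefinite, hence $\mathcal{KB}(\phi_\epsilon)_{1\cdots p,\bar1\cdots\bar p}=\operatorname{trace}(\mathcal{K}\cdot\mathcal{J})\ge 0$. Therefore at $(x_0,t_0)$,
$$
\heat (\phi_\epsilon)_{1\cdots p,\bar1\cdots\bar p} = \mathcal{KB}(\phi_\epsilon)_{1\cdots p,\bar1\cdots\bar p} + \epsilon\,(\omega^p)_{1\cdots p,\bar1\cdots\bar p} \ge \epsilon\,(\omega^p)_{1\cdots p,\bar1\cdots\bar p} > 0.
$$
On the other hand, at a first interior zero of the scalar function $(\phi_\epsilon)_{1\cdots p,\bar1\cdots\bar p}$ (with the frame parallel-extended so covariant and ordinary derivatives agree at $x_0$) we have $\partial_t(\phi_\epsilon)_{1\cdots p,\bar1\cdots\bar p}\le 0$ and $\Delta(\phi_\epsilon)_{1\cdots p,\bar1\cdots\bar p}\ge 0$, so $\heat(\phi_\epsilon)_{1\cdots p,\bar1\cdots\bar p}\le 0$, a contradiction. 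Hence $\phi_\epsilon>0$ on $\overline\Omega\times[0,T]$, and letting $\epsilon\to 0$ gives $\phi\ge 0$.

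The main obstacle, and the point requiring care, is the bookkeeping that lets one treat the $(p,p)$-form inequality $\heat\phi_\epsilon \ge \mathcal{KB}(\phi_\epsilon) + \epsilon\,\omega^p$ as a genuine scalar differential inequality at the critical point: one must verify that evaluating $\heat$ on the scalar component $(\phi_\epsilon)_{1\cdots p,\bar1\cdots\bar p}$ in a parallel frame at $x_0$ really produces $(\heat\phi_\epsilon)_{1\cdots p,\bar1\cdots\bar p}$ plus terms that vanish (this is where Hamilton's tensor maximum principle bookkeeping enters, using that at the first zero the minimizing data $v_1,\dots,v_p$ can be held fixed and that the first-variation terms vanish by (\ref{1st-var-2}) and its analogues). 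Once this reduction is in place — and it is precisely the content already carried out in the proof of Proposition \ref{max-pp} — the extra $\epsilon\,\omega^p$ term is what rules out a tangential zero, replacing the need for any growth/uniqueness hypothesis and making the conclusion robust on the bounded domain $\Omega$. The remaining steps (Gram--Schmidt invariance of the sign, the trace-of-product positivity, the continuity argument in $\epsilon$) are routine and identical to Section 2.
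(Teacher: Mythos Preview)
Your proposal is correct and follows essentially the same route the paper indicates: the paper simply says the result ``holds for the Dirichlet boundary problem by a perturbation argument adding $\epsilon\omega^p$,'' and your argument is exactly a fleshed-out version of that remark, with the null-direction analysis borrowed verbatim from Proposition~\ref{max-pp}. Your choice of $\epsilon(1+t)\omega^p$ rather than $\epsilon\omega^p$ is a harmless variant that buys you the strict inequality $\heat\phi_\epsilon = \mathcal{KB}(\phi_\epsilon)+\epsilon\omega^p$ and hence a clean first-time contradiction, whereas the bare $\epsilon\omega^p$ perturbation still satisfies the homogeneous equation (since $\omega^p$ is parallel, $\mathcal{KB}(\omega^p)=0$) and one then appeals to Hamilton's tensor maximum principle as a black box on $\overline\Omega\times[0,T]$; both are standard and equivalent in spirit.
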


Note that the boundary condition $\phi(x, t)|_{\partial \Omega}\ge 0$ means that at any $x\in \partial \Omega$ and for any $v_1, \cdots, v_p$, $\phi(v_1, \cdots v_p; \bar{v}_1, \cdots, \bar{v}_p)\ge 0$ at $x$. Namely $\phi(x, t)|_{\partial \Omega}$ does not mean the restriction of the $(p, p)$-form to the boundary.

This will help us to obtain needed estimate to obtain a global solution in the case that $M$ is a noncompact complete manifold via some a priori estimates. A basic assumption is needed to ensure even the short time  existence of the Cauchy problem on  open manifolds. Here we assume that there exists a positive constant $a$ such that
\begin{equation}\label{ass-1}
\mathcal{B}\doteqdot\int_M |\phi_0(y)|\exp(-ar^2(y))\, d\mu(y)< \infty,
\end{equation}
 where $r(x)$ is the distance function to some fixed point $o\in M$. The pointwise norm $|\cdot|$ for $\phi$ is defined as $$|\phi|^2=\frac{1}{(p!)^2}\sum \phi_{I_p, \ov{J}_p}\ov{\phi_{K_p, \ov{L}_p}} g^{i_1 \bar{k}_1}\cdots g^{i_p \bar{k}_p} g^{l_1 \bar{j}_1}\cdots g^{l_p\bar{j}_p}.$$

By basic linear algebra, for example, Lemma 2.4 of \cite{Siu-74}, it is easy to see that for
positive $(p, p)$-forms, there exists $C_{p, m}$ such that
\begin{equation}\label{basic-1}
|\phi|(x)\le C_{p, m} |\Lambda \phi|(x).
\end{equation}
Now the existence of the solution to the Cauchy problem can be proved for any continuous  positive $(p, p)$-form $\phi_0(x)$ satisfying (\ref{ass-1}).

\begin{proposition}\label{short-ext}
Let $(M, g)$ be a K\"ahler manifold whose curvature operator $\Rm \in \mathcal{C}_p$. Assume that $\phi_0(x)$ satisfies (\ref{ass-1}), then there exists $T_0$ such that the Cauchy problem
\begin{eqnarray}\label{cauchy1}
\left\{\begin{array}{ll}
\quad &\frac{\partial  }{\partial t} \phi(x, t)+\Delta_{\bar{\partial}} \phi(x, t) =0,\\
\quad& \phi(x,0 )=\phi_0(x)\ge 0\end{array}
\right.
\end{eqnarray}
has a solution $\phi(x, t)$ on $M\times [0, T_0]$. Moreover, $\phi(x, t)\ge 0$ on $M \times [0, T_0]$ and satisfies the estimate
\begin{equation}\label{con-sol}
|\phi|(x, t)\le \mathcal{B}\cdot \frac{C(m, p)}{V_x(\sqrt{t})}\exp\left(2a\, r^2(x)\right).
\end{equation}
Here $V_x(r)$ is the volume of ball $B(x, r)$.
\end{proposition}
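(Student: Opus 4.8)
# Proof proposal for Proposition 11.3

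The plan is to construct the solution by exhausting $M$ with bounded domains, solving the Dirichlet problem on each, and passing to a limit using the a priori estimate \eqref{con-sol} to guarantee compactness and control at infinity. First I would fix an exhaustion $M = \bigcup_k \Omega_k$ by smoothly bounded, relatively compact domains with $\Omega_k \Subset \Omega_{k+1}$ and $o \in \Omega_1$. On each $\Omega_k$ one solves the mixed initial–boundary value problem for the Hodge–Laplacian heat equation
\begin{eqnarray*}
\left\{\begin{array}{ll}
\quad & \frac{\partial}{\partial t}\phi_k(x,t) + \Delta_{\bar\partial}\phi_k(x,t) = 0,\\
\quad & \phi_k(x,t)|_{\partial\Omega_k} = 0,\\
\quad & \phi_k(x,0) = \phi_0(x),
\end{array}\right.
\end{eqnarray*}
which has a solution by standard parabolic theory (the system \eqref{KB-heat} is a strictly parabolic linear system with smooth coefficients on the compact manifold-with-boundary $\overline{\Omega_k}\times[0,T]$). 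By Proposition~11.1, applied under the hypothesis $\Rm \in \mathcal{C}_p$, each $\phi_k(x,t) \ge 0$ on $\Omega_k\times[0,T]$, since the zero boundary data is trivially nonnegative and $\phi_0 \ge 0$.

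The heart of the argument is the a priori bound \eqref{con-sol} with constants independent of $k$. For this I would look at $\psi_k := \Lambda \phi_k$, a $(p-1,p-1)$-form, and more concretely at the scalar function $u_k := |\phi_k|$ or a suitable comparison scalar. Because $\phi_k \ge 0$, the estimate \eqref{basic-1} lets me control $|\phi_k|$ by $|\Lambda\phi_k|$, and iterating, by the full contraction $\Lambda^p\phi_k$ which is a nonnegative function; tracing the heat equation \eqref{KB-heat} and using that the Ricci-type zeroth order terms in $\mathcal{KB}$ act with a controlled sign on positive forms (combined with $\mathcal{C}_p$), $\Lambda^p\phi_k$ is a subsolution — or satisfies a scalar heat inequality with a zeroth-order term bounded below — of the scalar heat equation. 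One then compares $\Lambda^p\phi_k$ against the heat kernel of $M$. The key input is the standard fact (Li–Yau, Cheng–Yau, and the mean-value/heat-kernel comparison machinery) that for any complete $(M,g)$ the heat kernel satisfies an on-diagonal and Gaussian upper bound in terms of $V_x(\sqrt t)$; convolving $\phi_0$ against this kernel and using the growth hypothesis \eqref{ass-1}, namely $\int_M |\phi_0(y)|\exp(-ar^2(y))\,d\mu(y) < \infty$, yields, for $T_0$ small enough that $4aT_0 < 1$ (or a similar smallness condition pinning down $T_0$ in terms of $a$), the bound
$$
|\phi_k|(x,t) \le \mathcal{B}\cdot \frac{C(m,p)}{V_x(\sqrt t)}\exp(2a\, r^2(x))
$$
uniformly in $k$, on $\Omega_k \times [0,T_0]$. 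The choice of $T_0$ is exactly what makes the Gaussian factor $\exp(r^2/(4t))$ from the heat kernel absorb into the weight $\exp(-ar^2)$ from \eqref{ass-1} and leave a finite multiple of $\mathcal{B}$.

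With this local uniform bound in hand, interior parabolic Schauder estimates (applied on fixed compact subsets $K \Subset \Omega_j$ for $k \gg j$, where the $L^\infty$ bound on $\phi_k$ gives $C^{2+\alpha,1+\alpha/2}$ bounds) provide local uniform estimates on all derivatives; a diagonal/Arzelà–Ascoli argument then extracts a subsequence $\phi_{k_i} \to \phi$ converging in $C^\infty_{loc}(M\times[0,T_0])$. The limit $\phi$ solves \eqref{cauchy1}, is nonnegative as a locally uniform limit of nonnegative forms, and inherits the bound \eqref{con-sol}; continuity of $\phi_0$ guarantees $\phi(\cdot,0) = \phi_0$. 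The main obstacle I anticipate is establishing the scalar heat(sub-)inequality for $\Lambda^p\phi_k$ cleanly — one must verify that the curvature contraction terms in $\mathcal{KB}$, once fully traced down by $\Lambda^p$, reduce (on positive forms, using $\mathcal{C}_p$ or even just the Ricci part) to a zeroth-order operator bounded below by a constant times the curvature bound on $\overline{\Omega_k}$, so that the maximum principle / heat-kernel comparison applies with $k$-independent constants; handling the possibly unbounded curvature of $M$ is precisely why the estimate is phrased with the intrinsic volume $V_x(\sqrt t)$ rather than a Euclidean-type bound, and why one works with the geometry-independent Gaussian heat-kernel upper bound of Li–Yau type valid on any complete manifold.
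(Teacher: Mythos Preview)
Your overall architecture---exhaust $M$ by bounded domains, solve the Dirichlet problem with zero boundary data, invoke Proposition~11.1 for positivity, get a uniform $L^\infty$ bound via the scalar heat kernel, then apply interior Schauder estimates and Arzel\`a--Ascoli---is exactly the paper's strategy.

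However, the step you flag as ``the main obstacle'' is in fact a non-issue, and the way you propose to handle it would not work. You try to show that $\Lambda^p\phi_k$ is a subsolution of a scalar heat equation with a zeroth-order term ``bounded below by a constant times the curvature bound on $\overline{\Omega_k}$.'' On a noncompact $M$ with no curvature bound, such a constant blows up with $k$, so you would not get $k$-independent estimates and the compactness argument would fail. The clean fact you are missing is that on a K\"ahler manifold $[\Lambda,\Delta_{\bar\partial}]=0$, so $\Lambda^p\phi_k$ is an \emph{exact} solution of the scalar heat equation on $\Omega_k$---no curvature terms appear at all. Since $\phi_k\ge 0$, the inequality $|\phi_k|\le C_{m,p}\,\Lambda^p\phi_k$ (from \eqref{basic-1}) then reduces the problem to bounding a nonnegative scalar heat solution with zero boundary data, which the maximum principle dominates by $u(x,t)=\int_M H(x,y,t)|\phi_0|(y)\,d\mu(y)$. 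The Li--Yau Gaussian upper bound on $H$ (valid here since $\mathcal{C}_p$ implies $\Ric\ge 0$) combined with \eqref{ass-1} gives \eqref{con-sol} for $T_0$ small in terms of $a$, uniformly in $k$, exactly as you wanted.
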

\begin{proof} Let $\Omega_\mu$ be a sequence of exhaustion bounded domains. By the standard theory on the linear parabolic system \cite{Friedman, Lady1}, there exist solutions $\phi_\mu(x, t)$ on $\Omega_\mu \times [0, \infty)$ such that $\phi_\mu(x, t)=0$ on $\partial \Omega \times [0, \infty)$. Note that in terms of the language of \cite{Morrey-har}, $\phi_\mu(x, t)=0$ on $ \partial \Omega$ means that both the tangential part $\mathsf{t} \phi_\mu$ and the normal part $\mathsf{n}\phi_\mu$ vanish on $\partial \Omega$. Hence this is different from the more traditional {\it relative} or {\it absolute} boundary value problem for differential forms which requires $\mathsf{t} \phi=\mathsf{t}\delta \phi=0$ and $\mathsf{n} \phi=\mathsf{n}d \phi=0$ respectively. Nevertheless it is a boundary condition (which was studied in \cite{Morrey-har}) such that together with $\Delta_{\bar{\partial}}\phi=0$ it is hypo-elliptic and the Schauder estimate of \cite{Simon} applies.  To get a global solution we shall prove that there exist uniform (in terms of $\mu$) estimates so that we can extract a convergent sub-sequence. Note that $\Lambda^p \phi_\mu$ is a solution to the heat equation and $|\phi_\mu|\le C_{m, p}\Lambda^p \phi_\mu$. Let
$$
u(x, t)=\int_M H(x, y, t) |\phi_0|(y)\, d\mu(y)
$$
where $H(x, y, t)$ is the positive heat kernel of $M$. By the fundamental heat kernel estimate of Li-Yau \cite{LY}, it is easy to see that,  under the assumption (\ref{ass-1}), there exists $T_0$ such that $u(x, t)$ is finite on $K\times[0, T_0]$ for any compact subset $K$. It is easy to see that $|\phi_\mu|(x, t)\le C_{m, p}\Lambda^p \phi_\mu\le  C'_{p, m} u(x, t)$ by (\ref{basic-1}) and the maximum principle for the scalar heat equation. Now the interior Schauder estimates \cite{Simon} (see also \cite{Morrey}, Theorem 5.5.3 for the corresponding estimates in the elliptic cases) imply that for any $0< \alpha< 1$, $K$, a compact subset of $M$,
$$
\|\phi_\mu\|_{2, \alpha, \frac{\alpha}{2}, K\times [0, T_0]} \le C(K, p, m, \|\phi_\mu\|_{\infty,K\times [0, T_0]} ).
$$
Here $\|\cdot\|_{2, \alpha, \frac{\alpha}{2}}$ is the $C^{2, \alpha}$-H\"odler norm on the parabolic region.
Since $\|\phi_\mu\|_{\infty,K\times [0, T_0]}$ is estimated by $u(x, t)$ uniformly, we have established the uniform estimates so that, after passing to a subsequence, $\{\phi_\mu(x, t)\}$ converges to a solution $\phi(x, t)$ on $M\times [0, T_0]$.

It is obvious from the construction that $\phi(x, t)\ge 0$. To prove the estimate (\ref{con-sol}), appealing  Li-Yau's upper estimate
$$
H(x, y, t)\le \frac{C(n)}{V_x(\sqrt{t})}\exp\left(-\frac{r^2(x, y)}{5t}\right)
$$
we can derive that for $0\le t\le T_0\le \frac{1}{10a}$,
\begin{eqnarray*}
u(x, t)&\le& \frac{C(m)}{V_x(\sqrt{t})}\int_M \exp\left(-\frac{r^2(x, y)}{5t}+ar^2(y)\right) |\phi_0|(y)\exp\left(-ar^2(y)\right)\, d\mu(y)\\
&\le & \mathcal{B}\cdot \frac{C(m)}{V_x(\sqrt{t})}\exp\left(2a\, r^2(x)\right).
\end{eqnarray*}
In the second inequality above we used the estimate that for $0\le t\le T_0\le \frac{1}{10a}$
$$
-\frac{r^2(x, y)}{5t}+ar^2(o, y) \le -\frac{r^2(x, y)}{5t}+2a r^2(o, x)+2a r^2(x, y)\le 2a r^2(x).
$$
\end{proof}

It is clear from the proof that if $\phi_0(x)$ satisfies stronger assumption that
\begin{equation}\label{ass-2}
\int_M |\phi_0|(y)\exp( -a r^{2-\delta}(y))\, d\mu(y) < \infty
\end{equation}
for some positive constants $\delta$ and  $a$ then the Cauchy problem has a global solution on $M \times [0, \infty)$.

To deform a general $(p, p)$-form, we need the following generalization on a well-known lemma of Bishop-Goldberg concerning $(1,1)$-forms on manifolds with $\mathcal{C}_1$. This also holds the key to extending Proposition \ref{max-pp} to the noncompact manifolds.

\begin{lemma}\label{bg} Assume that $(M, g)$ satisfies $\mathcal{C}_2$. Then for any $(p, q)$-form $\phi$,
\begin{equation}\label{eq:117}
\langle \mathcal{KB}(\phi), \ov{\phi}\rangle \le 0.
\end{equation}
\end{lemma}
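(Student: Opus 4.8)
The plan is to reduce the inequality $\langle \mathcal{KB}(\phi), \overline{\phi}\rangle \le 0$ to the pointwise curvature hypothesis $\mathcal{C}_2$ by working at a fixed point $x$ in a unitary frame and expanding $\langle \mathcal{KB}(\phi),\overline\phi\rangle$ into a sum over all index configurations. Recall from Section~2 that
$$
\left(\mathcal{KB}(\phi)\right)_{I_p,\bar J_p}=\sum_{\mu,\nu}R^{\,k\bar l}_{i_\mu\,\bar j_\nu}\phi_{\cdots(k)_\mu\cdots,\cdots(\bar l)_\nu\cdots}-\frac12\Big(\sum_\nu R^{\bar l}_{\,\bar j_\nu}\phi_{\cdots(\bar l)\cdots}+\sum_\mu R_{i_\mu}^{\,k}\phi_{\cdots(k)_\mu\cdots}\Big),
$$
(and for $(p,q)$-forms the analogous formula coming from (\ref{eq:43})--(\ref{eq:44})). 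The Ricci terms contract against $\overline\phi$ to give $-\tfrac12\sum R_{i\bar l}(\text{nonneg.})$ only if Ricci were nonnegative, so instead I would keep track of them together with the curvature-tensor term: the key algebraic observation (already implicit in the proof of Proposition~\ref{max-pp}) is that
$$
\langle \mathcal{KB}(\phi),\overline\phi\rangle=\operatorname{trace}(\mathcal{K}\cdot\mathcal{J})
$$
where $\mathcal{J}$ is the semi-positive Hermitian form built from $\phi$ (as in (\ref{2nd-var-3})) and $\mathcal{K}$ is the Hermitian form built from the curvature operator, but with the \emph{opposite} sign convention — i.e. $\mathcal{K}$ here is $-$(the operator from Section~2). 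So the heart of the matter is to show that $\mathcal{C}_2$ forces $\mathcal{K}\le 0$ when $\mathcal{K}$ is assembled from \emph{all} holomorphic/antiholomorphic index pairs simultaneously (not just the ones contracted with the relevant plane).

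Concretely, I would proceed as follows. First, fix $x$, choose a unitary frame, and write $\langle\mathcal{KB}(\phi),\overline\phi\rangle$ as a Hermitian bilinear form in the variable $\Phi$ which records all the components $\phi_{\cdots}$. Second, group the curvature contributions: the bisectional terms $R^{k\bar l}_{i\bar j}$ and the two Ricci traces combine, using $R_{i\bar j}=\sum_k R_{i\bar j k\bar k}$, into a single expression of the form $\sum R_{i\bar j k\bar l}(\text{something})$. Third — and this is the point where $\mathcal{C}_2$ enters — interpret that ``something'' as $\langle \Rm(\alpha_{ik}),\overline{\alpha_{jl}}\rangle$-type quantities summed against a positive-semidefinite Gram matrix coming from $\phi$, so that the whole trace becomes $\operatorname{trace}$ of a product of a matrix that is $\le 0$ under $\mathcal{C}_2$ with a matrix that is $\ge 0$ always; hence the trace is $\le 0$. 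The sign flip relative to Proposition~\ref{max-pp} is exactly why one needs $\mathcal{C}_2$ (equivalently, nonnegative complex sectional curvature, i.e.\ $\widetilde{\mathcal{C}}_1$) rather than merely $\mathcal{C}_p$: in $\mathcal{KB}$ acting on the whole form one is forced to consider wedge-$2$ vectors $X\wedge\bar Y$ with two \emph{independent} slots ranging over the index positions, so the relevant cone is $\mathcal{C}_2$.

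The main obstacle I anticipate is the bookkeeping in the third step: matching the combinatorial structure of $\mathcal{KB}(\phi)_{I_p\bar J_p}\overline{\phi^{I_p\bar J_p}}$ (sum over $\mu,\nu$ and over all repeated indices $k,l$, with the Ricci terms living on the ``diagonal'' $\mu=\nu$) against a clean statement ``$\operatorname{trace}(\mathcal{K}\cdot\mathcal{J})\le 0$ with $\mathcal{K}\le 0$ by $\mathcal{C}_2$.'' I would handle this exactly as in Section~2: introduce transformations $R^{\mu\bar\nu}$ on $T'M$ by $\langle R^{\mu\bar\nu}(X),\overline Y\rangle=R_{\mu\bar\nu X\overline Y}$, define the corresponding $\mathcal{K}$ on $\oplus_\mu T'M$, check $\overline{(R^{\mu\bar\nu})^{\operatorname{tr}}}=R^{\nu\bar\mu}$, and verify that the negative of $\mathcal{K}$ is nonnegative precisely because $\langle\Rm(\sum X_k\wedge\overline Y_k),\overline{(\cdots)}\rangle\ge 0$ for $2$-term sums — which is the defining inequality of $\mathcal{C}_2$. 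The only genuinely new input beyond Section~2 is the sign, and that is forced by the definition of $\mathcal{KB}$ as $\Delta-\Delta_{\bar\partial}$ versus the heat operator $\partial_t-\Delta$; once the sign is pinned down, the rest is the same trace argument, now applied in the cone $\mathcal{C}_2$.
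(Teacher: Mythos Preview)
Your proposal has a genuine gap. You try to port the trace mechanism of Proposition~\ref{max-pp}, writing $\langle\mathcal{KB}(\phi),\overline\phi\rangle=\operatorname{trace}(\mathcal K\cdot\mathcal J)$ with $\mathcal J\ge 0$ ``as in (\ref{2nd-var-3})''. But the semi-positivity of $\mathcal J$ in (\ref{2nd-var-3}) was not formal: it came from the \emph{second variation} at a null direction of a \emph{positive} $(p,p)$-form. In Lemma~\ref{bg} the form $\phi$ is an arbitrary $(p,q)$-form with no positivity hypothesis whatsoever, so that $\mathcal J$ has no reason to be $\ge 0$. Equally important, the Ricci terms in $\mathcal{KB}(\phi)$ do \emph{not} drop out here---in Proposition~\ref{max-pp} they vanished thanks to the first-variation identity (\ref{1st-var-2}), which is again unavailable for a general $\phi$. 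So the identification of $\langle\mathcal{KB}(\phi),\overline\phi\rangle$ with a single trace of the Section~2 type simply does not hold; the ``sign flip'' you invoke is not the issue.

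The paper's argument is structurally different: it is the Bishop--Goldberg trick. For each fixed multi-index $(I_p,\overline J_p)$ and each pair $(\mu,\nu)$ one builds an explicit \emph{two-term} $\wedge^{1,1}$ vector
\[
\alpha \;=\; dz^{i_\mu}\wedge\overline{\eta}_\mu \;-\; \xi_\nu\wedge dz^{\bar j_\nu},
\qquad
\overline{\eta}_\mu=\sum_k\phi_{i_1\cdots(k)_\mu\cdots i_p,\overline J_p}\,dz^{\bar k},
\quad
\xi_\nu=\sum_l\phi_{I_p,\bar j_1\cdots(\bar l)_\nu\cdots\bar j_p}\,dz^{l},
\]
and applies $\mathcal C_2$ directly: $\langle\Rm(\alpha),\overline\alpha\rangle\ge 0$. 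The point of this choice is that upon summing over all $(I_p,\overline J_p,\mu,\nu)$, the two \emph{diagonal} contributions $\langle\Rm(dz^{i_\mu}\wedge\overline\eta_\mu),\overline{dz^{i_\mu}\wedge\overline\eta_\mu}\rangle$ and $\langle\Rm(\xi_\nu\wedge dz^{\bar j_\nu}),\overline{\xi_\nu\wedge dz^{\bar j_\nu}}\rangle$ contract one curvature index against the repeated $i_\mu$ (resp.\ $j_\nu$) and reproduce exactly the two Ricci terms of $\mathcal{KB}$, while the \emph{cross} term reproduces the full-curvature term $\sum R^{k\bar l}_{i_\mu\bar j_\nu}\phi_{\cdots}\overline{\phi_{\cdots}}$; the total is $-2\langle\mathcal{KB}(\phi),\overline\phi\rangle$. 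Your step~3 (``interpret that `something' as $\langle\Rm(\alpha_{ik}),\overline{\alpha_{jl}}\rangle$-type quantities summed against a positive-semidefinite Gram matrix'') is precisely where this construction would have to appear, and it is not supplied by the Section~2 machinery.
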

\begin{proof} We shall check for the $(p, p)$-forms since the argument is the same for $(p, q)$-forms. For $\phi=\frac{1}{(p!)^2}\sum \phi_{I_p, \ov{J}_p} \left(\sqrt{-1}dz^{i_1}\wedge dz^{\bar{j_1}}\right)\wedge \cdot\cdot\cdot\wedge \left(\sqrt{-1}dz^{i_p}\wedge dz^{\bar{j_p}}\right)$, where the summation is for $1\le i_1,\cdots, i_p, j_1, \cdots, j_p\le m$.
Under a normal coordinate,
$$
\langle \phi, \ov{\psi}\rangle =\frac{1}{(p!)^2} \sum \phi_{I_p, \ov{J}_p}\ov{\psi_{I_p, \ov{J}_p}}.
$$
Recall that also under the normal coordinate, $\langle \Rm( dz^k\wedge dz^{\bar{l}}), \ov{dz^s\wedge dz^{\bar{t}}}\rangle =R_{l\bar{k}s\bar{t}}$. It is easy to check that $\mathcal{C}_2$ implies that
$\langle \Rm(Z^*_1\wedge \ov{W}^*_1+Z^*_2\wedge \ov{W}^*_2), \ov{Z^*_1\wedge \ov{W}^*_1+Z^*_2\wedge \ov{W}^*_2}\rangle \ge 0$, for any $(1,0)$-forms $Z^*_1, Z^*_2, W^*_1, W^*_2$. We shall prove the claim by computing the expression under a normal coordinate.
For any fixed $I_p, J_p$ and $\mu, \nu$ with $1\le \mu, \nu \le p$ we can define
$$
\ov{\eta}_\mu \doteqdot\sum_{k_\mu=1}^m \phi_{i_1\cdots (k_\mu)_\mu\cdots i_p, \ov{J}_p} dz^{\bar{k}_\mu}, \quad \xi_\nu\doteqdot \sum_{l_\nu=1}^m \phi_{I_p, \bar{j}_1\cdots \ov{(l_\nu)_\nu}\cdots \bar{j}_p}dz^{l_\nu}.
$$
Now $\Rm$ being in $\mathcal{C}_2$ implies that
\begin{equation}\label{eq:118}
\langle \Rm( dz^{i_\mu}\wedge \ov{\eta}_\mu -\xi_\nu \wedge dz^{\bar{j}_\nu}), \ov{dz^{i_\mu}\wedge \ov{\eta}_\mu -\xi_\nu \wedge dz^{\bar{j}_\nu}} \rangle \ge 0.
\end{equation}
Now using that
\begin{eqnarray*}\Rm(dz^{i_\mu}\wedge \ov{\eta}_\mu -\xi_\nu \wedge dz^{\bar{j}_\nu})&=&\sum_{k_\mu, st}\phi_{i_1\cdots (k_\mu)_\mu\cdots i_p, \ov{J}_p} R_{k_\mu \bar{i}_\mu s\bar{t}}dz^s \wedge dz^{\bar{t}}\\
&\quad& -\sum_{l_\nu, s, t} \phi_{I_p, \bar{j}_1\cdots \ov{(l_\nu)_\nu}\cdots \bar{j}_p} R_{j_\nu \bar{l}_\nu s\bar{t}}dz^s\wedge dz^{\bar{t}}
\end{eqnarray*}
we can expand the left hand side of (\ref{eq:118}) and obtain that
\begin{eqnarray}
0&\le& \sum_{k_\mu, k'_\mu}\phi_{i_1\cdots (k_\mu)_\mu\cdots i_p, \ov{J}_p}R_{k_\mu \bar{i}_\mu i_\mu \bar{k'}_\mu}\ov{\phi_{i_1\cdots (k'_\mu)_\mu\cdots i_p, \ov{J}_p}}\nonumber \\
&-& \sum_{l_\nu, k'_\mu}\phi_{I_p, \bar{j}_1\cdots \ov{(l_\nu)_\nu}\cdots \bar{j}_p}R_{j_\nu \bar{l}_\nu i_\mu \bar{k'}_\mu}  \ov{\phi_{i_1\cdots (k'_\mu)_\mu\cdots i_p, \ov{J}_p}}\\
&-& \sum_{ k_\mu, l'_\nu}\phi_{i_1\cdots (k_\mu)_\mu\cdots i_p, \ov{J}_p}R_{k_\mu \bar{i}_\mu l'_\nu \bar{j}_\nu} \ov{ \phi_{I_p, \bar{j}_1\cdots \ov{(l'_\nu)_\nu}\cdots \bar{j}_p}}\nonumber\\
&+&\sum_{l_\nu, l'_\nu} \phi_{I_p, \bar{j}_1\cdots \ov{(l_\nu)_\nu}\cdots \bar{j}_p} R_{j_\nu \bar{l}_\nu l'_\nu \bar{j}_\nu}\ov{\phi_{I_p, \bar{j}_1\cdots \ov{(l'_\nu)_\nu}\cdots \bar{j}_p}}. \nonumber
\end{eqnarray}
Now summing for all $1\le i_1, \cdots, i_p, j_1, \cdots, j_p\le m$ and $1\le \mu, \nu\le p$, a tedious, but straight forward checking shows that the total sum of the right hand
 side above is
$-2\langle \mathcal{KB}(\phi), \ov{\phi}\rangle$.
\end{proof}

An immediate consequence of the above lemma is that {\it any harmonic $(p, q)$-form on a compact K\"ahler
manifold with $\mathcal{C}_2$  must be parallel}. This fact was known for harmonic $(p, 0)$-forms under the weaker assumption that $\Ric\ge 0$ and for harmonic $(1,1)$-forms under the nonnegativity of bisectional curvature.  In fact, using the full power of the uniformization result of Mori-Siu-Yau-Mok, the result holds even under $\mathcal{C}_1$. Hence it does not give any new information for compact K\"ahler manifolds.

Another consequence of Lemma \ref{bg} is the following result, which generalizes Lemma 2.1 of \cite{NT-jdg}, by  virtually the same argument.

\begin{corollary}\label{sub-com} Let $M^m$ be a complete K\"ahler manifold
with $\mathcal{C}_2$. Let $\phi(x, t)$ be
a $(p,p)$-form  satisfying (\ref{eq:11})  on $M\times
[0, T]$. Then $|\phi|(x,t)$ is a sub-solution of the heat equation.
\end{corollary}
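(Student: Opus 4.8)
The plan is to apply the standard Bochner–Weitzenb\"ock technique to $u = |\phi|^2$ (or, to avoid the usual difficulty at zeros of $\phi$, to $u = |\phi|$ via the smoothing $u_\eps = \sqrt{|\phi|^2 + \eps^2}$ and letting $\eps \to 0$), using the reaction term $\mathcal{KB}(\phi)$ from equation~(\ref{KB-heat}) together with the curvature hypothesis $\mathcal{C}_2$. Concretely, I would first recall that $\phi$ satisfies $\left(\frac{\p}{\p t} - \Delta\right)\phi = \mathcal{KB}(\phi)$, where $\Delta = \frac12(\nabla_i\nabla_{\bar i} + \nabla_{\bar i}\nabla_i)$ is the rough (Laplace–Beltrami) Laplacian on $(p,p)$-forms. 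Then I compute, in a normal coordinate at a point,
\begin{equation*}
\heat |\phi|^2 = 2\,\mathrm{Re}\,\langle \heat \phi, \ov\phi\rangle - 2|\nabla \phi|^2 = 2\,\mathrm{Re}\,\langle \mathcal{KB}(\phi), \ov\phi\rangle - 2|\nabla\phi|^2,
\end{equation*}
where $|\nabla\phi|^2 = |\nabla'\phi|^2 + |\nabla''\phi|^2$ collects both the $(1,0)$ and $(0,1)$ covariant derivatives.

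The key input is Lemma~\ref{bg}, which gives precisely $\langle \mathcal{KB}(\phi), \ov\phi\rangle \le 0$ under $\mathcal{C}_2$. Feeding this into the identity above yields $\heat |\phi|^2 \le -2|\nabla\phi|^2 \le 0$, so $|\phi|^2$ is a subsolution of the heat equation. To pass to $|\phi|$ itself, I would work with $u_\eps = \sqrt{|\phi|^2 + \eps^2}$; a direct computation gives
\begin{equation*}
\heat u_\eps = \frac{1}{u_\eps}\left(\frac12\heat |\phi|^2\right) - \frac{1}{u_\eps}\,\frac{|\nabla |\phi|^2|^2}{4 u_\eps^2} \cdot \text{(sign)},
\end{equation*}
and the Kato-type inequality $|\nabla|\phi|^2|^2 \le 4|\phi|^2 |\nabla\phi|^2 \le 4 u_\eps^2 |\nabla\phi|^2$ together with $\heat|\phi|^2 \le -2|\nabla\phi|^2$ shows $\heat u_\eps \le 0$ for every $\eps > 0$; letting $\eps \to 0$ gives that $|\phi|$ is a subsolution in the barrier/viscosity (hence distributional) sense. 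This is exactly the argument of Lemma~2.1 of \cite{NT-jdg}, now available because Lemma~\ref{bg} supplies the needed curvature term sign for general $(p,p)$-forms.

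The only genuinely delicate point — and the one I would flag as the main obstacle — is the regularity/zero-set issue: $|\phi|$ is only Lipschitz, not $C^2$, where $\phi$ vanishes, so the differential inequality $\heat|\phi| \le 0$ has to be interpreted weakly and justified via the $\eps$-regularization just described (or equivalently via upper barriers). Everything else is a routine Bochner computation: expanding $\heat|\phi|^2$, invoking~(\ref{KB-heat}), and quoting Lemma~\ref{bg}. I would also remark that the same computation works verbatim for $(p,q)$-forms, since Lemma~\ref{bg} is stated in that generality, though only the $(p,p)$ case is needed here. No growth or completeness subtleties enter at this stage — the statement is purely local/pointwise about the differential inequality satisfied by $|\phi|$.
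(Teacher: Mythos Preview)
Your proposal is correct and is essentially the same argument the paper has in mind: the paper does not spell out a proof but simply says the corollary ``generalizes Lemma 2.1 of \cite{NT-jdg}, by virtually the same argument,'' with Lemma~\ref{bg} supplying the sign $\langle \mathcal{KB}(\phi),\ov\phi\rangle\le 0$ that makes the Bochner computation go through. Your outline---compute $\heat|\phi|^2$ via (\ref{KB-heat}), apply Lemma~\ref{bg}, then pass from $|\phi|^2$ to $|\phi|$ by the $\sqrt{|\phi|^2+\eps^2}$ regularization and Kato---is exactly that argument, including the correct identification of the zero-set regularity issue as the only point requiring care.
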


This together with the proof to Proposition \ref{short-ext} gives the following improvement on the existence of the Cauchy problem for initial $(p, p$-forms not necessarily positive.

\begin{proposition}\label{short2}
Let $(M, g)$ be a K\"ahler manifold whose curvature operator $\Rm \in \mathcal{C}_2$. Assume that $\phi_0(x)$ satisfies (\ref{ass-1}), then there exists $T_0$ such that the Cauchy problem (\ref{cauchy1})
has a solution $\phi(x, t)$ on $M\times [0, T_0]$. Moreover, (\ref{con-sol}) holds.
\end{proposition}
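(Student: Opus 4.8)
The plan is to combine the existence machinery already built for positive initial data (Proposition \ref{short-ext}) with the estimate of Corollary \ref{sub-com} that replaces positivity by the weaker curvature hypothesis $\mathcal{C}_2$. First I would run the exhaustion argument exactly as in the proof of Proposition \ref{short-ext}: take $\Omega_\mu$ an exhaustion of $M$ by bounded domains and solve the mixed boundary value problem $\left(\frac{\partial}{\partial t}+\Delta_{\dbar}\right)\phi_\mu=0$ on $\Omega_\mu\times[0,\infty)$ with $\phi_\mu(x,0)=\phi_0(x)$ and $\phi_\mu(x,t)=0$ on $\partial\Omega_\mu$ (both tangential and normal parts vanishing, so that the hypo-ellipticity and Schauder estimates of \cite{Simon}, \cite{Morrey-har} apply). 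The only place where positivity of $\phi_0$ was used in that proof was to control $|\phi_\mu|$ by $\Lambda^p\phi_\mu$ via \eqref{basic-1} and then to invoke the scalar maximum principle for $\Lambda^p\phi_\mu$; here that step is simply replaced.

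The key substitution is: since $\Rm\in\mathcal{C}_2$, Corollary \ref{sub-com} tells us that $|\phi_\mu|(x,t)$ is a subsolution of the scalar heat equation on $\Omega_\mu\times[0,T]$, with $|\phi_\mu|(x,0)=|\phi_0|(x)$ and $|\phi_\mu|=0$ on $\partial\Omega_\mu$. Comparing with the function $u(x,t)=\int_M H(x,y,t)|\phi_0|(y)\,d\mu(y)$, where $H$ is the positive heat kernel of $M$, the maximum principle for scalar subsolutions on $\Omega_\mu$ gives $|\phi_\mu|(x,t)\le u(x,t)$ on $\Omega_\mu\times[0,T_0]$ for every $\mu$, with $T_0$ determined by the integrability condition \eqref{ass-1} and the Li-Yau upper heat kernel bound exactly as before (one needs $T_0\le\frac{1}{10a}$ so that the Gaussian in $H$ dominates the $e^{ar^2}$ growth of $|\phi_0|$). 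Thus we obtain a $\mu$-independent local bound $\|\phi_\mu\|_{\infty,K\times[0,T_0]}\le\sup_K u<\infty$ for every compact $K\subset M$.

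With this uniform $L^\infty$ bound in hand, the interior Schauder estimates of \cite{Simon} (parabolic version) yield $\|\phi_\mu\|_{2,\alpha,\alpha/2,K\times[0,T_0]}\le C(K,p,m,\|\phi_\mu\|_{\infty,K\times[0,T_0]})$, uniformly in $\mu$; a diagonal/Arzelà-Ascoli argument then extracts a subsequence converging in $C^{2}_{loc}$ to a solution $\phi(x,t)$ of \eqref{eq:11} on $M\times[0,T_0]$. Finally, passing to the limit in $|\phi_\mu|(x,t)\le u(x,t)$ and in the Li-Yau bound $u(x,t)\le\mathcal{B}\cdot\frac{C(m,p)}{V_x(\sqrt t)}\exp(2a\,r^2(x))$ gives the estimate \eqref{con-sol} for $\phi$. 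I would remark that no positivity is used, so this genuinely improves Proposition \ref{short-ext} to arbitrary initial $(p,p)$-forms; the cost is the slightly stronger curvature hypothesis $\mathcal{C}_2$ (rather than merely $\mathcal{C}_p$) needed to make Corollary \ref{sub-com}, via Lemma \ref{bg}, available.

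\textbf{Main obstacle.} Everything routine hinges on one non-obvious analytic input: the well-posedness and regularity of the boundary value problem with $\mathsf{t}\phi_\mu=\mathsf{n}\phi_\mu=0$, which is not the classical relative or absolute boundary condition for forms but is exactly the hypo-elliptic condition studied in \cite{Morrey-har} so that the Schauder theory of \cite{Simon} applies. Granting that (as the proof of Proposition \ref{short-ext} already does), the proof is a direct transcription of that proof with the single replacement of the positivity-based $L^\infty$ control by the $\mathcal{C}_2$-based subsolution control of Corollary \ref{sub-com}.
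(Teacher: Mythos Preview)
Your proposal is correct and matches the paper's own proof essentially verbatim: the paper simply observes that the approximating solutions $\phi_\mu$ from the proof of Proposition~\ref{short-ext} now have $|\phi_\mu|$ a subsolution of the scalar heat equation by Corollary~\ref{sub-com}, whence $|\phi_\mu|\le u$, and then invokes the rest of that proof unchanged. Your identification of the single substitution (Corollary~\ref{sub-com} in place of the positivity estimate~\eqref{basic-1}) and of the boundary-value regularity as the only nontrivial analytic input is exactly right.
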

\begin{proof} Observe that $\phi_\mu(x, t)$ in the proof of Proposition \ref{short-ext} satisfies that
$|\phi_\mu|(x, t)$ is a sub-solution to the heat equation, hence $|\phi_\mu|(x, t)\le u(x, t)$. The rest proof of Proposition \ref{short-ext} applies here.
\end{proof}

A more important application of the lemma is the following extension of  Proposition \ref{max-pp}.
This also extends Theorem 2.1 of \cite{NT-jdg}.
\begin{theorem}\label{max-pp-noncom}
Let $(M, g)$ be a complete noncompact K\"ahler manifold with $\mathcal{C}_p$. Let $\phi(x, t)$ be a $(p,p)$-form  satisfying (\ref{eq:11})  on $M\times
[0, T]$. Assume  that $\phi(x, 0)\ge0$ and satisfies (\ref{ass-1}). Assume further that for some $a>0$,
\begin{equation}\label{ass-max}
\liminf_{r\to \infty} \int_0^T \int_{B_o(r)}|\phi|^2(x, t) \exp(-a r^2(x))\, d\mu(x) dt < \infty.
\end{equation}
Then $\phi(x, t)\ge 0$. Moreover (\ref{con-sol}) holds.
\end{theorem}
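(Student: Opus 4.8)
The plan is to establish Theorem~\ref{max-pp-noncom} by reducing it, via the maximum principle for compact domains already proved in Proposition~\ref{max-pp} and its Dirichlet version, to a cutoff/exhaustion argument controlled by the growth bound (\ref{ass-max}). First I would record the preliminary facts: by Lemma~\ref{bg} (which applies since $\mathcal{C}_p \subset \mathcal{C}_2$ for $p\ge 2$, and the case $p=1$ is classical), $|\phi|(x,t)$ is a subsolution of the scalar heat equation on $M\times[0,T]$; combined with the Li--Yau heat kernel upper bound and the hypothesis (\ref{ass-1}), this already yields the quantitative estimate (\ref{con-sol}) by exactly the comparison argument in the proof of Proposition~\ref{short-ext}. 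So the only real content is the sign assertion $\phi(x,t)\ge 0$.

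For the positivity, I would run the standard Greene--Wu/Karp--Li type cutoff argument adapted to $(p,p)$-forms. Fix $\e>0$ and consider $\phi_\e \doteqdot \phi + \e(1+t)^N\omega^p$ for a large constant $N$ to be chosen; since $\omega^p$ is strictly positive and $\heat \omega^p = \mathcal{KB}(\omega^p)$ is controlled by the Ricci curvature, choosing $N$ large (depending on a local lower bound of $\Ric$, which is finite on each compact set) makes $\phi_\e$ a strict supersolution in the relevant sense on any fixed bounded domain. The plan then is: on the exhausting domains $\Omega_\mu = B_o(\mu)$, $\phi$ restricted need not be nonnegative on $\partial\Omega_\mu$, so instead I would test the quantity $\int_0^{t} \int_{B_o(r)} \langle (\phi_\e)_-, \cdot\rangle \,\chi^2\, d\mu\, dt$ against the evolution, where $(\phi_\e)_-$ denotes the negative part in the sense of the cone $\mathcal{P}_p$ (i.e.\ the distance to the cone of positive $(p,p)$-forms in the fiberwise metric), and $\chi$ is a Lipschitz cutoff equal to $1$ on $B_o(r)$, supported in $B_o(2r)$, with $|\nabla\chi|\le C/r$. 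Using Proposition~\ref{max-pp} type pointwise computation $\heat$-inequality on the set where $\phi_\e$ fails positivity (the Kodaira--Bochner term $\mathcal{KB}$ has the right sign there by the $\mathcal{C}_p$ condition, exactly as in the compact proof), together with the Kato-type inequality for the distance-to-cone function, one obtains a differential inequality for $E_r(t) \doteqdot \int_{B_o(2r)} |(\phi_\e)_-|^2 \chi^2\, d\mu$ of the form $\frac{d}{dt}E_r \le C E_{2r} + \frac{C}{r^2}\int_{B_o(2r)} |\phi_\e|^2\, d\mu$. Feeding in (\ref{ass-max}) to control the error term and letting $r\to\infty$ along a subsequence (on which the weighted integral stays bounded), then finally $\e\to 0$, forces $(\phi_\e)_- \equiv 0$ and hence $\phi\ge 0$.

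A couple of technical points I would handle carefully. The distance-to-cone function $\delta_x(\psi) = \operatorname{dist}(\psi, \mathcal{P}_{p,x})$ on the fiber is convex and $1$-Lipschitz, so it is differentiable a.e.\ and satisfies the second-order inequality needed to convert $\heat \phi_\e = (\text{curvature terms})$ into $\heat (\delta^2) \le (\text{controlled})$ in the barrier sense; here I would invoke the fiberwise convexity argument exactly as in Lemma~1.1--1.2 of \cite{B-W} referenced in Section~10, applied with the reaction term being the $\mathcal{KB}$ operator plus the $\e\omega^p$ correction. The sign of the reaction term at a point where $\phi_\e$ touches $\partial\mathcal{P}_p$ from outside is precisely where $\mathcal{C}_p$ enters: one repeats the linear-algebra computation (the $\operatorname{trace}(\mathcal{K}\cdot\mathcal{J})\ge 0$ argument) from the proof of Proposition~\ref{max-pp}, now at the nearest point $v_\infty$ in the cone, to conclude $\langle \mathcal{KB}(\phi_\e) + \e(\cdots), \phi_\e - v_\infty\rangle \le \text{(lower order)}$.

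The main obstacle I expect is the integration-by-parts step on the noncompact manifold: one must legitimately integrate $\heat(\delta^2\chi^2)$ and absorb the cross terms $\langle \nabla\delta, \nabla\chi\rangle$ using Cauchy--Schwarz, which requires the a priori local $L^2$-in-space-time bound on $\nabla\phi$; this comes from interior parabolic estimates (as already used in Proposition~\ref{short-ext}), but one has to check the bound is uniform enough on the annuli $B_o(2r)\setminus B_o(r)$ to survive multiplication by $|\nabla\chi|^2 \sim r^{-2}$ against the exponentially-weighted hypothesis (\ref{ass-max}). Once the weighted-energy inequality $E_r(t) \le C\int_0^t E_{2r}(s)\,ds + o(1)$ is in hand, a Gronwall/iteration argument of Karp--Li type (doubling $r$ finitely many times on $[0,T]$) closes the estimate. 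The rest --- deducing (\ref{con-sol}) --- is then immediate from Corollary~\ref{sub-com} and the Li--Yau kernel bound as in Proposition~\ref{short2}.
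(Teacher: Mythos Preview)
Your approach via weighted $L^2$-energies and the distance-to-cone function is a legitimate alternative to the paper's argument, but as written it has a gap in the treatment of the reaction term, and it is also more complicated than necessary.

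First, the perturbation $\e(1+t)^N\omega^p$ is based on a false premise: since $\omega^p$ is parallel, $\mathcal{KB}(\omega^p)=0$ (equivalently $\Delta_{\bar\partial}\omega^p=0=\Delta\omega^p$), so no time-growing factor is needed and no Ricci lower bound enters; plain $\e\omega^p$ already suffices.

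Second, and more seriously, your claim that ``$\mathcal{KB}$ has the right sign \dots\ exactly as in the compact proof'' is not what Proposition~\ref{max-pp} delivers. That proposition only shows $\mathcal{KB}(\psi)_{v_1\cdots v_p,\bar v_1\cdots\bar v_p}\ge 0$ at a null direction of a \emph{nonnegative} $\psi$; equivalently, the ODE $\dot\psi=\mathcal{KB}(\psi)$ preserves the cone $\mathcal{P}_p$. For your energy method you need the pointwise inequality $\langle \mathcal{KB}(\phi_\e),\phi_\e-v_\infty\rangle\le C\,|(\phi_\e)_-|^2$ with $C$ independent of $|\Rm|$, since the theorem assumes no curvature bound. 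Splitting $\mathcal{KB}(\phi_\e)=\mathcal{KB}(v_\infty)+\mathcal{KB}(\phi_\e-v_\infty)$ by linearity, the first piece pairs nonpositively with $\phi_\e-v_\infty$ by cone preservation, but the second piece requires Lemma~\ref{bg} (hence $\mathcal{C}_2$) to conclude $\langle\mathcal{KB}(\psi),\bar\psi\rangle\le 0$ for \emph{all} $\psi$. Without invoking Lemma~\ref{bg} here you only get a bound $\le C|\Rm|\cdot|(\phi_\e)_-|^2$, and your Gronwall constant blows up with $r$. Once you do use Lemma~\ref{bg}, the reaction contribution is in fact $\le 0$ and the ``$CE_{2r}$'' term in your energy inequality disappears, so the iteration you describe is not even needed.

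The paper takes a shorter route that avoids energy estimates altogether. By Corollary~\ref{sub-com}, $|\phi|$ is a subsolution of the scalar heat equation; hypothesis~(\ref{ass-max}) then yields $|\phi|(x,t)\le u(x,t)\doteqdot\int_M H(x,y,t)|\phi_0|(y)\,d\mu(y)$ via the standard noncompact scalar maximum principle. A localization lemma of Ni--Tam (\cite{NT-jdg}, Lemma~2.2) produces, for each large $R$, a heat solution $u_R$ with $u\le u_R+\tau(R)$ on $\partial B_o(R)$ and $u_R\to 0$ locally uniformly as $R\to\infty$. Hence $(u_R+\tau(R))\omega^p+\phi\ge 0$ on $\partial B_o(R)\times[0,T_0]$, and the Dirichlet version of Proposition~\ref{max-pp} on the ball propagates this to the interior; letting $R\to\infty$ finishes. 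This barrier argument uses only the \emph{sign} of $\mathcal{KB}$ at null directions of a nonnegative form, never a Lipschitz bound, which is why no curvature hypothesis is required. Your route can be completed, but only after inserting Lemma~\ref{bg} at the step above; the paper's barrier method is the more direct path.
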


Before we prove the theorem, we should remark that even though Proposition \ref{short-ext} provides a solution to the Cauchy problem which is a positive $(p, p)$-form, it is also useful to be able to assert that certain solutions, which are not constructed by Proposition \ref{short-ext}, preserve the positivity. For example, if $\phi=\sqrt{-1}\partial \bar{\partial} \eta$ and $\eta$ satisfies (\ref{eq:11}). It is easy to see that $\phi$ satisfies (\ref{eq:11}) since $\Delta_{\bar{\partial}}$ is commutable with $\partial$ and $\bar{\partial}$. If we know that $\sqrt{-1}\partial \bar{\partial} \eta\ge 0$ at $t=0$, it is desirable to know when we have $\phi(x, t)\ge 0$.

\begin{proof} We employ the localization technique of \cite{NT-jdg}. Let $\sigma_R$ be a cut-off function between $0$ and $1$ being $1$ in the annulus $A(\frac{R}{4}, 4R)=B(o, 4R)\setminus B(o, \frac{R}{4})$ and supported in the annulus $A(\frac{R}{8}, 8R)$. Let $$u_R(x, t)=\int_M H(x, y, t) |\phi|(y, 0) \sigma_R(y)\, d\mu(y)\quad \quad  u(x, t)=\int_M H(x, y, t)|\phi|(y, 0)d\mu(y).$$ Clearly $u_R(x, t)\le u(x, t)$. However the following result is proved in \cite{NT-jdg}, Lemma 2.2.

\begin{lemma}[Ni-Tam] Assume that $\phi(x, 0)$ satisfies (\ref{ass-1}).
 Then there exists $T_0>0$ depending only on
 $a$ such that for $R\ge \max\{ \sqrt{T_0}, 1\},$
  the following are true.
\begin{enumerate}
\item There exists a function $\tau=\tau(r)$ with
$\lim_{r\to \infty}\tau(r)=0$ such that for all $(x,t)\in
A_o(\frac R2, 2R)\times[0,T_0]$,
$$
u(x,t)\le u_R(x,t)+\tau(R).
$$
\item For any $r>0$,
$$
\lim_{R\to\infty}\sup_{B_o(r)\times [0,T_0]}u_R=0.
$$
\end{enumerate}
\end{lemma}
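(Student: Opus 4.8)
The final statement to prove is the Ni--Tam localization lemma: under assumption~(\ref{ass-1}), there exists $T_0>0$ depending only on $a$ so that for $R\ge \max\{\sqrt{T_0},1\}$ one has $u(x,t)\le u_R(x,t)+\tau(R)$ on the annular region $A_o(R/2,2R)\times[0,T_0]$ with $\tau(R)\to 0$, and $\sup_{B_o(r)\times[0,T_0]}u_R\to 0$ as $R\to\infty$ for each fixed $r$. The plan is to follow the proof of Lemma~2.2 of \cite{NT-jdg} essentially verbatim, the only input being the Li--Yau Gaussian upper bound for the heat kernel $H(x,y,t)\le \frac{C(n)}{V_x(\sqrt t)}\exp(-\frac{r^2(x,y)}{5t})$ (valid here since $\Rm\in\mathcal{C}_p$ with $p\ge 2$ forces $\Ric\ge 0$, hence the Li--Yau estimates apply), together with the volume comparison $V_y(\sqrt t)\ge c\,t^{n/2}$ coming from nonnegative Ricci curvature, and elementary Gaussian integral estimates.

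First I would write $u(x,t)-u_R(x,t)=\int_M H(x,y,t)\,|\phi|(y,0)\,(1-\sigma_R(y))\,d\mu(y)$, so the integrand is supported on $\big(M\setminus A_o(R/4,4R)\big)$, i.e.\ on the two pieces $B_o(R/4)$ and $M\setminus B_o(4R)$. On the inner piece $B_o(R/4)$: for $x\in A_o(R/2,2R)$ we have $r(x,y)\ge r(x)-r(y)\ge R/2-R/4=R/4$, so $H(x,y,t)\le \frac{C}{V_x(\sqrt t)}\exp(-\frac{R^2}{80t})$; combined with $\int_{B_o(R/4)}|\phi|(y,0)\,d\mu(y)\le e^{aR^2/16}\mathcal{B}$ and $V_x(\sqrt t)\ge c t^{n/2}$, this contributes at most $C t^{-n/2}\exp\!\big(\frac{aR^2}{16}-\frac{R^2}{80t}\big)\mathcal{B}$, which tends to $0$ as $R\to\infty$ uniformly on $[0,T_0]$ once $T_0$ is chosen with $\frac{1}{80T_0}>\frac{a}{16}$, say $T_0<\frac{1}{10a}$. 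On the outer piece $M\setminus B_o(4R)$: for $x\in A_o(R/2,2R)$ we have $r(x,y)\ge r(y)-r(x)\ge r(y)-2R\ge r(y)/2$ when $r(y)\ge 4R$, and then $-\frac{r^2(x,y)}{5t}+a r^2(y)\le -\frac{r^2(y)}{20t}+a r^2(y)\le -\frac{r^2(y)}{40t}$ for $t\le T_0<\frac{1}{40a}$; so the contribution is bounded by $\frac{C}{V_x(\sqrt t)}\int_{M\setminus B_o(4R)}\exp(-\frac{r^2(y)}{40t})|\phi|(y,0)e^{-ar^2(y)}e^{ar^2(y)}d\mu(y)\le C t^{-n/2}\int_{M\setminus B_o(4R)}|\phi|(y,0)e^{-ar^2(y)}d\mu(y)$, and this last integral is the tail of the convergent integral in~(\ref{ass-1}), hence $\to 0$ as $R\to\infty$. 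Adding the two pieces defines $\tau(R)\to 0$ and proves part~(1).

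For part~(2), fix $r>0$ and $x\in B_o(r)$. Since $\sigma_R$ is supported in $A_o(R/8,8R)$, we have $u_R(x,t)=\int_{A_o(R/8,8R)}H(x,y,t)|\phi|(y,0)\,d\mu(y)$, and for $R>8r$ every such $y$ satisfies $r(x,y)\ge r(y)-r(x)\ge R/8-r\ge R/16$. Then, exactly as above, $H(x,y,t)\le \frac{C}{V_x(\sqrt t)}\exp(-\frac{R^2}{1280 t})$ and splitting $\exp(-\frac{r^2(x,y)}{5t})=\exp(-\frac{r^2(x,y)}{10t})\exp(-\frac{r^2(x,y)}{10t})$ we absorb the $e^{ar^2(y)}$ factor into one half (using $r(x,y)\ge r(y)/2$ for $R$ large and $t\le T_0$ small as before) while the other half gives a factor $\exp(-\frac{R^2}{2560 t})$; so $u_R(x,t)\le C t^{-n/2}\exp(-\frac{R^2}{2560 T_0})\cdot\mathcal{B}$ on $B_o(r)\times[0,T_0]$, which $\to 0$ as $R\to\infty$. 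The main (and only genuine) obstacle is bookkeeping: choosing the single threshold $T_0=T_0(a)$ and the numerical constants so that all the competing exponents $-r^2(x,y)/(5t)$ and $+ar^2$ combine with the right sign on each of the three regions simultaneously; this is the standard Gaussian-tail juggling of \cite{NT-jdg} and requires no new idea beyond the Li--Yau bound and volume comparison. Once the lemma is in hand, it feeds directly into the proof of Theorem~\ref{max-pp-noncom} via Stokes/cutoff and the $L^2$ growth hypothesis~(\ref{ass-max}).
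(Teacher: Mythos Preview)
The paper does not prove this lemma; it simply cites it as Lemma~2.2 of \cite{NT-jdg}. Your outline follows that standard heat-kernel/volume-comparison argument and the overall strategy is correct.

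One point does need repair beyond mere constant-chasing. For the outer piece in part~(1) your final bound
\[
C\,t^{-n/2}\int_{M\setminus B_o(4R)}|\phi_0|(y)\,e^{-ar^2(y)}\,d\mu(y)
\]
is \emph{not} uniform on $(0,T_0]$: it blows up as $t\to 0^+$, so it cannot serve as part of a $\tau(R)$ independent of $t$. (The same issue appears in your part~(2) bound if the exponent is really $-R^2/(2560T_0)$ rather than $-R^2/(2560t)$.) The cure is to retain a $t$-dependent Gaussian factor: since $r(x,y)\ge r(y)/2\ge 2R$ on the outer piece, split $\exp\!\big(-r^2(x,y)/(5t)\big)$ so as to keep a factor $\exp(-cR^2/t)$; then $t^{-n}\exp(-cR^2/t)$ is bounded on $(0,T_0]$ (for $R^2\gg T_0$ its maximum occurs at $t=T_0$) and tends to $0$ as $R\to\infty$. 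This same Gaussian also absorbs the polynomial-in-$R$ loss hidden in your volume step: Bishop--Gromov gives $V_x(\sqrt t)\ge V_x(1)\,t^{n/2}$ for $t\le 1$, but for $x\in A_o(R/2,2R)$ one only has $V_x(1)\ge V_o(1)/(2R+1)^n$, not a universal constant. With these adjustments the argument goes through as you indicate.
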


 Lemma \ref{bg} above implies that $(u_R(x, t)+\tau(R))\omega^p$ can be used as a barrier on $ \partial B_o(R)\times [0, T_0]$ since by  Corollary \ref{sub-com} and the maximum principle on complete noncompact Riemannian manifolds, that is  where the assumption (\ref{ass-max}) is needed, $|\phi|(x, t)\le u(x, t)\le u_R(x, t)+\tau(R)$ on $ \partial B_o(R)\times [0, T_0]$. In fact $$\left((u_R(x, t)+\tau(R))\omega^p+\phi\right)(v_1, \cdots, v_p; \bar{v}_1, \cdots, \bar{v}_p)\ge u_R(x, t)+\tau(R)-|\phi|(x, t)\ge 0$$ for any $(v_1, \cdots, v_p)$ which can be extended into a unitary basis of $T'_xM$.   Now apply the argument of Proposition \ref{max-pp} on $B_0(R)\times [0, T]$ we can conclude that
$$(u_R(x, t)+\tau(R))\omega^p+\phi\ge 0
$$
on $B_o(R)\times [0, T_0]$
as a $(p, p)$-form since $\phi(x, 0)\ge 0$. Now the result follows by letting $R\to \infty$ and the facts that $\lim_{R\to\infty}\sup_{B_o(r)\times [0,T_0]}u_R=0$ proved in the lemma, and $\tau(R)\to 0$. Since $|\phi|(x, t)\le u(x, t)$, the estimate (\ref{con-sol}) follows as before.
\end{proof}

We devote the rest to the proof of Theorem \ref{main-lyh} and Theorem \ref{main-KRFlyh} for the case that $M$ is noncompact complete. Since one can pick a  small $\delta>0$ and shift the time $t\to t-\delta$ and multiply the expression $Q$ by $t-\delta$, we may assume without the loss of the generality that $\phi, \partial^* \phi, \bar{\partial}^* \phi, \bar{\partial}^* \partial^* \phi$ are all smooth up to $t=0$.

 First we observe that if $\phi$ is a positive $(p, p)$-form, then $\Lambda^p\phi(x, t)$ is a nonnegative solution to the heat equation,  hence satisfies (\ref{ass-max}) by the estimate of Li and Yau.
 Precisely,
 $$
 \Lambda^p \phi(x, t)\le \Lambda^p \phi (o, 1) \cdot \frac{1}{t^{m}}\cdot \exp\left(\frac{r^2(x)}{4(1-t)}\right).
 $$
 In particular, for $\frac{\delta}{2}<T<1-\delta$, one can find $a>0$ such that
 $$
\int_M \left(\Lambda^p \phi\right)^2(x,\frac{\delta}{2})  \exp(-ar^2(x))\, d\mu(x)+\int_{\frac{\delta}{2}}^T \int_M (\Lambda^p \phi )^2 \exp(-ar^2(x))\, d\mu(x)\, dt <\infty.
 $$
Since $|\phi|\le C_{m, p} \Lambda^p \phi$ we can conclude that $|\phi|$ satisfies the above estimate. Applying Lemma \ref{bg} to $(p-1, p)$ and $(p, p-1)$ forms implies the following estimates: There exists $c_{m, p}>0$ such that
 \begin{eqnarray}
  \left(\Delta -\frac{\partial}{\partial t}\right)|\phi|^2 &\ge& c_{m, p}\left(|\partial^* \phi|^2+|\bar{\partial}^* \phi|^2\right),\label{11-help1}\\
 \left(\Delta -\frac{\partial}{\partial t}\right) \left(|\partial^* \phi|^2+|\bar{\partial}^* \phi|^2\right) &\ge& c_{m, p} |\bar{\partial}^* \partial^* \phi|^2, \label{11-help2}\\
  \left(\Delta -\frac{\partial}{\partial t}\right) |\bar{\partial}^* \partial^* \phi|^2 &\ge 0. \label{11-help3}
 \end{eqnarray}
By the same argument of the proof to Lemma 1.4 in \cite{N-jams} we can conclude that there exists $a'>0$ such that
\begin{equation}
\int_{\frac{\delta}{2}}^T \int_M \left(|\phi|^2 +|\partial^* \phi|^2+|\bar{\partial}^* \phi|^2+|\bar{\partial}^* \partial^* \phi|^2\right)\exp(-a'r^2(x))< \infty.
\end{equation}
Note that by the mean value theorem for the subsolution to the heat equation \cite{LT}, one can obtain pointwise estimates for $|\phi|^2 +|\partial^* \phi|^2+|\bar{\partial}^* \phi|^2+|\bar{\partial}^* \partial^* \phi|^2$  at $t=\delta$.
Now  with the help of the argument for the compact case, the same proof as Theorem \ref{max-pp-noncom} via the barrier argument, applying to $Q$ which is viewed a $(p-1, p-1)$-form valued Hermitian symmetric tensor,   proves Theorem \ref{main-lyh} on complete noncompact manifolds. The corresponding result with the K\"ahler-Ricci flow, namely Theorem \ref{main-KRFlyh},  is very similar. Hence we keep it brief. Due to the bound on the curvature there exists a positive constant $\alpha_{m, A}$, depending on the upper bound $A$ of the curvature  tensor so that
\begin{eqnarray}&\,&
  \left(\Delta -\frac{\partial}{\partial t}\right)\left(e^{-\alpha_{m, A} t}\cdot |\phi|^2 \right)\ge c_{m, p}e^{-\alpha_{m, A} t}\cdot \left(|\partial^* \phi|^2+|\bar{\partial}^* \phi|^2\right),\label{11-help1-1}\\
 &\, &\left(\Delta -\frac{\partial}{\partial t}\right)\left(e^{-\alpha_{m, A} t}\cdot \left(|\partial^* \phi|^2+|\bar{\partial}^* \phi|^2\right)\right) \ge c_{m, p} e^{-\alpha_{m, A} t}\cdot |\bar{\partial}^* \partial^* \phi|^2, \label{11-help2-2}\\
 &\, & \left(\Delta -\frac{\partial}{\partial t}\right)\left(e^{-\alpha_{m, A} t}\cdot |\bar{\partial}^* \partial^* \phi|^2 \right) \ge 0. \label{11-help3-3}
 \end{eqnarray}
There are modified point-wise estimates for the positive solutions coupled with the Ricci flow to replace the Li-Yau's estimate. See for example Theorem 2.7 of \cite{Ni-JDG07} (a result of Guenther \cite{Gu}). There is a corresponding  mean value theorem for the nonnegative sub-solutions to the heat equation. See for example Theorem 1.1 of \cite{N-MRL05}. Putting them together the similar argument as the above applies to Theorem \ref{main-KRFlyh}.

\begin{remark} The argument here in fact proves Theorem 1.1 of \cite{N-jams} without the assumption  (1.5) there since that assumption  is a consequence of the semi-positivity of $h$ and Li-Yau's estimate for positive solutions to the heat equation.
\end{remark}

\section{Appendix.}

Here we include  Wilking's proof to Theorem \ref{ww} (also included in \cite{CT}) following the notations of Section 3, which was explained to the first author by Wilking  in May of  2008.

Recall that $\operatorname{ad}_v:  \mathfrak{so}(n, \C)\to \mathfrak{so}(n, \C)$ mapping $w$ to $[v, w]$. The operator $\operatorname{ad}_{(\cdot)}$ can be viewed as a map from $\mathfrak{so}(n, \C)$ to the space of endmorphisms of $\mathfrak{so}(n, \C)$. It is the derivative of $\operatorname{Ad}$, the adjoint action of $\mathsf{SO}(n, \C)$ on $\mathfrak{so}(n, \C)$, which maps  $\mathsf{SO}(n, \C)$ to automorphisms of $\mathfrak{so}(n, \C)$.  This is a basic fact in  Lie group theory.  Another basic fact from Lie group theory  asserts that
$e^{\operatorname{ad}_v}=\operatorname{Ad}(\operatorname{Exp}(v))$.

For the proof of Wilking's theorem we need to recall  the following  identity for $\Rm^\#$.
\begin{equation}\label{A1}
\langle \Rm^{\#} (v), w\rangle =\frac{1}{2}\sum_{\alpha, \beta}\langle [ \Rm(b^\alpha), \Rm(b^\beta)], v\rangle \langle [b^\alpha, b^\beta], w\rangle.
\end{equation}
Here $\{b^\alpha\}$ is a basis for $\frak{so}(n, \C)$. It suffices to show that if $\langle \Rm(v_0), \ov{v}_0\rangle =0$, $\langle \Rm^{\#}(v_0), \ov{v}_0\rangle\ge 0$. Here we identify $\wedge^2(\C^n)$ with $\mathfrak{so}(n, \C)$ and and observe that the action of  $\mathsf{SO}(n, \C)$ on $\wedge^2 (\C^n)$ is the same as the adjoint action under the identification.

Given above basic facts from Lie group theory, for any $b\in \mathfrak{so}(n, \C)$, and $z\in \C$, consider $\left(\operatorname{Ad}(\operatorname{Exp}(zb))\right)(v_0)=\operatorname{Exp}(zb) \cdot v_0 \cdot \operatorname{Exp}(-zb)$. Since $\operatorname{Exp}(zb)\in \mathsf{SO}(n, \C)$, we conclude that $\left(\operatorname{Ad}(\operatorname{Exp}(zb))\right)(v_0)\in \Sigma$. Hence $v(z)=e^{z \, \operatorname{ad}_b}\cdot v_0\in \Sigma$. Thus if we define
$$
I(z):=\langle \Rm(v(z)), \overline{v(z)}\rangle
$$
it is clear that $I(z)\ge 0$ and $I(0)=0$, which implies that  $\frac{\partial^2}{\partial z\partial \overline{z}} I(z)|_{z=0} \ge 0$. Hence  for any $b\in \mathfrak{so}(n, \C)$,
$\langle \Rm ( \operatorname{ad}_b(v_0)), \overline{\operatorname{ad}_b(v_0)}\rangle \ge 0 $, which can be equivalently written as
\begin{equation}\label{A2}
\langle \Rm \cdot \operatorname{ad}_{v_0}(b), \operatorname{ad}_{\overline{v}_0} (\overline{b})\rangle \ge 0.
\end{equation}
This is equivalent to $-\operatorname{ad}_{\overline{v}_0}\cdot \Rm \cdot \operatorname{ad}_{v_0}\ge 0$, as a Hermitian symmetric tensor.

By equation (\ref{A1})  $ \langle \Rm^{\#}(v_0), \ov{v}_0\rangle \ge 0$  is the same as
$\frac{1}{2}\operatorname{trace}(-\operatorname{ad}_{\overline{v}_0} \cdot \Rm \cdot \operatorname{ad}_{v_0} \cdot \Rm )\ge 0$. This last fact is implied by (\ref{A2}) as follows. Let $\lambda_{\alpha}$ be the eigenvalues of $-\operatorname{ad}_{\overline{v}_0}\cdot \Rm \cdot \operatorname{ad}_{v_0}$ with eigenvectors $b^{\alpha}$. Then for  $\lambda_\alpha>0$,
$
b^{\alpha}=\frac{1}{\lambda_\alpha} \operatorname{ad}_{\overline{v}_0}(w^{\alpha}),
$
where $w^{\alpha}=\Rm \cdot \operatorname{ad}_{v_0}(b^\alpha)$. At the mean time
\begin{eqnarray*}
\operatorname{trace}(-\operatorname{ad}_{\overline{v}_0} \cdot \Rm \cdot \operatorname{ad}_{v_0}
 \cdot \Rm )&=&\sum \langle \Rm \cdot (-\operatorname{ad}_{\overline{v}_0} \cdot \Rm \cdot \operatorname{ad}_{v_0}) (b^\alpha), \overline{b^{\alpha}}\rangle  \\
&=&\sum_{\lambda_\alpha >0}\lambda_\alpha \langle \Rm (b^{\alpha} ), \overline{b^{\alpha}}\rangle\cr
&=&\sum_{\lambda_\alpha >0}\frac{1}{\lambda_\alpha} \langle \Rm \cdot \operatorname{ad}_{\overline{v}_0}(w^{\alpha}), \operatorname{ad}_{v_0} (\overline{w^\alpha})\rangle\\
&=&\sum_{\lambda_\alpha >0}\frac{1}{\lambda_\alpha} \langle \Rm\cdot  \operatorname{ad}_{v_0} (\overline{w^\alpha}), \operatorname{ad}_{\overline{v}_0}(w^{\alpha}) \rangle.
\end{eqnarray*}
The last expression  is nonnegative by (\ref{A2}).

\section*{Acknowledgments.} {  } We  thank Brett Kotschwar for bringing our attention to \cite{CT}. We also thank Nolan Wallach for helpful discussions, Shing-Tung Yau for his interests. The first author's research is partially supported by a NSF grant DMS-0805834.

\bibliographystyle{amsalpha}

{\sc  Addresses:}

{\sc  Lei Ni},
 Department of Mathematics, University of California at San Diego, La Jolla, CA 92093, USA


email: lni@math.ucsd.edu

{\sc Yanyan Niu}, Department of Mathematics  and  Institute of Mathematics and
Interdisciplinary Science, Capital Normal University, Beijing, China

email: yyniukxe@gmail.com

\end{document}